\numberwithin{figure}{section}
\numberwithin{equation}{section}
\title[Bijections on Dyck tilings]
{Bijections on Dyck tilings: DTS/DTR bijections, Dyck tableaux and tree-like tableaux}
\author[K.~Shigechi]{Keiichi~Shigechi}
\email{k1.shigechi AT gmail.com}
\date{\today}
\newcommand\tikzpic[2]{
\raisebox{#1\totalheight}{
\begin{tikzpicture}
#2
\end{tikzpicture}
}}
\newtheorem{theorem}[figure]{Theorem}%%[section]
\newtheorem{example}[figure]{Example}
\newtheorem{defn}[figure]{Definition}
\newtheorem{prop}[figure]{Proposition}
\newtheorem{cor}[figure]{Corollary}
\newtheorem{remark}[figure]{Remark}
\begin{document}

\begin{abstract}
Dyck tilings are certain tilings in the region surrounded by two Dyck paths.
We study bijections and combinatorial objects bijective to Dyck tilings,
which include Dyck tiling strip (DTS) and Dyck tiling ribbon (DTR) bijections, 
increasing and decreasing trees, Hermite histories,
Dyck tableaux and tree-like tableaux.
Dyck tableaux and tree-like tableaux are originally defined for a zigzag path, or equivalently
a permutation. 
We generalize them to the case of general Dyck paths.
We show that the most properties of Dyck tableaux can be generalized to the generic case,
and show some enumerative results on generalized tree-like tableaux.
We also show connections among DTS and DTR bijections, Hermite histories, 
involutions on increasing and decreasing trees and the reflection of Dyck tilings. 
\end{abstract}

\maketitle

\section{Introduction}
Cover-inclusive Dyck tilings were introduced by Zinn-Justin and the author \cite{SZJ12}
in the study of Kazhdan--Lusztig polynomials for the Grassmannian permutations.
Independently, they appeared in the study of the so-called double-dimer model 
by Kenyon and Wilson \cite{KW11} (see also \cite{K12} for the proof of conjectures 
by Kenyon and Wilson). 
In both cases, we consider the partition function of Dyck tilings above a 
Dyck path. 
The difference of the two models is the weight given to a Dyck tiling.
Since then, Dyck tilings have arisen in different contexts: 
the double-dimer model and related physical models \cite{KW11,KW15,Pon18}, 
fully packed loop systems \cite{FN12}, representation of the symmetric group \cite{F16},
the pure partition function of multiple Schramm--Loewner evolutions \cite{KKP17,PelWu19}, 
and characterization of a basis of the intersection cohomology 
of Grassmannian Schubert varieties \cite{Pa19}. 

There are several combinatorial objects which are bijective to cover-inclusive 
Dyck tilings.
In \cite{KMPW12}, it was shown that increasing trees, perfect matchings and Hermite histories
are bijective to Dyck tilings through the Dyck tiling strip (DTS) bijection and 
the Dyck tiling ribbon (DTR) bijection.
Further, they also show that the weight of a Dyck tiling is compatible with statistics
of a permutation.
The DTR bijection for a Dyck tiling whose lower path is a zigzag path is equivalent 
to two other combinatorial objects: Dyck tableaux \cite{ABDH11} and tree-like tableaux \cite{ABN11}.
Dyck tableaux are regarded as a variant of another combinatorial object, permutation
tableaux \cite{B07,CN09,N11,P06,Vie07,W05}. 
They appear in the combinatorial description of the physical model PASEP (Partially 
Asymmetric Exclusion Process) \cite{CW072,CW071,CW073}.
Tree-like tableaux were introduced in \cite{ABN11} and bijective to permutation tableaux \cite{P06,SW07}
and alternative tableaux \cite{N11,Vie07}.
One of the advantages of Dyck tableaux and tree-like tableaux studied in \cite{ABDH11} and \cite{ABN11} 
is that they have nice recursive constructions and one can easily show bijections to other 
combinatorial objects mentioned before.
One of the purposes of this paper is to introduce and study generalized Dyck tableaux and 
generalized tree-like tableaux for general Dyck paths.
 
There are several generalizations of Dyck tilings: ballot tilings \cite{S142} and generalized Dyck 
tilings studied in \cite{JVK16}.
Ballot tilings \cite{S142} can be regarded as a type $B$ analogue of Dyck tilings since they naturally 
arise in the study of Kazhdan--Lusztig polynomials for an Hermitian symmetric pair \cite{Boe88,S141}.
The generalized Dyck tilings include $k$-Dyck tilings and symmetric Dyck tilings \cite{JVK16}.
There, the analogue of Bruhat order plays a role.
We remark that ballot tilings are a subset of symmetric Dyck tilings with certain conditions.
The result of this paper can be a starting point to generalize the notions of combinatorial
objects bijective to generalized cover-inclusive Dyck tilings.

We first revisit the relations among Dyck tilings, increasing trees, Hermite histories,
and DTS and DTR bijections.
We consider two types of labeling of a tree: a natural label and a weakly increasing label.
The former is a labeling of a tree in $[1,n]$ such that labels are strictly increasing 
from the root to a leaf in the tree.
The latter is a labeling such that labels are weakly increasing from the root to a leaf.
We utilize weakly increasing labels defined by Lascoux and Sch\"utzenberger in the 
study of Kazhdan--Lusztig polynomials for Grassmannian permutations \cite{LS81}.
There exists a simple bijection between these two labels.
We introduce a cover-inclusive Dyck tiling $D_{0}$ associated with a weakly increasing label, 
which is shown to be bijective to the Dyck tiling $D_{1}$ constructed from a natural label
by DTS bijection.
Further, we show that a Dyck tiling $D_{0}$ is compatible with the Hermite history of $D_{1}$.

Dyck tableaux \cite{ABDH11} and tree-like tableaux \cite{ABN11} are defined for a special class of 
Dyck paths called zigzag paths.
One of the main purposes of this paper is to generalize the notions of Dyck tableaux and tree-like 
tableaux to all Dyck paths.
A Dyck tableau of size $n$ is a skew Ferrers $\mu/\nu$ diagram with $n$ dots, where 
$\mu$ is a staircase partition.
Note that the boundary of a staircase partition $\mu$ is a zigzag path.
By relaxing the condition on $\mu$ such that the boundary of $\mu$ is a Dyck path,
we can construct a generalized Dyck tableau in a similar way as \cite{ABDH11}.
A tree-like tableau of size $n$ is a Ferrers diagram with $n$ dots (or points), 
which we call off-diagonal dots. 
By introducing a new class of dots, which we call diagonal dots, 
we generalize the notion of a tree-like tableau to that of a Dyck path.
We naturally have a bijection between generalized tree-like tableaux and Dyck 
tilings for general Dyck paths.  
For both generalized Dyck tableaux and generalized tree-like tableaux, 
we have recursive constructions.
We generalize most of the results in \cite{ABDH11} to the case of general Dyck paths with slight 
modifications.
It includes the insertion procedures for Dyck tableaux and weighted Dyck words, the bijection 
between Dyck tableaux and Dyck tilings, and the relations between generalized patterns for a 
natural label and a Dyck tableau.
We also show some enumerative results in case of generalized tree-like tableaux together 
with the insertion procedure.
The bijection between Dyck tableaux and tree-like tableaux is also given.

We have an operation called reflection on Dyck tilings, which reflects Dyck tilings 
along a vertical line.
We also have a natural operation called bar involution which maps a natural label to 
a decreasing natural label.
We study several relations among the DTS bijection, the reflection, the bar operation, and Hermite histories.
One of the relations is that a Dyck tiling constructed through an Hermite history is equal to 
a Dyck tiling by a variant of DTS bijection.

We introduce two operations on a natural label, $\ast$-operation and $\times$-operation.
These two operations are related by the bar operation.
The $\times$-operation on a natural label $L$ is characterized by the reflection of a Dyck 
tiling $D$ along a vertical line, where the Dyck tiling $D$ is constructed from the natural 
label $L$ by the DTR bijection.
Further, we study the action of $\ast$-operation on Dyck tilings above a certain special class of 
Dyck paths.
This special class of Dyck paths is a generalization of zigzag paths.
There, we show that the action of $\ast$-operation on Dyck tilings is controlled by the positions 
of dots in generalized Dyck tableaux.

When the lower path of a Dyck tiling is a zigzag path, we have two extreme Dyck tableaux, one of 
which is the one with all dots in the highest position, and the other is with all dots in 
the lowest position.
We give an algorithm to obtain extreme Dyck tableaux from a Dyck tableau by 
introducing the notions of skeleton and resolution.
A skeleton is a graph consisting of arches, which contains the information of decreasing sequences in 
a permutation, and a resolution is an operation to transform a skeleton to another skeleton 
with less intersections of arches. 
By successive applications of resolutions, we give an algorithm to obtain extreme Dyck tableaux 
from a generic Dyck tableau.

Given a decreasing natural label on a tree, we can construct a Dyck tiling through the Hermite 
history associated to the label.
We give an insertion algorithm to capture the top path of the Dyck tiling constructed 
by the Hermite history by introducing a new concept, Dyck bi-words.

This paper is divided into six sections.
In Section 2, we introduce the basic notions of Dyck tilings, Hermite histories, and 
DTS/DTR bijections.
In Section 3, we introduce a tree and its two labels: natural labels and weak increasing 
labels. Then, we study the relation between Dyck tilings $D_{1}$ and another 
Dyck tilings $D_{0}$ associated with the weakly increasing labels. 
Here, trees introduced by Lascoux and Sch\"utzenberger play a central role.
We also show that Dyck tilings $D_{0}$ are compatible with the Hermite history of $D_{1}$.
Then, we introduce an involution on Dyck tilings and study its relations to Dyck 
tilings $D_{0}$ and $D_{1}$.
Section 4 presents a generalization of Dyck tableaux for general Dyck paths.
We introduce weighted Dyck words generalized to general Dyck paths.
Then, we construct recursive algorithms for both Dyck tableaux and weighted 
Dyck words. 
We study three properties of generalized Dyck tableaux: generalized patterns, 
their shapes, and (LR/RL)-(minima/maxima).
In Section 5, we study tree-like tableaux for general Dyck paths.
We define the insertion procedure for the tableaux and observe the recursive
structure.
We present some enumerative results with respect to generalized tree-like tableaux.
We show that there exists a bijection between generalized Dyck tableaux and 
generalized tree-like tableaux.
Section 6 is devoted to the analysis of bijections characterizing Dyck tilings.
We first study the DTS bijection and some involutions on natural labels and 
on Dyck tilings.
Secondly, we move to the DTR bijection and involutions on Dyck tableaux.
Thirdly, we present two extreme Dyck tilings associated to a Dyck tableau of the 
same boundary paths.
Finally, we point out that there exists an insertion algorithm to identify 
the top path of the Dyck tiling constructed from an Hermite history.

\section{Dyck tilings}
\subsection{Dyck tilings}
We recall the definitions of Dyck tilings following \cite{KW11,SZJ12}.

A {\it Dyck path of length $2n$} is a lattice path from the origin $(0,0)$ 
to $(2n,0)$ with up steps $(1,1)$ and down steps $(1,-1)$, which does not 
go below the horizontal line $y=0$.
We write simply ``U" (resp. ``D") for an up (resp. down) step.
A sequence of ``U" and ``D" corresponding to a Dyck path is called a Dyck word.
We call the Dyck path $U\cdots UD\cdots D$ (resp. $UDUD\cdots UD$) 
the highest (resp. lowest) path.
We also denote by $|\lambda|$ the length of $\lambda$ ($|\lambda|=2n$).
When $\lambda$ can be expressed of a concatenation of two Dyck paths 
$\lambda_1$ and $\lambda_2$, 
we denote $\lambda=\lambda_1\circ\lambda_2$.
Here, concatenation means that we connect two Dyck paths one after another.
For example, when $\lambda_{1}=UDUD$ and $\lambda_{2}=UUDD$, a concatenation 
$\lambda_1\circ\lambda_2=UDUDUUDD$.

We introduce two special classes of Dyck paths: 
\begin{eqnarray*}
\mathrm{zigzag}_{n}&:=&UDUD\ldots UD=(UD)^{n}, \\ 
\wedge_{m}&:=&U\ldots UD\ldots D=U^{m}D^{m}.
\end{eqnarray*}
Given a sequence of positive integers $\mathbf{m}:=(m_1,\ldots,m_n)$,
we define a Dyck path 
$\wedge_{\mathbf{m}}:=\wedge_{m_1}\circ\ldots\circ\wedge_{m_n}$.
We call this class of Dyck paths generalized zigzag paths.

For later purpose, we also define a word $\vee_{n}$ by 
\begin{eqnarray*}
\vee_{n}=D\ldots DU\ldots U:=D^{n}U^{n}.
\end{eqnarray*}

A Dyck path $\lambda$ of length $2n$ can be identified with the Young diagram
which is determined by the path $\lambda$, the line $y=x$ and the line $y=-x+2n$.
Let $\lambda$ and $\mu$ be two Dyck paths.
If the skew shape $\lambda/\mu$ exists, we call $\lambda$ and $\mu$ 
the lower path and the upper (or top) path.

{\it A Dyck tile} is a ribbon (a connected skew shape which does not contain a $2\times2$ box)
such that the centers of the boxes form a Dyck path.
A single box is a Dyck tile of length $0$.
The size of a Dyck tile associated with a Dyck path of length $2n$ is $n$.
Let $\lambda$ and $\mu$ be two Dyck paths.
{\it A Dyck tiling} is a tiling of a skew Young diagram $\lambda/\mu$ by Dyck tiles.
A Dyck tiling $D$ is called {\it cover-inclusive} (resp. {\it cover-exclusive}) 
if we translate a Dyck tile of $D$ downward by $(0,-2)$ 
(resp. upward by $(0,2)$), then it is strictly below (resp. above) $\lambda$ (resp. $\mu$) 
or contained in another Dyck tile.

For a Dyck tiling $D$ of shape $\lambda/\mu$, we define $\mathrm{tiles}(D)$ to 
be the number of Dyck tiles in $D$, and $\mathrm{area}(D)$ to be the number 
of boxes in the skew shape $\lambda/\mu$.
We define 
\begin{eqnarray*}
\mathrm{art}(D):=(\mathrm{area}(D)+\mathrm{tiles}(D))/2.
\end{eqnarray*}

\subsection{Planted plane trees}
\label{sec:ppt}
Given a Dyck path $\lambda$, or equivalently a Dyck word
consisting of $U$'s and $D$'s, we define a planted plane tree 
$\mathrm{Tree}(\lambda)$
associated with $\lambda$
as follows:
\begin{enumerate}
\item $\mathrm{Tree}(\emptyset)$ is an empty tree.
\item Suppose that $\lambda$ is a concatenation of two Dyck words $\lambda_{1}$ 
and $\lambda_{2}$, {\it i.e.} $\lambda=\lambda_{1}\circ\lambda_{2}$.
Then, the tree $\mathrm{Tree}(\lambda)$ is obtained by attaching the two 
trees $\mathrm{Tree}(\lambda_{1})$ and $\mathrm{Tree}(\lambda_{2})$
at their roots. 
\item When $\lambda=U\lambda'D$ with a Dyck word $\lambda'$, the tree $\mathrm{Tree}(\lambda)$ is obtained 
by attaching an edge just above the tree $\mathrm{Tree}(\lambda')$.
\end{enumerate}
A top node is called the root of the tree, and a node which does not have edges below it is called 
as a leaf of the tree.
When an edge $e$ has several edges just below it, we call them children of $e$.

A planted plane tree $\mathrm{Tree}(\lambda)$ has a natural poset structure.
We have an order-preserving bijection $L: \mathrm{Tree}(\lambda)\rightarrow [n]$ 
where $n$ is the number of edges in $\mathrm{Tree}(\lambda)$ and 
$[n]:=\{1,2,\ldots,n\}$.
Here, order-preserving means that an integer on an edge $e_{1}$ of the tree is bigger 
than that on an edge $e_{2}$, where $e_{2}$ is just above $e_{1}$.
We call a planed plane tree with a natural labeling an increasing planted plane tree, 
or simply a natural label.

In the study of the Kazhdan--Lusztig polynomials for the Grassmannian permutations, 
Lascoux and Sch\"utzenberger introduced a weakly increasing planted plane tree \cite{LS81}.
Let $\lambda$ be a Dyck word and $\lambda_{0}$ is a word consisting of $U$ and $D$ 
such that the path $\lambda_{0}$ is above $\lambda$ and denote it by $\lambda_{0}\ge\lambda$. 
By definition, $\lambda_{0}$ is not necessarily a Dyck word but 
the first word of $\lambda_{0}$ is $U$.
For a word $\lambda$, we denote by $||\lambda||$ the length of the word 
and by $||\lambda||_{\alpha}$, $\alpha=U$ or $D$, the number of $\alpha$ 
in the word $\lambda$.
Suppose that $\lambda_{0}=\lambda_{0}'vw\lambda_{0}''$ 
and $\lambda=\lambda'UD\lambda''$ where $v,w\in\{U,D\}$ and 
$||\lambda_{0}'||=||\lambda'||$. 
Then, a {\it capacity} of the partial word corresponding to $UD$ in $\lambda$
is defined by 
\begin{eqnarray*}
\mathrm{cap}(UD):=||\lambda_{0}'v||_{U}-||\lambda'U||_{U}.
\end{eqnarray*}
The condition $\lambda_{0}\ge\lambda$ ensures that all capacities of $\lambda$
with respect to $\lambda_{0}$ is non-negative.

When the last word of $\lambda_{0}$ is $D$, we can have a path $\lambda_{0}$ 
satisfying $\lambda_{0}\ge\lambda$. 
We can similarly define the capacities by reflecting the paths by a vertical line.
More precisely, let $\lambda^{\mathrm{ref}}$ and $\lambda_{0}^{\mathrm{ref}}$ be 
the words obtained from $\lambda$ and $\lambda_{0}$ 
by exchanging $U$ and $D$ and reading from left to right.
The first letter of $\lambda_{0}^{\mathrm{ref}}$ is $U$.
We have capacities of $\lambda^{\mathrm{ref}}$ with respect to $\lambda_{0}^{\mathrm{ref}}$ 
as above. 

We put capacities of $\lambda$ with respect to $\lambda_{0}$ on the leaves of 
the tree $\mathrm{Tree}(\lambda)$. 
We denote by $\mathrm{Tree}(\lambda/\lambda_{0})$ the tree $\mathrm{Tree}(\lambda)$
with capacities.
The weakly increasing planted plane trees $\mathrm{Tree}(\lambda/\lambda_{0})$ 
satisfies 
\begin{enumerate}
\item Non-negative integers on edges are weakly increasing from the root to a leaf.
\item An integer on an edge connected to a leaf is less than or equal to 
its capacity.
\end{enumerate}

We denote by $T$ the planted plane tree $\mathrm{Tree}(\lambda)$. 
When an edge $e'$ is strictly right (resp. left) to the edge $e$, we denote this relation 
by $e\rightarrow e'$ (resp. $e'\leftarrow e$). Here, ``strictly" means that the edge $e'$ is positioned 
right to the edge $e$ and not positions in-between the edge $e$ and the root in $T$.
Similarly, when an edge $e'$ is weakly right (resp. left) to the edge $e$, 
we denote this relation by $e\Rightarrow e'$ (resp. $e'\Leftarrow e$).
Here, ``weakly right" means that $e'$ is strictly right to $e$ or positions in-between 
the edge $e$ and the root.
By definition, when $e\rightarrow e'$, $e$ satisfies also $e\Rightarrow e'$.
However, the reverse is not true.
When $e\Rightarrow e'$ but not $e\rightarrow e'$, we denote this relation 
by $e\uparrow e'$.
In other words, $e\uparrow e'$ means that $e'$ is positioned in-between 
the edge $e$ and the root in $T$.

\subsection{A Dyck tiling and an Hermite history}
\label{sec:DTHh}
In this subsection, we study the relation between a Dyck tiling and an Hermite history.
An Hermite history encodes the number of Dyck tiles and it is bijective to a perfect 
matching \cite{KMPW12}.
However, we consider another type of Hermite histories, which encodes the statistics $\mathrm{art}$ of Dyck 
tiles rather than the number of Dyck tiles.

Let $\lambda$ be a Dyck path of length $2n$ and $D$ is a cover-inclusive 
Dyck tiling over $\lambda$.
An Hermite history of $D$ is a collection of non-negative integers of length $n$.
These non-negative integers are associated with the down steps of $\lambda$.

Recall that a Dyck tile is a ribbon. 
We put a line in a Dyck tile from the left-most south-west edge to the right-most north-east 
edge.
We draw lines on all the Dyck tiles forming the Dyck tiling $D$.
Since $\lambda$ is a lowest path of $D$, each line starts from a down step $d$
of $\lambda$, or from a box which is placed upward by $(0,2m)$ for some $m$ 
from a down step $d$ of $\lambda$. 
In both cases, we say that a line is associated with the down step $d$.
We call a line associated with a down step a {\it trajectory}.

Let $D_{l}$ be the set of Dyck tiles which a trajectory $l$ passes through.
We define the weight $\mathrm{wt}$ of a trajectory $l$ on $D$ by 
\begin{eqnarray*}
\mathrm{wt}(l):=\sum_{t\in D_l}\mathrm{art}(t).
\end{eqnarray*}

Since a Dyck word of length $2n$ is a balanced word, we have $n$ down steps in it.
The collection of non-negative integers $\mathbf{H}:=(H_1,\ldots,H_n)$
is defined by 
\begin{eqnarray*}
H_{i}:=\mathrm{wt}(i),
\end{eqnarray*}
where the trajectory $i$ is associated with the $i$-th down step of $\lambda$.
If there is no trajectory associated with the $i$-th down step, we define 
$H_{i}:=0$.

See Figure \ref{fig:Hh} for an example of a Dyck tiling and its Hermite 
history.
\begin{figure}[ht]
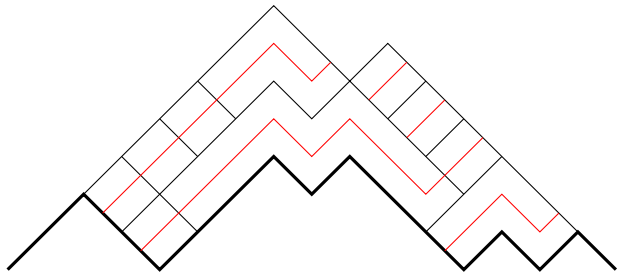

\tikzpic{-0.5}{[x=0.5cm,y=0.5cm]
\draw[very thick](0,0)--(2,2)--(4,0)--(7,3)--(8,2)--(9,3)--(12,0)--(13,1)--(14,0)--(15,1)--(16,0);
\draw(2,2)--(7,7)--(9,5)--(10,6)--(15,1)(3,1)--(7,5)--(8,4)--(9,5)--(12,2);
\draw(3,3)--(5,1)(4,4)--(5,3)(5,5)--(6,4)(11,5)--(10,4)(12,4)--(11,3)(13,3)--(11,1);
\draw[red](2.5,1.5)--(7,6)--(8,5)--(8.5,5.5)
          (3.5,0.5)--(7,4)--(8,3)--(9,4)--(11,2)--(12.5,3.5)
          (9.5,4.5)--(10.5,5.5)(10.5,3.5)--(11.5,4.5)
          (11.5,0.5)--(13,2)--(14,1)--(14.5,1.5);
}

\caption{A cover-inclusive Dyck tiling with the Hermite 
history $\mathbf{H}=(5,6,0,1,1,2,0,0)$}
\label{fig:Hh}
\end{figure}

\subsection{Bijections: Dyck tiling strip and Dyck tiling ribbon}
\label{sec:DTSDTR}
Let $\lambda$ be a Dyck path of length $2n$.
Since $\lambda$ is a lattice path from the origin $(0,0)$ to $(2n,0)$, 
we have a unique intersection between 
$\lambda$ and the line $x=m$ for $0\le m\le 2n$.
We divide a Dyck tiling $D$ over $\lambda$ into two pieces by the line $x=m$.
Then, we move the right piece to the right by $(2,0)$.
We reconnect two pieces by paths $UD$.
We call this operation the {\it spread} of $D$ at $x=m$.
When a single box is divided into two pieces by $x=m$, we obtain 
a Dyck tile of length $1$.
Similarly, if a Dyck tile of length $r$ is divided into two pieces by $x=m$,
we obtain a Dyck tile of length $r+1$ (See Figure \ref{fig:spread} for examples).
\begin{figure}[ht]
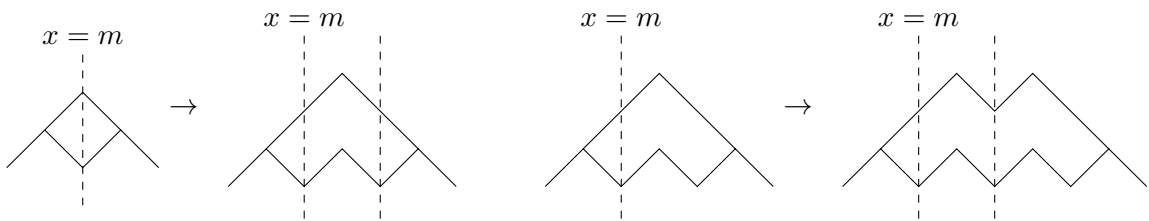

\tikzpic{-0.5}{[x=0.5cm,y=0.5cm]
\draw(-1,-1)--(0,0)--(1,1)--(2,0)--(1,-1)--(0,0)(2,0)--(3,-1);
\draw[dashed](1,2)--(1,-2);
\node[anchor=south] at(1,2){$x=m$};
}$\rightarrow$
\tikzpic{-0.5}{[x=0.5cm,y=0.5cm]
\draw(-1,-1)--(0,0)--(2,2)--(4,0)--(3,-1)--(2,0)--(1,-1)--(0,0)(4,0)--(5,-1);
\draw[dashed](1,3)--(1,-2)(3,3)--(3,-2);
\node[anchor=south] at(1,3){$x=m$};
}\qquad
\tikzpic{-0.5}{[x=0.5cm,y=0.5cm]
\draw(-1,-1)--(0,0)--(2,2)--(4,0)--(3,-1)--(2,0)--(1,-1)--(0,0)(4,0)--(5,-1);
\draw[dashed](1,3)--(1,-2);
\node[anchor=south] at(1,3){$x=m$};
}$\rightarrow$
\tikzpic{-0.5}{[x=0.5cm,y=0.5cm]
\draw(-1,-1)--(0,0)--(2,2)--(3,1)--(4,2)--(6,0)--(5,-1)--(4,0)--(3,-1)--(2,0)--(1,-1)--(0,0)
     (6,0)--(7,-1);
\draw[dashed](1,3)--(1,-2)(3,3)--(3,-2);
\node[anchor=south] at(1,3){$x=m$};
}
\caption{Examples of the operation spread}
\label{fig:spread}
\end{figure}

Given a natural label $T_1$, we denote by $e(i)$ the edge labeled by $i$, 
and by $n(e)$ the label of the edge $e$.
Let $n+1$ be the number of edges in $T_1$.
we define a sequence $\mathbf{h}:=(h_1,\ldots,h_{n+1})$ of non-negative integers by 
\begin{eqnarray*}
h_i:=2\cdot\#\{e' | e'\leftarrow e(i), n(e')<i \}+\#\{e' | e(i)\uparrow e'\}.
\end{eqnarray*}
By definition, we have $h_{i}\in[0,2(i-1)]$.
We call $\mathbf{h}$ an {\it insertion history}.
Note that a sequence $\mathbf{h}$ is bijective to a natural label $T_{1}$.

We will recursively construct two bijections between Dyck tilings over $\lambda$ and 
non-negative integer sequences $\mathbf{h}$.
When we have a Dyck tiling $D$ of length $2n$ associated with a sequence $(h_1,\ldots,h_n)$,
we perform the spread of $D$ at $x=h_{n+1}$.
We denote by $\widetilde{D}$ the new Dyck tiling obtained from $D$ by the spread, and 
simply write the spread of $D$ at $x=m$ as $\mathrm{sp}_{m}(D):=\widetilde{D}$.

We define two processes on Dyck tilings: the strip-growth and the ribbon-growth.
The former is used to define the Dyck tiling strip (DTS) bijection and 
the latter is for the Dyck tiling ribbon (DTR) bijection. 
We define the DTS and DTR bijections following \cite{KMPW12}.

\paragraph{\bf The strip-growth}
Given a Dyck tiling $\widetilde{D}$, we add a vertical strip to $\widetilde{D}$ 
in the north-east direction right to the line $x=h_{n+1}$. 
In other words, we attach a single box for each up step in $\widetilde{D}$ such 
that the up step is right to the line $x=h_{n+1}$.
We denote by $\mathrm{SG}(\widetilde{D})$ the new Dyck tiling obtained from 
$\widetilde{D}$ by the strip-growth.

\paragraph{\bf The ribbon-growth}
To define the ribbon-growth, we first introduce the notion of the special column
of a Dyck tiling following \cite{KMPW12}.
Let $\mu$ be the top path of a Dyck tiling $\widetilde{D}$.
The special column of a Dyck tiling is the right-most column $s$ satisfying 
\begin{enumerate}
\item The top path $\mu$ contains an up step which ends in column $s$.
\item The intersection of $\mu$ with the line $x=s$ is not the top corner 
of a Dyck tile of $\widetilde{D}$ consisting of a single box.
\end{enumerate}
A column satisfying above two conditions are said to be eligible.
Thus, the special column is the right-most eligible column.
Given a Dyck tiling $\widetilde{D}$, we add a ribbon consisting of single boxes 
in the north-east direction right to the line $x=h_{n+1}$ up to the line $x=s$.
We denote by $\mathrm{RG}(\widetilde{D})$ the new Dyck tiling obtained from 
$\widetilde{D}$ by the ribbon-growth.

Let $T_{1}'$ be a natural label obtained from $T_{1}$ by deleting the edge with 
the label $n+1$.
We define the DTS and DTR bijections recursively by
\begin{eqnarray*}
DTS(T_1)&:=&\mathrm{SG}(\widetilde{D})=\mathrm{SG}(\mathrm{sp}_{h_{n+1}}(DTS(T_{1}'))), \\
DTR(T_1)&:=&\mathrm{RG}(\widetilde{D})=\mathrm{RG}(\mathrm{sp}_{h_{n+1}}(DTR(T_{1}'))).
\end{eqnarray*}

Figure \ref{fig:inctreeDTSDTR} gives an example of a labeled tree, the DTS and the DTR bijections.
\begin{figure}[ht]
\tikzpic{-0.5}{[scale=0.8]
\coordinate
       child{coordinate (c5)}
       child{coordinate (c2)
            child{coordinate (c6)}
            child{coordinate (c3)
                 child[missing]
                 child{coordinate (c4)}
                 }
            }
       child{coordinate (c1)};
\node[anchor=south west] at ($(0,0)!.5!(c1)$){1};
\node[anchor=east]at($(0,0)!.6!(c2)$){2};
\node[anchor=south east]at($(0,0)!.6!(c5)$){5};
\node[anchor=south east]at($(c2)!.6!(c6)$){6};
\node[anchor=south west]at($(c2)!.6!(c3)$){3};
\node[anchor=south west]at($(c3)!.6!(c4)$){4};
\node at(c3){$-$};
}\quad
\tikzpic{-0.5}{[x=0.4cm,y=0.4cm]
\draw[very thick](0,0)--(1,1)--(2,0)--(4,2)--(5,1)--(7,3)--(10,0)--(11,1)--(12,0);
\draw(1,1)--(6,6)--(11,1)(2,2)--(3,1)(4,4)--(6,2)(5,5)--(7,3)--(8,4)(5,3)--(7,5);
\draw(8,2)--(9,3)(9,1)--(10,2);
}\quad
\tikzpic{-0.5}{[x=0.4cm,y=0.4cm]
\draw[very thick](0,0)--(1,1)--(2,0)--(4,2)--(5,1)--(7,3)--(10,0)--(11,1)--(12,0);
\draw(1,1)--(4,4)--(6,2)(5,3)--(6,4)--(7,3)(2,2)--(3,1);
\draw(9,1)--(10,2)--(11,1);
}
\caption{A natural label (the left picture), DTS bijection (the middle picture) and 
DTR bijection (the right picture)}
\label{fig:inctreeDTSDTR}
\end{figure}

We introduce three types of variants of the DTS bijection:
the left DTS (lDTS) bijection, the reverse order DTS (rDTS) bijection,
and the reverse order left DTS (rlDTS) bijection.
The DTS bijection is a map from an increasing tree to a Dyck tiling, and 
its addition of a vertical strip is right to the insertion point.
The lDTS bijection is the DTS bijection such that the addition of a vertical 
strip is left to the insertion point. Thus, lDTS is a map from an increasing 
tree to a Dyck tiling.
The rDTS bijection is the DTS bijection such that the bijection is a map 
from a decreasing tree to a Dyck tiling, insertion order is according to 
the decreasing order of labels, and the addition of a vertical strip is 
right to the insertion point.
The rlDTS bijection is a map from a deceasing tree to a Dyck tiling as 
the rDTS bijection, but the addition of a vertical strip is left to the 
insertion point.

See Figure \ref{fig:vaDTS} for an example of these DTS bijections.
\begin{figure}[ht]
\begin{eqnarray*}
\tikzpic{-0.5}{[scale=0.6]
\coordinate
	child{coordinate(c3)}
	child{coordinate(c1)
		child{coordinate(c4)}
		child{coordinate(c2)}};
\draw($(0,0)!0.5!(c3)$)node[anchor=south east]{$3$}($(0,0)!0.5!(c1)$)node[anchor=south west]{$1$};
\draw($(c1)!0.5!(c4)$)node[anchor=south east]{$4$}($(c1)!0.5!(c2)$)node[anchor=south west]{$2$};
}\qquad
\tikzpic{-0.5}{[x=0.4cm,y=0.4cm]
\draw[very thick](0,0)--(1,1)--(2,0)--(4,2)--(5,1)--(6,2)--(8,0);
\draw(1,1)--(4,4)--(6,2)(2,2)--(3,1);
}\qquad
\tikzpic{-0.5}{[x=0.4cm,y=0.4cm]
\draw[very thick](0,0)--(1,1)--(2,0)--(4,2)--(5,1)--(6,2)--(8,0);
\draw(1,1)--(2,2)--(3,1);
} \\
\tikzpic{-0.5}{[scale=0.6]
\coordinate
	child{coordinate(c3)}
	child{coordinate(c4)
		child{coordinate(c2)}
		child{coordinate(c1)}};
\draw($(0,0)!0.5!(c3)$)node[anchor=south east]{$3$}($(0,0)!0.5!(c4)$)node[anchor=south west]{$4$};
\draw($(c4)!0.5!(c2)$)node[anchor=south east]{$2$}($(c4)!0.5!(c1)$)node[anchor=south west]{$1$};
}\qquad
\tikzpic{-0.5}{[x=0.4cm,y=0.4cm]
\draw[very thick](0,0)--(1,1)--(2,0)--(4,2)--(5,1)--(6,2)--(8,0);
\draw(1,1)--(2,2)--(3,1);
}\qquad
\tikzpic{-0.5}{[x=0.4cm,y=0.4cm]
\draw[very thick](0,0)--(1,1)--(2,0)--(4,2)--(5,1)--(6,2)--(8,0);
\draw(1,1)--(2,2)--(3,1)(4,2)--(5,3)--(6,2)(2,2)--(3,3)--(4,2);
}
\end{eqnarray*}
\caption{An increasing tree (left picture in the first row), its DTS bijection 
(middle picture in the first row) and its lDTS bijection (right picture in the first row).
The second row is an example of a decreasing tree (left picture), the rDTS bijection (middle picture),
and rlDTS bijection (right picture).}
\label{fig:vaDTS}
\end{figure}

Similarly, we define the reverse order left DTR (rlDTR for short) as the DTR bijection from a 
decreasing tree to a Dyck tiling such that the ribbon growth is to the left of the 
insertion point for the spread.

\section{Bijections associated with an increasing tree}
In this section, we consider bijections among increasing trees,
two weakly increasing trees, two Dyck tilings associated with the weakly increasing trees,  
and two Hermite histories.
Let $\lambda$ be a Dyck path of length $2n$.

\subsection{Bijection between an increasing tree and a weakly increasing tree}
\label{sec:bijwit}
We denote by $T$ the planted plane tree $\mathrm{Tree}(\lambda)$ and 
$T_{1}$ be a natural label of $T$.
Given an edge $e$ of $T$, we denote by $T_{1}(e)$ the label of the edge $e$.

We construct a weakly increasing tree $T_{2}$ as follows.
A label $T_{2}(e)$ of an edge $e$ is equal to 
\begin{eqnarray}
\label{T2}
T_{2}(e):=\#\{e'| T_{1}(e)>T_{1}(e'), e\rightarrow e'\}.
\end{eqnarray}
By construction, the maximum value of $T_{2}(e)$ is nothing but the 
number of $e'$'s satisfying $e\rightarrow e'$ in $T$.
When an edge $e_{1}$ is the parent of an edge $e_{2}$ ($e_{1}$ is just above $e_{2}$), 
we have $T_{2}(e_{1})\le T_{2}(e_{2})$.

\begin{figure}[ht]
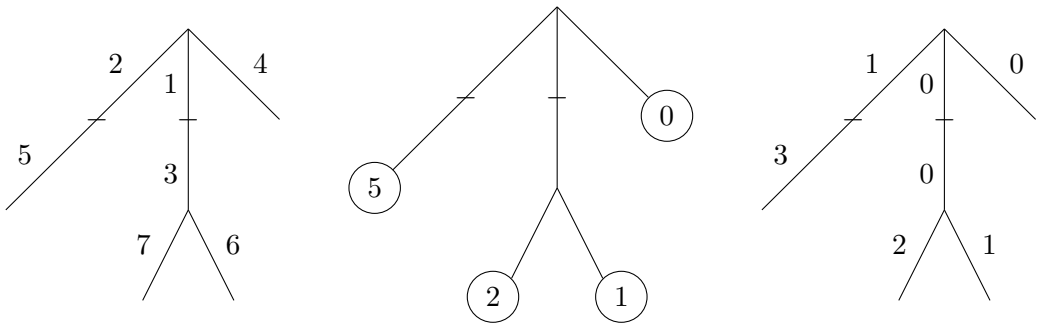

\tikzpic{-0.5}{[scale=0.8]
\coordinate 
 	child{coordinate (c2)
	     child{coordinate (c5)}
             child[missing]
             child[missing]
	     }
	child{coordinate (c1)
             child{coordinate (c3)
                  child{coordinate (c7)}
                  child{coordinate (c6)}
		  }
             }
	child{coordinate (c4)};
\draw(c2)node{$-$}(c1)node{$-$};
\node[anchor=south east] at ($(0,0)!.6!(c2)$){2};
\node[anchor=east] at ($(0,0)!.6!(c1)$){1};
\node[anchor=south west] at ($(0,0)!.6!(c4)$){4};
\node[anchor=south east] at ($(c2)!.6!(c5)$){5};
\node[anchor=east] at ($(c1)!.6!(c3)$){3};
\node[anchor=south east] at ($(c3)!.6!(c7)$){7};
\node[anchor=south west] at ($(c3)!.6!(c6)$){6};
}
\quad
\tikzpic{-0.5}{[scale=0.8]
\coordinate 
 	child{coordinate (c2)
	     child{coordinate (c5)}
             child[missing]
             child[missing]
	     }
	child{coordinate (c1)
             child{coordinate (c3)
                  child{coordinate (c7)}
                  child{coordinate (c6)}
		  }
             }
	child{coordinate (c4)};
\draw(c2)node{$-$}(c1)node{$-$};
\node[circle,draw,fill=white] at (c5){5};
\node[circle,draw,fill=white,anchor=north east] at (c7){2};
\node[circle,draw,fill=white,anchor=north west] at (c6){1};
\node[circle,draw,fill=white,anchor=north west] at (c4){0};
}
\quad
\tikzpic{-0.5}{[scale=0.8]
\coordinate 
 	child{coordinate (c2)
	     child{coordinate (c5)}
             child[missing]
             child[missing]
	     }
	child{coordinate (c1)
             child{coordinate (c3)
                  child{coordinate (c7)}
                  child{coordinate (c6)}
		  }
             }
	child{coordinate (c4)};
\draw(c2)node{$-$}(c1)node{$-$};
\node[anchor=south east] at ($(0,0)!.6!(c2)$){1};
\node[anchor=east] at ($(0,0)!.6!(c1)$){0};
\node[anchor=south west] at ($(0,0)!.6!(c4)$){0};
\node[anchor=south east] at ($(c2)!.6!(c5)$){3};
\node[anchor=east] at ($(c1)!.6!(c3)$){0};
\node[anchor=south east] at ($(c3)!.6!(c7)$){2};
\node[anchor=south west] at ($(c3)!.6!(c6)$){1};
}

\caption{A natural label of a tree associated with a Dyck path $\lambda=UUDDUUUDUDDDUD$
(the left picture), a tree with capacities (the middle picture), and 
the weakly increasing tree (the right picture).
}
\label{fig:inKLwin}
\end{figure}

Let $\lambda_{0}=D\ldots D$ be a Dyck path. 
The right-most steps of $\lambda_{0}$ and $\lambda$ are overlapped. 
Thus, we have $\lambda_{0}\ge\lambda$.
As in Section \ref{sec:ppt}, we consider a weakly increasing tree $\mathrm{Tree}(\lambda/\lambda_{0})$.
Suppose that $e$ is an edge connected to a leaf. Then, by the choice of $\lambda_{0}$, 
the capacity of the edge $e$ is equal to the number of $e'$ satisfying $e\rightarrow e'$ in $T$.

From above considerations, we have a bijection between increasing trees $T_{1}$ and 
weakly increasing trees $T_{2}$ (or equivalently weakly increasing trees 
of $\mathrm{Tree}(\lambda/\lambda_{0})$).
See Figure \ref{fig:inKLwin} for an example of a natural label $L$ and the weakly increasing 
tree associated with $L$.

The two paths $\lambda$ and $\lambda_{0}$, and boxes on the vertical line $x=0$ 
form a region $R$ in the Cartesian coordinate.
We construct a bijection between weakly increasing trees $T_{2}$ and conver-inclusive 
Dyck tilings in the region $R$.
We generalize the identification studied in \cite{SZJ12}, which connects a cover-inclusive 
Dyck tiling and a weakly increasing tree of Lascoux and Sch\"utzenberger.
An edge $e$ in the tree $\mathrm{Tree}(\lambda)$ corresponds to a pair of $U$ and $D$ steps 
in the lowest path $\lambda$. Especially, an edge connected to a leaf corresponds to 
a pair of $U$ and $D$ steps next to each other in $\lambda$. 
In a weakly increasing tree $T_{2}$, when an edge $e$ has an integer $T_{2}(e)$, 
we have $T_{2}(e)$ non-trivial Dyck tiles (not a single box) over the pair of $U$ and $D$.
Recall that a label $T_2(e)$ is less than or equal to a capacity. 
In the region $R$, we can put non-trivial Dyck tiles over the pair of $U$ and $D$ steps
up to its capacity by the choice of $\lambda_{0}$.
Since the labels of edges from the root to a leaf are weakly increasing in $T_{2}$, 
we obtain an cover-inclusive Dyck tiling in the region $R$. 

We call left-most boxes in the region $R$ as anchor boxes and enumerate them from 
top to bottom by $0$ to $n-1$, where $n$ is the number of edges of $\mathrm{Tree}(\lambda)$. 
See Figure \ref{fig:DTRegion} for an example of a cover-inclusive Dyck tiling 
in the region $R$ above a Dyck path $\lambda$.

Let $D$ be a Dyck tiling in the region $R$ corresponding to a weakly increasing tree $T_{2}$.
We consider an Hermite history, which consists of $n$ trajectories.
Here, a trajectory is a line on Dyck tiles starting from an up step of $\lambda$ and ending 
at an anchor box.
More precise definition of a trajectory is as follows.
For a Dyck tile, we call the rightmost southeast edge {\it entry} and 
the leftmost northwest edge {\it exit}.
A trajectory on a Dyck tile connects the entry and the exit by a line.
We concatenate trajectories of $D$ if and only if the entry of a Dyck tile is attached 
to the exit of another Dyck tile.
Some of trajectories start from the up steps of $\lambda$ and other trajectories start from 
Dyck tiles which is not attached to the lowest path $\lambda$.
We translate a trajectory downward and if the rightmost entry of the trajectory is on an 
up step $U$ of $\lambda$, we say that this trajectory is associated with the up step $U$.
The Hermite history is a collection of trajectories associated with the up steps of $\lambda$.

\begin{figure}[ht]
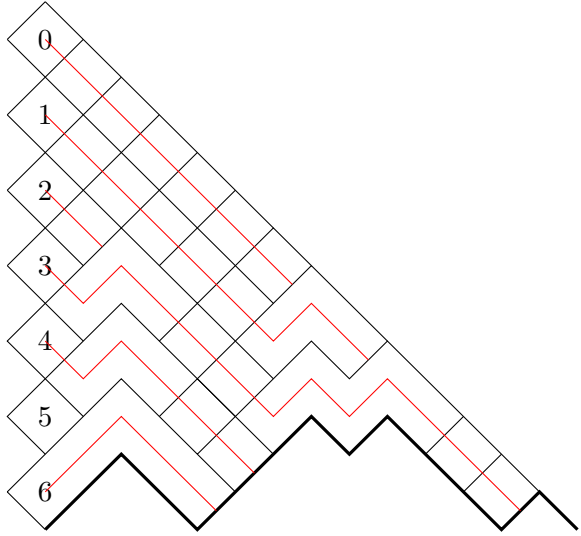

\tikzpic{-0.5}{[x=0.5cm,y=0.5cm]
\draw[very thick](0,0)--(2,2)--(4,0)--(7,3)--(8,2)--(9,3)--(12,0)--(13,1)--(14,0);
\draw(-1,1)--(0,0)(-1,1)--(2,4)--(4,2)--(7,5)--(8,4)--(9,5)--(13,1);
\draw(0,2)--(-1,3)--(0,4)--(2,6)--(4,4)--(7,7)--(9,5);
\draw(0,4)--(-1,5)--(0,6)--(2,8)--(4,6)--(6,8)--(7,7);
\draw(3,7)--(5,9)(2,8)--(4,10)(-1,7)--(3,11)(-1,9)--(2,12)(-1,11)--(1,13)(-1,13)--(0,14);
\draw(0,14)--(6,8)(-1,13)--(5,7)(-1,11)--(2,8)(-1,9)--(1,7)(-1,7)--(1,5)(0,4)--(1,3);
\draw(4,2)--(5,1)(4,4)--(6,2)(11,3)--(10,2)(12,2)--(11,1);
\draw(3,3)--(4,4)--(5,3)(5,5)--(6,4);
\draw(3,5)--(4,6)--(5,5)(5,7)--(6,6);
\foreach \x in {6,5,...,0} {\node at (0,13-2*\x) {\x};}
\draw[red](0,1)--(2,3)--(4.5,0.5)
          (0,5)--(1,4)--(2,5)--(5.5,1.5)
          (0,7)--(1,6)--(2,7)--(6,3)--(7,4)--(8,3)--(9,4)--(12.5,0.5)
          (0,9)--(1.5,7.5)
          (0,11)--(6,5)--(7,6)--(8.5,4.5)
          (0,13)--(6.5,6.5);
}
\caption{The cover-inclusive Dyck tiling in the region $R$ associated with the natural label
in Figure \ref{fig:inKLwin}. The red lines are the trajectories of the Hermite history.}
\label{fig:DTRegion}
\end{figure}

Suppose that a trajectory in an Hermite history connects the $i$-th up step from right 
and an anchor box with a label $a_{i}\in\{0,1,\ldots,n-1\}$. 
We define a sequence of integers $\mathbf{b}:=(b_{1},\ldots,b_{n})$ by 
\begin{eqnarray*}
b_{i}:=a_{i}-\#\{a_{j}: j<i, a_{j}<a_{i}\}.
\end{eqnarray*}
By definition, we have $b_{i}\ge0$.

Below, we construct a cover-inclusive Dyck tiling over $\lambda$ from 
the sequence of integers $\mathbf{b}$.
We identify $\mathbf{b}$ as a sequence of integers associated with 
an Hermite history starting from the up steps of $\lambda$.
Suppose that the $i$-th up step from right has a trajectory which 
passes through Dyck tiles $d_{j}, 1\le j\le m$.
Then, we have 
\begin{eqnarray*}
b_{i}=\sum_{j=1}^{m}\mathrm{art}(d_{j}).
\end{eqnarray*}

\begin{example}
For the cover-inclusive Dyck tiling in Figure \ref{fig:DTRegion}, we have 
\begin{eqnarray*}
\mathbf{a}&=&(3,1,0,4,6,2,5), \\
\mathbf{b}&=&(3,1,0,1,2,0,0).
\end{eqnarray*}
By use of the correspondence between a Dyck tiling and an Hermite history, 
we have the following cover-inclusive Dyck tiling:
\begin{center}
\tikzpic{-0.5}{[x=0.4cm,y=0.4cm]
\draw[very thick](0,0)--(2,2)--(4,0)--(7,3)--(8,2)--(9,3)--(12,0)--(13,1)--(14,0);
\draw(2,2)--(3,3)--(5,1)(3,1)--(5,3)--(6,2)(7,3)--(8,4)--(9,3)--(10,4)--(13,1)
     (10,2)--(11,3)(11,1)--(12,2);
}
\end{center}
\end{example}

\begin{prop}
The above construction is well-defined. 
In other words, $\mathbf{b}$ defines an Hermite history on a cover-inclusive 
Dyck tiling above $\lambda$.
\end{prop}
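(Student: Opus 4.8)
The plan is to verify the two conditions that a valid Hermite history above $\lambda$ must satisfy: each entry is a non-negative integer, and each entry lies within the range of art-weights that an actual trajectory from the corresponding up step can carry in a cover-inclusive tiling with lower path $\lambda$. First I would record that $\mathbf{a}=(a_1,\dots,a_n)$ is a permutation of $\{0,1,\dots,n-1\}$. This holds because the Hermite history in the region $R$ sets up a bijection between the $n$ up steps of $\lambda$ and the $n$ anchor boxes: every trajectory, translated downward, has its rightmost entry on an up step of $\lambda$, and by the choice $\lambda_{0}=D^{n}$ together with cover-inclusivity one checks that each up step carries exactly one trajectory and each anchor box $0,\dots,n-1$ is the exit of exactly one. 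Granting this, for each $i$ there are only $a_i$ values smaller than $a_i$ in all, so $\#\{a_j:j<i,\ a_j<a_i\}\le a_i$ and hence $b_i\ge 0$; splitting those smaller values according to position gives $b_i=\#\{j>i:\ a_j<a_i\}$, whence $0\le b_i\le n-i$.

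Next I would make precise what it means for $\mathbf{b}$ to be realizable above $\lambda$ and reduce the statement to a single bound. Reconstructing a cover-inclusive tiling over $\lambda$ from an Hermite history is the inverse of reading off trajectory art-weights: processing the up steps from right to left, at the $i$-th step one grows a trajectory of total art $b_i$ on top of the tiling already produced for the rightmost $i-1$ up steps, using the spread operation $\mathrm{sp}_m$ of Section~\ref{sec:DTSDTR} to open space and then stacking tiles. I would show by induction on $i$ that this keeps the tiling cover-inclusive with lower path exactly $\lambda$, so the only thing that can fail is demanding more art than the local geometry permits; that is, the construction succeeds precisely when $b_i\le\beta_i$, where $\beta_i$ is the maximal art-weight a trajectory from the $i$-th up step from the right can carry once the rightmost $i-1$ up steps are fixed.

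The crux is therefore the inequality $b_i\le\beta_i$, and this is where the region $R$ genuinely enters, since the bare bound $b_i\le n-i$ from the first step is not by itself the correct $\lambda$-dependent constraint. I would establish it by comparing the two families of trajectories through a reflection about a vertical line. In $R$ the trajectory of the $i$-th up step climbs up and to the left to the anchor box $a_i$, and distinct trajectories do not cross; the quantity $b_i=\#\{j>i:\ a_j<a_i\}$ counts exactly those up steps lying to the left of the $i$-th one whose trajectories terminate strictly higher. Reflecting $R$ turns these non-crossing ``higher-to-the-left'' trajectories into precisely the tiles over which an up-step trajectory above $\lambda$ can climb, and I would check that the correspondence matches $\mathrm{art}$ step by step, so that $b_i$ equals the art accumulated along the realized trajectory and in particular $b_i\le\beta_i$. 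The main obstacle I anticipate is exactly this reflection/insertion compatibility: one must verify that the local effect of a spread followed by a tile-stacking on the over-$\lambda$ side mirrors the non-crossing count in $R$ uniformly in $i$, rather than only in examples, and carefully tracking how $\mathrm{art}$ is distributed among the tiles a single trajectory meets (single boxes versus longer ribbons) is the delicate point.
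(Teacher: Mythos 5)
Your proposal has the right overall shape --- reduce to showing that each $b_i$ does not exceed what a trajectory from the $i$-th up step can actually carry --- and your first step is sound: since $\mathbf{a}$ is a permutation of $\{0,\dots,n-1\}$, the identity $b_i=\#\{j>i:\ a_j<a_i\}$ and hence $b_i\ge 0$ follow immediately (the paper also records $b_i\ge 0$ but does not use the permutation reformulation). However, there is a genuine gap where you yourself locate ``the main obstacle'': the inequality $b_i\le\beta_i$ is never actually established. That inequality is not a single global bound but a family of local constraints tied to the shape of $\lambda$ --- for adjacent up steps $U_i,U_{i+1}$ one needs $b_i\le b_{i+1}$, and when $M_1$ down steps separate them one needs bounds of the form $b_i\le b_j+M'$ where $M'$ counts intervening down steps, with the relevant $j$ depending on the lengths of the Dyck tiles sitting at the $\wedge$-corners of $D_R$. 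The paper's proof consists precisely of deriving these inequalities by a case analysis on the local configuration of tiles in $R$ (cases a, b1-i/ii/iii, b2-i/ii), using the non-crossing property of trajectories and the ordering of anchor-box labels; none of that is replaced by anything concrete in your sketch.

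Moreover, the ``reflection about a vertical line'' mechanism you propose for the crux is doubtful as stated. The region $R$ is bounded by $\lambda$, the path $\lambda_0=D^{2n}$, and the anchor-box column on the left, whereas the target tiling $D_1$ lives between $\lambda$ and an upper Dyck path $\mu$; these two regions are not mirror images of one another, and a trajectory in $R$ terminating at an anchor box does not reflect onto a trajectory above $\lambda$ in any evident tile-by-tile, $\mathrm{art}$-preserving way. So the step that would carry all the content of the proposition is both unproved and resting on a correspondence that would itself need a careful (and possibly false) justification. To repair the argument you would need to replace the reflection heuristic by the local analysis: fix consecutive up steps of $\lambda$, examine the tile at the intervening $\wedge$-corner in $D_R$, and deduce the required inequalities on the $a_i$ (hence the $b_i$) from the non-intersecting trajectories in $R$, exactly as the paper does.
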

\begin{proof}
Let $U_{i}$ and $U_{i+1}$ be the $i$-th and $i+1$-th up steps from the right end in $\lambda$.
We have two cases: a) $U_{i}$ and $U_{i+1}$ are next to each other, and 
b) there exist down steps between $U_{i}$ and $U_{i+1}$.

First, we consider the case a).
In a cover-inclusive Dyck tiling $D_{R}$ in the region $R$, we have 
\begin{enumerate}
\item 
\label{DR1}
trajectories of the Hermite history of $D_{R}$ are non-intersecting, 
\item 
\label{DR2}
the labels of anchor boxes increase one-by-one from top to bottom, 
\item 
\label{DR3}
the trajectory for $U_{i}$ starts above the trajectory for $U_{i+1}$.
\end{enumerate}
Here, the third property comes from the following fact: 
if the starting point of the trajectory of $U_{i+1}$ is lifted upward by 
a trajectory of $U_{j}$ with $j<i$, the starting point of the 
trajectory of $U_{i}$ should also be lifted upward.
From properties from (\ref{DR1}) to (\ref{DR3}) of $D_{R}$ in $R$, 
we have $a_{i}<a_{i+1}$, which implies $b_{i}\le b_{i+1}$.
This condition is admissible as a condition for the Hermite history.

We consider the case b).
Let $n_{0}$ be the label of the anchor box which is in the 
north-west direction from the step $U_{i}$.
Let $N_{1}$ be the maximal integer such that there is no down 
step between $U_{i+1}$ and $U_{i+N_{1}}$, and 
$M_{1}$ be the number of down steps between $U_{i}$ and $U_{i+1}$.
The partial path around $U_{i}$ and $U_{i+1}$ is depicted as follows:
\begin{eqnarray}
\label{eqn:HhDT}
\tikzpic{-0.5}{[x=0.4cm,y=0.4cm]
\draw(-0.2,0.2)--(1,-1)--(2.2,0.2)(3.4,1.4)--(4.6,2.6)--(5.8,1.4)(7,0.2)--(8.2,-1);
\draw(8.2,-1)--(9.4,0.2);
\draw(2,0)node{$-$}(3.6,1.6)node{$-$}(0,0)node{$-$}(5.6,1.6)node{$-$}(7.2,0)node{$-$}
     (9.2,0)node{$-$};
\draw[dashed](2.2,0.2)--(3.4,1.4)(5.8,1.4)--(7,0.2)(-1,0.8)--(-0.2,0.2)(9.2,0)--(10,0.8);
\draw(4.1,2.3)node[anchor=south east]{$U_{i+1}$};
\draw(1.5,-0.5)node[anchor=north west]{$U_{i+N_{1}}$};
\draw(8.7,-0.5)node[anchor=north west]{$U_{i}$};
\draw[decoration={brace,mirror,raise=5pt},decorate](8.1,-0.9)--(4.6,2.6);
\draw(7.5,2)node{$M_{1}$};
}
\end{eqnarray}

Suppose that the north-west box of the edge $U_{i}$ is contained by a Dyck tile, 
which is right to $U_{i}$ and of length larger than zero.
Then, the trajectory of this tile is associated to an up step $U_{h}$ for $h<i$.

If the trajectory associated to $U_{h}$ does not contain the north-west box of the edge $U_{i+1}$,
the trajectory associated to $U_{h}$ is positioned between the trajectories 
of $U_{i}$ and $U_{i+1}$.
We have $a_{i}<a_{i+1}$, or equivalently, $b_{i}\le b_{i+1}$, which implies 
that this configuration is admissible as an Hermite history.

If the trajectory associated to $U_{h}$ contains the north-west box of the edge $U_{i+1}$ as 
a Dyck tile of length larger than zero, the starting points of the trajectories associated to
$U_{i}$ and $U_{i+1}$ are moved upward by $(0,2)$.
The local configuration around $U_{i}$ and $U_{i+1}$ looks the same as Eqn. (\ref{eqn:HhDT})
except that we have the trajectory associated to $U_{h}$ below the partial path.

From above observations, one can assume that the north-west box of $U_{i}$ is contained 
in the trajectory associated to $U_{i}$ and local configuration is as Eqn. (\ref{eqn:HhDT})
without loss of generality. 
We have two cases: b1) $N_{1}\le M_{1}$, and b2) $N_{1}>M_{1}$.

\paragraph{\bf Case b1).} 
Since we have a cover-inclusive Dyck tiling in $R$, one may have 
a Dyck tile of length $l$ at the $\wedge$-corner whose left edge is $U_{i+1}$.
We have three cases for the length $l$: i) $0\le l\le N_{1}$, ii) $N_{1}<l\le M_{1}$, 
and iii) $M_{1}<l$.

\paragraph{ Case i).}
When $l=0$, it is obvious that the trajectory associated $U_{i}$ is above the trajectory 
associated to $U_{i+1}$. 
We have $b_{i}\le b_{i+1}$, which implies that the Hermite history is admissible.

We consider $l>0$. 
The starting points of trajectories starting from the steps 
from $U_{i+1}$ to $U_{i+l}$ is above the trajectory starting from 
$U_{i}$.
By the same argument as the case a), we have 
\begin{eqnarray*}
a_{i+1}<a_{i+2}<\cdots<a_{i+l}<a_{i}<a_{i+l+1}<\cdots<a_{i+N_{1}},
\end{eqnarray*} 
which implies 
\begin{eqnarray*}
b_{i+1}\le b_{i+2}\le\cdots\le b_{i+l}\le b_{i}\le b_{i+l+1}.
\end{eqnarray*}
We also have $a_{i}\ge n_{0}+l$ since there exist at least $l$ trajectories 
above the one of $U_{i}$.
If the step $U_{i}$ is connected to the anchor box with $n_{0}$ in $R$, we have 
$n_{0}$ up steps right to $U_{i}$.
From these observations, we have $b_{i}\ge l$ with $0\le l\le N_{1}$.
To regard $\mathbf{b}$ with an Hermite history of a Dyck tiling $D_{1}$ over $\lambda$, 
we put a trajectory starting from the step $U_{i}$ whose art weight is $b_{i}$.
When $b_{i}\le M_{1}$, one can put $b_{i}$ boxes at the step $U_{i}$ in $D_{1}$.
When $b_{i}>M_{1}$, we put several Dyck tiles at the step $U_{i}$ in $D_{1}$ such that 
the art weight on the trajectory is $b_{i}$, and we have a 
Dyck tile of length larger than zero over the $\wedge$-corner whose left edge is the up step 
$U_{i+1}$ in $D_{1}$.  
We have $b_{i}=a_{i}-n_{0}$ and $b_{i+l+1}=a_{i+l+1}-n_{0}-l-1$.
The condition $a_{i}<a_{i+l+1}$ implies $b_{i}\le b_{i+l+1}+l\le b_{i+l+1}+M_{1}$.
Thus, this configuration is admissible as an Hermite history.

\paragraph{Case ii).}
The trajectory starting from $U_{i+1}$ is above the trajectory starting from $U_{i}$, 
which implies $a_{i+1}<a_{i}$ and $a_{i}\ge n_{0}+l$.
By a similar argument to Case i), we have $b_{i+1}\le b_{i}$, and $b_{i}\ge l$.
When $b_{i}\le M_{1}$, we put $b_{i}$ boxes at the step $U_{i}$ in $D_{1}$.
When $b_{i}\ge M_{1}$, we put boxes such that the art weight on the trajectory 
is $b_{i}$, and may have a Dyck tile of length $l$ over the $\wedge$-corner 
whose left edge is the up step $U_{i+1}$ in $D_{1}$.
This configuration is admissible as an Hermite history.

\paragraph{Case iii).}
Since $l>M_{1}$, we have an up step $U_{j}$ such that the trajectory 
associated to it starts from the box just below the Dyck tile of size $l$.
By the same argument as case i), we have 
$b_{i}\le b_{j}+M'$ where $M'$ is the number of down steps between 
$U_{i}$ and $U_{j}$.
Then, this configuration is admissible as an Hermite history.

\paragraph{\bf Case b2).}
As in the case b1), we may have a Dyck tile of length $l$ at 
the $\wedge$-corner whose left edge is $U_{i+1}$ in $R$.
Here, we have two cases: i) $0\le l\le M_{1}$, and ii) $M_{1}<l$.

\paragraph{Case i).} 
By the same argument as Case b1), we have $a_{i}>a_{i+1}$ which implies 
$b_{i}\ge b{i+1}$.
When $b_{i}\le M_{1}$, we put $b_{i}$ boxes at the step $U_{i}$
in $D_{1}$. This is admissible as an Hermite history.
The remaining case is $b_{i}>M_{1}$. 
In this case, $a_{i+l}$ also satisfies $a_{i+l}>a_{i}$, which implies $b_{i+l}\ge b_{i}-l\ge b_{i}-M_{1}$. 
In $D_{1}$, if we put Dyck tiles at the up step $U_{i}$, we have a non-trivial Dyck tile at the 
$\wedge$-corner whose left edge is $U_{i+1}$.
Further, we have at least $n_{0}+l-1$ trajectories above the trajectory of $U_{i}$ in $R$ since 
$b_{i}>M_{1}$.
The condition $b_{i+l}\ge b_{i}-M_{1}$ insures that we have Dyck tiles starting from $U_{i+l}$ in $D_{1}$
and this configuration is admissible as an Hermite history.

\paragraph{Case ii).}
One can apply the same argument as Case b1-iii).
\end{proof}

The following theorem gives the relation between a natural label $T_{1}$ and $\mathbf{b}$.
\begin{theorem}
\label{thrm:HhDTS}
Let $D_{1}$ be a Dyck tiling constructed from $\mathbf{b}$ by the Hermite history
and $D_{2}$ be a Dyck tiling obtained by the DTS bijection from a natural label $T_{1}$.
Then, we have $D_{1}=D_{2}$.
\end{theorem}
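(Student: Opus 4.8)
The plan is to argue by induction on the number $n$ of edges of $\mathrm{Tree}(\lambda)$, peeling off the edge $e^{\ast}:=e(n)$ carrying the maximal label. Since $T_{1}$ is a natural label, $e^{\ast}$ is necessarily attached to a leaf, so deleting it produces a natural label $T_{1}'$ on $\mathrm{Tree}(\lambda')$, where $\lambda'$ is obtained from $\lambda$ by erasing the peak $UD$ corresponding to $e^{\ast}$. By the recursive definition of the DTS bijection, $D_{2}=DTS(T_{1})=\mathrm{SG}(\mathrm{sp}_{h_{n}}(DTS(T_{1}')))$. Because the (up-step) Hermite history determines a cover-inclusive Dyck tiling over a fixed lower path uniquely, namely one can reconstruct the tiling trajectory by trajectory, it suffices to show that $D_{2}$ has Hermite history $\mathbf{b}$; by the induction hypothesis $DTS(T_{1}')=D_{1}(T_{1}')$ already has Hermite history $\mathbf{b}'$, the sequence built from $T_{1}'$. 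So everything reduces to understanding how $\mathrm{SG}\circ\mathrm{sp}_{h_{n}}$ acts on Hermite histories.

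First I would record how $\mathbf{b}$ changes when $e^{\ast}$ is removed. Since $n$ is the maximal label, the quantity $T_{2}(e)=\#\{e'\mid T_{1}(e)>T_{1}(e'),\,e\rightarrow e'\}$ is unchanged for every $e\neq e^{\ast}$, and $T_{2}(e^{\ast})$ equals its capacity; hence on the region $R$ the tiling merely loses the column sitting over the $UD$ of $e^{\ast}$. From this I would verify the auxiliary fact that the trajectory issuing from the up step of the edge labelled $k$ terminates at the anchor box $n-k$ (so $e^{\ast}$ reaches the top box $0$), as in the example of Figure~\ref{fig:inKLwin}. Consequently $\mathbf{a}$ is simply $n$ minus the labels read in the right-to-left order of the up steps, $\mathbf{b}$ is its inversion table, and deleting $e^{\ast}$ drops its (zero) entry and subtracts $1$ from $b_{i}$ for every up step lying to the right of that of $e^{\ast}$. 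Equivalently, passing from $T_{1}'$ to $T_{1}$ inserts a trajectory of weight $0$ at the position of $e^{\ast}$ and adds $1$ to the weight of every trajectory strictly to its right. A short computation with the insertion history gives $h_{n}=2\#\{e'\leftarrow e^{\ast}\}+\#\{e'\mid e^{\ast}\uparrow e'\}$, which is exactly the horizontal coordinate of the up step of $e^{\ast}$, so $\mathrm{sp}_{h_{n}}$ inserts the peak of $e^{\ast}$ at the correct place.

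It then remains to prove the key local statement, which is free of any tree: for an arbitrary cover-inclusive Dyck tiling $\widetilde{D}$ over $\lambda'$, the tiling $\mathrm{SG}(\mathrm{sp}_{m}(\widetilde{D}))$ has the Hermite history obtained from that of $\widetilde{D}$ by inserting a $0$ at the position of the newly created peak and by raising by $1$ the entry of every up step strictly to the right of the line $x=m$, leaving the remaining entries unchanged. Granting this with $m=h_{n}$, the induced change $\mathbf{b}'\mapsto\mathbf{b}$ is precisely the one described above, and the induction closes. Note that the new peak carries no tile above it, since its up step lies \emph{on} the line $x=m$ and is therefore not strictly to the right of it; this is why its trajectory has weight $0$.

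The hard part will be the bookkeeping inside this local statement, i.e.\ controlling $\mathrm{art}$ along each trajectory under the two operations. The strip-growth adds a single box over every up step strictly right of $x=m$, which ought to raise the corresponding trajectory weight by exactly $1$; the delicate point is that $\mathrm{sp}_{m}$ simultaneously lengthens every tile crossing $x=m$, and such a tile may be traversed by a trajectory starting to the left of the line. I would resolve this by showing that the extra box supplied by the strip-growth at the up step where the lengthened tile re-emerges to the right of $x=m$ is exactly the box completing that tile, so that spread and strip-growth act on the weight of each affected trajectory as one coordinated $+1$, and as $+0$ on trajectories meeting no up step to the right of the line. The requisite case analysis is parallel to that in the preceding Proposition (cases a, b1, b2), distinguishing whether the peak of $e^{\ast}$ sits below a nontrivial tile and how many down steps separate consecutive up steps; once these local configurations are matched, the equality $D_{1}=D_{2}$ follows.
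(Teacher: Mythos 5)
Your proposal is essentially the paper's own argument: the paper likewise inducts on $n$ by removing the edge with the maximal label, identifies $h_{n}$ with the spread position, and tracks how the Hermite history changes under $\mathrm{SG}\circ\mathrm{sp}_{h_{n}}$ --- it merely runs the inductive step in reverse, as a ``truncation'' of the region-$R$ tiling that deletes the trajectory ending at anchor box $0$ together with its column and is shown to undo the DTS insertion. The only detail to adjust is your description of how spread and strip-growth coordinate: a tile crossing $x=m$ is lengthened entirely by the spread itself (the inserted $UD$'s supply that tile's new boxes), while the strip-growth boxes are separate single boxes attached above the new top path; the aggregate bookkeeping you want --- exactly one unit of $\mathrm{art}$ added to the trajectory of each up step of $\lambda'$ starting at $x\ge m$, and none to the new peak --- is nonetheless correct and is asserted at the same level of detail in the paper's proof.
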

\begin{proof}
We prove the theorem by induction.

Let $n$ be the number of edges in $\mathrm{Tree}(\lambda)$, and 
$T_{1}^{'}$ be the natural label obtained from $T_{1}$ by deleting the edge with 
the label $n$.
Let $D_{R}$ be the Dyck tiling in the region $R$ associated with a natural label 
$T_{1}$.
We consider the following operation on $D_{R}$ and obtain a new Dyck tiling 
$D_{R}^{'}$ of size $n-1$.
By induction assumption, we denote by $D_{2}^{'}$ the Dyck tiling associated with 
$T'_{1}$ by the DTS bijection.
The Dyck tiling constructed from $D_{R}'$ by the Hermite history is equal to $D_{2}'$.

We will define an operation called {\it truncation} of $D_{R}$ as follows.
First, we find the south-east box $b$ on the trajectory connected to the anchor box
with the label $0$.
Let $p$ be the position of $b$ from left.
We delete the trajectory connected to the anchor box labeled by zero, and delete the boxes 
in the column at the position $p+1$. 
Then, we reconnect the two regions by moving the right region left by $(-2,0)$.
\begin{figure}[ht]
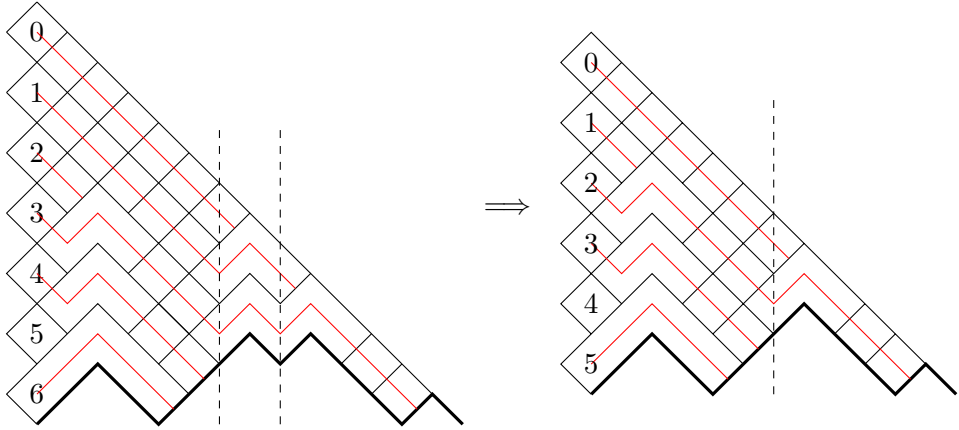

\tikzpic{-0.5}{[x=0.4cm,y=0.4cm]
\draw[very thick](0,0)--(2,2)--(4,0)--(7,3)--(8,2)--(9,3)--(12,0)--(13,1)--(14,0);
\draw(-1,1)--(0,0)(-1,1)--(2,4)--(4,2)--(7,5)--(8,4)--(9,5)--(13,1);
\draw(0,2)--(-1,3)--(0,4)--(2,6)--(4,4)--(7,7)--(9,5);
\draw(0,4)--(-1,5)--(0,6)--(2,8)--(4,6)--(6,8)--(7,7);
\draw(3,7)--(5,9)(2,8)--(4,10)(-1,7)--(3,11)(-1,9)--(2,12)(-1,11)--(1,13)(-1,13)--(0,14);
\draw(0,14)--(6,8)(-1,13)--(5,7)(-1,11)--(2,8)(-1,9)--(1,7)(-1,7)--(1,5)(0,4)--(1,3);
\draw(4,2)--(5,1)(4,4)--(6,2)(11,3)--(10,2)(12,2)--(11,1);
\draw(3,3)--(4,4)--(5,3)(5,5)--(6,4);
\draw(3,5)--(4,6)--(5,5)(5,7)--(6,6);
\foreach \x in {6,5,...,0} {\node at (0,13-2*\x) {\x};}
\draw[red](0,1)--(2,3)--(4.5,0.5)
          (0,5)--(1,4)--(2,5)--(5.5,1.5)
          (0,7)--(1,6)--(2,7)--(6,3)--(7,4)--(8,3)--(9,4)--(12.5,0.5)
          (0,9)--(1.5,7.5)
          (0,11)--(6,5)--(7,6)--(8.5,4.5)
          (0,13)--(6.5,6.5);
\draw[dashed](6,0)--(6,10)(8,0)--(8,10);
}
$\Longrightarrow$
\tikzpic{-0.5}{[x=0.4cm,y=0.4cm]
\draw[very thick](0,0)--(2,2)--(4,0)--(7,3)--(10,0)--(11,1)--(12,0);
\draw(0,0)--(-1,1)--(2,4)--(5,1)(0,2)--(-1,3)--(2,6)--(6,2)(0,4)--(-1,5)--(2,8)--(6,4)
     (0,6)--(-1,7)--(2,10)(0,8)--(-1,9)--(1,11)(0,10)--(-1,11)--(0,12)--(11,1)
     (1,3)--(0,4)(1,5)--(0,6)(1,7)--(0,8)(2,8)--(0,10)(2,8)--(3,9)
     (3,7)--(4,8)(3,5)--(5,7)(3,3)--(6,6)(4,2)--(7,5)(8,2)--(9,3)(9,1)--(10,2);
\draw[red](0,1)--(2,3)--(4.5,0.5)(0,5)--(1,4)--(2,5)--(5.5,1.5)
          (0,7)--(1,6)--(2,7)--(6,3)--(7,4)--(10.5,0.5)(0,9)--(1.5,7.5)(0,11)--(6.5,4.5); 
\foreach \x in {5,...,0} {\node at (0,11-2*\x) {\x};}
\draw[dashed](6,0)--(6,10);
}
\caption{An example of truncation of the Dyck tiling at $x=7$.
We delete the column between dashed line, and reconnect the two regions
at $x=6$.}
\label{fig:DRred}
\end{figure}
%%%%
We call this procedure the truncation of a Dyck tiling, and 
the new Dyck tiling of size $n-1$ is called $D_{R}^{'}$.
See Figure \ref{fig:DRred} for an example of the truncation of a Dyck tiling in $R$.
We delete the column next to the box $b$, which implies we have 
a $\wedge$-corner at the position $x=p+1$ after deleting the top trajectory connected 
to the anchor box labeled by zero.
We have a pair of an up step $U$ and a down step $D$ which is deleted in the truncation, 
and we denote the up step by $U_{p+1}$.
A trajectory starting from an up step left to $U_{p+1}$ in $D_{R}$
is not changed by the truncation.
This means that  the connectivity of a trajectory left to $U_{p+1}$ in $D_{1}$ is not 
changed by the truncation.
Note that the label of the anchor box on the trajectory decreases by one.

Secondly, the top trajectory is connected to the anchor box with the label $0$ in $D_{R}$. 
This means that $a_{i}=b_{i}=0$ for $i$ such that the top trajectory in $D_{R}$ is associated 
with the $i$-th up step $U_{i}$ from left. 
In terms of the Dyck tiling $D_{1}$ above $\lambda$, there is no box attached to 
the up step $U_{i}$ in $D_{1}$.

Thirdly, all the trajectories associated with the up steps right to $U_{i}$ in $D_{R}$ 
have a $\wedge$-shape at $x=p+1$.
Sine we delete the $\wedge$-corner from $D_{R}$ by truncation, 
$a_{j}$ for $D_{R}$ becomes $a_{j}-1$ in $D_{R}^{'}$ for $i<j$.
From the second observation and the fact that we have $a_{i}=b_{i}=0$, 
$b_{j}$ for $D_{R}$ becomes $b_{j}-1$ in $D_{R}^{'}$ for $i<j$.

When we perform the insertion procedure of the DTS bijection on a Dyck tiling $D_{2}'$,
we insert a $\wedge$ (an adjacent pair of $U$ and $D$) into somewhere of $D_{2}'$.
This added $U$ corresponds to $U_{i}$ in $D_{R}$. 
The configuration left to $U_{i}$ in $D_{2}$ is the same as that of $D_{2}'$.
There is no Dyck tile which is attached to the added edge $U_{i}$.
By the strip-growth of the DTS bijection, we add single boxes to up steps  
to obtain the Dyck tiling $D_{2}$.
This addition is equivalent to the increment of $a_{j}$ for all the trajectories
in $D'_{R}$ with $i<j$ by one when we construct $D_{2}$ from $D_{2}^{'}$.

By summarizing the above arguments, the truncation of $D_{R}$ corresponds to 
the inverse of the insertion procedure of the DTS bijection at the position $x=p$.
Thus, $D_1$ is obtained from $D_{2}'$ by the insertion of the DTS bijection,
which implies $D_{1}=D_{2}$.
\end{proof}

Recall that $D_{R}$ is a Dyck tiling in the region $R$ corresponding to a weakly increasing 
tree $T_{2}$.
We define a statistics on $D_{R}$ by
\begin{eqnarray*}
\mathrm{art}_{-}(D_{R})=(\mathrm{area}(D_{R})-\mathrm{tiles}(D_{R}))/2.
\end{eqnarray*}
where the statistics $\mathrm{area}$ is the number of boxes in $R$ and 
the statistics $\mathrm{tiles}$ is the number of Dyck tiles in $R$.
Let $D_{2}$ be a Dyck tiling via the DTS bijection from a natural label $T_{1}$.

\begin{prop}
We have 
\begin{eqnarray*}
\mathrm{art}(D_{2})=\mathrm{art}_{-}(D_R)
\end{eqnarray*}
\end{prop}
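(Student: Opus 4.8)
The proposition asserts the identity $\mathrm{art}(D_2)=\mathrm{art}_-(D_R)$, where $D_2=DTS(T_1)$ is a Dyck tiling over $\lambda$ and $D_R$ is the cover-inclusive Dyck tiling in the region $R$ associated with the same natural label $T_1$ (via the weakly increasing tree $T_2$). My plan is to compute both sides through the Hermite-history weights, exploiting Theorem~\ref{thrm:HhDTS}, which already tells us that $D_1=D_2$ where $D_1$ is reconstructed from the sequence $\mathbf b$. The key is that $\mathbf b$ simultaneously records the $\mathrm{art}$-weights of trajectories in $D_2$ (by construction of $D_1$) and the trajectory data of $D_R$ (by the identity $b_i=\sum_j \mathrm{art}(d_j)$ established just before the Proposition). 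So both statistics should reduce to $\sum_i b_i$, and the whole proof becomes a matter of checking that each side equals this common sum.

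**Main line of argument.** First I would observe that $\mathrm{art}(D_2)=\sum_{l}\mathrm{wt}(l)$, summing over all trajectories $l$ of the Hermite history of $D_2$, because every Dyck tile of $D_2$ is traversed by exactly one trajectory and $\mathrm{wt}(l)=\sum_{t\in D_l}\mathrm{art}(t)$; since the trajectories partition the tiles, the sum of weights equals $\sum_t \mathrm{art}(t)=\mathrm{art}(D_2)$. By Theorem~\ref{thrm:HhDTS}, $D_2=D_1$, and $D_1$ is precisely the tiling whose $i$-th trajectory carries $\mathrm{art}$-weight $b_i$, hence
\begin{eqnarray*}
\mathrm{art}(D_2)=\sum_{i=1}^{n}b_i.
\end{eqnarray*}
Next I would attack the right-hand side. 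The subtler point is that $\mathrm{art}_-=(\mathrm{area}-\mathrm{tiles})/2$ rather than $(\mathrm{area}+\mathrm{tiles})/2$, so $\mathrm{art}_-(D_R)$ is not the sum of the $\mathrm{art}$-weights of $D_R$'s trajectories. Instead I would use the relation $b_i=a_i-\#\{a_j:j<i,\ a_j<a_i\}$ together with $b_i=\sum_{j}\mathrm{art}(d_j)$ over the tiles $d_j$ passed by the $i$-th trajectory in $D_R$. Summing the latter over $i$ gives $\sum_i b_i = \sum_{t\in D_R}\mathrm{art}(t)=\mathrm{art}(D_R)=(\mathrm{area}(D_R)+\mathrm{tiles}(D_R))/2$, which is the \emph{wrong} quantity.

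**Resolving the discrepancy — the real obstacle.** The gap of size $\mathrm{tiles}(D_R)$ between $\mathrm{art}(D_R)$ and $\mathrm{art}_-(D_R)$ is exactly where the work lies, so I expect this to be the main obstacle. The resolution must come from the anchor boxes and the definition of $b_i$ via the subtraction term $\#\{a_j:j<i,\ a_j<a_i\}$. My plan is to show that this subtraction precisely discards one box per trajectory — namely the leftmost anchor box at which each trajectory terminates — so that $\sum_i b_i$ counts the $\mathrm{art}$-contribution of $D_R$ \emph{excluding} the anchor-box columns. Concretely, a trajectory ending at anchor box $a_i$ passes through a nested stack of $\#\{a_j:j<i,\ a_j<a_i\}$ boxes in the anchor column $x=0$ of $R$ that belong to lower-indexed trajectories, and subtracting this count corresponds to stripping the single-box overcounting that converts $\mathrm{art}$ into $\mathrm{art}_-$. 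I would make this precise by an induction mirroring the truncation argument of Theorem~\ref{thrm:HhDTS}: removing the top trajectory (the one reaching anchor box $0$) decreases $\mathrm{area}(D_R)$ and $\mathrm{tiles}(D_R)$ in a controlled way, decreases $\sum b_i$ by the removed trajectory's $\mathrm{art}$-weight, and one checks the two changes match, so that $\mathrm{art}_-(D_R)=\sum_i b_i=\mathrm{art}(D_2)$. The inductive base case $n=0$ (empty tiling) is trivial with both sides zero, and the inductive step is exactly the bookkeeping that the anchor-column boxes contribute $\mathrm{tiles}(D_R)$ extra boxes whose removal turns the $+\mathrm{tiles}$ into $-\mathrm{tiles}$.
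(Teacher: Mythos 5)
Your reduction of the left-hand side is sound and agrees with the paper: the trajectories of the Hermite history partition the tiles of $D_{2}=D_{1}$, each trajectory carries $\mathrm{art}$-weight $b_{i}$, hence $\mathrm{art}(D_{2})=\sum_{i}b_{i}$. The gap is in your treatment of $\mathrm{art}_{-}(D_{R})$. The identity $b_{i}=\sum_{j}\mathrm{art}(d_{j})$ is, in the paper, the \emph{defining} property of the tiling $D_{1}$ over $\lambda$ that is reconstructed from $\mathbf{b}$; it is not a property of the trajectories of $D_{R}$. Applying it to $D_{R}$ yields the false statement $\sum_{i}b_{i}=\mathrm{art}(D_{R})$: take $\mathbf{b}=0$, so that $D_{R}$ consists entirely of single boxes; then $\mathrm{art}(D_{R})=\mathrm{area}(D_{R})>0$ while $\sum_{i}b_{i}=0$. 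So the ``discrepancy of size $\mathrm{tiles}(D_{R})$'' you set out to repair is an artifact of a misapplied formula rather than a real feature of the problem (indeed, if both $\sum_{i}b_{i}=\mathrm{art}(D_{R})$ and the desired $\sum_{i}b_{i}=\mathrm{art}_{-}(D_{R})$ held, $D_{R}$ would have no tiles at all). Your proposed repair by induction on truncation also does not close as stated: the truncated trajectory is precisely the one with $a_{i}=b_{i}=0$, so ``$\sum_{i}b_{i}$ decreases by the removed trajectory's $\mathrm{art}$-weight'' would mean $\sum_{i}b_{i}$ is unchanged, whereas truncation in fact decrements every $b_{j}$ with $i<j$ by one; the bookkeeping you defer to ``one checks the two changes match'' is exactly the content that is missing.

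The paper's argument never passes through $\mathrm{art}(D_{R})$. It is a direct incremental count: when $\mathbf{b}=0$ the tiling $D_{R}$ consists only of single boxes, so $\mathrm{area}(D_{R})=\mathrm{tiles}(D_{R})$ and $\mathrm{art}_{-}(D_{R})=0=\mathrm{art}(D_{2})$; each unit increase in the length of a tile of $D_{R}$ (sitting above a $\wedge$-corner of $\lambda$) adds two boxes to $D_{R}$ without changing its number of tiles, hence raises $\mathrm{art}_{-}(D_{R})$ by exactly one, and corresponds to a unit increase of some $b_{k}$ and therefore of $\mathrm{art}(D_{2})$ by Theorem \ref{thrm:HhDTS}. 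This gives $\mathrm{art}_{-}(D_{R})=\sum_{k}b_{k}=\mathrm{art}(D_{2})$ with no correction term to track. If you want to salvage your route, the statement you actually need is $\mathrm{art}_{-}(D_{R})=\sum_{i}b_{i}$ proved directly, and the incremental argument above is the cleanest way to obtain it.
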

\begin{proof}
When $\mathbf{b}=0$, we have an empty Dyck tiling over the path 
$\lambda$ and $\mathrm{art}(D_{2})=0$.
In this case, the Dyck tiling in $R$ contains only single boxes, 
and $\mathrm{art}_{-}(D_{R})=0$.
When we increase the length of a single box by one in $D_{R}$ 
(the single box is above a $\wedge$-corner in $\lambda$),
we increase the number of single boxes in $D(\lambda)$ or 
the length of a non-trivial Dyck tile in $D(\lambda)$ by one.
Since $\mathbf{b}$ defines an Hermite history, 
the statistics $\mathrm{art}_{-}$ on $D_{R}$ gives 
$\mathrm{art}_{-}(D_{R})=\sum_{1\le k\le n}b_{k}$, 
which is nothing but $\mathrm{art}(D_{2})$ from Theorem \ref{thrm:HhDTS}.
\end{proof}

In the proof of Theorem \ref{thrm:HhDTS}, we introduce the truncation 
of the Dyck tiling $D_{R}$ in the region $R$.
By taking the inverse of truncation, we obtain the insertion procedure
for $D_{R}$.

Recall that an insertion history $\mathbf{h}$ (defined in Section \ref{sec:DTSDTR})
is bijective to a natural label $T_{1}$ of the tree $\mathrm{Tree}(\lambda)$.

The insertion procedure for the Dyck tiling in $R$ consists of 
two steps: {\it column addition} and {\it  trajectory addition}.
Since $h_{1}=0$ for any insertion history, we put a single box such that 
its west vertex of the box is at $(0,0)$ and the center of the box is placed 
at $(1,0)$ in the Cartesian coordinate.
Suppose we have a Dyck tiling $D_R$ in $R$ for $\mathbf{h}=(h_1,\ldots,h_{n})$. 
A column addition for $h_{n+1}$ is as follows:
\begin{enumerate}
\item 
\label{colinsDR1}
We split the Dyck tiling $D_{R}$ at $x=h_{n+1}$ and translate the right 
  pieces right by $(2,0)$.
\item 
\label{colinsDR2}
We connect the vertices on $x=h_{n+1}$ and $x=h_{n+1}+2$ by the path $UD$.
\end{enumerate}
We denote by $\widetilde{D}_{R}$ the diagram obtained by column insertion.
In the operation (\ref{colinsDR2}), we add the path $UD$ in the top path.
Note that there is no Dyck tile attached to the added up step $U$ which 
is on the top path of $\widetilde{D}_{R}$. 

The trajectory addition on $\widetilde{D}_{R}$ is to add $h_{n+1}+1$ single
boxes in the $(-1,1)$ direction from the up step which is on the 
top path of $\widetilde{D}_{R}$ and added by the column addition.
Then, we obtain a Dyck tiling of size $n+1$ in the region $R$.
\begin{figure}[ht]
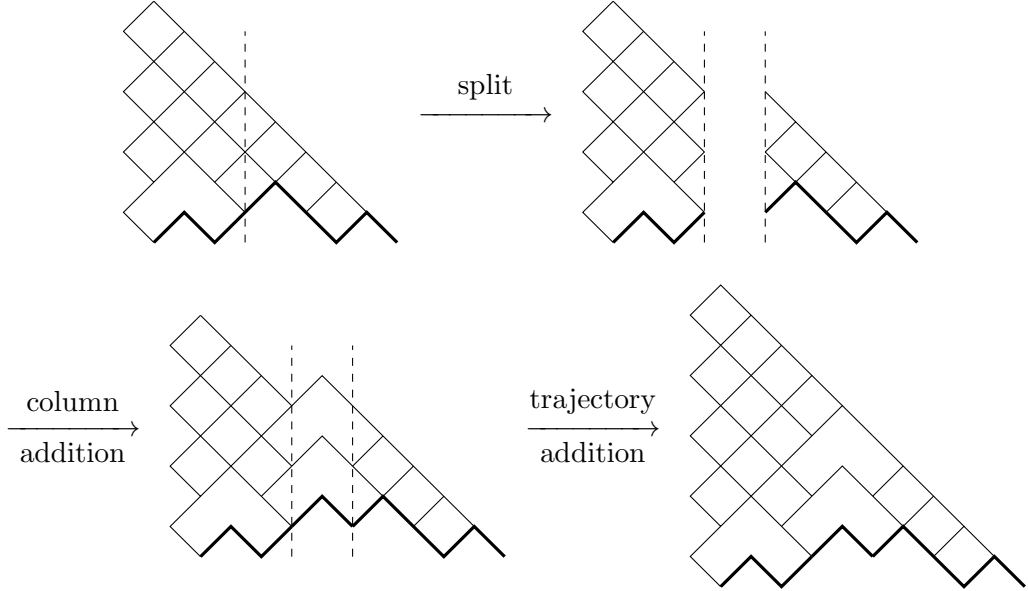

\tikzpic{-0.5}{[x=0.4cm,y=0.4cm]
\draw[very thick](0,0)--(1,1)--(2,0)--(4,2)--(6,0)--(7,1)--(8,0);
\draw(0,0)--(-1,1)--(3,5)(0,2)--(-1,3)--(2,6)(0,4)--(-1,5)--(1,7)(0,6)--(-1,7)
     (2,2)--(4,4)(4,2)--(5,3)(5,1)--(6,2)(-1,7)--(0,8);
\draw(0,8)--(7,1)(0,6)--(4,2)(0,4)--(3,1);
\draw[dashed](3,0)--(3,7);
}
$\stackrel{\mbox{split}}{\xrightarrow{\qquad\qquad}}$
\tikzpic{-0.5}{[x=0.4cm,y=0.4cm]
\draw[very thick](0,0)--(1,1)--(2,0)--(3,1)(5,1)--(6,2)--(8,0)--(9,1)--(10,0);
\draw(0,0)--(-1,1)--(3,5)(0,2)--(-1,3)--(2,6)(0,4)--(-1,5)--(1,7)(0,6)--(-1,7)--(0,8)
     (2,2)--(3,3);
\draw(3,1)--(0,4)(3,3)--(0,6)(3,5)--(0,8);
\draw(5,3)--(6,4)(6,2)--(7,3)(7,1)--(8,2)(5,5)--(9,1)(5,3)--(6,2);
\draw[dashed](3,0)--(3,7)(5,0)--(5,7);
} \\[5mm]
$\overset{\mbox{column}}{\underset{\mbox{addition}}{\xrightarrow{\qquad\qquad}}}$
\tikzpic{-0.5}{[x=0.4cm,y=0.4cm]
\draw[very thick](0,0)--(1,1)--(2,0)--(3,1)(5,1)--(6,2)--(8,0)--(9,1)--(10,0);
\draw(0,0)--(-1,1)--(3,5)(0,2)--(-1,3)--(2,6)(0,4)--(-1,5)--(1,7)(0,6)--(-1,7)--(0,8)
     (2,2)--(3,3);
\draw(3,1)--(0,4)(3,3)--(0,6)(3,5)--(0,8);
\draw(5,3)--(6,4)(6,2)--(7,3)(7,1)--(8,2)(5,5)--(9,1)(5,3)--(6,2);
\draw[very thick](3,1)--(4,2)--(5,1);
\draw(3,3)--(4,4)--(5,3)(3,5)--(4,6)--(5,5);
\draw[dashed](3,0)--(3,7)(5,0)--(5,7);
}
$\overset{\mbox{trajectory}}{\underset{\mbox{addition}}{\xrightarrow{\qquad\qquad}}}$
\tikzpic{-0.5}{[x=0.4cm,y=0.4cm]
\draw[very thick](0,0)--(1,1)--(2,0)--(3,1)(5,1)--(6,2)--(8,0)--(9,1)--(10,0);
\draw(0,0)--(-1,1)--(3,5)(0,2)--(-1,3)--(2,6)(0,4)--(-1,5)--(1,7)(0,6)--(-1,7)--(0,8)
     (2,2)--(3,3);
\draw(3,1)--(0,4)(3,3)--(0,6)(3,5)--(0,8);
\draw(5,3)--(6,4)(6,2)--(7,3)(7,1)--(8,2)(5,5)--(9,1)(5,3)--(6,2);
\draw[very thick](3,1)--(4,2)--(5,1);
\draw(3,3)--(4,4)--(5,3)(3,5)--(4,6)--(5,5);
\draw(0,8)--(-1,9)--(0,10)--(4,6)(0,8)--(1,9)(1,7)--(2,8)(2,6)--(3,7);
}
\caption{An example of insertion procedure for a Dyck tiling of size $4$ in $R$.
The insertion point is $x=3$.}
\label{fig:exinsDR}
\end{figure}
See Figure \ref{fig:exinsDR} for an example of insertion procedure for a Dyck 
tiling in $R$.

\subsection{Involutions on increasing trees}
\label{sec:alpha}
Given a path $\lambda$, we denote by $\overline{\lambda}$ a path reflected 
by a vertical line.
In other words, $\lambda:=\lambda_{1}\ldots \lambda_{n}$ for $\lambda_{i}\in\{U,D\}$ 
gives $\overline{\lambda}:=\overline{\lambda_{n}}\ldots\overline{\lambda_{1}}$ 
where $\overline{U}=D$ and $\overline{D}=U$.
The tree $\mathrm{Tree}(\overline{\lambda})$ can be obtained from $\mathrm{Tree}(\lambda)$
by reflecting along a vertical line.

Let $T_{1}$ be a natural label on a tree $\mathrm{Tree}(\lambda)$, 
$T_{1}(e)$ is a label on an edge $e$, and $n$ be the number of edges in $T_{1}$.
We define an {\it bar operation} on a label $T_{1}(e)\in[1,n]$: 
\begin{eqnarray*}
\overline{T_{1}(e)}:=n+1-T_{1}(e).
\end{eqnarray*}
Then, we denote by $\overline{T_{1}}$ a decreasing tree from the root to the leaves, 
which is obtained from $T_{1}$ by acting the bar operation on all edges of $T_{1}$.

From $\overline{T_{1}}$, we construct an increasing tree $S_{1}$ as follows.
Let $f_{i}$ for $i\in[1,n]$ be the edge of the tree $\overline{T_{1}}$ with 
the label $i$.
Take an edge $f_{m_0}$ of $\overline{T_{1}}$. 
Suppose that the edge $f_{m_1}$ is a child of $f_{m_0}$ such $m_1$ is the maximum integer 
among the integers satisfying $m_1<m_0$. 
We denote this relation by $f_{m_{0}}\searrow f_{m_1}$.
Then, we have a unique chain of edges starting from $f_{n}$:
\begin{eqnarray*}
f_{n_0}\searrow f_{n_1}\searrow \ldots \searrow f_{n_{p}},
\end{eqnarray*}
where $n_0:=n$ and $n_0>n_{1}>\ldots>n_p$ with maximal $p$.
Note that the edge $f_{n_{0}}$ is connected to the root and 
the edge $f_{n_p}$ just above a leaf. 
We change the label $n_{i}$ of the edges $f_{n_{i}}$ by $n_{i+1}$ for $0\le i\le p-1$
and the label $n_{p}$ by $n_{0}$.
After the above operation, the integer $n_{0}$ is on the edge connected to a leaf.
Then, we construct a chain starting from $f_{n-1}$, and 
shift the labels of the chain in the similar way as above.
Note that, after the second operation, the edge with the label $n-1$ is connected to 
a leaf or connected to the edge with the label $n$.
We continue this procedure until we have a chain starting from 
and ending with $f_{1}$.
During the above successive operations, we may have a child $f_{m}$ of an edge $f_{n_p}$
with $m>n_{p}$. 
By construction of the relation by $\searrow$, this child 
edge $f_{m}$ does not effect anything on the algorithm at all.
We just keep the label of the edge as it is.
We call this procedure on $\overline{T_{1}}$ the {\it cyclic operation}.

We denote a map defined above by $\alpha: T_{1}\mapsto S_{1}$, that is, 
$\alpha$ is a composition of the bar operation and the cyclic operation.
By construction, it is obvious that $S_{1}$ is again an increasing tree 
of the shape $\mathrm{Tree}(\lambda)$.
See Figure \ref{fig:exalpha} for an example.
%%%
\begin{figure}[ht]
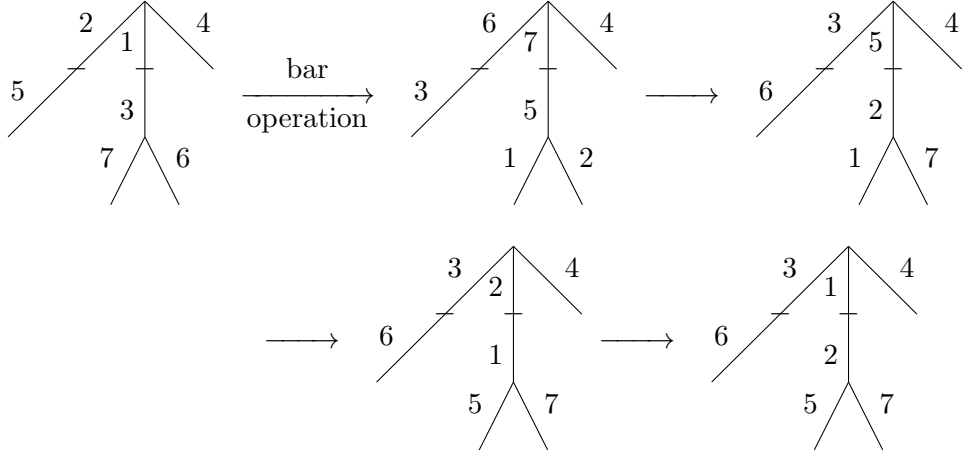

\tikzpic{-0.5}{[scale=0.6]
\coordinate 
 	child{coordinate (c2)
	     child{coordinate (c5)}
             child[missing]
             child[missing]
	     }
	child{coordinate (c1)
             child{coordinate (c3)
                  child{coordinate (c7)}
                  child{coordinate (c6)}
		  }
             }
	child{coordinate (c4)};
\draw(c2)node{$-$}(c1)node{$-$};
\node[anchor=south east] at ($(0,0)!.6!(c2)$){2};
\node[anchor=east] at ($(0,0)!.6!(c1)$){1};
\node[anchor=south west] at ($(0,0)!.6!(c4)$){4};
\node[anchor=south east] at ($(c2)!.6!(c5)$){5};
\node[anchor=east] at ($(c1)!.6!(c3)$){3};
\node[anchor=south east] at ($(c3)!.6!(c7)$){7};
\node[anchor=south west] at ($(c3)!.6!(c6)$){6};
}
$\overset{\mbox{bar}}{\underset{\mbox{operation}}{\xrightarrow{\qquad\qquad}}}$
\tikzpic{-0.5}{[scale=0.6]
\coordinate 
 	child{coordinate (c2)
	     child{coordinate (c5)}
             child[missing]
             child[missing]
	     }
	child{coordinate (c1)
             child{coordinate (c3)
                  child{coordinate (c7)}
                  child{coordinate (c6)}
		  }
             }
	child{coordinate (c4)};
\draw(c2)node{$-$}(c1)node{$-$};
\node[anchor=south east] at ($(0,0)!.6!(c2)$){6};
\node[anchor=east] at ($(0,0)!.6!(c1)$){7};
\node[anchor=south west] at ($(0,0)!.6!(c4)$){4};
\node[anchor=south east] at ($(c2)!.6!(c5)$){3};
\node[anchor=east] at ($(c1)!.6!(c3)$){5};
\node[anchor=south east] at ($(c3)!.6!(c7)$){1};
\node[anchor=south west] at ($(c3)!.6!(c6)$){2};
}
$\xrightarrow{\qquad}$
\tikzpic{-0.5}{[scale=0.6]
\coordinate 
 	child{coordinate (c2)
	     child{coordinate (c5)}
             child[missing]
             child[missing]
	     }
	child{coordinate (c1)
             child{coordinate (c3)
                  child{coordinate (c7)}
                  child{coordinate (c6)}
		  }
             }
	child{coordinate (c4)};
\draw(c2)node{$-$}(c1)node{$-$};
\node[anchor=south east] at ($(0,0)!.6!(c2)$){3};
\node[anchor=east] at ($(0,0)!.6!(c1)$){5};
\node[anchor=south west] at ($(0,0)!.6!(c4)$){4};
\node[anchor=south east] at ($(c2)!.6!(c5)$){6};
\node[anchor=east] at ($(c1)!.6!(c3)$){2};
\node[anchor=south east] at ($(c3)!.6!(c7)$){1};
\node[anchor=south west] at ($(c3)!.6!(c6)$){7};
} \\[5mm]
\hspace{3cm}$\xrightarrow{\qquad}$
\tikzpic{-0.5}{[scale=0.6]
\coordinate 
 	child{coordinate (c2)
	     child{coordinate (c5)}
             child[missing]
             child[missing]
	     }
	child{coordinate (c1)
             child{coordinate (c3)
                  child{coordinate (c7)}
                  child{coordinate (c6)}
		  }
             }
	child{coordinate (c4)};
\draw(c2)node{$-$}(c1)node{$-$};
\node[anchor=south east] at ($(0,0)!.6!(c2)$){3};
\node[anchor=east] at ($(0,0)!.6!(c1)$){2};
\node[anchor=south west] at ($(0,0)!.6!(c4)$){4};
\node[anchor=south east] at ($(c2)!.6!(c5)$){6};
\node[anchor=east] at ($(c1)!.6!(c3)$){1};
\node[anchor=south east] at ($(c3)!.6!(c7)$){5};
\node[anchor=south west] at ($(c3)!.6!(c6)$){7};
}$\xrightarrow{\qquad}$
\tikzpic{-0.5}{[scale=0.6]
\coordinate 
 	child{coordinate (c2)
	     child{coordinate (c5)}
             child[missing]
             child[missing]
	     }
	child{coordinate (c1)
             child{coordinate (c3)
                  child{coordinate (c7)}
                  child{coordinate (c6)}
		  }
             }
	child{coordinate (c4)};
\draw(c2)node{$-$}(c1)node{$-$};
\node[anchor=south east] at ($(0,0)!.6!(c2)$){3};
\node[anchor=east] at ($(0,0)!.6!(c1)$){1};
\node[anchor=south west] at ($(0,0)!.6!(c4)$){4};
\node[anchor=south east] at ($(c2)!.6!(c5)$){6};
\node[anchor=east] at ($(c1)!.6!(c3)$){2};
\node[anchor=south east] at ($(c3)!.6!(c7)$){5};
\node[anchor=south west] at ($(c3)!.6!(c6)$){7};
}
\caption{An example of the action of $\alpha$ on a natural label.}
\label{fig:exalpha}
\end{figure}

\begin{prop}
The map $\alpha$ is an involution on natural labels of $\mathrm{Tree}(\lambda)$.
\end{prop}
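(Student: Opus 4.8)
The plan is to exploit the factorization $\alpha = c\circ b$, where $b$ denotes the bar operation $T_{1}(e)\mapsto n+1-T_{1}(e)$ and $c$ denotes the cyclic operation, so that $\alpha(T_{1})=c(\overline{T_{1}})$. The bar operation is visibly an involution and interchanges increasing and decreasing trees of the same shape, while $c$ is a bijection from decreasing to increasing trees. Hence $\alpha^{2}=c\,b\,c\,b$, and since $b^{2}=\mathrm{id}$ the identity $cbcb=\mathrm{id}$ is equivalent, after left-composing with $c^{-1}$, to $b\circ c\circ b=c^{-1}$. So the first thing I would do is record this reduction: proving that $\alpha$ is an involution amounts to proving that \emph{conjugating the cyclic operation by the bar operation inverts it}.

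Next I would make the conjugate $\gamma:=b\circ c\circ b$ completely explicit. The bar operation induces an order-reversing dictionary on all the data entering the definition of $c$: it exchanges ``increasing tree'' with ``decreasing tree'', sends a label $\ell$ to $n+1-\ell$, turns the selection of the child of \emph{largest} label used in the $\searrow$-chains of $c$ into the selection of the child of \emph{smallest} label, and reverses the processing order $n,n-1,\dots,1$ into $1,2,\dots,n$, all while fixing the root/leaf orientation of the tree and the direction (top to bottom of a chain) in which the processed label is slid. Reading the definition of $c$ through this dictionary yields at once that $\gamma$ is the analogous algorithm on increasing trees: process the labels $m=1,2,\dots,n$ in increasing order, and for each find the edge labeled $m$, build the chain that repeatedly descends to the child whose label is the smallest one exceeding the current label, and cyclically slide $m$ to the bottom of that chain. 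Because this description of $\gamma$ is obtained from $c$ purely by applying the bar dictionary, the equality $\gamma=bcb$ holds by construction, and the whole proposition is reduced to the single statement that $c$ and $\gamma$ are mutually inverse bijections (equivalently, $\gamma\circ c=\mathrm{id}$, which on finite sets of equal cardinality gives a two-sided inverse).

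To prove $\gamma\circ c=\mathrm{id}$ I would use a \emph{stabilization lemma}: in $c$, the step processing $j$ slides $j$ to the bottom of its current chain, and no subsequent step can move it, since every later chain is built from an edge of smaller label and consists entirely of labels smaller than $j$ (labels strictly decrease along a $\searrow$-chain, so $j$ is neither the starting edge nor an interior edge of any later chain); the mirror statement holds for $\gamma$ with ``smaller'' replaced by ``larger''. Stabilization means that $c$ places the labels one at a time and permanently, so the output $S=c(\overline{T_{1}})$ records, for each label, the exact chain along which it was slid; the goal is then to verify that when $\gamma$ processes that label it retraces this chain in the opposite direction and restores the label to its position in $\overline{T_{1}}$. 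I expect the main obstacle to be exactly this matching: the chains in both $c$ and $\gamma$ are recomputed against \emph{continually updated} labels, so the elementary shifts of $bcb$ and of the naive ``undo the steps of $c$ in reverse'' do not agree termwise—already on the tree of Figure~\ref{fig:exalpha} the first shift of $bcb$ is nontrivial while the corresponding undo-step of $c$ is trivial. Taming this interleaving is the heart of the argument; I would do it by tracking each label's trajectory through the updated labelings, controlled by the stabilization lemma and, where convenient, by induction on $n$. Once $\gamma\circ c=\mathrm{id}$ is established we obtain $bcb=c^{-1}$, hence $\alpha^{2}=\mathrm{id}$, and I would close by checking the full chain of equalities on the running example of Figure~\ref{fig:exalpha}.
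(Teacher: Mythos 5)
Your reduction of the proposition to the identity $b\circ c\circ b=c^{-1}$ (with $b$ the bar operation and $c$ the cyclic operation) is sound, your description of the conjugate $\gamma:=b\circ c\circ b$ as the mirror algorithm is correct by construction, and the stabilization lemma---once step $j$ of $c$ deposits the label $j$ at the bottom of its chain, no later step touches that edge, because every later chain consists of edges whose current labels are strictly smaller than $j$---is a correct and useful observation. The problem is that the statement to which you have reduced everything, namely $\gamma\circ c=\mathrm{id}$, is never proved. You call it ``the heart of the argument,'' you observe (correctly) that the elementary steps of $\gamma$ do not undo the elementary steps of $c$ one at a time, and you then defer the resolution to ``tracking each label's trajectory through the updated labelings.'' No invariant controlling that interleaving is identified, and no induction is actually set up; the write-up stops exactly where the difficulty begins. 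Since you yourself note that the naive termwise matching already fails on the example of Figure~\ref{fig:exalpha}, this is not a cosmetic omission: some genuinely new input is required, e.g.\ a closed-form description of the edge on which $c$ finally places each label (which your stabilization lemma guarantees is well defined, since each label is placed exactly once and permanently), together with a verification that this description is symmetric under conjugation by $b$. Without that, the proof is a plan rather than an argument.

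For contrast, the paper sidesteps the dynamics of the chains entirely: it decomposes $\mathrm{Tree}(\lambda)$ into a concatenation of indecomposable factors, uses that $\alpha$ commutes with concatenation, peels off the unique root edge of each factor (whose label is the minimum and is preserved by $\alpha$), and iterates until only the two extreme shapes $\wedge_{m}$ and $\mathrm{zigzag}_{m}$ remain, where $\alpha^{2}=\mathrm{id}$ is checked directly. Your route, if completed, would arguably be more informative (it would exhibit $c^{-1}$ explicitly as $b\circ c\circ b$, and hence prove Proposition~\ref{prop:US}-type statements as byproducts), but as it stands the central claim $\gamma\circ c=\mathrm{id}$ is asserted, not established.
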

\begin{proof}

Suppose that $\mathrm{Tree}(\lambda)$ can be decomposed into a concatenation 
of Dyck paths $\lambda_1,\ldots,\lambda_p$ which cannot be decomposed into 
a concatenation of Dyck paths of smaller length.
The tree $\mathrm{Tree}(\lambda)$ can be written as
$\mathrm{Tree}(\lambda)=\mathrm{Tree}(\lambda_1)\circ\cdots\circ\mathrm{Tree}(\lambda_p)$.

The bar operation and the cyclic operation are commutative with a concatenation of 
natural labels.
More precisely, suppose that a natural label $T_{1}$ is written as 
$T_{1,1}\circ\cdots\circ T_{1,p}$ where the shape of $T_{1,i}$ is $\mathrm{Tree}(\lambda_{i})$.
Then, since $\alpha$ is a successive actions of the bar and cyclic operations, we have 
\begin{eqnarray*}
\alpha(T_{1})&=&\alpha(T_{1,1}\circ\cdots\circ T_{1,p}), \\
&=&\alpha(T_{1,1})\circ\cdots\circ\alpha(T_{1,p}).
\end{eqnarray*}
To show that $\alpha$ is an involution, we need to show that $\alpha$ is an involution
on $T_{1,i}$ for $1\le i\le p$.
Since $T_{1,i}$ cannot be decomposed into a concatenation of natural labels of smaller 
size, $T_{1,i}$ has a unique edge $e_{0}$ which is connected to the root.
The label of $e_{0}$ in $T_{1,i}$ is the smallest label in $T_{1,i}$.
After the action of $\alpha$ on $T_{1,i}$, it is obvious that we have again that 
$e_{0}$ is the smallest label in $\alpha(T_{1,i})$.
Let $S$ be the set of labels in $T_{1,i}$ and $\widetilde{S}$ be the set of labels 
in $\alpha(T_{1,i})$.
Let $T_{1,i}^{\times}$ be the natural label obtained from $T_{1,i}$ by deleting the 
edge $e_{0}$.
Then, the labels of $T_{1,i}^{\times}$ are in $S\setminus\{\min(S)\}$
and the labels of $\alpha(T_{1,i})^{\times}$ are in $\widetilde{S}\setminus\{\min(\widetilde{S})\}$.
Note that $\min(\widetilde{\widetilde{S}})=\min(S)$ and we also have $\min(S)+\max(\widetilde{S})=n+1$. 
This implies that $\alpha$ is an involution if it is an involution on $T_{1,i}^{\times}$.
By continuing the decomposition of $\mathrm{Tree}(\lambda)$ into trees of smaller size 
and the deleting of the unique edge which is connected to the root, 
it is enough to check the action of $\alpha$ on a tree of smaller size.

From the above observations, it is enough to show that $\alpha$ is an involution
on the following two extreme trees $\mathrm{Tree}(\lambda)$: 
1) $\lambda=\wedge_{m}$, and 2) $\lambda=\mathrm{zigzag}_{m}$.

For case 1), the labels are $1,\ldots,m$ and increasing from top to bottom on the tree.
The application of the map $\alpha$ keeps the labels the same order as before.
Thus, we have $\alpha^{2}$ acts as the identity in this case.

For case 2), suppose that we have the labels $(n_1,n_2,\ldots,n_m)$ from 
left to right on the tree.
The action of $\alpha$'s changes $n_{i}$ to $\overline{n_{i}}$, then to 
$\overline{\overline{n_{i}}}$. 
Since we have $\overline{\overline{m}}=m$ from the definition of the bar 
operation, we have $\alpha^{2}$ acts as the identity.
\end{proof}

Given a Dyck path $\lambda$, we define $N_{\mathrm{max}}$ as the number of single boxes
in the region specified by $\lambda$ and the top path $U\ldots UD\ldots D$.
\begin{prop}
Let $S_{1}:=\alpha(T_{1})$. 
We have 
\begin{eqnarray}
\label{eqn:tsmax}
\mathrm{art}(\mathrm{DTS}(T_{1}))+\mathrm{art}(\mathrm{DTS}(S_{1}))=N_{\mathrm{max}}.
\end{eqnarray}
\end{prop}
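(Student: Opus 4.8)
The plan is to reduce both sides to a single inversion statistic and to exhibit $\alpha$ as the involution that complements it. First I would record that $N_{\max}$ is exactly the maximal value of $\mathrm{art}$ among all cover-inclusive Dyck tilings over $\lambda$: since every Dyck tile contains at least one box, $\mathrm{tiles}(D)\le\mathrm{area}(D)\le N_{\max}$, whence $\mathrm{art}(D)=(\mathrm{area}(D)+\mathrm{tiles}(D))/2\le N_{\max}$, with equality precisely when $D$ tiles the whole region $\wedge_{n}/\lambda$ by single boxes. Next, using Theorem \ref{thrm:HhDTS} together with the preceding proposition that $\mathrm{art}(\mathrm{DTS}(T_{1}))=\sum_{i}b_{i}$, I would observe that $\mathbf{a}$ is a permutation of $\{0,\dots,n-1\}$, so $a_{i}=\#\{j:a_{j}<a_{i}\}$ and hence $b_{i}=a_{i}-\#\{j<i:a_{j}<a_{i}\}=\#\{j>i:a_{j}<a_{i}\}$. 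Therefore $\mathrm{art}(\mathrm{DTS}(T_{1}))=\sum_{i}b_{i}=\mathrm{inv}(\mathbf{a})$ is the inversion number of the anchor permutation attached to $T_{1}$, and the claim becomes $\mathrm{inv}(\mathbf{a}(T_{1}))+\mathrm{inv}(\mathbf{a}(S_{1}))=N_{\max}$.

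The heart of the argument is then to understand how $\alpha$ acts on the anchor data, equivalently on the region-$R$ tiling $D_{R}$ and its weakly increasing tree $T_{2}$. I would argue that the bar operation plays the role of capacity-complementation: over the pair of adjacent $U,D$ steps below a leaf edge $e$ it tries to replace the height $T_{2}(e)$ by $\mathrm{cap}(e)-T_{2}(e)$, and more generally it reverses the order in which the trajectories meet the anchor boxes, so that on the level of $\mathbf{a}$ it is a reversal-complement sending $\mathrm{inv}$ to $\binom{n}{2}-\mathrm{inv}$. The subtlety is that the naive complement of a weakly increasing tree is only weakly \emph{decreasing}, hence inadmissible, and it overcounts by exactly the boxes lying above the concatenated local maxima but below $\wedge_{n}$. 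The cyclic operation is precisely the repair that restores weak monotonicity from the root to the leaves by shifting the excess labels down each $\searrow$-chain; I would show that this repair deletes exactly the inadmissible excess and leaves the residual inversions complementary \emph{inside} the constrained set of anchor permutations realizable over $\lambda$, so the two inversion numbers sum to $N_{\max}$ rather than to $\binom{n}{2}$.

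To make the final bookkeeping rigorous I would follow the reduction already used to prove that $\alpha$ is an involution. Decomposing $\lambda=\lambda_{1}\circ\cdots\circ\lambda_{p}$ into indecomposable factors, $N_{\max}$ is not additive, so I would split it as $N_{\max}(\lambda)=\sum_{i}N_{\max}(\lambda_{i})+C(\mathbf{n})$, where $C(\mathbf{n})=N_{\max}(\wedge_{n_{1}}\circ\cdots\circ\wedge_{n_{p}})$ counts the boxes between the concatenated local highest paths and the global highest path $\wedge_{n}$ and depends only on $\mathbf{n}=(n_{1},\dots,n_{p})$. Correspondingly $\mathrm{art}(\mathrm{DTS})$ splits into a within-factor part and a cross-factor part produced by the strip-growths that reach across factors. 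The within-factor identity is handled by induction, deleting the root edge as before and checking the two extreme shapes: for $\lambda=\wedge_{m}$ both sides vanish since $N_{\max}=0$, and for $\lambda=\mathrm{zigzag}_{m}$ the operation $\alpha$ is the bar operation, $\mathrm{art}(\mathrm{DTS})=\mathrm{inv}$, and $\mathrm{inv}(\sigma)+\mathrm{inv}(\bar\sigma)=\binom{m}{2}=N_{\max}$. The cross-factor part is the same statement for the generalized zigzag $\wedge_{\mathbf{n}}$, with the factors acting as atoms, so the identical inversion-complement reasoning returns $C(\mathbf{n})$.

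The main obstacle I anticipate is the middle step: proving that the cyclic operation exactly compensates for the failure of capacity-complementation to yield an admissible weakly increasing tree, that is, that relabelling along the $\searrow$-chains changes neither the total area of $D_{R}$ nor the admissible inversion count in a way that would break the complement relation. Establishing this cleanly, most plausibly by tracking chain by chain how each shifted label moves $\mathbf{a}$ and hence $\mathrm{inv}$, and matching each change against the boxes gained or lost in $D_{R}$, is where the real work lies; the remainder is organized entirely by the concatenation-and-deletion induction already available from the previous proofs.
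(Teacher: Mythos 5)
You have correctly identified the two-part skeleton of the argument --- the bar operation complements the relevant edge statistic, and the cyclic operation must then be shown not to change it --- but the second part, which you yourself flag as ``where the real work lies,'' is exactly the content of the proposition and is not supplied. The paper closes this gap with a short direct computation: writing $T_{3}$ for the weakly decreasing tree built from $\overline{T_{1}}$, it tracks one $\searrow$-chain $f_{n_{0}}\searrow f_{n_{1}}\searrow\cdots\searrow f_{n_{p}}$ at a time and shows that shifting the labels up the chain changes the label of $f'_{n_{i}}$ by $-M_{i}$ (where $M_{i}$ counts the edges right of $f_{n_{i}}$ and below $f_{n_{i-1}}$) while the top edge gains $\sum_{j}M_{j}$, so the total $|T_{3}|$ is invariant. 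Without an argument of this kind your proof does not exist yet; asserting that the cyclic operation ``deletes exactly the inadmissible excess'' is a restatement of what must be proved.

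Your surrounding scaffolding also adds difficulty rather than removing it. The complementation step needs no inversion statistic and no decomposition into indecomposable factors: for each edge $e$, $T_{2}(e)$ counts the edges $e'$ with $e\rightarrow e'$ and $T_{1}(e')<T_{1}(e)$, while $T_{3}(e)$ counts those with $T_{1}(e')>T_{1}(e)$, so $T_{2}(e)+T_{3}(e)=\#\{e'\,:\,e\rightarrow e'\}$ is the capacity of $e$, and summing over edges gives $N_{\max}$ immediately and globally. By contrast, your route through the anchor permutation $\mathbf{a}$ and through $\lambda=\lambda_{1}\circ\cdots\circ\lambda_{p}$ forces you to handle the non-additivity of $N_{\max}$ and the cross-factor boxes added by strip-growth, and the proposed induction by deleting the root edge of each factor does not match the DTS recursion (which removes the edge of \emph{largest} label, not the root edge of smallest label), so the inductive step would itself need a new argument. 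I would recommend dropping the factor decomposition entirely, proving the pointwise capacity identity as above, and then supplying the chain-by-chain bookkeeping for the cyclic operation.
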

\begin{proof}

Let $T_{2}$ be a weakly increasing tree obtained from $T_1$ by Eqn. (\ref{T2}), 
and $T_{3}$ be a weakly decreasing tree obtained from $\overline{T_{1}}$ by
\begin{eqnarray*}
T_{3}(e):=\#\{e'| e\rightarrow e', \overline{T_{1}}(e)>\overline{T_{1}}(e')\},
\end{eqnarray*}
where $T(e)$ is the label of the edge $e$ in a labeled tree $T$.
We denote by $|T|$ the sum of the labels in a labeled tree $T$.

We first claim that $|T_{2}|+|T_{3}|=N_{\mathrm{max}}$.
When we perform the spread of a Dyck tiling, the number of boxes is not 
changed. Then, when we perform the strip-growth, we add some boxes.
The number of added boxes is equal to the number of edges which are right to 
the added $UD$ path and whose labels are smaller than the one of the edges
corresponding to the added $UD$ path.
Recall that $\mathrm{Tree}(\lambda/\lambda_{0})$ with $\lambda_{0}:=D^{2n}$ 
defines the capacities on the leaves of the tree $\mathrm{Tree}(\lambda)$. 
The above consideration leads to the following statement: $N_{\max}$ 
is equal to the sum of labels which is maximal with respect to the 
capacities of $\mathrm{Tree}(\lambda/\lambda_{0})$. 
Since the bar operation reverses the order of the labels on the edges, {\it i.e.},
the increasing (resp. decreasing) tree is mapped to a decreasing (resp. an increasing)
tree, $T_{2}(e)+T_{3}(e)$ is equal to the maximal label of the edge $e$ 
with respect to the capacity.
Summing $T_{2}(e)+T_{3}(e)$ on all over the edges, we obtain that 
$|T_{2}|+|T_{3}|$ is equal to the sum of maximal labels and 
it is equal to $N_{\max}$. 

Next, we claim that $|T_{3}|$ is invariant under the cyclic operation 
on $\overline{T_{1}}$.
Let $f_{i}$ be the edge of $\overline{T_{1}}$ with the label $i$.
Let $n_{0}>n_{1}>\cdots>n_{p}$ be the integers satisfying 
$f_{n_0}\searrow f_{n_1}\searrow\ldots\searrow f_{n_p}$.
Suppose we have $M_{i}$ edges which are right to $f_{n_{i}}$ and are 
below $f_{n_{i-1}}$ for $1\le i\le p$.
From the definition of the cyclic operation, the labels of these 
$M_{i}$ edges are smaller than $n_{i}$.
For $1\le i\le p$, we move these labels $n_{1},\ldots,n_{p}$ upward by one edge during the 
cyclic operation. 
Let $f^{'}_{i}$ be the edge with the label $i$ in the new tree $T_{4}$ after
the move of these labels on $\overline{T_{1}}$.
Let $T_{5}$ be a label constructed from $T_{4}$ by Eqn. (\ref{T2}).
Note that $T_{5}$ may be neither a weakly increasing nor weakly decreasing tree.
Then, we have $T_{5}(f'_{n_i})=T_{3}(f_{n_i})-M_{i}$.
The edge $f_{n_i}$ in $T_{3}$ is a child of the edge $f'_{n_i}$ in $T_{5}$.
For $n_{0}$, we have $T_{5}(f'_{n_0})=T_{3}(f_{n_0})+\sum_{1\le j\le p}M_{j}$.
Summing up all the contributions, we have 
$|T_{3}|=|T_{5}|$. 
We can apply a similar argument on the cyclic operation on $T_{4}$, and easily 
show that $|T_{5}|$ is invariant. 
Thus, $|T_{3}|$ is invariant under the cyclic operations.

A label of $T_{2}$ counts the number of non-trivial Dyck tiles which are spread 
by length one plus the number of single boxes added in the strip-growth, 
which means that the sum of the labels in $T_{2}$ is equal to the 
statistics $\mathrm{art}$ of $\mathrm{DTS}(T_{1})$.
Thus, $|T_{2}|$ and $|T_{3}|$ are equal to $\mathrm{art}(\mathrm{DTS}(T_1))$ 
and $\mathrm{art}(\mathrm{DTS}(S_1))$ respectively.
Therefore, we obtain Eqn. (\ref{eqn:tsmax}).
\end{proof}

We have a natural involution called {\it reflection} between $T_{1}$ and a natural label on 
$\mathrm{Tree}(\overline{\lambda})$, where $\overline{\lambda}$ is the path 
obtained from $\lambda$ by the mirror image.
We denote by $\mathrm{ref}(T_{1})$ the mirror image (or equivalently reflection) 
of $T_{1}$.
A Dyck tiling $D$ also has a natural involution by a reflection.
We denote by $\mathrm{ref}(D)$ the reflection of $D$ along a vertical line.

Let $T_{1}$ be a natural label of $\mathrm{Tree}(\lambda)$.
We denote $X_{n}(\cdots (X_{2}(X_{1}(T))\cdots)=T_{X}$ by 
\begin{eqnarray*}
T\xrightarrow{X_{1}}\xrightarrow{X_{2}}\cdots\xrightarrow{X_{n}} T_{X}
\end{eqnarray*}
where $X_{i}$, $1\le i\le n$, 
is either $\alpha$, $\mathrm{DTS}^{\pm1}$ or $\mathrm{ref}$. 
\begin{theorem}
Let $T_{2}$ and $T_{3}$ be natural labels such that 
\begin{gather}
T_{1}\xrightarrow{\alpha}T_{2}, \\
\label{eqn:adrda}
T_{1}\xrightarrow{\alpha}\xrightarrow{\mathrm{DTS}}
\xrightarrow{\mathrm{ref}}\xrightarrow{\mathrm{DTS}^{-1}}
\xrightarrow{\alpha}T_{3}.
\end{gather}
Then we have 
\begin{eqnarray}
\label{eqn:alphaiff}
T_{3}=\mathrm{ref}(T_{1})
\Leftrightarrow
T_{2}\xrightarrow{\mathrm{ref}}\xrightarrow{\mathrm{DTS}}\xrightarrow{\mathrm{ref}}
\xrightarrow{\mathrm{DTS}^{-1}}T_{2}.
\end{eqnarray}
\end{theorem}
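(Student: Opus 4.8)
The plan is to peel off the two outer applications of $\alpha$ together with the two bijections $\mathrm{DTS}^{\pm1}$ until both sides of the claimed equivalence become the \emph{same} assertion about a single Dyck tiling, the only substantial input being that $\alpha$ commutes with reflection. Throughout write $r=\mathrm{ref}$ and $S=\mathrm{DTS}$, bearing in mind that both change the base path: $S$ sends a natural label on $\mathrm{Tree}(\lambda)$ to a tiling over $\lambda$, while $r$ interchanges the $\lambda$- and $\overline{\lambda}$-worlds. Since $\alpha$ (the preceding proposition) and $r$ are involutions and $S$ is a bijection, every inverse written below is legitimate.

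First I would unwind the right-hand side. Reading $T_{2}\xrightarrow{\mathrm{ref}}\xrightarrow{\mathrm{DTS}}\xrightarrow{\mathrm{ref}}\xrightarrow{\mathrm{DTS}^{-1}}T_{2}$ as the equation $S^{-1}rSr(T_{2})=T_{2}$ and applying $S$ and then $r$, it is equivalent to $S(r(T_{2}))=r(S(T_{2}))$, i.e. to the single statement that \emph{$\mathrm{DTS}$ commutes with reflection at $T_{2}$}. Putting $\beta:=S^{-1}rS$, this is the same as $\beta(T_{2})=r(T_{2})$.

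Next I would unwind the left-hand side. As $T_{2}=\alpha(T_{1})$ and $\alpha^{2}=\mathrm{id}$, we have $T_{1}=\alpha(T_{2})$, and the defining chain (\ref{eqn:adrda}) reads $T_{3}=\alpha\,\beta(T_{2})$, so $\alpha(T_{3})=\beta(T_{2})$. Because $\alpha$ is a bijection, $T_{3}=r(T_{1})$ holds iff $\alpha(T_{3})=\alpha(r(T_{1}))$, that is iff $\beta(T_{2})=\alpha\,r\,\alpha(T_{2})$. Comparing with the previous paragraph, the theorem is reduced to the identity $\alpha\,r\,\alpha=r$ on natural labels, equivalently $r\circ\alpha=\alpha\circ r$: granting this, $\alpha r\alpha(T_{2})=r(T_{2})$ and the two reductions literally coincide.

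The heart of the argument, and the one step needing genuine verification, is therefore the commutation of $\alpha$ with reflection. The point I would make is that $\alpha$ is \emph{insensitive to the planar order}: the bar operation $\ell\mapsto n+1-\ell$ is a relabeling that ignores positions, and the cyclic operation is defined purely through the rooted structure and the labels --- the relation $f_{m_{0}}\searrow f_{m_{1}}$ selects, among the children of $f_{m_{0}}$, the one carrying the largest label below $m_{0}$, and the ensuing chain and label shift never invoke the left-to-right order of siblings. Thus $\alpha$ assigns to each edge a new label by a rule depending only on the underlying \emph{rooted} labeled tree, while keeping the planar shape fixed; reflection, conversely, reverses sibling orders but preserves parent--child relations and the labels carried by edges, hence leaves the rooted labeled tree --- and therefore the relabeling produced by $\alpha$ --- unchanged. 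I would make this precise by modeling a labeled planted plane tree as a triple (parent function, sibling orders, label function), observing that $\alpha$ modifies only the label function through an order-free rule and $r$ modifies only the sibling orders, and concluding that $r\circ\alpha$ and $\alpha\circ r$ produce the same labels on the same reflected shape; the example of Figure \ref{fig:exalpha} serves as a check. The main obstacle is thus not the algebra of the reduction but the careful bookkeeping confirming that the $\searrow$-relation and the chain shift truly ignore sibling order, including the harmless children $f_{m}$ with $m>n_{p}$ flagged in the definition of the cyclic operation.
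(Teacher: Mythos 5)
Your proof is correct and rests on the same key input as the paper's: the commutation $\alpha\circ\mathrm{ref}=\mathrm{ref}\circ\alpha$ on natural labels, combined with formal manipulation of the involutions $\alpha$, $\mathrm{ref}$ and the bijection $\mathrm{DTS}$. Your organization is cleaner --- reducing the two sides of the equivalence to the conditions $\beta(T_{2})=\alpha\,\mathrm{ref}\,\alpha(T_{2})$ and $\beta(T_{2})=\mathrm{ref}(T_{2})$ with $\beta=\mathrm{DTS}^{-1}\circ\mathrm{ref}\circ\mathrm{DTS}$, so both implications are handled at once --- whereas the paper proves the two directions separately by rewriting the chain (\ref{eqn:adrda}) in terms of $T_{3}$, respectively of $T_{5}=\mathrm{ref}(T_{1})$. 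More importantly, the paper simply asserts $\alpha\circ\mathrm{ref}=\mathrm{ref}\circ\alpha$ without justification, while you supply the (correct) reason: the bar operation is position-free, and the $\searrow$-chains of the cyclic operation are determined solely by the parent--child relation and the labels, never by the left-to-right order of siblings, which is all that reflection alters. That observation is exactly what is needed and would be worth isolating as a lemma; with it in place, your reduction closes the argument completely.
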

%%%%%
\begin{proof}
Suppose $T_{3}=\mathrm{ref}(T_{1})$.
Since $T_{2}=\alpha(T_{1})$ and $\alpha\circ\mathrm{ref}=\mathrm{ref}\circ\alpha$, 
we have 
\begin{eqnarray}
\label{eq:refal}
T_{2}=\mathrm{ref}\circ\alpha(T_{3}).
\end{eqnarray}
Let $T_{4}$ be a natural label such that 
\begin{eqnarray*}
T_{2}\xrightarrow{\mathrm{ref}}\xrightarrow{\mathrm{DTS}}\xrightarrow{\mathrm{ref}}T_{4}.
\end{eqnarray*}
The above equation can be written in terms of $T_{3}$ by Eqn.(\ref{eq:refal}), namely
we have 
\begin{eqnarray*}
T_{3}\xrightarrow{\alpha}\xrightarrow{\mathrm{DTS}}\xrightarrow{\mathrm{ref}}T_{4}.
\end{eqnarray*}
From the inverse of Eqn.(\ref{eqn:adrda}), we have 
\begin{eqnarray*}
T_{3}\xrightarrow{\mathrm{\alpha}}\xrightarrow{\mathrm{DTS}}\xrightarrow{\mathrm{ref}}
\mathrm{DTS}\circ\alpha(T_{1})=\mathrm{DTS}(T_{2}).
\end{eqnarray*}
Thus, we obtain $T_{4}=\mathrm{DTS}(T_{2})$, which implies the $\Rightarrow$ 
part of Eqn.(\ref{eqn:alphaiff}).

Next, we prove the $\Leftarrow$ part of Eqn.(\ref{eqn:alphaiff}).
Let $T_{5}$ be a natural label such that $T_{5}=\mathrm{ref}(T_{1})$.
Since $T_{2}=\alpha(T_{1})$, we have $T_{2}=\mathrm{ref}\circ\alpha(T_{5})$.
The right hand side of Eqn.(\ref{eqn:alphaiff}) can be written in terms of 
$T_{5}$:
\begin{eqnarray*}
T_{5}\xrightarrow{\alpha}\xrightarrow{\mathrm{DTS}}\xrightarrow{\mathrm{ref}}
\xrightarrow{\mathrm{DTS}^{-1}}\xrightarrow{\alpha}\mathrm{ref}(T_{5}).
\end{eqnarray*}
By taking the inverse and putting $T_{1}=\mathrm{ref}(T_{5})$, we obtain 
$T_{3}=\mathrm{ref}(T_{1})$.
\end{proof}

In Section \ref{sec:bijwit}, we give a bijection between a natural label $T_{1}$ 
and a Dyck tiling associated with a weakly increasing tree $T_{2}$.
We also have a bijection between $S_{1}:=\alpha(T_{1})$ and a Dyck tiling in 
the region $R'$ surrounded by the lowest path $\lambda$, the path $U^{2n}$ and $x=2n$.
We construct a weakly increasing tree $S_{2}$ from $S_{1}$ as follows.
A label $S_{2}(e)$ of an edge $e$ is equal to 
\begin{eqnarray*}
S_{2}(e):=\#\{e'| S_{1}(e)>S_{1}(e'), e'\leftarrow e\}.
\end{eqnarray*}
Then, as in Section \ref{sec:bijwit}, we have a cover-inclusive Dyck 
tiling in the region $R'$.
By summarizing above considerations, we have the following theorem.
\begin{theorem}
Given a Dyck path $\lambda$, there exists a bijection between a Dyck tiling $D'_{1}$ 
in the region $R$ and a Dyck tiling $D'_{2}$ in the region $R'$, 
where $D'_{1}$ is associated with $T_{1}$ and $D'_{2}$ is associated 
with $S_{1}=\alpha(T_{1})$.
Furthermore, let $D_{1}$ (resp. $D_{2}$) be the Dyck tiling above $\lambda$ 
constructed from $D'_{1}$ (resp. $D'_{2}$) via the Hermite history.
Then, we have $\mathrm{art}(D_{1})=\mathrm{art}(D_{2})$.
\end{theorem}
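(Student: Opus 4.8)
The plan is to separate the statement into its two assertions, establish the bijection by composing maps that are already known to be bijective, and then reduce the equality of $\mathrm{art}$ to a pair-counting identity that is closed off using the previously proven relation (\ref{eqn:tsmax}).

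For the bijection I would simply chain together bijections already in hand. The assignment $T_{1}\mapsto T_{2}$ through Eqn.~(\ref{T2}) and the passage $T_{2}\mapsto D'_{1}$ from a weakly increasing tree to a cover-inclusive Dyck tiling in $R$ (Section~\ref{sec:bijwit}) are both bijective, and the same construction carried out in $R'$ makes $S_{1}\mapsto S_{2}\mapsto D'_{2}$ bijective. Since $\alpha$ is an involution on natural labels of $\mathrm{Tree}(\lambda)$, the chain
\[
D'_{1}\longleftrightarrow T_{1}\xrightarrow{\ \alpha\ }S_{1}\longleftrightarrow D'_{2}
\]
is a composition of bijections and gives the required correspondence $D'_{1}\leftrightarrow D'_{2}$.

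For the statistic I would first write both $\mathrm{art}$-values as label-sums of weakly increasing trees. By Theorem~\ref{thrm:HhDTS} we have $D_{1}=\mathrm{DTS}(T_{1})$, and the computation in the proof of (\ref{eqn:tsmax}) identifies $|T_{2}|$ with $\mathrm{art}(\mathrm{DTS}(T_{1}))$, so $\mathrm{art}(D_{1})=|T_{2}|$. Transporting these two facts to the region $R'$ by the reflection exchanging $R$ and $R'$ — legitimate because $S_{2}$ is obtained from $S_{1}$ by the left-handed analogue of (\ref{T2}) — yields $\mathrm{art}(D_{2})=|S_{2}|$. It then remains to prove the purely combinatorial identity $|T_{2}|=|S_{2}|$. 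Reading the label-sums as pair-counts on $\mathrm{Tree}(\lambda)$, Eqn.~(\ref{T2}) gives
\[
|T_{2}|=\#\{(e,e') : e\rightarrow e',\ T_{1}(e)>T_{1}(e')\},
\]
while, after rewriting $e'\leftarrow e$ as $e'\rightarrow e$,
\[
|S_{2}|=\#\{(e,e') : e\rightarrow e',\ S_{1}(e)<S_{1}(e')\}.
\]
The weakly increasing tree obtained from $S_{1}$ by (\ref{T2}) has label-sum $\#\{(e,e'):e\rightarrow e',\ S_{1}(e)>S_{1}(e')\}=\mathrm{art}(\mathrm{DTS}(S_{1}))$, and together with $|S_{2}|$ it exhausts all pairs $e\rightarrow e'$, whose total number depends only on $\mathrm{Tree}(\lambda)$ and equals $N_{\mathrm{max}}$ (as in the proof of (\ref{eqn:tsmax})). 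Hence $|S_{2}|=N_{\mathrm{max}}-\mathrm{art}(\mathrm{DTS}(S_{1}))$, and invoking (\ref{eqn:tsmax}), namely $\mathrm{art}(\mathrm{DTS}(T_{1}))+\mathrm{art}(\mathrm{DTS}(S_{1}))=N_{\mathrm{max}}$, I obtain $|S_{2}|=\mathrm{art}(\mathrm{DTS}(T_{1}))=|T_{2}|$, so that $\mathrm{art}(D_{1})=|T_{2}|=|S_{2}|=\mathrm{art}(D_{2})$.

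The step I expect to be the main obstacle is $\mathrm{art}(D_{2})=|S_{2}|$. This requires checking that the reflection sending $R$ to $R'$ genuinely transports Theorem~\ref{thrm:HhDTS} and the label-sum computation to the left-handed setting, so that the Hermite-history tiling built over $\lambda$ from $D'_{2}$ is the mirror image of a DTS tiling, with $\mathrm{art}$ left invariant by reflection. Once this reflection dictionary between $R$ and $R'$ is set up carefully, the remaining pair-counting and the appeal to (\ref{eqn:tsmax}) are routine.
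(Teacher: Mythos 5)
Your proposal is correct and follows essentially the route the paper intends: the paper offers no written proof beyond ``by summarizing above considerations,'' and those considerations are precisely the ingredients you assemble --- the bijection as a composition $D'_{1}\leftrightarrow T_{1}\xrightarrow{\alpha}S_{1}\leftrightarrow D'_{2}$, Theorem~\ref{thrm:HhDTS} together with the identification $\mathrm{art}(\mathrm{DTS}(T_{1}))=|T_{2}|$, its reflected analogue for $R'$, and Eqn.~(\ref{eqn:tsmax}). Your pair-counting reduction $|S_{2}|=N_{\mathrm{max}}-\mathrm{art}(\mathrm{DTS}(S_{1}))$ is a clean way to close the argument, and you correctly flag the reflection dictionary for $R'$ as the one step that genuinely needs checking.
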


\begin{example}
We consider the following natural label $T_{1}$, the Dyck tiling $D'_{1}$ in $R$, and 
the Dyck tiling $D_{1}$ above $\lambda$.
\begin{eqnarray*}
\tikzpic{-0.5}{[scale=0.6]
\coordinate
	child{coordinate(c3)}
	child{coordinate(c1)
		child{coordinate(c4)}
		child{coordinate(c2)}
	};
\draw($(0,0)!0.5!(c3)$)node[anchor=south east]{$3$};
\draw($(0,0)!0.5!(c1)$)node[anchor=south west]{$1$};
\draw($(c1)!0.5!(c4)$)node[anchor=south east]{$4$};
\draw($(c1)!0.5!(c2)$)node[anchor=south west]{$2$};
}\qquad
\tikzpic{-0.5}{[x=0.4cm,y=0.4cm]
\draw[very thick](0,0)--(1,1)--(2,0)--(4,2)--(5,1)--(6,2)--(8,0);
\draw(0,0)--(-1,1)--(1,3)--(3,1)(2,2)--(4,4)(0,2)--(-1,3)--(1,5)--(3,3)
     (0,4)--(-1,5)--(1,7)(0,6)--(-1,7)--(0,8)--(6,2)
     (0,6)--(1,5)--(2,6)(2,4)--(3,5);
\draw[red](0,1)--(1,2)--(2.5,0.5)(0,3)--(1,4)--(3,2)--(4,3)--(5.5,1.5)
          (0,7)--(3.5,3.5);
\draw(0,1)node{$3$}(0,3)node{$2$}(0,5)node{$1$}(0,7)node{$0$};
}\qquad
\tikzpic{-0.5}{[x=0.4cm,y=0.4cm]
\draw[very thick](0,0)--(1,1)--(2,0)--(4,2)--(5,1)--(6,2)--(8,0);
\draw(1,1)--(2,2)--(3,1)(2,2)--(4,4)--(6,2);
}
\end{eqnarray*}
Then, the natural label $S_{1}=\alpha(T_{1})$, the Dyck tiling $D'_{2}$ in $R'$,
and the Dyck tiling $D_{2}$ above $\lambda$ are depicted as below.
\begin{eqnarray*}
\tikzpic{-0.5}{[scale=0.6]
\coordinate
	child{coordinate(c2)}
	child{coordinate(c1)
		child{coordinate(c3)}
		child{coordinate(c4)}
	};
\draw($(0,0)!0.5!(c2)$)node[anchor=south east]{$2$};
\draw($(0,0)!0.5!(c1)$)node[anchor=south west]{$1$};
\draw($(c1)!0.5!(c3)$)node[anchor=south east]{$3$};
\draw($(c1)!0.5!(c4)$)node[anchor=south west]{$4$};
}\qquad
\tikzpic{-0.5}{[x=0.4cm,y=0.4cm]
\draw[very thick](0,0)--(1,1)--(2,0)--(4,2)--(5,1)--(6,2)--(8,0);
\draw(1,1)--(8,8)--(9,7)--(7,5)(7,7)--(9,5)--(8,4)(2,2)--(3,1)
     (4,4)--(5,3)--(6,4)--(9,1)--(8,0)(7,1)--(9,3)--(6,6)(7,3)--(8,4);
\draw[red](1.5,0.5)--(4,3)--(5,2)--(6,3)--(7,2)--(8,3)
          (4.5,3.5)--(6,5)--(7,4)--(8,5)(6.5,5.5)--(8,7);
\draw(8,1)node{$3$}(8,3)node{$2$}(8,5)node{$1$}(8,7)node{$0$};
}\qquad
\tikzpic{-0.5}{[x=0.4cm,y=0.4cm]
\draw[very thick](0,0)--(1,1)--(2,0)--(4,2)--(5,1)--(6,2)--(8,0);
\draw(1,1)--(3,3)--(4,2)(2,2)--(3,1)(4,2)--(5,3)--(6,2);
}
\end{eqnarray*}
We have $\mathrm{art}(D_{1})=\mathrm{art}(D_{2})=3$.
\end{example}

\section{Dyck tableaux for general Dyck tilings}
\label{sec:DTab}
\subsection{Dyck tableaux}
Let $\lambda$ be a Dyck path (not necessarily a zigzag path).
Due to the construction of the tree $\mathrm{Tree}(\lambda)$ 
from the Dyck path $\lambda$ as in Section \ref{sec:ppt}, 
an edge of $\mathrm{Tree}(\lambda)$ consists of a pair 
of an up step $U_1$ and a down step $D_1$ in $\lambda$.
There exists a unique box which is in the south-east direction from $U_1$ 
and in the south-west direction from $D_1$ under the path $\lambda$.
When $\lambda$ is of length $2n$, we have $n$ such boxes corresponding 
to $n$ edges of $\mathrm{Tree}(\lambda)$.
We call these unique boxes {\it anchor boxes} of $\lambda$.

Let $\lambda_{i}$ for $1\le i\le m$ be Dyck paths such that 
they cannot be written as a concatenation of Dyck paths.
When $\lambda$ is written as a concatenation of Dyck paths, {\it i.e.,} 
$\lambda=\lambda_1\circ \ldots \circ\lambda_{m}$, 
we define a path $\underline{\lambda}$ by
\begin{eqnarray*}
\underline{\lambda}
:=\vee_{|\lambda_1|/2}\circ\vee_{|\lambda_2|/2}\circ\ldots\circ\vee_{|\lambda_m|/2}.
\end{eqnarray*}
We call the region surrounded by $\lambda$ and $\underline{\lambda}$ 
as the {\it frozen region} associated with $\lambda$.
It is obvious that the frozen region associated with $\lambda$ is written as 
a concatenation of the frozen regions associated with $\lambda_{i}$ for $1\le i\le m$.
Note that we have $|\lambda_i|/2$ anchor boxes in the frozen region 
surrounded by $\lambda_{i}$ and $\underline{\lambda_i}$.
We call these anchor boxes as anchor boxes in the zeroth floor.
If we translate an anchor box in the zeroth floor upward by $(0,2m)$,
the new box is called an {\it anchor box in the $m$-th floor}. 

Figure \ref{fig:frab} is an example of a frozen region and anchor boxes.

\begin{figure}[ht]
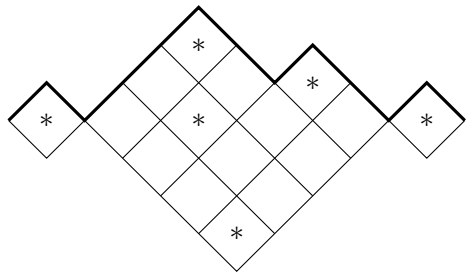

\tikzpic{-0.5}{[x=0.5cm,y=0.5cm]
\draw[very thick](0,0)--(1,1)--(2,0)--(5,3)--(7,1)--(8,2)--(10,0)--(11,1)--(12,0);
\draw(0,0)--(1,-1)--(2,0)--(6,-4)--(10,0)--(11,-1)--(12,0);
\draw(3,-1)--(6,2)(4,-2)--(7,1)(5,-3)--(9,1);
\draw(3,1)--(7,-3)(4,2)--(8,-2)(7,1)--(9,-1);
\draw(1,0)node{$\ast$}
(5,2)node{$\ast$}(5,0)node{$\ast$}
(8,1)node{$\ast$}
(6,-3)node{$\ast$}(11,0)node{$\ast$};
}
\caption{The frozen region and anchor boxes associated with a Dyck path $\lambda=UDUUUDDUDDUD$.
The lowest path is $\underline{\lambda}=DUDDDDUUUUDU$.
The boxes with $\ast$ are anchor boxes in the $0$-th floor.}
\label{fig:frab}
\end{figure}

Let $a$ be an anchor box in the zeroth floor. As mentioned before, this box is characterized 
by a pair of an up step $u$ and a down step $d$.
We take a partial path from $u$ to $d$ in $\lambda$, and obtain a partial frozen region.
This partial frozen region is said to be associated with the anchor box $a$.

An anchor box $a_1$ is said to be just below an another anchor box $a_2$ if and only if 
the edge of $\mathrm{Tree}(\lambda)$ corresponding to $a_1$ is the parent of the 
edge corresponding to $a_{2}$.
Note that the parent edge of an edge is unique if it exists.

We introduce four classes of boxes which are used to construct a Dyck tableau: 
\begin{enumerate}
\item An empty box.
\item A box with a label $i\in[1,n]$.
\item A parallel box. A line passes through from its north-west edge to its south-east edge
or from its south-west edge to its north-east edge.
\item A turn box. 
A $\vee$-turn (resp. $\wedge$-turn) box is a box with a line passing 
through from the north-west (resp. south-west) edge to 
the north-east (resp. south-east) edge. 
\end{enumerate}
Figure \ref{fig:fourboxes} shows the four classes of boxes.

\begin{figure}[ht]
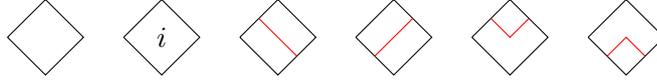

\tikzpic{-0.5}{[x=0.5cm,y=0.5cm]
\draw(0,0)--(1,1)--(2,0)--(1,-1)--(0,0);
}
\tikzpic{-0.5}{[x=0.5cm,y=0.5cm]
\draw(0,0)--(1,1)--(2,0)--(1,-1)--(0,0);
\node at (1,0){$i$};
}
\tikzpic{-0.5}{[x=0.5cm,y=0.5cm]
\draw(0,0)--(1,1)--(2,0)--(1,-1)--(0,0);
\draw[red](0.5,0.5)--(1.5,-0.5);
}
\tikzpic{-0.5}{[x=0.5cm,y=0.5cm]
\draw(0,0)--(1,1)--(2,0)--(1,-1)--(0,0);
\draw[red](0.5,-0.5)--(1.5,0.5);
}
\tikzpic{-0.5}{[x=0.5cm,y=0.5cm]
\draw(0,0)--(1,1)--(2,0)--(1,-1)--(0,0);
\draw[red](0.5,0.5)--(1,0)--(1.5,0.5);
}
\tikzpic{-0.5}{[x=0.5cm,y=0.5cm]
\draw(0,0)--(1,1)--(2,0)--(1,-1)--(0,0);
\draw[red](0.5,-0.5)--(1,0)--(1.5,-0.5);
}
\caption{Four classes of boxes. An empty box (the first picture), 
a box with the label $i$ (the second picture), parallel boxes (the third and the fourth picture)
and turn boxes (the fifth and the sixth picture).}
\label{fig:fourboxes}
\end{figure}

We put integers in $[1,n]$ in the region $R_0$ defined by $\wedge_{n}$ and $\underline{\lambda}$,
and obtain a generalized Dyck tableau. 
The algorithm to produce a Dyck tableau is as follows.

By the correspondence between an edge of $T_1$ and an anchor box in the $0$-th floor,
we put the integer $1$ in the corresponding anchor box in the frozen region.
We will put the integers $i\in[2,n]$ in the region $R_{0}$ recursively starting from $i=2$ 
and obtain a Dyck tableau of size $n$ by the following rules: 
\begin{enumerate}
\item Find an anchor box $B$ in the zeroth floor 
corresponding to the edge of $T_1$ with the integer $i$.
\item If the anchor boxes up to the $p-1$-th floor are occupied by $\vee$-turn boxes, 
we put the integer $i$ on the anchor box in the $p$-th floor. 
\item 
%If an anchor box just below the anchor box $B$ are in the $p$-th floor, we put the integer $i$
%on the anchor box in the $p$-th floor.
If anchor boxes just below the anchor box $B$ are occupied by a labeled box or a turn box up 
to the $p-1$-th floor, we put the integer $i$ on the anchor box in the $p$-th floor.

\item If the edge with the label $i-1$ is strictly right to the edge with the label
$i$, we connect by a line the anchor boxes with labels $i-1$ and $i$ in the following way:
\begin{enumerate}
\item
\label{DTab1}
 The line starts from the north-east edge of the anchor box labeled by $i$ and 
ends at the north-west edge of the anchor box labeled by $i-1$.
The line consists of north-east steps and south-east steps.
\item The line does not pass through the occupied anchor boxes with labels smaller than $i$.
\item The line can pass through the unoccupied anchor box (which does not have a label yet) 
only in the $p$-th floor as a $\vee$-turn box if the anchor boxes up to $p-1$-th floor 
are occupied by labeled boxes or $\vee$-turn boxes.  
\item When an anchor box in the $p$-th floor is labeled by the integer $1\le k\le i-1$,
we translate the partial frozen region associated with this anchor box 
upward by $(0,2p)$. Then, we redefine the frozen region 
as a union of the translated frozen region and the original frozen region.
\item 
\label{DTab4}
The line can pass through a box (not an anchor box) in the redefined frozen region 
from the south-west edge to the north-east edge or from the north-west edge 
to the south-east edge.
\item The line is the lowest path satisfying from (\ref{DTab1}) to (\ref{DTab4}). 
\end{enumerate}
\item Increase $i$ by one and apply (1) to (4) to the new $i$.
\end{enumerate}
We denote by $\mathrm{DTab}(T_{1})$ the diagram obtained from $T_1$ by the above 
procedure and call it a (generalized) {\it Dyck tableau}.

An anchor box is either a box with the label $i$ or a $\vee$-turn box.
A box (which is not an anchor box) in the frozen region is either
an empty box or a parallel box.
A box (which is not an anchor box) above the path $\lambda$ is either
an empty box, a parallel box, or a turn box.

\begin{remark}
When $\lambda$ is a zigzag path, the frozen region associated with $\lambda$ 
consists of single boxes which are anchor boxes in the zeroth floor. 
There are no empty boxes in the region $R_{0}$.
Further, boxes below a labeled box are $\vee$-turn boxes or the lower boundary path $\lambda$.
\end{remark}

Figure \ref{fig:DTabfromTree} is an example of the Dyck tableau associated with 
the natural label in Figure \ref{fig:inctreeDTSDTR}.
\begin{figure}[ht]
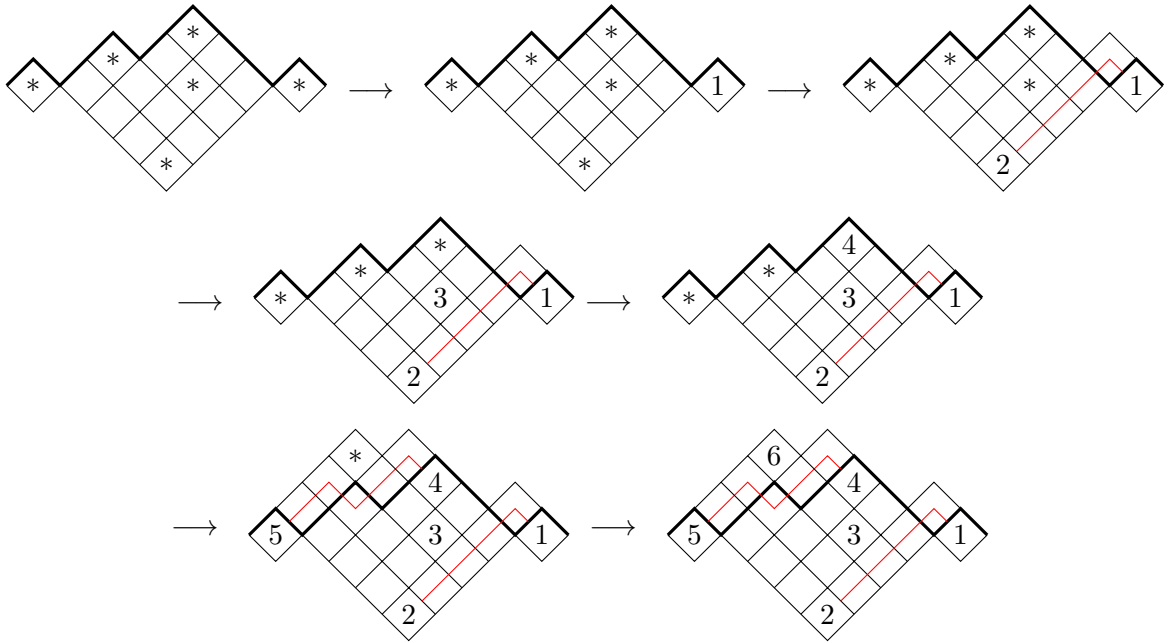

\tikzpic{-0.5}{[x=0.35cm,y=0.35cm]
\draw[very thick](0,0)--(1,1)--(2,0)--(4,2)--(5,1)--(7,3)--(10,0)--(11,1)--(12,0);
\draw(0,0)--(1,-1)--(2,0)--(6,-4)--(10,0)--(11,-1)--(12,0);
\draw(3,1)--(7,-3)(5,1)--(8,-2)(6,2)--(9,-1)(3,-1)--(5,1)(4,-2)--(8,2)(5,-3)--(9,1);
\draw(1,0)node{$\ast$}(4,1)node{$\ast$}(6,-3)node{$\ast$}
(7,2)node{$\ast$}(7,0)node{$\ast$}(11,0)node{$\ast$};
}
$\longrightarrow$
\tikzpic{-0.5}{[x=0.35cm,y=0.35cm]
\draw[very thick](0,0)--(1,1)--(2,0)--(4,2)--(5,1)--(7,3)--(10,0)--(11,1)--(12,0);
\draw(0,0)--(1,-1)--(2,0)--(6,-4)--(10,0)--(11,-1)--(12,0);
\draw(3,1)--(7,-3)(5,1)--(8,-2)(6,2)--(9,-1)(3,-1)--(5,1)(4,-2)--(8,2)(5,-3)--(9,1);
\draw(1,0)node{$\ast$}(4,1)node{$\ast$}(6,-3)node{$\ast$}
(7,2)node{$\ast$}(7,0)node{$\ast$}(11,0)node{$1$};
}
$\longrightarrow$
\tikzpic{-0.5}{[x=0.35cm,y=0.35cm]
\draw[very thick](0,0)--(1,1)--(2,0)--(4,2)--(5,1)--(7,3)--(10,0)--(11,1)--(12,0);
\draw(0,0)--(1,-1)--(2,0)--(6,-4)--(10,0)--(11,-1)--(12,0);
\draw(3,1)--(7,-3)(5,1)--(8,-2)(6,2)--(9,-1)(3,-1)--(5,1)(4,-2)--(8,2)(5,-3)--(9,1);
\draw(1,0)node{$\ast$}(4,1)node{$\ast$}(6,-3)node{$2$}
(7,2)node{$\ast$}(7,0)node{$\ast$}(11,0)node{$1$};
\draw[red](6.5,-2.5)--(10,1)--(10.5,0.5);
\draw(9,1)--(10,2)--(11,1);	
} \\[3mm]
$\longrightarrow$
\tikzpic{-0.5}{[x=0.35cm,y=0.35cm]
\draw[very thick](0,0)--(1,1)--(2,0)--(4,2)--(5,1)--(7,3)--(10,0)--(11,1)--(12,0);
\draw(0,0)--(1,-1)--(2,0)--(6,-4)--(10,0)--(11,-1)--(12,0);
\draw(3,1)--(7,-3)(5,1)--(8,-2)(6,2)--(9,-1)(3,-1)--(5,1)(4,-2)--(8,2)(5,-3)--(9,1);
\draw(1,0)node{$\ast$}(4,1)node{$\ast$}(6,-3)node{$2$}
(7,2)node{$\ast$}(7,0)node{$3$}(11,0)node{$1$};
\draw[red](6.5,-2.5)--(10,1)--(10.5,0.5);
\draw(9,1)--(10,2)--(11,1);	
}$\longrightarrow$
\tikzpic{-0.5}{[x=0.35cm,y=0.35cm]
\draw[very thick](0,0)--(1,1)--(2,0)--(4,2)--(5,1)--(7,3)--(10,0)--(11,1)--(12,0);
\draw(0,0)--(1,-1)--(2,0)--(6,-4)--(10,0)--(11,-1)--(12,0);
\draw(3,1)--(7,-3)(5,1)--(8,-2)(6,2)--(9,-1)(3,-1)--(5,1)(4,-2)--(8,2)(5,-3)--(9,1);
\draw(1,0)node{$\ast$}(4,1)node{$\ast$}(6,-3)node{$2$}
(7,2)node{$4$}(7,0)node{$3$}(11,0)node{$1$};
\draw[red](6.5,-2.5)--(10,1)--(10.5,0.5);
\draw(9,1)--(10,2)--(11,1);	
}\\[3mm]
$\longrightarrow$
\tikzpic{-0.5}{[x=0.35cm,y=0.35cm]
\draw[very thick](0,0)--(1,1)--(2,0)--(4,2)--(5,1)--(7,3)--(10,0)--(11,1)--(12,0);
\draw(0,0)--(1,-1)--(2,0)--(6,-4)--(10,0)--(11,-1)--(12,0);
\draw(3,1)--(7,-3)(5,1)--(8,-2)(6,2)--(9,-1)(3,-1)--(5,1)(4,-2)--(8,2)(5,-3)--(9,1);
\draw(1,0)node{$5$}(4,3)node{$\ast$}(6,-3)node{$2$}
(7,2)node{$4$}(7,0)node{$3$}(11,0)node{$1$};
\draw[red](6.5,-2.5)--(10,1)--(10.5,0.5)(1.5,0.5)--(3,2)--(4,1)--(6,3)--(6.5,2.5);
\draw(9,1)--(10,2)--(11,1)(1,1)--(3,3)--(4,2)--(6,4)--(7,3)(2,2)--(3,1)(5,3)--(6,2);
\draw(3,3)--(4,4)--(5,3);	
}
$\longrightarrow$
\tikzpic{-0.5}{[x=0.35cm,y=0.35cm]
\draw[very thick](0,0)--(1,1)--(2,0)--(4,2)--(5,1)--(7,3)--(10,0)--(11,1)--(12,0);
\draw(0,0)--(1,-1)--(2,0)--(6,-4)--(10,0)--(11,-1)--(12,0);
\draw(3,1)--(7,-3)(5,1)--(8,-2)(6,2)--(9,-1)(3,-1)--(5,1)(4,-2)--(8,2)(5,-3)--(9,1);
\draw(1,0)node{$5$}(4,3)node{$6$}(6,-3)node{$2$}
(7,2)node{$4$}(7,0)node{$3$}(11,0)node{$1$};
\draw[red](6.5,-2.5)--(10,1)--(10.5,0.5)(1.5,0.5)--(3,2)--(4,1)--(6,3)--(6.5,2.5);
\draw(9,1)--(10,2)--(11,1)(1,1)--(3,3)--(4,2)--(6,4)--(7,3)(2,2)--(3,1)(5,3)--(6,2);
\draw(3,3)--(4,4)--(5,3);	
}

\caption{A generation of a Dyck tableau from a natural label.}
\label{fig:DTabfromTree}
\end{figure}

\subsection{Weighted Dyck word for a general Dyck path}
In this subsection, we give a word representation for Dyck tableaux.

Let $\mathrm{Tree}(\lambda)$ be a tree for a Dyck path $\lambda$.

\begin{defn}[Position tree]
\label{def:PosTree}
A tree $\mathrm{PosTree}(\lambda)$ is a tree  such that its shape is $\mathrm{Tree}(\lambda)$ and 
its edge $e$ has the label $\mathrm{label}(e)$ given by    
\begin{eqnarray*}
\mathrm{label}(e):=2\cdot \#\{e'| e'\leftarrow e\}+\#\{e'| e\uparrow e' \}+\#\{e'|e'\uparrow e\}+1.
\end{eqnarray*}
We call $\mathrm{PosTree}(\lambda)$ the position tree of a Dyck path $\lambda$.
\end{defn}

\begin{defn}[Weighted word]
\label{defn:wdw}
Given a Dyck path $\lambda$, a {\it weighted word} for $\lambda$
is a word $w$ consisting of letters $\{\blacklozenge,U,D\}\cup\mathbb{N}$
such that 
\begin{enumerate}
\item 
\label{defn:wdw1}
the word $w$ is in the set of the language defined by
\begin{eqnarray*}
(\blacklozenge((U+D)^{\ast}\mathbb{N}^{\ast}(U+D)^{\ast})^{\ast})^{\ast}\blacklozenge,
\end{eqnarray*}
with the condition: 
the number of $w(i)\in\{U,D\}$ is twice of the number of $w(j)\in\mathbb{N}$ 
between two adjacent $\blacklozenge$'s;
\item 
\label{defn:wdw2}
we enumerate all $U$ and $D$ steps of $w$ by $1,2,\ldots$ from left to right.
We have $N$ non-negative integers after the $m$-th step, 
if and only if the position tree $\mathrm{PosTree}(\lambda)$ has $N$ edges with the label $m$;
\item 
\label{defn:wdw3}
the sub-word consisting of $U$ and $D$ is a Dyck word above $\lambda$;
\item
\label{defn:wdw4}
for each $i$,
\begin{multline*}
w(i), w(i+1),\ldots, w(i+m-1)\in\mathbb{N} 
\ \&\  w(i-1),w(i+m)\notin\mathbb{N}
 \\\Rightarrow 
0\le w(i)\le w(i+1)\le\cdots\le w(i+m-1)\le\mathrm{ch}(i,w).
\end{multline*}
Here, the {\it column height} $\mathrm{ch}(i,w)$ is defined by 
\begin{eqnarray*}
\mathrm{ch}(i,w)&=&\left\lceil\frac{1}{2}(|\{j<i|w(j)=U\}|-|\{j<i|w(j)=D\}|)\right\rceil \\
&&-|\{j|s<j<i, w(j)\in\{U,D\} \}|+|\{j|s<j<i, w(j)\in\mathbb{N}\}|+1
\end{eqnarray*}
where $s$ is the position of the rightmost $\blacklozenge$ left to $w(i)$.
\end{enumerate}
\end{defn}

We will construct a generalized Dyck tableau for a Dyck path $\lambda$ 
from a weighted word $w$ as follows.
However, we remark that not all weighted words produce Dyck tableaux. 
See Definition \ref{def:WDW} for the definition of weighted Dyck words.
By definition, the set of weighted Dyck words is bijective to the 
set of generalized Dyck tableaux for a general Dyck path.

When $w(2)=\blacklozenge$, which implies $w=\blacklozenge\blacklozenge$, we define 
$\lambda$ is an empty.
Below, we assume that $w(2)\neq\blacklozenge$.
From the condition (\ref{defn:wdw2}) in Definition \ref{defn:wdw},
we show that one can reconstruct the path $\lambda$ from the weighted word $w$.
When $w(i)\in\mathbb{N}$, let $X$ be a step $U$ or $D$ which is rightmost 
and left to $w(i)$, and $r$ be the position of $X$ in the sub-word 
of $w$ consisting of only $U$'s and $D$'s. 
We define the {\it position} of $w(i)\in\mathbb{N}$ as $r$.
Then, we get a sequence of integers $\mathbf{r}:=(r_1,r_2,\ldots,r_{n})$ 
where $r_{j}$ is the position of $j$-th letter in $\mathbb{N}$ in $w$ 
and $n$ is the number of letters in $\mathbb{N}$.

We give an algorithm to produce a tree from $\mathbf{r}$:
\begin{enumerate}
\item The tree for $\mathbf{r}=\emptyset$ is the empty tree.

\item 
%Find the smallest $k$ such that $k\ge1$ and $r_{k+1}>2k$.
%If such $k$ does not exit, go to (\ref{alg:wdw3}).
Find the smallest $1\le k\le n$ and an integer $p$ associated to $k$ such that 
\begin{enumerate}
\item $p$ is maximal satisfying $r_{p}\le 2 r_{k}$,
\item $r_{k}=p$.
\end{enumerate}
If $p=n$, go to (\ref{alg:wdw4}). Otherwise, go to (\ref{alg:wdw3}).

\item 
\label{alg:wdw3}
We define two sequences $\mathbf{r}_{1}$ and 
$\mathbf{r}_{2}$ from $\mathbf{r}$:
\begin{eqnarray*}
\mathbf{r}_{1}&:=&(r_{1},\ldots,r_{p}), \\
\mathbf{r}_{2}&=&(r_{p+1}-2p,\ldots,r_{n}-2p).
\end{eqnarray*}
We attach two trees associated with $\mathbf{r}_{1}$ and $\mathbf{r}_{2}$ at their roots.
\item
\label{alg:wdw4}
We have the integer $n$ in $\mathbf{r}$, and suppose that $r_{p}=n$ for some $p\ge1$.
Let $\mathbf{r}'$ be a sequence of integers defined by 
\begin{eqnarray*}
\mathbf{r}':=(r_{1}-1,\ldots,r_{k-1}-1,r_{k+1}-1,\ldots,r_{n}-1).
\end{eqnarray*}
Then, the tree for $\mathbf{r}$ is obtained by putting an edge above the 
root of the tree for $\mathbf{r}'$.
\end{enumerate}

\begin{prop}
The above algorithm to produce a tree from $\mathbf{r}$ is well-defined.
In other words, one can find the integer $p$ such that $r_{k}=p$ in the step (\ref{alg:wdw3}).
\end{prop}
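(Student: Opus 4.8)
The plan is to identify $\mathbf{r}$ with the weakly increasing rearrangement of the multiset of labels of $\mathrm{PosTree}(\lambda)$ and then to exhibit an explicit index $k$ realizing the required fixed-point condition $r_k=p$. First I would record the consequence of condition (\ref{defn:wdw2}) of Definition \ref{defn:wdw}: the position of the $j$-th letter of $\mathbb{N}$ equals the label of an edge of $\mathrm{PosTree}(\lambda)$, and since the letters of $\mathbb{N}$ are read from left to right, $\mathbf{r}=(r_1\le\cdots\le r_n)$ is exactly the sorted list of the numbers $\mathrm{label}(e)$. Next I would simplify the label. Writing $L(e)$, $R(e)$, $A(e)$, $B(e)$ for the numbers of edges strictly left of, strictly right of, above (ancestors of), and below (descendants of) $e$, these four classes partition the remaining $n-1$ edges, so $A(e)+B(e)=n-1-L(e)-R(e)$ and hence
\begin{eqnarray*}
\mathrm{label}(e)=2L(e)+A(e)+B(e)+1=n+L(e)-R(e).
\end{eqnarray*}
This formula makes the behaviour of labels under concatenation transparent and is the engine of the whole argument.

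Next I would decompose $\lambda=\lambda_1\circ\mu$, where $\lambda_1$ is the \emph{first} indecomposable factor of $\lambda$ (so $\lambda_1=U\nu D$ cannot be split further) and $\mu$ is the remainder; write $p_1:=|\lambda_1|/2$, allowing $\mu=\emptyset$ and $p_1=n$ when $\lambda$ is itself indecomposable. Using the formula I would verify three facts. (i) A short computation gives that the full-tree label of an edge $e$ of $\lambda_1$ equals its label inside $\lambda_1$, namely $p_1+L(e)-R(e)\le 2p_1-1$, since within $\lambda_1$ one has $L(e)\le p_1-1$ and $R(e)\ge 0$. (ii) Every edge of $\mu$ sees the $p_1$ edges of $\lambda_1$ added to its left, so its label equals $\mathrm{label}_\mu(e)+2p_1\ge 2p_1+1$ (every label is at least $1$ because $n+L-R\ge n-(n-1)=1$); note that this shift by $2p_1$ is precisely the subtraction $-2p$ appearing in the splitting step (\ref{alg:wdw3}). (iii) The unique edge of $\lambda_1$ attached to the root has $L=R=0$, hence label exactly $p_1$. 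Items (i)--(iii) show that the $\lambda_1$-labels and the $\mu$-labels are separated by the gap at $2p_1$: in the sorted sequence the first $p_1$ entries are the $\lambda_1$-labels (all $\le 2p_1-1$) and the remaining entries are the $\mu$-labels (all $\ge 2p_1+1$), and the value $p_1$ occurs among the first $p_1$ entries.

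Finally I would produce the fixed point. Choose any index $k$ with $r_k=p_1$ (one exists by (iii)). Then $2r_k=2p_1$ lies strictly inside the gap, so the maximal index $p$ with $r_p\le 2r_k$ equals the number of $\lambda_1$-labels, namely $p=p_1$; consequently $r_k=p_1=p$, so this $k$ satisfies conditions (a) and (b) of the search step. Thus the set of admissible $k$ is nonempty, a smallest one exists, and the algorithm never stalls, which is exactly the assertion of well-definedness. As a bonus the same computation shows the decomposition is structurally correct: $p=p_1<n$ in the decomposable case routes the algorithm to step (\ref{alg:wdw3}) and $p=p_1=n$ in the indecomposable case routes it to step (\ref{alg:wdw4}).

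I expect the only real obstacle to be establishing the clean separation of labels across the value $2p_1$. Everything depends on the simplified identity $\mathrm{label}(e)=n+L(e)-R(e)$ together with the choice of $\lambda_1$ as the first \emph{indecomposable} factor: a merely decomposable initial chunk would neither produce a root edge of label $p_1$ nor yield a gap-free split, so the indecomposability is essential. Once the separation is in place the existence of the fixed point is immediate.
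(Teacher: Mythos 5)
Your proof is correct and follows essentially the same route as the paper: split $\lambda$ at its first indecomposable factor $\lambda_{1}$ and use the unique root edge of $\lambda_{1}$ (label $p_{1}$, equal to $n$ in the indecomposable case) as the witness $k$. It is in fact more complete than the paper's own argument, which merely asserts that such a $k$ exists exactly when the tree decomposes as a concatenation; your identity $\mathrm{label}(e)=n+L(e)-R(e)$ and the resulting separation of the $\lambda_{1}$-labels (all $\le 2p_{1}-1$) from the remaining labels (all $\ge 2p_{1}+1$) supply the verification of the maximality condition (a) that the paper leaves implicit.
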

\begin{proof}
We consider the case where $\mathbf{r}$ does not have an integer $k$ 
satisfying $p\le n-1$ and conditions (2a) and (2b).
Note that the weight of an edge $e'$ is two if $e'$ is strictly left to an edge $e$ and 
the weight is one if $e'$ is above or below $e$ in the position tree.
Thus, such $k$ exists if and only if a tree for $\lambda$ can be obtained by attaching two trees 
at their roots.
In this case, a tree for $\mathbf{r}$ can not be decomposed into a concatenation of 
trees of smaller size.
This implies that there exists a unique edge $e$ connected to the root. 
Since all the other edges are below $e$, the label of $e$ in the position tree 
is equal to $n$. 
\end{proof}

\begin{remark}
Note that the path $\lambda$ is written as a concatenation of 
$q$ Dyck paths (which are not decomposed into a concatenation of Dyck paths of smaller length)
when $w$ has $q+1$ $\blacklozenge$'s. 
\end{remark}

Once we have a tree from $\mathbf{r}$, one can easily obtain a Dyck path $\lambda$.
Since the sub-word of $w$ consisting of $U$'s and $D$'s are a Dyck path $\mu$ above $\lambda$,
the top path of the generalized Dyck tableau is given by $\mu$.
We also have the frozen region associated with $\lambda$.
When $w(i)\in\mathbb{N}$, the position of $w(i)$ indicates the position of an anchor box
in the frozen region, and $w(i),\ldots,w(i+m)\in\mathbb{N}$ indicates that the dot 
corresponding the $w(j)$ for $i\le j\le i+m$ is in the $w(j)$-th floor.

For example, the weighted Dyck word $\blacklozenge UU00UU\blacklozenge DU1D0D0DD\blacklozenge$
corresponds to the generalized Dyck tableau for $\lambda=UUDDUUDUDD$:
\begin{eqnarray*}
\tikzpic{-0.5}{[x=0.4cm,y=0.4cm]
\draw[very thick](0,0)--(2,2)--(4,0)--(6,2)--(7,1)--(8,2)--(10,0);
\draw(2,2)--(4,4)--(5,3)--(6,4)--(8,2)
     (0,0)--(2,-2)--(4,0)--(7,-3)--(10,0)(1,-1)--(5,3)--(6,2)(7,1)--(9,-1)
     (1,1)--(3,-1)(3,3)--(8,-2)(5,-1)--(7,1)(6,-2)--(9,1)(6,2)--(7,3);
\draw (2,-1)node{$\bullet$}(2,1)node{$\bullet$}(6,3)node{$\bullet$}(7,-2)node{$\bullet$}(8,1)node{$\bullet$};
}
\end{eqnarray*}
with $\mathbf{r}=(2,2,6,7,8)$.

\begin{defn}[Weighted Dyck word]
\label{def:WDW}
A weighted word $w$ is said to be a weighted Dyck word 
if there exists a Dyck tableau corresponding to the weighted 
word $w$.
\end{defn}

\begin{remark}
We consider the following weighted words:
\begin{eqnarray*}
\blacklozenge U0U\blacklozenge UU\alpha U1D1DD\blacklozenge D0D\blacklozenge
\end{eqnarray*}
where $\alpha=0$ or $1$.
The weighted word for $\alpha=0$ is not a weighted Dyck word.
\end{remark}

\subsection{Insertion procedure for Dyck tableaux}
The insertion procedure is the process to insert a labeled
box into a Dyck tableau. 
Since this procedure gives a recursive structure, we are able 
to construct a generation tree for Dyck tableaux.
The insertion procedure can be divided into two steps: 
addition of a labeled box and ribbon addition.

Let $\mathbf{h}:=(h_1,\ldots,h_n)$ be an insertion history such that  
$h_{i}\in[0,2(i-1)]$.

Let $\lambda$ be a Dyck path of length $2n$ and $T_1$ be a natural 
label of the tree $\mathrm{Tree}(\lambda)$.
Recall that a Dyck tableau $\mathrm{DTab}(T_{1})$ is placed in the Cartesian
coordinate system such that the Dyck path $\lambda$ starts from the origin 
of the coordinate system.
Then, we insert a labeled box at the line $x=h_{n+1}+1$ with $h_{n+1}\in[0,2n]$. 
Since $h_1=0$, we put the box with labeled by $1$ when $n=1$.

The insertion procedure for addition of a labeled box is as follows:
\begin{enumerate}
\item We divide a Dyck tableau into two pieces along the vertical line $x=h_{n+1}$
and translate the right piece right by $(2,0)$. 
\item We connect the top paths of the two pieces by the Dyck path $UD$.
Then, we put the label $n+1$ in the top box on the line $x=h_{n+1}+1$.
Similarly, the Dyck path $\lambda$ is cut into two pieces and connect them 
by the path $UD$. In this way, we obtain a new path $\lambda_{\mathrm{new}}$ 
of length $2(n+1)$.
\item Suppose that a labeled box $B$ (not necessarily in the $0$-th floor) 
in $\mathrm{DTab}(T_{1})$ corresponds to a pair of an up step $s_u$ of $\lambda$ 
and a down step $s_d$ of $\lambda$. 
If the line $x=h_{n+1}$ is placed between the up step $s_u$ and the down 
step $s_d$, we move the label by $(1,-1)$. 
If both steps $s_u$ and $s_d$ are right to the line $x=h_{n+1}$, then 
we move the label by $(2,0)$.
Otherwise, we do not move the label.
\item We change the bottom path from $\underline{\lambda}$ 
to $\underline{\lambda_{\mathrm{new}}}$ by (3).
Once the top and the bottom paths and boxes with labels 
are fixed, we put single boxes in the remaining region.
\end{enumerate}
We denote by $\mathrm{DTab}'(T_{1};h_{n+1})$ the new tableau obtained by 
adding the labeled box.

The insertion procedure for addition of a ribbon is as follows. 
If the box labeled by $n$ is right to the box labeled by $n+1$ in 
the tableau $\mathrm{DTab}'(T_1;h_{n+1})$, we put a ribbon 
(a skew Young tableau which is connected and does not contain 
a $2$-by-$2$ box) from the north-east edge of the box labeled by $n+1$ to 
the north-west edge of the box labeled by $n$.
Otherwise, we do not add a ribbon.
The new tableau is a Dyck tableau of size $n+1$.

Figure \ref{fig:exip} is an example of the insertion procedure of Dyck tableaux.

\begin{figure}[ht]
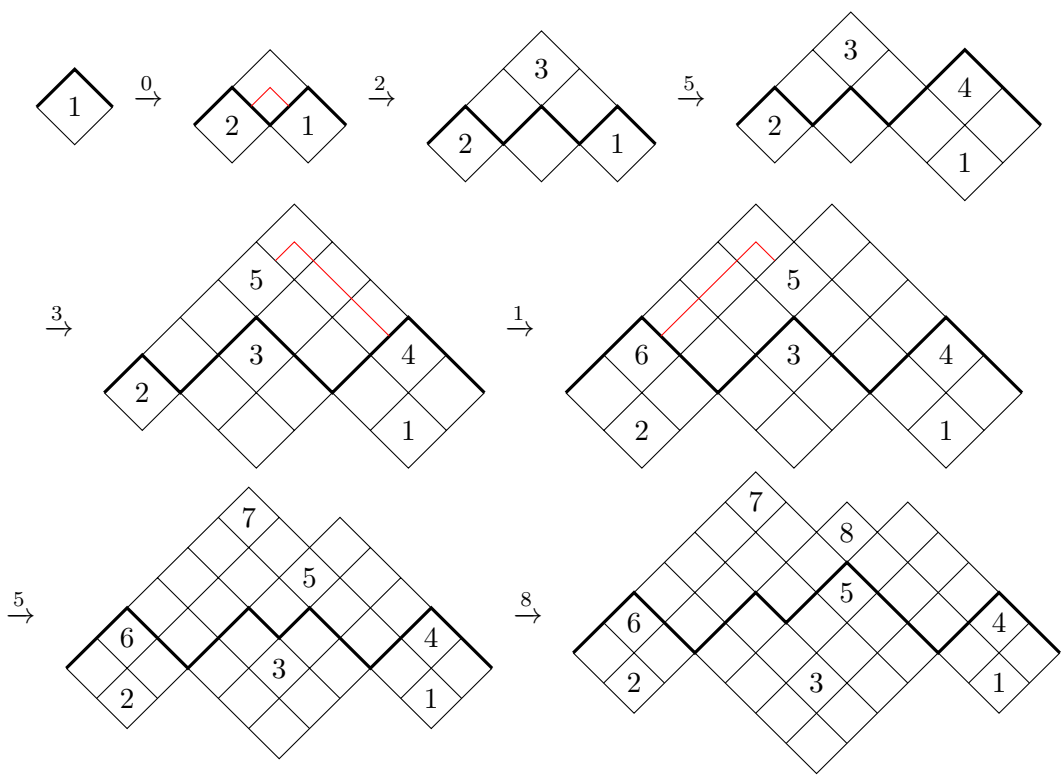

\begin{center}
\tikzpic{-0.5}{[x=0.5cm,y=0.5cm]
\draw(2,0)--(1,-1)--(0,0);
\draw[very thick](0,0)--(1,1)--(2,0);
\node at(1,0){$1$};
}
$\xrightarrow{0}$
\tikzpic{-0.5}{[x=0.5cm,y=0.5cm]
\draw(1,1)--(2,2)--(3,1)(0,0)--(1,-1)--(2,0)--(3,-1)--(4,0);
\draw[very thick](0,0)--(1,1)--(2,0)--(3,1)--(4,0);
\node at(1,0){$2$};\node at(3,0){$1$};
\draw[red](1.5,0.5)--(2,1)--(2.5,0.5);
}
$\xrightarrow{2}$
\tikzpic{-0.5}{[x=0.5cm,y=0.5cm]
\draw(1,1)--(3,3)--(5,1)(2,2)--(3,1)--(4,2)(0,0)--(1,-1)--(2,0)--(3,-1)--(4,0)--(5,-1)--(6,0);
\draw[very thick](0,0)--(1,1)--(2,0)--(3,1)--(4,0)--(5,1)--(6,0);
\node at(1,0){$2$};\node at(3,2){$3$};\node at(5,0){$1$};
}
$\xrightarrow{5}$
\tikzpic{-0.5}{[x=0.5cm,y=0.5cm]
\draw(1,1)--(3,3)--(5,1)(2,2)--(3,1)--(4,2);
\draw(0,0)--(1,-1)--(2,0)--(3,-1)--(4,0)--(6,-2)--(8,0)(5,1)--(7,-1)(5,-1)--(7,1);
\draw[very thick](0,0)--(1,1)--(2,0)--(3,1)--(4,0)--(6,2)--(8,0);
\node at(1,0){$2$};\node at(3,2){$3$};\node at(6,-1){$1$};\node at(6,1){$4$};
} \\
$\xrightarrow{3}$
\tikzpic{-0.5}{[x=0.5cm,y=0.5cm]
\draw[very thick](0,0)--(1,1)--(2,0)--(4,2)--(6,0)--(8,2)--(10,0);
\draw(1,1)--(5,5)--(8,2)(2,2)--(3,1)(3,3)--(4,2)--(6,4)(4,4)--(7,1)(5,1)--(7,3);
\draw(0,0)--(1,-1)--(2,0)--(4,-2)--(6,0)--(8,-2)--(10,0)
     (3,1)--(5,-1)(3,-1)--(5,1)(7,1)--(9,-1)(7,-1)--(9,1);
\node at (1,0){$2$};\node at(4,1){$3$};\node at(4,3){$5$};\node at(8,-1){$1$};\node at(8,1){$4$};
\draw[red](4.5,3.5)--(5,4)--(7.5,1.5);
}
$\xrightarrow{1}$
\tikzpic{-0.5}{[x=0.5cm,y=0.5cm]
\draw[very thick](0,0)--(2,2)--(4,0)--(6,2)--(8,0)--(10,2)--(12,0);
\draw(2,2)--(5,5)--(11,-1)(1,-1)--(7,5)--(10,2)(1,1)--(3,-1)(3,3)--(7,-1)(4,4)--(6,2)--(8,4)
     (5,-1)--(9,3)(9,-1)--(11,1)(0,0)--(2,-2)--(4,0)--(6,-2)--(8,0)--(10,-2)--(12,0);
\node at(2,-1){$2$};\node at(2,1){$6$};\node at(6,1){$3$};\node at(6,3){$5$};
\node at(10,-1){$1$};\node at(10,1){$4$};
\draw[red](2.5,1.5)--(5,4)--(5.5,3.5);
}\\
$\xrightarrow{5}$
\tikzpic{-0.5}{[x=0.4cm,y=0.4cm]
\draw[very thick](0,0)--(2,2)--(4,0)--(6,2)--(7,1)--(8,2)--(10,0)--(12,2)--(14,0);
\draw(2,2)--(6,6)--(13,-1)(1,-1)--(7,5)(1,1)--(3,-1)(3,3)--(8,-2)(4,4)--(6,2)(5,5)--(8,2);
\draw(6,2)--(9,5)--(12,2)(8,2)--(10,4)(5,-1)--(7,1)(6,-2)--(11,3)(7,1)--(9,-1)(11,-1)--(13,1);
\draw(0,0)--(2,-2)--(4,0)--(7,-3)--(10,0)--(12,-2)--(14,0);
\node at(2,-1){$2$};\node at(2,1){$6$};\node at(6,5){$7$};\node at(7,0){$3$};\node at(8,3){$5$};
\node at(12,1){$4$};\node at(12,-1){$1$};
}
$\xrightarrow{8}$
\tikzpic{-0.5}{[x=0.4cm,y=0.4cm]
\draw[very thick](0,0)--(2,2)--(4,0)--(6,2)--(7,1)--(9,3)--(12,0)--(14,2)--(16,0);
\draw(2,2)--(6,6)--(9,3)--(11,5)--(14,2)(1,1)--(3,-1)(1,-1)--(7,5)(3,3)--(9,-3)
     (4,4)--(6,2)--(9,5)--(15,-1)(5,5)--(11,-1);
\draw(5,-1)--(7,1)(6,-2)--(12,4)(7,-3)--(13,3)(7,1)--(10,-2)(13,-1)--(15,1);
\draw(0,0)--(2,-2)--(4,0)--(8,-4)--(12,0)--(14,-2)--(16,0);
\node at(2,1){$6$};\node at(2,-1){$2$};\node at(6,5){$7$};\node at(9,4){$8$};
\node at(9,2){$5$};\node at(8,-1){$3$};\node at(14,1){$4$};\node at(14,-1){$1$};
}

\end{center}
\caption{Insertion procedure for $\mathbf{h}=(0,0,2,5,3,1,5,8)$. The boxes with red lines are 
the ribbon added in the process.}
\label{fig:exip}
\end{figure}
 
\begin{remark}
The original definition of Dyck tableaux uses a dotted box instead of a box
labeled by an integer. Then, to add a ribbon during the insertion procedure,
the notions of an eligible box and a special box are necessary. 
See the insertion procedure for a weighted Dyck word below.
\end{remark}

To show the insertion procedure has an inverse, we introduce 
the inverse insertion procedure for $\mathbf{h}$ as follows: 
\begin{enumerate}
\item Find two boxes with the labels $n$ and $n-1$.
If the box labeled by $n-1$ is right to the box labeled by $n$,
we delete the ribbon connecting these two boxes.
\item We delete the region between the lines $x=h_{n}$ and $x=h_n+2$, 
and combine these two region at the line $x=h_{n}$. 
We delete a path $UD$ from the Dyck path $\lambda$ at $x=h_n$, and 
denote by $\lambda'_{\mathrm{new}}$ the new Dyck path of length $2n-2$.
\item Suppose that a labeled box $B$ corresponds to a pair of an up step 
$s_u$ of $\lambda$ and a down step $s_d$ of $\lambda$.
If the line $x=h_{n}$ is placed between $s_u$ and $s_d$, we move the label
of $B$ by $(-1,1)$. If $s_u$ and $s_d$ are right to the line $x=h_{n}$, 
we move the label of $B$ by $(-2,0)$. Otherwise, we do not move the label.
\item We change the lower boundary $\underline{\lambda}$ 
to $\underline{\lambda'_{\mathrm{new}}}$. 
Since the top and lowest boundaries, and the positions of labeled boxes 
are fixed, we put single boxes in the remaining region.
\end{enumerate}
In this way, we obtain a Dyck tableau of size $n-1$.
The following proposition is obvious from the definition of inverse insertion
procedure.
\begin{prop}
\label{prop:iipDT}
The inverse insertion procedure is the inverse of the insertion procedure.
\end{prop}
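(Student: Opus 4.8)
The plan is to verify directly that the inverse insertion procedure undoes each step of the insertion procedure, and conversely, so that both composite maps are the identity. Writing the insertion procedure as a composition $I_{h_{n+1}}=R\circ A_{h_{n+1}}$, where $A_{h_{n+1}}$ is the addition of a labeled box and $R$ is the ribbon addition, and the inverse insertion procedure as $I^{-1}_{h_{n+1}}=A^{-1}_{h_{n+1}}\circ R^{-1}$, where $R^{-1}$ is the ribbon deletion (step (1)) and $A^{-1}_{h_{n+1}}$ is the box removal (steps (2)--(4)), it suffices to check that $R^{-1}\circ R$ and $A^{-1}_{h_{n+1}}\circ A_{h_{n+1}}$ act as the identity on the relevant diagrams, together with the two reverse compositions. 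Throughout I keep the same value $h_{n+1}$ for the forward insertion on a size-$n$ tableau and for the inverse insertion on the resulting size-$(n+1)$ tableau, using that the box labeled $n+1$ is the top box on the line $x=h_{n+1}+1$ and is never moved afterwards, so the inverse procedure cuts along the correct line.

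First I would treat the ribbon steps. The ribbon addition $R$ inserts a ribbon from the north-east edge of the box labeled $n+1$ to the north-west edge of the box labeled $n$ precisely when box $n$ lies to the right of box $n+1$, and it moves none of the labeled boxes. Hence the ribbon-deletion condition in step (1) of the inverse procedure --- box $n$ to the right of box $n+1$ --- is evaluated on exactly the same configuration of labels, so $R^{-1}$ deletes exactly the ribbon that $R$ added, and does nothing when $R$ did nothing. Thus $R^{-1}\circ R=\mathrm{id}$, and the same reasoning gives $R\circ R^{-1}=\mathrm{id}$.

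The core of the argument is the box addition and removal. The path surgery is manifestly invertible: $A_{h_{n+1}}$ inserts a pair $UD$ into $\lambda$ and into the top path at $x=h_{n+1}$, while $A^{-1}_{h_{n+1}}$ deletes the $UD$ lying between $x=h_{n+1}$ and $x=h_{n+1}+2$, so the boundary paths are restored. The point that requires checking is that the label movements cancel. A labeled box $B$ with associated pair $(s_u,s_d)$ falls into exactly one of three cases according to whether the line $x=h_{n+1}$ lies between $s_u$ and $s_d$, strictly to the left of both, or strictly to the right of both, and $A_{h_{n+1}}$ moves $B$ by $(1,-1)$, $(2,0)$, or $(0,0)$ respectively. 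I would verify that this trichotomy is preserved by the move: if the line is between the two steps, then after translating everything right of the line by $(2,0)$ the up step stays left and the down step stays right, so the line is still between them; if both steps are right of the line they remain right after the shift; and if both are left they are untouched. Consequently the inverse rule applies $(-1,1)$, $(-2,0)$, or $(0,0)$ to $B$, exactly cancelling the forward move. Since the boundary paths and all label positions are thereby restored, and both procedures finish by filling the remaining region with single boxes --- a filling uniquely determined by the top path, the bottom path, and the labeled boxes --- the reconstructed single boxes agree as well, giving $A^{-1}_{h_{n+1}}\circ A_{h_{n+1}}=\mathrm{id}$.

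Finally, the reverse composition $A_{h_{n+1}}\circ A^{-1}_{h_{n+1}}=\mathrm{id}$, and hence $I_{h_{n+1}}\circ I^{-1}_{h_{n+1}}=\mathrm{id}$, follows from the same trichotomy read backwards, since the two procedures are defined by mirror-image rules; here one uses that box $n$ of a size-$n$ tableau sits in the column $x=h_n+1$ with no attached line once its ribbon is removed, so that re-inserting it at $x=h_n$ returns it to its original place. The hard part will be confirming the case-preservation of the label positions under the path surgery described above; once that is in hand, every remaining step pairs off with its inverse and both composites collapse to the identity.
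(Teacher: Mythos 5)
Your proposal is correct, and it supplies in detail exactly the verification that the paper omits: the paper states this proposition with no proof at all, declaring it ``obvious from the definition of inverse insertion procedure.'' Your decomposition into ribbon and box-addition steps, and in particular the check that the trichotomy governing the label movements is preserved by the $(2,0)$-translation so that the $(\pm1,\mp1)$, $(\pm2,0)$, $(0,0)$ moves cancel, is precisely the routine content behind that claim, so there is nothing to add.
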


\paragraph{\bf Insertion procedure for weighted Dyck words}
Let $\mathrm{DTab}(T)$ be a Dyck tableau and $w$ its weighted Dyck word.
We call {\it column addition} in $w$ is the substitutions
\begin{eqnarray}
\blacklozenge&\rightarrow& \blacklozenge UmD\blacklozenge,
\label{wdw:sub1} \\
\mathbb{N}^{p}&\rightarrow&U\mathbb{N}^{p}mD,
\label{wdw:sub2}
\end{eqnarray}
where $m=\mathrm{ch}(i,w)$. Here, $i$ is the position of the substituted 
letter in $\mathbb{N}$, that is $m$ in Eqn. (\ref{wdw:sub1}) and (\ref{wdw:sub2}), in $w$.
In case of (\ref{wdw:sub2}), let $p$ be the position of a letter in $\mathbb{N}$
right to the position $i$ and left to the leftmost $\blacklozenge$ right to the 
position $i$.
Then, we decrease all the values $p$'s satisfying the above condition by one.
This operation corresponds to the insertion of a column during the 
addition of a labeled box for a Dyck tableau.

For example, if we insert $U0D$ after the first $0$ in $\blacklozenge UUU0U00D0DDD\blacklozenge$
gives 
\begin{eqnarray*}
\blacklozenge UUU0U00D0DDD\blacklozenge &\rightarrow&\blacklozenge UUUU00DU00D0DDD\blacklozenge \\
&\rightarrow&\blacklozenge UUUU00D00U0DDDD\blacklozenge.
\end{eqnarray*}

\begin{defn}[Section 2 in \cite{ABDH11}]
Let $D$ be a Dyck tableau corresponding to a weighted Dyck word $w$.
Then, suppose that the weight $w(i)$ corresponds to a labeled box in the Dyck tableau
whose top path is on the top path of $D$.
An eligible weight is a letter $w(i-1)=U$ such that $w(i)=\mathrm{ch}(i,w)$.
A special weight is the right-most eligible weight in a weighted 
Dyck word.
\end{defn}

From Proposition 3 in \cite{ABDH11}, a weighted Dyck word has always 
a unique special weight. 
We denote by $s$ the special weight in a weighted Dyck word.

A {\it ribbon addition} in $w$ is an operation exchanging 
the step $D$ which is added in the column insertion and the 
step $U$ of the special weight if $s$ is right to the added 
step $D$.
All the other letters are not changed.
As in Proposition 2 in \cite{ABDH11}, a ribbon addition transforms a weighted 
Dyck tableau into another Dyck tableau of the same size.

We are ready to introduce the insertion procedure for a weighted 
Dyck tableau of size $n$.
The procedure consists of three steps:
\begin{enumerate}
\item Find the special weight $s$.
\item Perform a column addition (\ref{wdw:sub1}) or (\ref{wdw:sub2}) 
at the position of a $\blacklozenge$ or $\mathbb{N}^{p}$.
\item Perform a ribbon addition to the weighted Dyck word obtained 
in the second step.
\end{enumerate}
By the insertion procedure, we obtain a weighted Dyck word of size $n+1$ from 
a weighted Dyck word of size $n$.

\begin{theorem}
Every Dyck tableau (resp. equivalently weighted Dyck word) can be constructed 
from a box with the label $1$ (resp. $\blacklozenge U0D\blacklozenge$) 
by the insertion procedure recursively.
\end{theorem}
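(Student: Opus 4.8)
The plan is to argue by induction on the size $n$ of the Dyck tableau, peeling off the box with the maximal label by the inverse insertion procedure and then reversing this step via Proposition \ref{prop:iipDT}. Since weighted Dyck words are in bijection with Dyck tableaux (Definition \ref{def:WDW}), and the column addition (\ref{wdw:sub1})--(\ref{wdw:sub2}) together with the ribbon addition on words translate verbatim into the addition of a labeled box and the ribbon addition on tableaux, it suffices to prove the statement for Dyck tableaux; the word version then follows with $\blacklozenge U0D\blacklozenge$ as base. For the base case $n=1$, a Dyck tableau has a single anchor box, which must carry the label $1$, and this is exactly the starting configuration, so nothing needs to be constructed.

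For the inductive step, let $D=\mathrm{DTab}(T_1)$ be a Dyck tableau of size $n\ge 2$, where $T_1$ is the corresponding natural label on $\mathrm{Tree}(\lambda)$. First I would apply the inverse insertion procedure: locate the box labeled $n$, whose column determines the value $h_n\in[0,2(n-1)]$ via the fact that this box lies on the line $x=h_n+1$; delete the ribbon joining the boxes labeled $n$ and $n-1$ when the latter is strictly to the right; excise the column at $x=h_n+1$; and readjust the remaining labels and the frozen region as prescribed. I claim the output $D'$ is again a Dyck tableau, now of size $n-1$, corresponding to the natural label $T_1'$ obtained from $T_1$ by deleting the edge labeled $n$. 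Granting the claim, the induction hypothesis makes $D'$ reachable from the base case by the insertion procedure, and by Proposition \ref{prop:iipDT} applying the insertion procedure to $D'$ with insertion point $x=h_n$ returns $D$. This completes the induction.

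The main obstacle is the claim that the inverse insertion procedure is total and type-preserving on Dyck tableaux, i.e. that it always yields a genuine Dyck tableau of size $n-1$. The key point to establish is that the box labeled $n$, being the maximal label, always occupies a removable position: its top edge lies on the top path $\mu$ of $D$, the only ribbon incident to it is the one running toward the box labeled $n-1$, and deleting its anchor column neither destroys the frozen region nor violates the weak monotonicity of labels along anchor columns. I would prove this by comparing the recursive construction of $\mathrm{DTab}$, in which the integers $1,2,\ldots,n$ are placed in increasing order, with the inverse insertion. Since $n$ is placed last and no later step alters the placement of $1,\ldots,n-1$ except through the reversible redefinition of the frozen region in rule (\ref{DTab4}), removing $n$ precisely undoes its insertion and restores $\mathrm{DTab}(T_1')$; this matches the deletion of the edge labeled $n$ from $T_1$, and the consistency of the top-path and frozen-region bookkeeping is exactly what guarantees that $D'$ is a valid Dyck tableau of size $n-1$.
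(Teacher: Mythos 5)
Your proposal follows essentially the same route as the paper's own proof: induction on the size, peeling off the box with the maximal label via the inverse insertion procedure, invoking the induction hypothesis, and then using Proposition \ref{prop:iipDT} to reattach it. The only difference is that you explicitly flag and sketch a justification for the well-definedness of the inverse insertion procedure (that it always returns a genuine Dyck tableau of size $n-1$), a point the paper simply asserts; this is a reasonable bit of extra care, not a different argument.
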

\begin{proof}
We prove Theorem by induction on the size $n$.
When $n=1$, we have a box with the label $1$ or $\blacklozenge U0D\blacklozenge$.
We assume that Theorem holds true up to the size $n$.
Let $D$ (resp. $w$) be a Dyck tableau (resp. a weighted Dyck word) of size 
$n+1$.
By inverse insertion procedure, we obtain a Dyck tableau $D'$ (resp. a weighted 
Dyck word $w'$) of size $n$.
By induction assumption, $D'$ (resp. $w'$) can be constructed from the Dyck tableau
(resp. the weighted Dyck word) of size $1$ using the insertion procedure.
From Proposition \ref{prop:iipDT}, $D$ (resp. $w$) can be constructed from 
$D'$ (resp. $w'$) by the insertion procedure. 
\end{proof}

We show the generation tree for Dyck tableaux in $\mathcal{T}_{n}$ up to $n=3$
in Figure \ref{GTDT3}.
The label $i$ on an arrow indicates the insertion procedure at $x=i$.
%%%%%%
\begin{figure}[ht]
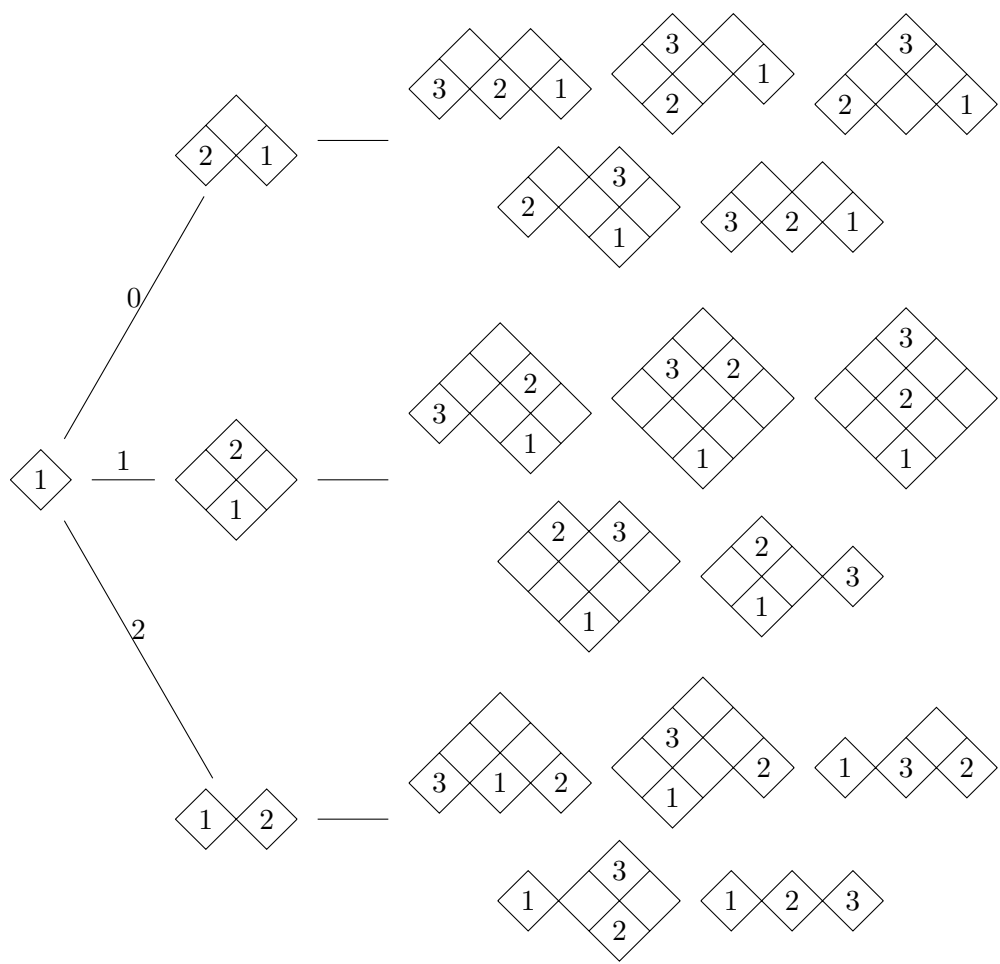

\tikzpic{-0.5}{[x=0.4cm,y=0.4cm,grow=right]
\tikzstyle{level 1}=[level distance=1.5cm, sibling distance=4.5cm]
\tikzstyle{level 2}=[level distance=2cm, sibling distance=1.5cm]
\node[text centered,anchor=east]{
\tikzpic{-0.5}{\draw(0,0)--(1,1)--(2,0)--(1,-1)--(0,0);
	       \node[anchor=center] at (1,0){$1$};
	       }
      }
child {
      node[anchor=west] {
           \tikzpic{-0.5}{\draw(0,0)--(1,1)--(3,-1)--(4,0)(0,0)--(1,-1)--(3,1)--(4,0);
                          \node[anchor=center] at (1,0){$1$};\node[anchor=center] at (3,0){$2$};
                         }
           }
      child {
             node[anchor=west]{$\begin{aligned}
                            \tikzpic{-0.5}{
                                          \draw(0,0)--(3,3)--(6,0)(0,0)--(1,-1)--(4,2)
                                               (1,1)--(3,-1)--(5,1)(2,2)--(5,-1)--(6,0);
                                          \node[anchor=center] at (1,0){$3$};
                                          \node[anchor=center] at (3,0){$1$};
                                          \node[anchor=center] at (5,0){$2$};
                                          }    
                            \tikzpic{-0.5}{
                                          \draw(0,0)--(3,3)--(6,0)(0,0)--(2,-2)--(5,1)
                                               (1,1)--(3,-1)(2,2)--(5,-1)--(6,0)(1,-1)--(4,2);
                                          \node[anchor=center] at (2,1){$3$};
                                          \node[anchor=center] at (5,0){$2$};
                                          \node[anchor=center] at (2,-1){$1$};
                                          }   
                            \tikzpic{-0.5}{
                                          \draw(0,0)--(1,1)--(2,0)--(4,2)--(6,0)(0,0)--(1,-1)--(2,0)--(3,-1)--(5,1)
                                               (3,1)--(5,-1)--(6,0);
                                          \node[anchor=center] at (5,0){$2$};
                                          \node[anchor=center] at (3,0){$3$};
                                          \node[anchor=center] at (1,0){$1$};
                                          } \\
                             \tikzpic{-0.5}{
                                          \draw(0,0)--(1,1)--(2,0)--(4,2)--(6,0)(0,0)--(1,-1)--(2,0)--(4,-2)--(6,0)
                                               (3,1)--(5,-1)(3,-1)--(5,1);
                                          \node[anchor=center] at (4,-1){$2$};
                                          \node[anchor=center] at (4,1){$3$};
                                          \node[anchor=center] at (1,0){$1$};
                                          }
                             \tikzpic{-0.5}{
                                          \draw(0,0)--(1,1)--(2,0)--(3,1)--(4,0)--(5,1)--(6,0)
                                               (0,0)--(1,-1)--(2,0)--(3,-1)--(4,0)--(5,-1)--(6,0);
                                          \node[anchor=center] at (5,0){$3$};
                                          \node[anchor=center] at (3,0){$2$};
                                          \node[anchor=center] at (1,0){$1$};
                                          }\hspace{1.5cm} \end{aligned}$ 
                            }
            }
      edge from parent node[above]{$2$}
      }
child {
      node[anchor=west] {
           \tikzpic{-0.5}{\draw(0,0)--(2,2)--(4,0)(0,0)--(2,-2)--(4,0)(1,1)--(3,-1)(1,-1)--(3,1);
                          \node[anchor=center] at (2,1){$2$};\node[anchor=center] at (2,-1){$1$};
                         }
           }
      child {
             node[anchor=west]{$\begin{aligned}
                            \tikzpic{-0.5}{
                                          \draw(0,0)--(3,3)--(6,0)(0,0)--(1,-1)--(4,2)
                                               (1,1)--(4,-2)--(6,0)(2,2)--(5,-1)(3,-1)--(5,1);
                                          \node[anchor=center] at (1,0){$3$};
                                          \node[anchor=center] at (4,1){$2$};
                                          \node[anchor=center] at (4,-1){$1$};
                                          }    
                            \tikzpic{-0.5}{
                                          \draw(0,0)--(3,3)--(6,0)(0,0)--(3,-3)--(6,0)
                                               (1,1)--(4,-2)(2,2)--(5,-1)(1,-1)--(4,2)(2,-2)--(5,1);
                                          \node[anchor=center] at (2,1){$3$};
                                          \node[anchor=center] at (4,1){$2$};
                                          \node[anchor=center] at (3,-2){$1$};
                                          }   
                            \tikzpic{-0.5}{
                                          \draw(0,0)--(3,3)--(6,0)(0,0)--(3,-3)--(6,0)
                                               (1,1)--(4,-2)(2,2)--(5,-1)(1,-1)--(4,2)(2,-2)--(5,1);
                                          \node[anchor=center] at (3,2){$3$};
                                          \node[anchor=center] at (3,0){$2$};
                                          \node[anchor=center] at (3,-2){$1$};
                                          } \\
                             \tikzpic{-0.5}{
                                          \draw(0,0)--(2,2)--(5,-1)(0,0)--(3,-3)--(6,0)
                                               (1,1)--(4,-2)(1,-1)--(4,2)--(6,0)(2,-2)--(5,1);
                                          \node[anchor=center] at (4,1){$3$};
                                          \node[anchor=center] at (2,1){$2$};
                                          \node[anchor=center] at (3,-2){$1$};
                                          }
                             \tikzpic{-0.5}{
                                          \draw(0,0)--(2,2)--(4,0)--(5,1)--(6,0)
                                               (0,0)--(2,-2)--(4,0)--(5,-1)--(6,0)
                                               (1,1)--(3,-1)(1,-1)--(3,1);
                                          \node[anchor=center] at (5,0){$3$};
                                          \node[anchor=center] at (2,1){$2$};
                                          \node[anchor=center] at (2,-1){$1$};
                                          }\hspace{1.5cm} \end{aligned}$ 
                            }
            }
            edge from parent node[above]{$1$}
      }
child {
      node[anchor=west] {
           \tikzpic{-0.5}{\draw(0,0)--(2,2)--(4,0)(0,0)--(1,-1)--(3,1)(1,1)--(3,-1)--(4,0);
                          \node[anchor=center] at (1,0){$2$};\node[anchor=center] at (3,0){$1$};
                         }
           }
      child{
           node[anchor=west]{$\begin{aligned}
                            \tikzpic{-0.5}{
                                          \draw(0,0)--(2,2)--(5,-1)--(6,0)(0,0)--(1,-1)--(4,2)--(6,0)
                                               (1,1)--(3,-1)--(5,1);
                                          \node[anchor=center] at (1,0){$3$};
                                          \node[anchor=center] at (3,0){$2$};
                                          \node[anchor=center] at (5,0){$1$};
                                          }    
                            \tikzpic{-0.5}{
                                          \draw(0,0)--(2,2)--(3,1)--(4,2)--(6,0)(0,0)--(2,-2)--(5,1)
                                               (1,1)--(3,-1)(1,-1)--(3,1)--(5,-1)--(6,0);
                                          \node[anchor=center] at (2,1){$3$};
                                          \node[anchor=center] at (2,-1){$2$};
                                          \node[anchor=center] at (5,0){$1$};
                                          }   
                            \tikzpic{-0.5}{
                                          \draw(0,0)--(2,2)--(5,-1)--(6,0)(0,0)--(1,-1)--(4,2)--(6,0)
                                               (1,1)--(3,-1)--(5,1)(2,2)--(3,3)--(4,2);
                                          \node[anchor=center] at (1,0){$2$};
                                          \node[anchor=center] at (3,2){$3$};
                                          \node[anchor=center] at (5,0){$1$};
                                          } \\
                            \tikzpic{-0.5}{
                                          \draw(0,0)--(2,2)--(5,-1)--(6,0)(0,0)--(1,-1)--(4,2)--(6,0)
                                               (1,1)--(3,-1)--(5,1)(3,-1)--(4,-2)--(5,-1);
                                          \node[anchor=center] at (1,0){$2$};
                                          \node[anchor=center] at (4,1){$3$};
                                          \node[anchor=center] at (4,-1){$1$};
                                          }
                             \tikzpic{-0.5}{
                                          \draw(0,0)--(2,2)--(5,-1)--(6,0)(0,0)--(1,-1)--(4,2)--(6,0)
                                               (1,1)--(3,-1)--(5,1);
                                          \node[anchor=center] at (1,0){$3$};
                                          \node[anchor=center] at (3,0){$2$};
                                          \node[anchor=center] at (5,0){$1$};
                                          }\hspace{1.5cm} \end{aligned}$ 
                            }
           }
      edge from parent node[above]{$0$}
      };
}
\caption{Generation tree for Dyck tableaux of size at most $3$}
\label{GTDT3}
\end{figure}

\subsection{Dyck tableaux and cover-inclusive Dyck tilings}

We construct a bijection from a Dyck tableau $\mathrm{DTab}(T_1)$ to a Dyck tiling $D$.
Recall that an anchor box in the zeroth floor corresponds to a pair of an up step
and a down step in $\lambda$.
When an anchor box in $\mathrm{DTab}(T_1)$ is in the $p$-th floor, 
we have $p$ non-trivial Dyck tiles (not a single box) above the corresponding pair 
of the up step and the down step.

The lower boundary of the Dyck tiling $D$ is given by the path $\lambda$.
The top path $\mu$ of $D$ is determined by the path satisfying
\begin{enumerate}
\item \label{DTtop1}
The path is above labeled boxes, parallel boxes and turn boxes.
\item \label{DTtop2}
The path is above the boxes forming a ribbon in the insertion procedure 
of $\mathrm{DTab}(T_{1})$.
\item The lowest path with the properties (\ref{DTtop1}) and (\ref{DTtop2}).
\end{enumerate}
We may have empty boxes below $\mu$.

Once we fix non-trivial Dyck tiles, the lowest and top paths $\lambda$ and $\mu$, we put 
single boxes in the remaining region.
In this way, we obtain a Dyck tiling.
We denote by $\phi_0$ the above map from Dyck tableaux to Dyck tilings.
Figure \ref{fig:TDTDTR} is an example of the map $\phi_{0}$.

\begin{figure}[ht]
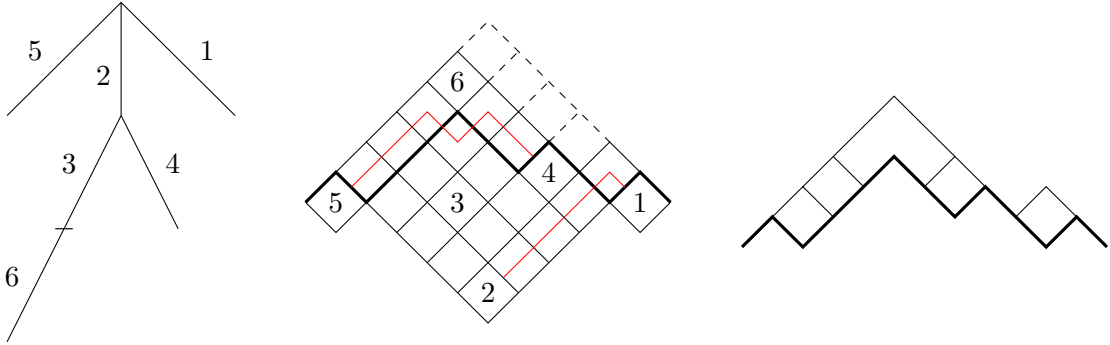

\tikzpic{-0.5}{
\coordinate
	child{coordinate (c5)}
	child{coordinate (c2)
	child{coordinate (c3)
	child{coordinate (c6)}
	child[missing]
	}
	child{coordinate (c4)}
	}
	child{coordinate (c1)};
\node[anchor=south east] at ($(0,0)!.6!(c5)$){5};
\node[anchor=east] at ($(0,0)!.65!(c2)$){2};
\node[anchor=south west] at ($(0,0)!.6!(c1)$){1};
\node[anchor=south east] at ($(c2)!.6!(c3)$){3};
\node[anchor=south east] at ($(c3)!.6!(c6)$){6};
\node[anchor=south west] at ($(c2)!.6!(c4)$){4};
\draw(c3)node{$-$};
}
\quad
\tikzpic{-0.5}{[x=0.4cm,y=0.4cm]
\draw[very thick](0,0)--(1,1)--(2,0)--(5,3)--(7,1)--(8,2)--(10,0)--(11,1)--(12,0);
\draw(0,0)--(1,-1)--(2,0)--(6,-4)--(10,0)--(11,-1)--(12,0);
\draw(3,-1)--(6,2)(4,-2)--(7,1)(5,-3)--(9,1);
\draw(3,1)--(7,-3)(4,2)--(8,-2)(7,1)--(9,-1);
\draw(1,0)node{$5$}
(5,4)node{$6$}(5,0)node{$3$}
(8,1)node{$4$}
(6,-3)node{$2$}(11,0)node{$1$};
\draw[dashed](5,5)--(6,6)--(10,2)(6,4)--(7,5)(7,3)--(8,4)(8,2)--(9,3);
\draw(1,1)--(5,5)--(8,2)(2,2)--(3,1)(3,3)--(4,2)(4,4)--(5,3)
(5,3)--(6,4)(6,2)--(7,3)(9,1)--(10,2)--(11,1);
\draw[red](6.5,-2.5)--(10,1)--(10.5,0.5)(1.5,0.5)--(4,3)--(5,2)--(6,3)--(7.5,1.5);
}
\quad
\tikzpic{-0.5}{[x=0.4cm,y=0.4cm]
\draw[very thick](0,0)--(1,1)--(2,0)--(5,3)--(7,1)--(8,2)--(10,0)--(11,1)--(12,0);
\draw(1,1)--(5,5)--(8,2)(9,1)--(10,2)--(11,1)(2,2)--(3,1)(3,3)--(4,2)
      (6,2)--(7,3);
}
\caption{A natural label of a tree (the left picture), a generalized Dyck tableau 
associated with the natural label (the middle picture) and the cover-inclusive 
Dyck tiling for the Dyck tableau (the right picture).
}
\label{fig:TDTDTR}
\end{figure}

\begin{theorem}
The map $\phi_0$ is a bijection between the cover-inclusive Dyck tilings whose lower
path is $\lambda$ and Dyck tableaux characterized by natural labels of $\mathrm{Tree}(\lambda)$.
\end{theorem}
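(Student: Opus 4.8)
The plan is to realize $\phi_0$ as a composition of two bijections that have already been established. Recall that the insertion procedure for Dyck tableaux is parametrized by an insertion history $\mathbf{h}=(h_1,\ldots,h_n)$, which by the discussion in Section~\ref{sec:DTSDTR} is itself in bijection with the natural labels $T_1$ of $\mathrm{Tree}(\lambda)$; together with Proposition~\ref{prop:iipDT} and the recursive generation theorem, this shows that $T_1\mapsto\mathrm{DTab}(T_1)$ is a bijection from natural labels onto Dyck tableaux with boundary path $\lambda$. On the other hand, the DTR bijection of \cite{KMPW12} is a bijection from natural labels of $\mathrm{Tree}(\lambda)$ onto cover-inclusive Dyck tilings whose lower path is $\lambda$. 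Hence it suffices to prove the single identity
\begin{eqnarray*}
\phi_0(\mathrm{DTab}(T_1))=\mathrm{DTR}(T_1)
\end{eqnarray*}
for every natural label $T_1$, since then $\phi_0=\mathrm{DTR}\circ\mathrm{DTab}^{-1}$ is a composite of bijections.

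I would prove this identity by induction on the number $n$ of edges. The base case $n=1$ is immediate: the single labeled box is sent by $\phi_0$ to the one-box Dyck tiling, which is exactly $\mathrm{DTR}$ of the one-edge tree. For the inductive step, let $T_1'$ be the natural label obtained by deleting the edge labeled $n$, and assume $\phi_0(\mathrm{DTab}(T_1'))=\mathrm{DTR}(T_1')$. Both $\mathrm{DTab}(T_1)$ and $\mathrm{DTR}(T_1)$ are produced from their size-$(n-1)$ predecessors by a two-step recursion indexed by the same insertion point $h_n$: for the tableau, addition of the labeled box $n$ followed by ribbon addition; for the tiling, the spread $\mathrm{sp}_{h_n}$ followed by the ribbon-growth $\mathrm{RG}$. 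The heart of the argument is to check that $\phi_0$ intertwines these two recursions step by step.

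Concretely, I would first show that $\phi_0$ applied to the intermediate tableau $\mathrm{DTab}'(T_1';h_n)$, reading off its tiles and top path, produces $\mathrm{sp}_{h_n}(\mathrm{DTR}(T_1'))$: the translation of the right piece by $(2,0)$ and the insertion of a fresh $UD$ in both the top path and $\lambda$ are literally the same operation in the two pictures, and the bookkeeping of floors of anchor boxes (an anchor box in the $p$-th floor records $p$ non-trivial Dyck tiles) matches the effect of the spread on tile lengths described in Section~\ref{sec:DTSDTR}. It then remains to match the ribbon addition with the ribbon-growth. Here the condition ``the box labeled $n-1$ is right of the box labeled $n$'' that triggers a ribbon in the tableau must be identified with the condition defining the special column of $\mathrm{sp}_{h_n}(\mathrm{DTR}(T_1'))$; the eligible/special-weight formalism imported from \cite{ABDH11} for weighted Dyck words is precisely the translation of the eligible/special-column formalism of \cite{KMPW12}, and condition~(\ref{DTtop2}) in the definition of $\phi_0$ ensures that the ribbon drawn in the tableau is exactly the ribbon grown on the tiling side.

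The main obstacle I anticipate is exactly this last matching of the special column with the Dyck-tableau ribbon in the genuinely general (non-zigzag) setting. For zigzag $\lambda$ the equivalence of Dyck tableaux and DTR is classical, but for general $\lambda$ the frozen region, the multi-floor anchor boxes, and the redefinition of the frozen region in rules~(\ref{DTab1})--(\ref{DTab4}) complicate the geometry; I would need to verify that the rightmost eligible column still lands on the north-west edge of the box labeled $n-1$ after the spread, and that no empty box inside the frozen region is mistaken for the top of a tile. This amounts to a careful case analysis paralleling the one already carried out in the proof of the well-definedness of the Hermite-history construction, and I would reuse the same local configurations around consecutive up steps $U_i,U_{i+1}$ appearing there to control where the ribbon can terminate.
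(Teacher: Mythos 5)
Your proposal is correct and follows essentially the same route as the paper: an induction on $n$ proving $\phi_0(\mathrm{DTab}(T_1))=\mathrm{DTR}(T_1)$, matching the spread with the addition of a labeled box (the $p$-th floor of an anchor box recording $p$ non-trivial tiles) and the ribbon-growth with the tableau's ribbon addition via the identification of the special column with the box labeled $n-1$. The only difference is one of emphasis — you flag the special-column matching in the general non-zigzag setting as the delicate point, which the paper asserts rather tersely — but the structure of the argument is the same.
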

\begin{proof}
We prove Theorem by induction with respect to the size $n$ of a Dyck tableau.
We have a unique Dyck tableau of size one, {\it i.e.,} a single labeled box.
The upper and lower paths of the Dyck tableau are a path $UD$. 
The DTR bijection for a tree with one edge gives a cover-inclusive 
Dyck tiling whose upper and lower paths are $UD$.
In both cases, the Dyck tableau and the DTR bijection give the same Dyck tiling.
Theorem is true for $n=1$.

Suppose that Theorem is true for the size $n-1$. 
Let $T_{1}$ be a natural label of a tree $\mathrm{Tree}(\lambda)$ of size $n-1$,
$\mathrm{DTab}(T_{1})$ be a Dyck tableau associated with $T_{1}$, 
and $\mathrm{DTR}(T_{1})$ be a Dyck tiling obtained by the DTR bijection on $T_{1}$. 
We denote by $T$ a natural label of size $n$ obtained from $T_{1}$ by attaching 
a single edge at a node of $T_{1}$. 
We want to show 
\begin{eqnarray}
\label{eqn:DTRDTab}
\mathrm{DTR}(T)=\phi_{0}(\mathrm{DTab}(T)).
\end{eqnarray}
Eqn. (\ref{eqn:DTRDTab}) for $n-1$ implies that the lowest paths and top paths of 
$\mathrm{DTR}(T_1)$ coincides with the ones of $\phi_{0}(\mathrm{DTab}(T_1))$.
It is clear that the insertion procedures of the DTR bijection and a Dyck tableau 
produce the same path $\lambda$.
Similarly, the top path after the spread of $\mathrm{DTR}(T_{1})$ at $x=m$ coincides with 
the top path after the addition of a labeled box.
We add a ribbon after the spread in case of the DTR bijection and after the addition of 
a labeled box in case of insertion process of a Dyck tableau.
Note that the top box at the special column in the DTR bijection is nothing but 
the box with the label $n-1$ in the Dyck tableau.
Thus, the top paths after the ribbon addition are the same.

Suppose we perform a spread of a Dyck tiling at $x=m$.
Let $p$ be the number of Dyck tiles above $\lambda$ at $x=m$ in the Dyck tiling.
We spread these $p$ Dyck tiles of length $l$ to the $p$ Dyck tiles of length $l+1$.
By induction assumption, the spread is equivalent to perform the addition of 
the labeled box in a Dyck tableau at $x=m$.
Then, the unoccupied anchor box is at the $p$-th floor at $x=m$.
By addition of the labeled box, we put the label $n$ at the $p$-th floor.
Thus, by $\phi_{0}$, $p$ is interpreted as the number of Dyck tiles in the DTR bijection
and as the $p$-th floor in the Dyck tableau. 

Above arguments implies that $\mathrm{DTR}(T)$ and $\phi_{0}(\mathrm{DTab}(T))$ have 
the same top and lowest paths, and all Dyck tiles of length larger than zero have the same length and 
these Dyck tiles are positioned at the same place.
These two Dyck tilings are to be same, that is, we have Eqn. (\ref{eqn:DTRDTab}).
\end{proof}

\subsection{Generalized patterns and shadow and clear boxes of Dyck tableaux}
In \cite{ABDH11}, they study several generalized patterns in permutations and 
their relations to Dyck tableaux.
The result in \cite{ABDH11} can be generalized to Dyck tableaux for general Dyck 
paths (not necessarily zigzag paths).
In this subsection, we study generalized patterns on a label of the 
tree $\mathrm{Tree}(\lambda)$ and their relations to Dyck tableaux.

\begin{defn}[Definition of shadow and clear boxes in \cite{ABDH11}]
\label{def:shadowclear}
%%In a Dyck tableau, the parallel and turn boxes above (resp. below) 
%%a dot are called shadow (resp. clear) boxes.

In a Dyck tableau, let $a$ be a box, which is either a parallel or turn 
box, and $b$ be a labeled box.
We call $a$ a shadow (resp. clear) box if $a$ is above (resp. below) $b$ such 
that there is no labeled box between $a$ and $b$.

\end{defn}

We define shadow and clear boxes in a Dyck tableau in Definition \ref{def:shadowclear}.
For later purpose, we refine its definition as follows.
%%%%%%%%%%%%%%%
Let $D=\mathrm{DTab}(L)$ be a Dyck tableau for a natural label $L$.
Recall that $D$ consists of labeled boxes, paths connecting two labeled boxes
and empty boxes.
If we have a $\wedge$-turn box $t$ such that a box below $t$ is an empty 
boxes, we locally transform the path containing $t$ as Figure \ref{fig:lmove}.
\begin{figure}[ht]
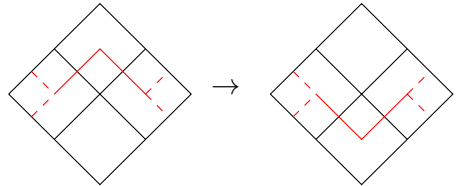

\tikzpic{-0.5}{[scale=0.6]
\draw(0,0)--(2,2)--(4,0)--(2,-2)--(0,0);
\draw(1,1)--(3,-1)(1,-1)--(3,1);
\draw[red](1,0)--(2,1)--(3,0);
\draw[red,dashed](0.5,0.5)--(1,0)(0.5,-0.5)--(1,0);
\draw[red,dashed](3,0)--(3.5,0.5)(3,0)--(3.5,-0.5);
}
$\rightarrow$
\tikzpic{-0.5}{[scale=0.6]
\draw(0,0)--(2,2)--(4,0)--(2,-2)--(0,0);
\draw(1,1)--(3,-1)(1,-1)--(3,1);
\draw[red](1,0)--(2,-1)--(3,0);
\draw[red,dashed](0.5,0.5)--(1,0)(0.5,-0.5)--(1,0);
\draw[red,dashed](3,0)--(3.5,0.5)(3,0)--(3.5,-0.5);
}
\caption{A local move for a path}
\label{fig:lmove}
\end{figure}
In the process of a local move, we never move the positions of 
labeled boxes.
We denote by $D'$ the tableau obtained from $D$ by local moves such that 
we can not apply a local move to any $\wedge$-turn box in $D'$.
This means that one cannot lower paths in $D'$.

Note that a shadow (resp. clear) box in $D$ is obviously bijective 
to a shadow (resp. clear) box in $D'$.

\begin{defn}
Let $D'$ be the tableau obtained from $D$ as above.
Let $s$ be a shadow (resp. clear) box above (resp. below) a labeled box 
$b$ in a Dyck tableau $D$. 
We call $s$ a proper shadow (resp. clear) box if there is neither 
empty boxes nor a labeled box below (resp. above) $s$
and above (resp. below) $b$ in $D'$.
\end{defn}

Let $T_1$ be a natural label of $\mathrm{Tree}(\lambda)$ and 
$e(i)$ for $1\le i\le n$ be the edge of $\mathrm{Tree}(\lambda)$ 
labeled by the integer $i$.
We have a position tree $\mathrm{PosTree}(\lambda)$ for $\lambda$ (see Definition \ref{def:PosTree})
and let $\mathrm{Pos}(e(i))$ be the label of the edge $e(i)$ 
in $\mathrm{PosTree}(\lambda)$.

A {\it pattern} $2^+2$ of a natural label $T_1$ is a relation 
of $e(a)$ and $e(b)$ such that $a=b+1$ and $e(a)\rightarrow e(b)$.
A {\it pattern} $2^+12$ of a natural label $T_{1}$ is a relation
among $e(a), e(b)$ and $e(c)$ such that 
\begin{enumerate}
\item $b<c$ and $a=c+1$,
\item $\mathrm{Pos}(e(a))<\mathrm{Pos}(e(b))<\mathrm{Pos}(e(c))$, and 
\item there is no $b'$ such that $\mathrm{Pos}(e(b))=\mathrm{Pos}(e(b'))$ and $b<b'<c$.
\end{enumerate}
A {\it pattern} $1^+21$ of a natural label $T_{1}$ is a relation 
among $e(a), e(b)$ and $e(c)$ such that 
\begin{enumerate}
\item $b>a$ and $a=c+1$, 
\item $\mathrm{Pos}(e(a))<\mathrm{Pos}(e(b))<\mathrm{Pos}(e(c))$, and 
\item there is no $b'$ such that $\mathrm{Pos}(e(b))=\mathrm{Pos}(e(b'))$ and $b>b'>a$.
\end{enumerate}

\begin{prop}[Generalization of Proposition 8 in \cite{ABDH11}]
\label{prop:2+2}
An added ribbon of a Dyck tableau is in bijection with the patterns $2^+2$ of 
$T_{1}$.
In $\mathrm{DTab}(T_1)$, if we read from left to right the labels of 
boxes that are connected by a ribbon, we get the pattern $2^+2$ for $T_1$.
\end{prop}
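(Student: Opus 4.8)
The plan is to prove the equivalence one ribbon at a time, matching each ribbon to a single pattern $2^+2$. Recall from the insertion procedure that when the labeled box $i$ is added, a ribbon is created if and only if the box labeled $i-1$ lies strictly to the right of the box labeled $i$ in $\mathrm{DTab}'(T_{1};h_{i})$, and in that case the ribbon runs from the north-east edge of box $i$ (its left endpoint) to the north-west edge of box $i-1$ (its right endpoint). Since exactly one ribbon can be created at each insertion step and, by the stability of the spread under later insertions, a ribbon once created is never destroyed, the ribbons of $\mathrm{DTab}(T_{1})$ are indexed by those $i\in[2,n]$ for which box $i-1$ is strictly right of box $i$. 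Reading the two labels of such a ribbon from left to right gives the pair $(i,i-1)$. Hence the whole statement reduces to the single equivalence: box $i-1$ is strictly to the right of box $i$ if and only if $e(i)\rightarrow e(i-1)$; for then setting $a=i$ and $b=i-1$ we have $a=b+1$ and $e(a)\rightarrow e(b)$, which is exactly the pattern $2^+2$, read left to right as $(a,b)$.

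First I would reduce the positional statement about labeled boxes to one about anchor columns. I claim that in $\mathrm{DTab}(T_{1})$ the labeled box carrying $j$ lies in the same column as the anchor box of the edge $e(j)$. This follows by tracking the construction rules: each label is first assigned to its $0$-th floor anchor and is only ever lifted to a higher floor (rules (2) and (3)), and the translation of a partial frozen region in rule (4d) is by $(0,2p)$; all of these operations are purely vertical and leave the column unchanged. Consequently ``box $i-1$ strictly right of box $i$'' is equivalent to ``the anchor of $e(i-1)$ lies in a column strictly to the right of the anchor of $e(i)$.''

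It then remains to show that the left--right order of anchor columns faithfully records the relation $\rightarrow$ on edges. By construction of the frozen region (see Figure \ref{fig:frab}), two edges in ancestor--descendant relation have their anchor boxes stacked in a common column, while two edges neither of which is an ancestor of the other sit in distinct columns whose order coincides with the left--right order of the edges in $\mathrm{Tree}(\lambda)$. Here the key arithmetic observation is that, since labels increase from the root to the leaves and $i-1<i$, the edge $e(i-1)$ can never be a descendant of $e(i)$. Thus only three cases occur: $e(i-1)$ is an ancestor of $e(i)$, in which case $e(i)\uparrow e(i-1)$ and the two anchors share a column (no ribbon); $e(i-1)$ is strictly left of $e(i)$, in which case its anchor column is to the left (no ribbon); or $e(i-1)$ is strictly right of $e(i)$, that is $e(i)\rightarrow e(i-1)$, in which case its anchor column is strictly to the right (ribbon). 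This is precisely the desired equivalence, and combined with the first paragraph it identifies the ribbons of $\mathrm{DTab}(T_{1})$ with the patterns $2^+2$ of $T_{1}$ and shows that reading a ribbon left to right recovers the pattern.

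The main obstacle I expect is the geometric bookkeeping underlying the two reductions: verifying that every repositioning in the construction of $\mathrm{DTab}(T_{1})$ is vertical so that a label genuinely stays in its anchor column, and verifying that the anchor columns of the frozen region encode the strictly-left/strictly-right/ancestor trichotomy exactly as claimed. A secondary point to check is that the relation $e(i)\rightarrow e(i-1)$, which the pattern $2^+2$ reads off the \emph{final} tree $T_{1}$, agrees with the relation present at the moment box $i$ is inserted; this holds because inserting an edge with a larger label never reorders the existing edges, equivalently because the spread preserves the left--right order of all columns.
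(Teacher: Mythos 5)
Your overall strategy (one ribbon per insertion step, reduce to a positional statement about anchor columns, then a trichotomy on the relative position of $e(i-1)$ and $e(i)$) is close in spirit to the paper's proof, which also argues at the moment box $a=b+1$ is inserted. But there is a genuine error in the geometric claim that carries your third paragraph: it is \emph{not} true that two edges in ancestor--descendant relation have their anchor boxes in a common column of $\mathrm{DTab}(T_1)$. The anchor of an edge whose $U$ and $D$ steps sit at positions $i_U$ and $i_D$ of $\lambda$ lies in column $(i_U+i_D-1)/2$, and for a parent this can be strictly left of, strictly right of, or equal to the child's column depending on the other subtrees hanging off the parent. Figure \ref{fig:frab} already exhibits this: the edge at positions $(3,10)$ has its anchor in column $6$, while its child at $(4,7)$ has its anchor in column $5$ and its other child at $(8,9)$ in column $8$. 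So in the case $e(i)\uparrow e(i-1)$ your criterion ``anchor of $e(i-1)$ strictly right of anchor of $e(i)$'' can hold in the final tableau even though no ribbon is present, and the equivalence you reduce everything to is false as a statement about $\mathrm{DTab}(T_1)$.

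The root of the problem is that the ribbon-creation test is evaluated in $\mathrm{DTab}'(T_1;h_i)$, i.e.\ on the tree restricted to labels $\le i$, and your closing remark that ``the spread preserves the left--right order of all columns'' does not transport the test to the final tableau: a spread whose cut line passes between the two steps of one edge but not of another moves those boxes by different horizontal amounts, so the \emph{same-column} relation is not preserved (this is exactly how an ancestor's anchor drifts away from its descendant's column as more edges are inserted below it). The repair is to run your trichotomy in the restricted tree at time $i$: if $e(i-1)$ is an ancestor of $e(i)$, then every other descendant of $e(i-1)$ has label $>i-1$, hence $\ge i$, hence is absent from the restricted tree; so there $e(i-1)$ is the parent of $e(i)$ and $e(i)$ is its unique child, their step pairs are nested adjacently, and their anchors do share a column --- whence no ribbon. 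The two incomparable cases survive restriction unchanged and give the ribbon exactly when $e(i)\rightarrow e(i-1)$. With that substitution your argument goes through and essentially reproduces the paper's proof, which works directly with the insertion and inverse-insertion steps rather than with final anchor columns.
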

\begin{proof}
Let $e(a)$ and $e(b)$ be labeled boxes satisfying $2^{+}2$ pattern.
The box labeled by $a$ is inserted immediately after the box labeled by $b$.
Since $e(a)\rightarrow e(b)$, the box labeled by $a$ is left to the box labeled 
by $b$. Therefore, there is a ribbon between $a$ and $b$ in the Dyck tableau.

Conversely, suppose labeled boxes $a$ and $b$ are connected by a ribbon. 
By the inverse of insertion algorithm, we remove the box labeled by $a$
immediately before the box labeled by $b$ and $e(a)$ is left to $e(b)$. 
This means $a=b+1$.
Further, if $e(a)\uparrow e(b)$ with $a=b+1$, we have no ribbon connecting the labeled
boxes $a$ and $b$. Thus, we have $e(a)\rightarrow e(b)$.
\end{proof}

\begin{prop}[Generalization of Proposition 9 in \cite{ABDH11}]
Shadow boxes of $T$ are bijective to the patterns $2^{+}12$.
Clear boxes of $T$ are bijective to the patterns $1^{+}21$.
\end{prop}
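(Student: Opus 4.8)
The plan is to build directly on Proposition \ref{prop:2+2} and reduce everything to analysing the interior boxes of ribbons. By Proposition \ref{prop:2+2} the added ribbons of $\mathrm{DTab}(T_1)$ are in bijection with the patterns $2^+2$, i.e. with pairs $(e(a),e(c))$, where $c:=a-1$ and $e(a)\rightarrow e(c)$; such a ribbon is the \emph{lowest} line running from the north-east edge of the box labelled $a$ to the north-west edge of the box labelled $c$, and it traverses exactly the columns whose position is strictly between $\mathrm{Pos}(e(a))$ and $\mathrm{Pos}(e(c))$. Over each such intermediate column it deposits a parallel box or a turn box, and these interior boxes are precisely the boxes eligible to be shadow or clear boxes (in the sense of Definition \ref{def:shadowclear}). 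So the first step is to fix one ribbon $a\rightarrow c$ and to classify its interior boxes column by column, according to the nearest labelled box in that column.

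Fix an intermediate column, say at position $\mathrm{Pos}(e(b))$ for an edge $e(b)$ whose anchor lies in it. Since $a=c+1$, no label lies strictly between $c$ and $a$, so every label $b$ sitting in that column satisfies either $b<c$ or $b>a$. If $b<c$, the box labelled $b$ is already present when the line is drawn; as the line is the lowest path avoiding occupied anchor boxes of smaller label, it passes just above $b$, so the interior box is a shadow box of $b$, and it is a \emph{proper} shadow box exactly when no label $b'$ with $b<b'<c$ occupies the same column (equivalently, nothing separates them in $D'$). If $b>a$, the box labelled $b$ is inserted after the line; the lower floors of its column being already occupied, $b$ is placed above the line, so the interior box is a clear box of $b$, proper exactly when no label $b'$ with $a<b'<b$ occupies the same column. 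Reading the geometric requirement ``the anchor of $e(b)$ lies in a column with $\mathrm{Pos}(e(a))<\mathrm{Pos}(e(b))<\mathrm{Pos}(e(c))$'' through the position tree $\mathrm{PosTree}(\lambda)$, these two cases are verbatim the defining conditions of the patterns $2^+12$ and $1^+21$.

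To turn this into an honest bijection I would argue in both directions, mirroring the inverse-insertion argument used for Proposition \ref{prop:2+2}. Given a pattern $2^+12$ on $(e(a),e(b),e(c))$, the relation $a=c+1$ together with $e(a)\rightarrow e(c)$ forces the ribbon $a\rightarrow c$ to exist, the inequalities on $\mathrm{Pos}$ place the column of $e(b)$ strictly inside the span of that ribbon, and the ``no $b'$'' clause makes $b$ the nearest lower label, producing a unique proper shadow box; conversely, reading off the nearest label below an interior box recovers the triple and hence the pattern. The same reasoning with ``lower'' replaced by ``upper'' and $b<c$ by $b>a$ gives the correspondence between proper clear boxes and patterns $1^+21$. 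Throughout I would use the normalisation $D\mapsto D'$ by local moves (Figure \ref{fig:lmove}) so that ``nearest, with nothing in between'' is intrinsic and unaffected by the undulation of the line.

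I expect the main obstacle to be the floor bookkeeping in the second step: rigorously deducing, from the recursive placement rules for $\mathrm{DTab}(T_1)$ and the ``lowest path'' clause, that a later-inserted label $(b>a)$ always lands \emph{above} the pre-existing line while an earlier-inserted one $(b<c)$ always lies \emph{below} it, and that passing to $D'$ neither creates nor destroys an intermediate labelled or empty box. This is exactly where the frozen region, the $\vee$-turn anchor boxes, and the fact that the line may weave through several floors all interact, and where the translation of ``nearest labelled box in the column'' into the combinatorial ``no $b'$'' condition on $\mathrm{Pos}$ must be verified with care.
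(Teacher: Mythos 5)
Your proposal is correct and follows essentially the same route as the paper's proof: invoke Proposition \ref{prop:2+2} to identify ribbons with $2^{+}2$ patterns, then classify each interior box of a ribbon according to whether the labelled box in its column was inserted before the ribbon (giving a shadow box and a $2^{+}12$ pattern) or after it (giving a clear box and a $1^{+}21$ pattern), with the ``no $b'$'' clause matching the ``no labelled box in between'' requirement of Definition \ref{def:shadowclear}. The only small caveat is that you phrase that last clause in terms of \emph{proper} shadow/clear boxes, whereas the statement (and the paper's argument) concerns ordinary shadow/clear boxes, for which the nearest-label condition is already built into Definition \ref{def:shadowclear}; this does not affect the substance of the argument.
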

\begin{proof}
Let $abc$ be a pattern $2^{+}12$ in a natural label $T_{1}$.
Since $\mathrm{Pos}(e(a))<\mathrm{Pos}(e(b))<\mathrm{Pos}(e(c))$,
the box labeled by $b$ is between the columns of the box labeled by 
$a$ and $c$.
Since $ac$ is a pattern $2^{+}2$, Proposition \ref{prop:2+2} implies that 
we add a ribbon after the insertion of the box labeled by $b$.
The condition about the box $b'$ implies that the column of $b$ intersects 
with the ribbon and this intersected box is a 
shadow box above $b$.
Suppose that $abc$ and $adc$ be two different $2^{+}12$ pattern.
The condition about the box $b'$ implies that 
$\mathrm{Pos}(b)\neq\mathrm{Pos}(d)$.
Thus, $abc$ and $adc$ give different  
shadow boxes.

Conversely, take a shadow box.
This box is the intersection of column $b$ and a ribbon connecting 
$a$ and $c$. 
It is obvious that $b<a$, which implies $abc$ is the generalized pattern $2^{+}12$.

The proof is the same for the pattern $1^{+}21$.
\end{proof}

\subsection{The shape of a Dyck tableau}
Let $\lambda$ and $\mu$ be Dyck paths satisfying $\lambda\le \mu$,
and $T_{1}$ a label of the tree $\mathrm{Tree}(\lambda)$.
Recall that given an anchor box $a$ at the zeroth floor, we have 
a corresponding pair of $U$ and $D$ steps in $\lambda$.
Let $i_{U}$ and $i_{D}$ be the position of these $U$ and $D$ steps 
in $\lambda$ from left. 
This pair of $U$ and $D$ steps corresponds to an edge $e$
in $\mathrm{Tree}(\lambda)$.
We denote by $T_{1}(e)$ the label of the edge $e$ in $T_1$ and 
by $e^{+}$ (resp. $e^{-}$) the edge whose label in $T_{1}$ is 
given by $T_{1}(e)+1$ (resp. $T_{1}(e)-1$).
We denote by $\mathrm{lb}(e)$ (resp. $\mathrm{rb}(e)$)
the step of the top path $\mu$ of the Dyck tableau $\mathrm{DTab}(T_{1})$ 
at position $i_{U}$ (resp. $i_{D}$).
We call $\mathrm{lb}(e)$ (resp. $\mathrm{rb}(e)$) left border 
(resp. right border) for $e$.

\begin{prop}
\label{prop:lbrb}
The left border for the edge $e$ in $\mathrm{Tree}(\lambda)$ 
is obtained by
\begin{eqnarray*}
\mathrm{lb}(e)=
\begin{cases}
U & \text{if $T_{1}(e)=n$}, \\
U & \text{if $e\rightarrow e^{+}$ or $e^{+}\uparrow e$}, \\
D & \text{if $e^{+}\rightarrow e$}.
\end{cases}
\end{eqnarray*}
The right border for the edge $e$ in $\mathrm{Tree}(\lambda)$ is obtained by
\begin{eqnarray*}
\mathrm{rb}(e)=
\begin{cases}
D & \text{if $T_{1}(e)=1$}, \\
D & \text{if $e^{-}\rightarrow e$ or $e\uparrow e^{-}$}, \\
U & \text{if $e\rightarrow e^{-}$}.
\end{cases}
\end{eqnarray*}
\end{prop}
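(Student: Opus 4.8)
The plan is to prove both border formulas simultaneously by tracking how the top path $\mu$ of $\mathrm{DTab}(T_1)$ is built up through the insertion procedure, and then to translate the condition ``a ribbon is attached to a given side of a labeled box'' into the tree relations using Proposition \ref{prop:2+2}. Throughout, write $e=e(k)$ with $k=T_1(e)$, so that $e^{+}=e(k+1)$ and $e^{-}=e(k-1)$ whenever these exist. The central observation is that the step of $\mu$ lying directly above the up-step of $e$ (the left border $\mathrm{lb}(e)$, at position $i_U$) and the step above its down-step (the right border $\mathrm{rb}(e)$, at position $i_D$) are each governed by a single ribbon: $\mathrm{lb}(e)$ by the ribbon, if any, arriving at the north-west edge of the box labeled $k$, and $\mathrm{rb}(e)$ by the ribbon, if any, leaving its north-east edge.

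First I would analyze a single insertion. When the box labeled $j$ is inserted, the spread creates a fresh peak $UD$ carrying the label $j$, so immediately the left border of this box is $U$ and its right border is $D$; the spread otherwise only translates pieces and therefore preserves the $U/D$ type of every pre-existing step of $\mu$. The subsequent ribbon addition places the unique cover-inclusive ribbon from the north-east edge of box $j$ to the north-west edge of box $j-1$, precisely when box $j-1$ lies to the right of box $j$. Since this ribbon is the single layer of cells hugging the current top path between the two boxes, it reproduces the previous sequence of steps one level higher and hence changes exactly two steps of $\mu$: the right border of box $j$ becomes $U$ and the left border of box $j-1$ becomes $D$, while all other borders are untouched. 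Iterating, the right border of box $k$ is fixed once and for all at the insertion of box $k$ (it is $U$ iff the ribbon to box $k-1$ is created, otherwise it stays the $D$ of the fresh peak), while the left border of box $k$ is fixed at the insertion of box $k+1$ (it is $D$ iff that ribbon is created, otherwise it stays $U$).

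Next I would invoke Proposition \ref{prop:2+2}: a ribbon joins the boxes labeled $k+1$ and $k$ exactly when $e(k+1)\rightarrow e(k)$, that is $e^{+}\rightarrow e$, and this is the only ribbon that can reach the north-west edge of box $k$; symmetrically, a ribbon leaves the north-east edge of box $k$ exactly when $e(k)\rightarrow e(k-1)$, that is $e\rightarrow e^{-}$. Combining with the previous paragraph yields $\mathrm{lb}(e)=D$ iff $e^{+}\rightarrow e$ and $\mathrm{rb}(e)=U$ iff $e\rightarrow e^{-}$. It remains to match this with the three-way case split, using that labels strictly increase from the root to the leaves. Because $T_1(e^{+})=k+1>k$, the edge $e^{+}$ cannot be an ancestor of $e$, so the position of $e^{+}$ relative to $e$ falls into exactly the three listed cases $e\rightarrow e^{+}$, $e^{+}\uparrow e$, $e^{+}\rightarrow e$; the first two are precisely the complement of $e^{+}\rightarrow e$ and hence give $\mathrm{lb}(e)=U$, while the third gives $D$. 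The boundary case $T_1(e)=n$ has no $e^{+}$, so no ribbon arrives and $\mathrm{lb}(e)=U$. The right border is handled identically, with $e^{-}$ in place of $e^{+}$ and $T_1(e)=1$ as the boundary case.

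The main obstacle is the stability claim of the second paragraph: that the two border steps determined at the insertions of box $k$ and box $k+1$ are genuinely never altered by any later insertion. The point to verify with care is that each ribbon addition is exactly the hugging layer over the current top path, so that it preserves the $U/D$ type above every intermediate labeled box and only flips the two steps at its own endpoints; this is where the cover-inclusiveness of Dyck tilings and the uniqueness of the ribbon as the lowest admissible path are essential. Once this is established, a clean induction on $n$ formalizes the argument: since $e(n)$ is necessarily a leaf edge, removing the box with label $n$ by the inverse insertion procedure changes only $\mathrm{rb}(e(n))$ and the $\mathrm{lb}$ of the edge adjacent to the deletion, leaves all relations $\rightarrow,\uparrow$ among the remaining edges intact, and thus reduces the statement to $\mathrm{Tree}(\lambda')$ of size $n-1$.
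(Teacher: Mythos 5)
Your proposal is correct and follows essentially the same route as the paper: both arguments reduce the computation of $\mathrm{lb}(e)$ and $\mathrm{rb}(e)$ to the presence or absence of the unique ribbon joining consecutive labels (your appeal to Proposition \ref{prop:2+2} is exactly the paper's implicit use of the insertion procedure), followed by the same three-way case analysis on the relative position of $e^{\pm}$, which is exhaustive since $e^{+}$ cannot be an ancestor of $e$ nor $e^{-}$ a descendant. The one place you go beyond the paper is in isolating and justifying the stability claim --- that the spread preserves step types and that a ribbon addition flips only its two endpoint steps of $\mu$ --- which the paper's proof takes for granted; your verification of this point is sound and arguably makes the argument more complete.
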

\begin{proof}
In the insertion procedure of a Dyck tableau, we may add a ribbon 
between the box labeled $j$ and the box labeled by $j+1$.
Adding a ribbon indicates that we change the left border for the 
edge with the label $j$ from $U$ to $D$ and the right border for 
the edge with the label $j+1$ from $D$ to $U$. 
From this observation, it is enough to consider the entries $e^{-}$,
$e$ and $e^{+}$.

If $T_{1}(e)=1$, there is no ribbon starting at position $i_{D}$.
Thus we have $\mathrm{rb}(e)=D$.

If $T_{1}(e)\in[2,n]$, the right border for the edge $e$ depends 
on whether there is a ribbon starting from $e$:
\begin{enumerate}
\item if $e\rightarrow e^{-}$, we have a ribbon between the box labeled by $T_{1}(e)$ 
and the box labeled by $T_{1}(e)-1$, which means $\mathrm{rb}(e)=U$,
\item if $e^{-}\rightarrow e$ or $e\uparrow e^{-}$, there is no ribbon, which 
means $\mathrm{rb}(e)=D$.
\end{enumerate}

If $T_{1}(e)=n$, there is no ribbon ending at $i_{U}$.
Thus we have $\mathrm{lb}(e)=U$. 

If $T_{1}(e)\in[1,n-1]$, the left border for the edge $e$ depends 
on whether there is a ribbon ending at $e$:
\begin{enumerate}
\item if $e\rightarrow e^{+}$ or $e^{+}\uparrow e$, we have no ribbon, which 
means $\mathrm{lb}(e)=U$,
\item if $e^{+}\rightarrow e$, we have a ribbon between the box labeled by 
$T_{1}(e)+1$ and the box labeled by $T_{1}(e)$, which means $\mathrm{lb}(e)=D$.
\end{enumerate}
\end{proof}

\subsection{The (LR/RL)-(minima/maxima) of a generalized Dyck tableau}
In this subsection, we introduce (LR/RL)-(minima/maxima) 
of a natural label $T_{1}$ of a tree $\mathrm{Tree}(\lambda)$ and 
reveal its relation to a generalized Dyck tableau.
Given an edge $e$ in $\mathrm{Tree}(\lambda)$, we denote by 
$T_{1}(e)$ the label of the edge $e$ in $T_{1}$.
We introduce the notion of (LR/RL)-(minima/maxima)of $T_{1}$:
\begin{enumerate}
\item 
$T_{1}(e)$ is a {\it right-to-left minimum} (RL-minima) 
if and only if $e$ is connected to the root and 
such that $e\rightarrow e'$ $\Rightarrow$ $T_{1}(e)<T_{1}(e')$,
\item 
$T_{1}(e)$ is a {\it right-to-left maximum} (RL-maxima) 
if and only if  $e$ is connected to a leaf and 
such that $e\rightarrow e'$ $\Rightarrow$ $T_{1}(e)>T_{1}(e')$,
\item 
$T_{1}(e)$ is a {\it left-to-right minimum} (LR-minima) 
if and only if $e$ is connected to the root and 
such that $e'\leftarrow e$ $\Rightarrow$ $T_{1}(e)<T_{1}(e')$,

\item
$T_{1}(e)$ is a {\it left-to-right maximum} (LR-maxima) 
if and only if $e$ is connected to a leaf and 
such that $e'\leftarrow e$ $\Rightarrow$ $T_{1}(e)>T_{1}(e')$,
\end{enumerate}

\begin{prop}[Generalization of Proposition 12 in \cite{ABDH11}]
\label{prop:RLminima}
A RL-minima of $T_{1}$ is bijective to a dotted box $b$ in $\mathrm{DTab}(T_{1})$ 
such that $b$ is at the zeroth floor with a right border equal to $D$ and there are 
neither empty boxes nor dotted boxes below $b$.
\end{prop}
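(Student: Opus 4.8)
The plan is to read off, one at a time, what each of the three conditions on the box $b$ means for the natural label $T_1$ at the edge $e$ to which $b$ corresponds, and then to recognize the conjunction of these three tree-conditions as precisely the statement that $T_1(e)$ is an RL-minimum. Throughout I write $e^{-}$ for the edge with label $T_1(e)-1$ and $e(i)$ for the edge with label $i$, and I would use Proposition \ref{prop:lbrb}, the insertion algorithm for $\mathrm{DTab}(T_1)$, the bijection $\phi_0$, and Proposition \ref{prop:2+2} relating ribbons to $2^{+}2$-patterns.

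First I would dispose of the two more local conditions. By Proposition \ref{prop:lbrb}, $\mathrm{rb}(e)=D$ holds precisely when $T_1(e)=1$ or $e^{-}$ is not strictly right of $e$ (that is, $e^{-}\to e$ or $e\uparrow e^{-}$); equivalently $\mathrm{rb}(e)=U$ iff $e\to e^{-}$. For the condition that there be neither empty nor dotted boxes below $b$, I would use the geometry of the frozen region: the zeroth-floor anchor box of an edge rests directly on the lowest path $\underline{\lambda}$ exactly when that edge is connected to the root, whereas the anchor box of a non-root edge is separated from $\underline{\lambda}$ by frozen-region cells (empty unless crossed by a line) and by the anchor boxes of its ancestors; here I would check that beneath a non-root anchor box there is always at least one empty or dotted box. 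Combined with the requirement that $b$ lie on the zeroth floor, this identifies the first and third box-conditions with the single statement: \emph{$e$ is connected to the root and its box is placed at floor $0$}.

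Next I would characterize, for a root edge $e$, when its box sits at floor $0$. Via $\phi_0$ this says no non-trivial Dyck tile lies above the outermost arch of $e$, and reading the insertion algorithm this amounts to saying that the zeroth-floor anchor box of $e$ is never turned into a $\vee$-turn box. Since $e$ has no parent, only rule (2) of the algorithm can raise it, and a $\vee$-turn box can be created at that position only by the connecting line of a $2^{+}2$-pair: some consecutive pair $(k-1,k)$ with $e(k)\to e(k-1)$ whose line \emph{straddles} $e$ (so $e(k)$ lies left of $e$ and $e(k-1)$ right of $e$) and which is drawn while $e$ is still unlabeled, i.e. $k<T_1(e)$. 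Thus a root edge $e$ is at floor $0$ iff no such straddling consecutive pair exists. The main work of the proof is exactly this step: verifying from the lowest-path rule that a straddling line necessarily dips into the free floor-$0$ anchor box of $e$, and that once $e$ is already labeled, rule (b) keeps every later line strictly above it; this geometric bookkeeping is where I expect the real difficulty to lie.

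Finally I would assemble the equivalence for a root edge $e$, showing that ``floor $0$'' together with ``$\mathrm{rb}(e)=D$'' is equivalent to $T_1(e)<T_1(e')$ for every $e'$ with $e\to e'$. If $T_1(e)$ is an RL-minimum, then $e^{-}$ cannot lie right of $e$ (its label is smaller), so $\mathrm{rb}(e)=D$, and every edge right of $e$ carries a larger label, so $e$ is labeled before any straddling line is drawn and remains at floor $0$. Conversely, assume $e$ is at floor $0$ with $\mathrm{rb}(e)=D$ but is not an RL-minimum, and let $j<T_1(e)$ be the \emph{largest} value with $e(j)$ strictly right of $e$; then examining $e(j+1)$ gives a contradiction in every case. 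If $e(j+1)$ lies left of $e$, then $j+1<T_1(e)$ and $(j,j+1)$ is a straddling pair, forcing $e$ above floor $0$; if $e(j+1)$ lies right of $e$, maximality of $j$ forces $j+1=T_1(e)$, whence $e^{-}=e(j)$ lies right of $e$ and $\mathrm{rb}(e)=U$. Either conclusion contradicts the hypotheses, so $e$ is an RL-minimum, and combined with the identification of the first and third conditions with ``$e$ connected to the root'' this exhibits the desired bijection between the RL-minima of $T_1$ and the boxes $b$ described in the statement.
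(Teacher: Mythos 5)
Your overall route is the same as the paper's: translate each of the three conditions on $b$ into a condition on the edge $e$ via Proposition~\ref{prop:lbrb} (for the right border), the straddling-pair analysis of when a connecting line turns the floor-$0$ anchor box of a root edge into a $\vee$-turn box (for the floor), and the geometry of the frozen region (for the absence of boxes below $b$), and then assemble these into the definition of an RL-minimum. Your converse, built on the maximal $j<T_{1}(e)$ with $e(j)$ strictly right of $e$ and a case analysis of $e(j+1)$, is in fact more careful than the paper's one-line version of the same step, and your forward direction matches the paper's argument that a ribbon crossing the column of $b$ before $b$ is labeled would exhibit a smaller label strictly to the right of $e$.

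The one place where your argument will not go through as written is the check you explicitly defer: ``beneath a non-root anchor box there is always at least one empty or dotted box.'' This is false. In the paper's own example (Figure~\ref{fig:DTabfromTree}), the edge labeled $3$ is a child of the edge labeled $2$, hence not connected to the root and not an RL-minimum (the edge labeled $1$ lies strictly to its right); yet its box sits at floor $0$ at $(7,0)$ with right border $D$, and the unique box below it in its column, centered at $(7,-2)$, is a parallel box---the line joining the boxes $2$ and $1$ passes through it---so it is neither empty nor dotted. Thus the three conditions, read literally, are also satisfied by this non-minimum. The property that actually separates root edges from non-root edges at floor $0$ is that the floor-$0$ anchor box of a root edge touches $\underline{\lambda}$ and has \emph{no} boxes at all below it, whereas a non-root anchor box always has \emph{some} box below it (which may, however, be a parallel or turn box rather than an empty or dotted one). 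Replacing your deferred claim by this stronger condition makes the rest of your argument sound. You have inherited this defect from the proposition itself: the paper's own proof elides exactly the same point when it asserts that the absence of empty and dotted boxes below $b$ forces $e$ to be connected to the root.
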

\begin{proof}
If $T_{1}(e)$ is a RL-minima, there is no ribbon below it.
If there exists such a ribbon, the ribbon connects labels $n_{1}$ and $n_{2}$ which 
satisfies $n_1<n_2<T_{1}(e)$ and the edge labeled by 
$n_{1}$ is strictly right to the edge $e$. 
This contradicts the fact that $T_{1}(e)$ is minimal.
Therefore, the RL-minima $T_{1}(e)$ is at the zeroth floor and denote by $b$ 
the box labeled by $T_{1}(e)$.
This indicates that there are neither empty boxes nor labeled boxes below $b$.
Similarly, $e^{-}$ has to be to the left of $e$ since $e$ is connected to the root.
From Proposition \ref{prop:lbrb}, the right border $\mathrm{rb}(e)=D$.

Conversely, let $T_{1}(e)$ be an entry in $\mathrm{DTab}(T_{1})$ 
corresponding to a dotted box at the zeroth floor with $\mathrm{rb}(e)=D$ 
and there are neither empty boxes nor dotted boxes below it.
This implies that $e$ is connected to the root in $T_{1}$ and $e^{-}$ is placed 
at the left of $e$. Since there is no ribbon below $T_{1}(e)$, an entry $j<T_{1}(e)$
is placed at the left of $T_{1}(e)$. Thus, $T_{1}(e)$ is the RL-minima.
\end{proof}

\begin{prop}[Generalization of Proposition 13 in \cite{ABDH11}]
A LR-maxima of $T_{1}$ is bijective to a dotted box in $\mathrm{DTab}(T_{1})$
at the maximal floor with a left border equal to $U$.
\end{prop}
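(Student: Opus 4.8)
The plan is to prove the equivalence by mirroring the argument for the RL-minima (Proposition~\ref{prop:RLminima}), exchanging the roles of ``root'' and ``leaf'', of ``below'' and ``above'', of ``right'' and ``left'', and of ``minimum'' and ``maximum''. First I would dispose of the left border. If $T_{1}(e)$ is a LR-maxima, then every edge strictly left of $e$ carries a label smaller than $T_{1}(e)$; in particular the edge $e^{+}$ with label $T_{1}(e)+1$ cannot satisfy $e^{+}\leftarrow e$, so either $T_{1}(e)=n$, or $e\rightarrow e^{+}$, or $e^{+}\uparrow e$. By Proposition~\ref{prop:lbrb} each of these cases forces $\mathrm{lb}(e)=U$, and conversely $\mathrm{lb}(e)=U$ excludes the remaining case $e^{+}\rightarrow e$, i.e.\ forbids $e^{+}$ from lying strictly to the left of $e$. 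Thus the left-border condition is exactly the ``top'' instance of the LR-maxima inequality, namely ``$e^{+}$ is not strictly left of $e$''.

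The heart of the proof is to translate ``maximal floor'' into a statement about labels. I would characterize the maximal-floor condition as the absence of any non-trivial Dyck tile, equivalently any shadow box, directly above the box labelled $T_{1}(e)$: a labelled box sits at its highest possible floor precisely when the top path $\mu$ passes immediately above it. Using the generalization of Proposition~9 of \cite{ABDH11} (shadow boxes are in bijection with the patterns $2^{+}12$), a shadow box above the box labelled $b:=T_{1}(e)$ corresponds to a pattern $a\,b\,c$ with $a=c+1$, $b<c$ and $\mathrm{Pos}(e(a))<\mathrm{Pos}(e(b))<\mathrm{Pos}(e(c))$. In such a pattern $a=c+1>c>b$, so the larger label $a$ lies to the left of $e$. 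Hence, if $T_{1}(e)$ is a LR-maxima, no larger label can occur to the left of $e$, no such $2^{+}12$ pattern exists, there is no shadow box above the box, and the box sits at the maximal floor. Together with the previous paragraph this gives the forward implication. (Note the contrast with the ribbons \emph{below} the box, which by Proposition~\ref{prop:2+2} come from smaller labels to the left and merely raise the floor; these are permitted by the LR-maxima condition and do not obstruct maximality.)

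For the converse I would start from a box at the maximal floor with $\mathrm{lb}(e)=U$. The maximal-floor hypothesis forces $e$ to be connected to a leaf: if $e$ had a child $e'$, the anchor box of $e'$ would lie just above that of $e$ by the parent/child convention for anchor boxes, so the box of $e$ could not reach the top path. The same hypothesis, read through the shadow-box correspondence, says that no $2^{+}12$ pattern has $T_{1}(e)$ as its middle entry, and combined with $\mathrm{lb}(e)=U$ (which places $e^{+}$ weakly right of $e$) this propagates to show that every edge strictly left of $e$ carries a label smaller than $T_{1}(e)$, exactly as in the converse half of Proposition~\ref{prop:RLminima}. Therefore $T_{1}(e)$ is a LR-maxima. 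The step needing the most care, and the main obstacle, is the precise dictionary between ``maximal floor'' and the label data: one must verify that ``no shadow box above the labelled box'' is genuinely equivalent to ``$e$ is a leaf with every strictly-left label smaller'', controlling the interplay between the position-tree order $\mathrm{Pos}$ entering the $2^{+}12$ pattern and the planar order $\leftarrow$ entering the definition of LR-maxima, and checking that the absence of non-trivial tiles above the box also excludes stray empty boxes, so that the top path really does lie immediately over it.
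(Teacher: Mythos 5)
Your proposal is correct and is essentially the paper's own argument: the paper's entire proof of this proposition is the single line ``The same argument as Proposition~\ref{prop:RLminima}'', and what you have written is precisely that dualization (root/leaf, below/above, right/left, min/max), with the left border handled through Proposition~\ref{prop:lbrb} exactly as the right border is handled there. Your detour through the $2^{+}12$/shadow-box correspondence is equivalent to the direct ``no ribbon above the box'' argument used in the RL-minima proof, since that correspondence is itself established via ribbons; your closing caveats (the $\mathrm{Pos}$ versus $\leftarrow$ interplay, the leaf condition excluding labelled boxes of descendants above $e$, and stray empty boxes) are exactly the points the paper leaves implicit.
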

\begin{proof}
The same argument as Proposition \ref{prop:RLminima}
\end{proof}

\begin{prop}[Gneralization of Proposition 14 in \cite{ABDH11}]
The box with the label $n$ in $\mathrm{DTab}(T_{1})$ corresponds to 
the rightmost dotted box at the maximal floor and its left border equal
to $U$.
The box with the label $1$ in $\mathrm{DTab}(T_{1})$ corresponds to 
the leftmost dotted box at the zeroth floor and its right border equal
to $D$.
\end{prop}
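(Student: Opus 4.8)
The plan is to identify the edges carrying the extreme labels $n$ and $1$ inside the tree $\mathrm{Tree}(\lambda)$ and then invoke the two preceding propositions. Because $T_{1}$ is a natural label, the labels strictly increase from the root towards the leaves along every branch; hence the edge $e(n)$ carrying the maximal label $n$ can have no edge below it and is therefore connected to a leaf, while the edge $e(1)$ carrying the minimal label $1$ can have no edge above it and is therefore connected to the root.

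First I would check that $e(n)$ is an LR-maxima and that $e(1)$ is an RL-minima. Since $n$ is the largest label, $T_{1}(e(n))=n>T_{1}(e')$ for every edge $e'$, in particular for every $e'$ with $e'\leftarrow e(n)$; together with $e(n)$ being connected to a leaf this is exactly the defining condition of an LR-maxima. Dually, $1$ being the smallest label gives $T_{1}(e(1))=1<T_{1}(e')$ for every $e'$, in particular for every $e'$ with $e(1)\rightarrow e'$, and $e(1)$ is connected to the root, so $e(1)$ is an RL-minima. Applying the generalization of Proposition 13 in \cite{ABDH11} then places the box labeled $n$ at the maximal floor with left border $U$, and applying Proposition~\ref{prop:RLminima} places the box labeled $1$ at the zeroth floor with right border $D$.

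The remaining point, which requires actual argument rather than a citation, is the extremality (rightmost, resp. leftmost). Here I would use that the LR-maxima are mutually comparable in the left--right order, since any two leaf-connected edges are incomparable in the ancestor relation and hence one is strictly left of the other, and likewise for the root-connected RL-minima. Suppose some LR-maxima $e$ were strictly to the right of $e(n)$; then $e(n)\leftarrow e$, so the LR-maxima condition for $e$ would force $T_{1}(e)>T_{1}(e(n))=n$, which is impossible. Hence no LR-maxima lies to the right of $e(n)$, so the box labeled $n$ is the rightmost such box. Symmetrically, if some RL-minima $e$ were strictly to the left of $e(1)$, then $e\rightarrow e(1)$ and the RL-minima condition would force $T_{1}(e)<T_{1}(e(1))=1$, again impossible; thus the box labeled $1$ is the leftmost such box.

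The main obstacle I anticipate is the bookkeeping that translates the tree's left--right edge order into the horizontal order of the corresponding dotted boxes in $\mathrm{DTab}(T_{1})$, together with pinning down what ``at the maximal floor'' means for boxes sitting in distinct columns. Once the edges carrying $n$ and $1$ are recognised as the extremal LR-maxima and RL-minima, the statement follows, but care is needed to ensure that ``rightmost, resp.\ leftmost, at a given floor'' is faithfully encoded by the strictly-right/strictly-left relation $\rightarrow$/$\leftarrow$ on the edges rather than by the raw Cartesian abscissa of the boxes.
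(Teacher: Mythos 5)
Your proof is correct, and it reaches the conclusion by a slightly different (and more modular) route than the paper. The paper's own proof re-argues the floor and border properties of the box labeled $n$ directly from the ribbon structure (no ribbon above it, no ribbon ending at it), and then establishes ``rightmost'' by showing that any box $i$ having a larger label $j$ to its left must fail one of the two conditions: either $i+1$ is to the left of $i$, forcing $\mathrm{lb}(i)=D$ by Proposition~\ref{prop:lbrb}, or $i+1$ is to the right of $i$, forcing a ribbon above $i$ so that $i$ is not at the maximal floor. You instead treat the two preceding propositions as black boxes: since they establish a bijection between LR-maxima (resp.\ RL-minima) and dotted boxes at the maximal floor with left border $U$ (resp.\ at the zeroth floor with right border $D$), the claim reduces to showing that $e(n)$ is the rightmost LR-maxima and $e(1)$ the leftmost RL-minima, which you do cleanly from the definitions. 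What your approach buys is economy — no new ribbon analysis is needed — at the cost of relying on the bijectivity (not just one direction) of the preceding propositions.

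The one loose end you flag yourself — translating the left--right order $\rightarrow$ on edges into the horizontal order of the corresponding dotted boxes — is genuinely needed but closes immediately: all LR-maxima are leaf-connected and all RL-minima are root-connected, so any two of them are non-nested and their associated $U$--$D$ pairs occupy disjoint intervals of $\lambda$; the column of a dotted box is the column of its anchor box, so the horizontal order of the boxes agrees with the order $\rightarrow$ on the edges. With that sentence added, your argument is complete.
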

\begin{proof}
Since there is no ribbon above the box with the label $n$, it is at the 
maximal floor. Since there is no ribbon ending at the box with the label $n$,
its left border is $U$.
Let $i$ and $j$ be two labels such that $n\ge j>i$ and $j$ is to the left of $i$.
If such $i$ and $j$ do not exist, the box labeled by $n$ is the rightmost dotted box
which is at the maximal floor and $\mathrm{lb}(n)=U$.
When such $i$ and $j$ exist, we have $\mathrm{lb}(i)=D$ if $i+1$ is to the left of $i$ from 
Proposition \ref{prop:lbrb}, or at least one ribbon above $i$ if $i+1$ is to the right of $i$.
Thus, the box with the label $n$ is the right-most box among 
the dotted boxes which are at the maximal floor and with their 
left borders equal to $U$.

The same argument for the box with the label $1$.
\end{proof}

For any natural label $T_{1}$, it is clear that $n$ is a RL-maxima and 
$1$ is a LR-minima.
Other RL-maximas and LR-minimas are characterized as follows.

\begin{prop}[Generalization of Proposition 15 in \cite{ABDH11}]
\label{prop:RLmaxima}
A RL-maxima $j<n$ of $T_{1}$ is bijective to a dotted box in $\mathrm{DTab}(T_{1})$
at the maximal floor, with a left border equal to $D$ and right to the box with $n$. 
\end{prop}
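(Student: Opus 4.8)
The plan is to prove both directions of the claimed bijection, mirroring the proof of Proposition~\ref{prop:RLminima} under the up-down duality and using the border formulas of Proposition~\ref{prop:lbrb} together with the ribbon/pattern correspondence of Proposition~\ref{prop:2+2} and the preceding proposition on the box labelled $n$. First I would pin down the three features of the dot attached to a RL-maxima $j=T_{1}(e)<n$. For the left border, consider the edge $e^{+}$ carrying the label $j+1$. Since $e$ is connected to a leaf it has no descendants, and since the natural labelling increases from the root every ancestor of $e$ carries a label $<j$; as $T_{1}(e^{+})=j+1>j$, the edge $e^{+}$ is neither a descendant nor an ancestor of $e$. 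If $e^{+}$ were strictly right of $e$, the RL-maxima property $e\rightarrow e'\Rightarrow T_{1}(e)>T_{1}(e')$ would force $j+1<j$; hence $e^{+}\rightarrow e$ and Proposition~\ref{prop:lbrb} gives $\mathrm{lb}(e)=D$. For the position relative to box $n$, note that $e(n)$ is itself a RL-maxima, hence leaf-connected, so $e$ and $e(n)$ are incomparable in the ancestor order and one lies strictly left of the other; were $e(n)$ strictly right of $e$ we would get $n<j$, impossible, so $e$ lies strictly right of $e(n)$, i.e. the dot $e$ is to the right of the box labelled $n$.

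The heart of the forward direction is that the dot sits at the maximal floor, which I would obtain by dualizing the ribbon argument of Proposition~\ref{prop:RLminima}: a dot fails to be at the maximal floor exactly when a non-trivial Dyck tile sits directly above it in $\phi_{0}(\mathrm{DTab}(T_{1}))$. By Proposition~\ref{prop:2+2} such a tile belongs to a ribbon joining consecutive labels $c$ and $c-1$ with $e(c)\rightarrow e(c-1)$, and the column of $e$ lies strictly between those of $c$ and $c-1$; the right endpoint $e(c-1)$ is then strictly right of the leaf-edge $e$, while the up-down reflection of the RL-minima computation (where a tile \emph{below} $e$ has both labels smaller than $T_{1}(e)$) shows a tile lying \emph{above} $e$ has both labels larger, so $c-1>j$. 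These two facts contradict $j>T_{1}(e(c-1))$, so no such tile exists and $e$ is at the maximal floor.

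For the converse I would start from a dot $b$ at the maximal floor with $\mathrm{lb}=D$ lying to the right of box $n$ and recover that its edge $e$ is a RL-maxima with label $<n$. Leaf-connectedness follows as in Proposition~\ref{prop:RLminima}: were $e$ to have a child, that child's anchor box sits directly above $e$'s (a parent's anchor box is just below its children's), forcing an occupied box above the dot and contradicting maximality of the floor. Lying to the right of box $n$ excludes the unique maximal label, so $T_{1}(e)<n$, and the border formula read backwards gives $e^{+}\rightarrow e$. Finally, if some edge strictly right of $e$ carried a larger label it would create, via the shadow-box/$2^{+}12$ correspondence of the preceding propositions, a Dyck tile above $e$, again contradicting the maximal floor; hence $e\rightarrow e'\Rightarrow T_{1}(e)>T_{1}(e')$ and $e$ is a RL-maxima. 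I expect the main obstacle to be making precise the claim that a tile above the dot forces both of its labels to exceed $j$ with its right endpoint strictly right of $e$ — that is, transporting the positional bookkeeping of the RL-minima proof cleanly through the up-down reflection — together with the identification of ``maximal floor'' with ``no non-trivial tile directly above the dot.''
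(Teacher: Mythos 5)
Your forward direction is sound and is essentially the paper's own (very terse) three-item argument: $e^{+}\rightarrow e$ gives $\mathrm{lb}(e)=D$ via Proposition \ref{prop:lbrb}, comparing the two leaf edges $e$ and $e(n)$ places the dot to the right of the box with $n$, and any ribbon over the column of $e$ added after step $j$ joins consecutive labels $c-1,c>j$ whose edges straddle $e$, contradicting the RL-maxima property. The difficulties are in the converse, which the paper waves away as obvious but which is where the content lies. First, your leaf-connectedness step rests on the claim that a child's anchor box sits directly above its parent's. In this paper ``just below'' is merely the \emph{name} given to the parent--child relation between anchor boxes, not a statement about columns: for the tree of Figure \ref{fig:DTabfromTree} the edge labeled $2$ has its anchor box in column $6$ while its two children's anchor boxes sit in columns $4$ and $7$. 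So a child of $e$ does not automatically occupy a box above the dot of $e$, and leaf-connectedness must be extracted from ``maximal floor'' by another route (for instance, that a non-leaf edge's zeroth-floor anchor box lies strictly below $\lambda$, so the top path leaves an unoccupied higher floor unless one accounts for the ribbons accumulated below the dot).

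Second, and more seriously, the step ``if some edge strictly right of $e$ carried a larger label it would create a Dyck tile above $e$'' fails for the label you implicitly select. If $m>j$ is \emph{minimal} with $e(m)$ strictly right of $e$, then $e(m-1)$ is strictly left of $e$ (for $m=j+1$ this is exactly what $\mathrm{lb}(e)=D$ says), hence $e(m-1)$ is to the left of $e(m)$ and the pattern $2^{+}2$ of Proposition \ref{prop:2+2} is absent: no ribbon is created at step $m$ and nothing lands above box $j$. The ribbon that actually covers the column of $e$ comes from the \emph{maximal} $r>j$ with $e(r)$ strictly right of $e$: because $e(n)$ is strictly left of $e$ --- and this is precisely where the hypothesis ``right to the box with $n$'' must be invoked --- one has $r<n$ and $e(r+1)$ strictly left of $e$, so $e(r+1)\rightarrow e(r)$ is a $2^{+}2$ pattern whose ribbon, added at step $r+1>j$, passes over the column of $e$ above box $j$. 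Your converse uses the ``right of box $n$'' hypothesis only to conclude $j<n$, and without its stronger use the implication is false: for the zigzag label with $e(2),e(1),e(3)$ from left to right, the box labeled $1$ sits at the maximal floor with left border $D$, yet $e(3)$ lies strictly to its right. The converse therefore needs to be reorganized around the maximal, not the minimal, offending label.
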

\begin{proof}
If $T_{1}(e)$ is a RL-maxima, then 
\begin{enumerate}
\item 
\label{RLmaxima1}
there is no ribbon above $T_{1}(e)$, which implies it is at the maximal floor;
\item $e^{+}$ is to the left of $e$, which implies $\mathrm{lb}(e)=D$ from Proposition \ref{prop:lbrb};
\item
\label{RLmaxima3}
 a labeled box corresponding to $e$ is right to the box with $n$.
\end{enumerate}

Conversely, if $e$ satisfies above three properties from (\ref{RLmaxima1}) to (\ref{RLmaxima3}), 
it is obvious that $e$ is the RL-maxima.
\end{proof}

\begin{prop}[Generalization of Proposition 16 in \cite{ABDH11}]
A LR-minima $j>1$ of $T_{1}$ is bijective to a dotted box in $\mathrm{DTab}(T_{1})$ 
at the zeroth floor, with a right border equal to $U$ and left to the box with $1$. 
\end{prop}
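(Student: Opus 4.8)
The plan is to mirror the proof of Proposition \ref{prop:RLmaxima} (the RL-maxima case), exploiting the evident left--right symmetry between the two statements. First I would unwind the definition: for an edge $e$ with $T_{1}(e)=j>1$, being an LR-minima means that $e$ is connected to the root and every edge $e'$ with $e'\leftarrow e$ satisfies $T_{1}(e)<T_{1}(e')$; in words, $e$ is a child of the root whose label is strictly smaller than the label of every edge strictly to its left. I would then establish the three asserted tableau properties one by one, just as in the RL-maxima argument.

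For the right border I would argue as follows. Since $e$ is attached to the root there is no edge strictly between $e$ and the root, so $e\uparrow e^{-}$ is impossible; moreover $T_{1}(e^{-})=j-1<j$ forbids $e^{-}$ from being a descendant of $e$ (descendants carry larger labels), and the LR-minima condition forbids $e^{-}$ from being strictly left of $e$. The only remaining possibility is $e\rightarrow e^{-}$, whence $\mathrm{rb}(e)=U$ by Proposition \ref{prop:lbrb}. For the zeroth-floor property I would invoke the bijection between clear boxes and patterns $1^{+}21$: a box lying below $e$ would exhibit $e$ as the middle entry $e(b)$ of a $1^{+}21$ pattern, forcing an edge $e(a)$ with label $a<j$ and $\mathrm{Pos}(e(a))<\mathrm{Pos}(e)$; because $e$ is a child of the root and $a<j$ rules out $e(a)$ being an ancestor or a descendant of $e$, this edge would be strictly left of $e$ with a smaller label, contradicting the LR-minima condition. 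Finally, for ``left of the box with $1$'', the edge $e(1)$ carries the globally smallest label, so it is neither a descendant of $e$ nor, since $e$ is a child of the root, an ancestor; were it strictly left of $e$ its label would violate the LR-minima condition, so $e\rightarrow e(1)$ and the box of $e$ sits in a column to the left of the box labeled $1$.

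For the converse I would reverse each implication: the zeroth-floor and right-border conditions supply $e\rightarrow e^{-}$ together with the absence of any $1^{+}21$ pattern having $e$ in the middle, while lying left of box $1$ pins down the horizontal location of $e$; I expect these to force $e$ to be a child of the root with label minimal among all edges to its left. Here I would use that box $1$ is the leftmost zeroth-floor box \emph{with right border} $D$ (the preceding proposition), so that an LR-minima box, which has right border $U$, can genuinely sit to its left without contradiction.

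The main obstacle I anticipate is the zeroth-floor step: converting ``$e$ is at the zeroth floor'' into the purely combinatorial statement ``$e$ is not the middle of any $1^{+}21$ pattern'' requires care about the position-tree comparison $\mathrm{Pos}(e(a))<\mathrm{Pos}(e)$, in particular ruling out the degenerate cases where the putative left edge is actually an ancestor or a descendant of $e$. This is exactly where the hypothesis that $e$ is connected to the root does the real work, and I would isolate that observation as a one-line lemma before assembling the three conditions into the bijection.
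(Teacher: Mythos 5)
Your proposal is correct and takes essentially the same route as the paper, whose proof of this proposition is simply a deferral to the symmetric argument for RL-maxima: you establish the three tableau properties (right border via Proposition \ref{prop:lbrb} after ruling out $e^{-}$ being left of, an ancestor of, or a descendant of $e$; zeroth floor via the absence of a $1^{+}21$ pattern with $j$ in the middle; position relative to the box labeled $1$) exactly as in that template. Your write-up is in fact more detailed than the paper's, but the underlying argument is the same.
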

\begin{proof}
The same argument as Proposition \ref{prop:RLmaxima}
\end{proof}

\section{Tree-like tableaux for general Dyck tilings}
\label{sec:TTab}
\subsection{Tree-like tableaux}

Given a Ferrers diagram $F$, the {\it half-perimeter} of $F$
is defined as the sum of its number of rows and its number of columns.
The {\it boundary edges} are the edges which are on the southeast boundary
of the diagram $F$.
Note that the number of boundary edges is equal to the half-perimeter of $F$.
The {\it boundary boxes} are the boxes which have a boundary edge.

We define a tree-like tableau following \cite{ABN11}:
\begin{defn}[Tree-like tableau]
\label{deftlTab}
A tree-like tableau is a Ferrers diagram in English notation where each box contains either 
$0$ or $1$ dot  with the following three conditions:
\begin{enumerate}
\item
\label{deftltab1}
the top left box of the diagram contains a dot. We call this dot the root;
\item 
\label{deftltab2}
every column and every row contains at least one dotted box.
\item 
\label{deftltab3}
for every non-root dotted box $b$, there exits a dotted box either above $b$ 
in the same column, or to its left in the same row, but not both.
\end{enumerate}
\end{defn}

We generalize Definition \ref{deftlTab} by relaxing the condition (\ref{deftltab3}).
For this purpose, we first introduce two classes of dotted boxes and consider 
a Ferrers diagram consisting of these two types of dotted boxes.
Then, we impose an admissible condition to get a notion of generalized tree-like 
tableaux.

\begin{defn}
We define two classes of dotted box:
\begin{enumerate}
\item
An off-diagonal dot (or point) is a non-root dotted box $b$ such that 
there exists a dotted box $b'$ either above $b$ in the same column, or 
to its left in the same row, but not both.
An off-diagonal dot $b$ is called row (resp. column) dot if there exists 
a dot $b'$ left to (resp. above) $b$  (resp. column).
\item
A diagonal dot (or point) is a non-root dotted box $b$ such that 
there exists dotted box neither above $b$ in the same column nor to its left 
in the same row.
\end{enumerate}
\end{defn}

Let $\widetilde{\mathcal{T}}_{n}$ be a set of Ferrers diagrams such that 
it consists of diagonal and off-diagonal dots which satisfy two conditions 
(\ref{deftltab1}) and (\ref{deftltab2}) in Definition \ref{deftlTab}.
Given $T\in\widetilde{\mathcal{T}}_{n}$, we may have several diagonal dots.
We enumerate all dots by $n,n-1,\ldots,1$ as follows.

\begin{defn}[Reverse insertion procedure]
\label{defn:iipTT}
The {\it reverse insertion procedure} 
$\mathrm{RI}:\widetilde{\mathcal{T}}_{n}\rightarrow\widetilde{\mathcal{T}}_{n-1}$, 
$T\mapsto T'$,  
is defined as an operation of the following two steps:
\begin{enumerate}
\item If there exists boundary dotted boxes whose both east and south edges are 
boundary edges, take the northeast-most box among them and label it by $n$.
Otherwise, take the northeast-most box $b$ whose south edge is a boundary edge.
Then, remove a maximal ribbon from $T$ starting from the east edge of $b$ and ending 
at the south edge of a boundary dotted box.
In both cases, we label $b$ by $n$.
\item If the box $b$ with the label $n$ is a diagonal box, we delete the row 
and the column where $b$ is placed. 
If the box $b$ is a row (resp. column) box, we delete the column (resp. row)
where $b$ is placed.
\end{enumerate}
\end{defn}

We get a new Ferrers diagram with $n-1$ dots in $\widetilde{\mathcal{T}}_{n-1}$.
By successive use of the above procedure, one can label the all dotted box
by integers in $[1,n]$.

See Figure \ref{fig:TtilRI} for an example of a diagram $\widetilde{\mathcal{T}}_{8}$
and the action of $\mathrm{RI}^{4}$ on the diagram.
\begin{figure}[ht]
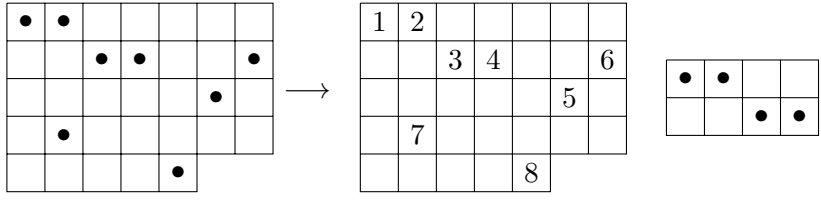

\tikzpic{-0.5}{[x=0.5cm,y=0.5cm]
%\draw[fill=gray!30!white](5,0)--(6,0)--(6,-2)--(7,-2)--(7,-4)--(5,-4)--(5,-3)--(0,-3)--(0,-2)--(5,-2)--(5,0);
\draw(0,0)--(7,0)--(7,-4)--(5,-4)--(5,-5)--(0,-5)--(0,0)
     (0,-1)--(7,-1)(0,-2)--(7,-2)(0,-3)--(7,-3)(0,-4)--(5,-4)
     (1,0)--(1,-5)(2,0)--(2,-5)(3,0)--(3,-5)(4,0)--(4,-5)(5,0)--(5,-4)(6,0)--(6,-4);
\draw(0.5,-0.5)node{$\bullet$}(1.5,-0.5)node{$\bullet$}(2.5,-1.5)node{$\bullet$}
     (3.5,-1.5)node{$\bullet$}(5.5,-2.5)node{$\bullet$}(6.5,-1.5)node{$\bullet$}
     (1.5,-3.5)node{$\bullet$}(4.5,-4.5)node{$\bullet$};
}$\longrightarrow$
\tikzpic{-0.5}{[x=0.5cm,y=0.5cm]
%\draw[fill=gray!30!white](5,0)--(6,0)--(6,-2)--(7,-2)--(7,-4)--(5,-4)--(5,-3)--(0,-3)--(0,-2)--(5,-2)--(5,0);
\draw(0,0)--(7,0)--(7,-4)--(5,-4)--(5,-5)--(0,-5)--(0,0)
     (0,-1)--(7,-1)(0,-2)--(7,-2)(0,-3)--(7,-3)(0,-4)--(5,-4)
     (1,0)--(1,-5)(2,0)--(2,-5)(3,0)--(3,-5)(4,0)--(4,-5)(5,0)--(5,-4)(6,0)--(6,-4);
\draw(0.5,-0.5)node{$1$}(1.5,-0.5)node{$2$}(2.5,-1.5)node{$3$}
     (3.5,-1.5)node{$4$}(5.5,-2.5)node{$5$}(6.5,-1.5)node{$6$}
     (1.5,-3.5)node{$7$}(4.5,-4.5)node{$8$};
}
\tikzpic{-0.5}{[x=0.5cm,y=0.5cm]
\draw(0,0)--(4,0)--(4,-2)--(0,-2)--(0,0)
     (0,-1)--(4,-1)(1,0)--(1,-2)(2,0)--(2,-2)(3,0)--(3,-2);
\draw(0.5,-0.5)node{$\bullet$}(1.5,-0.5)node{$\bullet$}(2.5,-1.5)node{$\bullet$}
     (3.5,-1.5)node{$\bullet$};
}
\caption{A diagram $D$ in $\widetilde{\mathcal{T}}_{8}$ (the left picture), and 
its labeling (the middle picture).
The right picture is $\mathrm{RI}^{4}(D)$.}
\label{fig:TtilRI}
\end{figure}

We consider a diagram in $\widetilde{\mathcal{T}}_{n}$ with circled 
vertices that are on the boundary edges, denoted by $T^{\circ}$.
We denote by $\widetilde{\mathcal{T}}_{n}^{\circ}$ the set of 
such diagrams with circled vertices.
We perform the first step of the reverse insertion procedure on 
a diagram, which is to remove the maximal ribbon from $T^{\circ}$.
Since we remove a ribbon, the number of the boundary edges in 
$T^{\circ}$ and that of the new diagram are equal.
We put a circle on vertices in a new diagram according to the circles
on vertices in $T^{\circ}$.
More precisely, if we enumerate the vertices of the boundary edges 
in $T^{\circ}$ and those of the new diagram by $1,2,\ldots$, the $i$-th vertex
in the new diagram has a circle if and only if the $i$-th vertex 
in $T^{\circ}$ has a circle.

We perform the second step of the reverse insertion procedure, which 
is to remove a row, a column, or both from the new diagram obtained 
from $T^{\circ}$.
We have three cases for the removal of a box $b$ with the label $n$.
\begin{enumerate}[{(RI}1{)}]
\item The box $b$ is a row dot. \\
We delete a circle of the south-east vertex of $b$ if it exists.
Then, we delete the column containing $b$ and all circles weakly right 
to $b$ are moved to the left by $(-1,0)$.

\item The box $b$ is a column dot. \\
We delete a circle of the south-east vertex of $b$ if it exists.
Then, we delete the row containing $b$ and all circles weakly 
below $b$ are moved to upward by $(0,1)$.

\item
\label{RIcircdiag}
The box $b$ is a diagonal dot.\\
We delete a circle of the south-east vertex of $b$ if it exists.
Then, we delete the row and the column which contain $b$. 
All circles weakly right to $b$ are move to the left by $(-1,0)$
and all circles weakly below $b$ are moved to upward by $(0,1)$.
We add a circle on the vertex which used to be the north-west vertex
of $b$.
\end{enumerate}

We denote by 
$\mathrm{RI}_{\circ}:\widetilde{\mathcal{T}}^{\circ}_{n}
\rightarrow\widetilde{\mathcal{T}}^{\circ}_{n-1}$ 
the reversed insertion procedure with circled vertices defined above.
Note that during the procedure (RI\ref{RIcircdiag}), we may have a chance to 
put more than one circles on the vertex which used to be the north-west vertex 
of $b$.

Let $T^{\circ}\in\widetilde{\mathcal{T}}^{\circ}_{n}$, and $S^{\circ}$ 
be the Ferrers diagram with dots obtained from $T^{\circ}$ by the first step of $\mathrm{RI}_{\circ}$.
We denote by $b$ the box with label $n$ in $S^{\circ}$.
We consider a subset $\mathcal{T}_{n}^{\circ}\subset\widetilde{\mathcal{T}}^{\circ}_{n}$. 
A diagram $T^{\circ}$ is in $\mathcal{T}_{n}^{\circ}$ if it satisfies the following 
conditions.
\begin{enumerate}
\item A diagram $S^{\circ}$ has a circle on the south-east vertex of the box $b$.
\item If the half-perimeter of $S^{\circ}$ is $m$, we have $2n+1-m$ circles on the boundary 
vertices of $S^{\circ}$.
\item The diagram $\mathrm{RI}_{\circ}^{n-1}(T^{\circ})$ is a single box with 
the label $1$ and with a circle on the south-east vertex.
\end{enumerate}

\begin{defn}
We say that $\mathrm{RI}_{\circ}$ on 
$T^{\circ}\in\widetilde{\mathcal{T}}_{n}\subset\widetilde{\mathcal{T}}^{\circ}_{n}$ 
is {\it admissible} if one add at most one circle on a vertex during the 
procedure (RI\ref{RIcircdiag}).
\end{defn}

\begin{defn}[admissibility]
A diagram $T\in\widetilde{\mathcal{T}}_{n}$ is admissible if 
there exists $T^{\circ}\in\mathcal{T}^{\circ}_{n}$ such that 
\begin{enumerate}
\item $\mathrm{RI}_{\circ}^{p}$ on $T^{\circ}$ for all 
$1\le p\le n-1$ are admissible, and 
\item the shape of $T^{\circ}$ and the position of dots in $T^{\circ}$ 
are the same as those of $T$.
\end{enumerate}
\end{defn}

\begin{defn}[Generalized tree-like tableau]
If a Ferrers diagram with dots, denoted by $T$, is called a generalized tree-like tableau
if $T\in\widetilde{\mathcal{T}}_{n}$ and $T$ is admissible.
We denote by $\mathcal{T}_{n}$ the set of generalized tree-like tableaux 
of size $n$.
\end{defn}

Figure \ref{fig:GTT} gives an example of a generalized tree-like tableau.

\begin{figure}[ht]
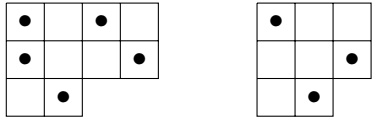

\tikzpic{-0.5}{[x=0.5cm,y=0.5cm]
\draw(0,0)--(4,0)--(4,-2)--(2,-2)--(2,-3)--(0,-3)--(0,0)
     (0,-1)--(4,-1)(0,-2)--(2,-2)(1,0)--(1,-3)(2,0)--(2,-2)(3,0)--(3,-2);
\node at(0.5,-0.5){$\bullet$};\node at(0.5,-1.5){$\bullet$};\node at(1.5,-2.5){$\bullet$};
\node at(2.5,-0.5){$\bullet$};\node at(3.5,-1.5){$\bullet$};
} \qquad
\tikzpic{-0.5}{[x=0.5cm,y=0.5cm]
\draw(0,0)--(3,0)--(3,-2)--(2,-2)--(2,-3)--(0,-3)--(0,0)
     (0,-1)--(3,-1)(0,-2)--(2,-2)(1,0)--(1,-3)(2,0)--(2,-2);
\node at(0.5,-0.5){$\bullet$};\node at(1.5,-2.5){$\bullet$};\node at(2.5,-1.5){$\bullet$};
}
\caption{A generalized tree-like tableau of size $5$ (the left picture) and 
a non-admissible configuration (the right picture).}
\label{fig:GTT}
\end{figure}

Let $T$ be a generalized tree-like tableau.
If the diagram $T$ has $n$ dots, $n$ is called the {\it size} of $T$.
Since $\mathcal{T}_{n}$ contains the set of (non-generalized) tree-like 
tableaux as a subset, we call a generalized tree-like tableau simply 
a tree-like tableau when it is clear from the context.

\begin{remark}
When $T\in\mathcal{T}_{n}$ is a (non-generalized) tree-like tableau defined 
by Definition \ref{deftlTab}, the half-perimeter of $T$ is equal to $n+1$.
On the other hand, if $T$ is a generalized tree-like tableau, the half-perimeter 
of $T$ is in $[n+1,2n]$.
The unique tree-like tableau of half-perimeter $2n$ is the one consisting of 
the root point and $n-1$ diagonal points.
\end{remark}

\subsection{Insertion procedure}

\paragraph{\bf Row, column and diagonal insertion}
Let $F$ be a Ferrers diagram with $F_{i}$ boxes in the $i$-th row, 
and $e$ be an boundary edge of $F$.
We denote by $F'$ the Ferrers diagram obtained from $F$ by an insertion.
Suppose $e$ is the right edge of a boundary box and it is in the $p$-th row. 
The insertion of a column (or simply column insertion) at $e$ is defined 
by $F'_{i}:=F_{i}+1$ for $1\le i\le p$ and $F'_{i}:=F_{i}$ the same 
for $i>p$.
Similarly, suppose $e$ is the bottom edge of a boundary box $b$ and 
$b$ is in the $p$-th row and in the $q$-th column.
The insertion of a row (or simply row insertion) at $e$
is defined by changing $F'_{p+1}:=q$, $F'_{i+1}:=F_{i}$ for $i\ge p+1$, 
and $F'_{j}:=F_{j}$ for $j<p$.
Let $v$ be a vertex on the boundary edges of $F$ which is neither the leftmost bottom
one nor the rightmost top one.
Suppose the coordinate of $v$ is $(p,q)$.
The diagonal insertion at $v$ is defined by changing
$F'_{p+1}:=q+1$, $F'_{i}:=F_{i}+1$ for $1\le i\le p$, and 
$F'_{j+1}:=F_{j}$ for $j>p$.
See Figure \ref{fig:DI} for an example of a diagonal insertion on a diagram.
\begin{figure}[ht]
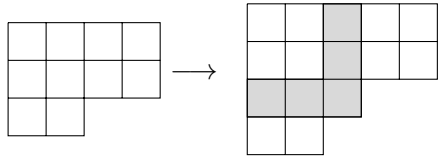

\tikzpic{-0.5}{[x=0.5cm,y=0.5cm]
\draw(0,0)--(4,0)--(4,-2)--(2,-2)--(2,-3)--(0,-3)--(0,0)
     (0,-1)--(4,-1)(0,-2)--(2,-2)(1,0)--(1,-3)(2,0)--(2,-2)(3,0)--(3,-2);
}$\longrightarrow$
\tikzpic{-0.5}{[x=0.5cm,y=0.5cm]
\draw[fill=gray!30!white](2,0)--(3,0)--(3,-3)--(0,-3)--(0,-2)--(2,-2)--(2,0);
\draw(0,0)--(5,0)--(5,-2)--(3,-2)--(3,-3)--(2,-3)--(2,-4)--(0,-4)--(0,0)
     (0,-1)--(5,-1)(0,-2)--(3,-2)(0,-3)--(2,-3)(1,0)--(1,-4)(2,0)--(2,-3)(3,0)--(3,-2)
     (4,0)--(4,-2);
}
\caption{An example of a diagonal insertion. The right picture is the diagonal insertion 
at (2,2) for the Ferrers diagram $(4,4,2)$. The shaded boxes are the added boxes by the insertion.}
\label{fig:DI}
\end{figure}

Let $T(n)\in\mathcal{T}_{n}$ of half-perimeter $m$. 
We enumerate the boundary edges of $T$ by $1,2,\ldots,m$ from the south-west edge
to the north-east edge.
We will define a sequence of integers $\mathbf{e}(n):=(e_{1},\ldots,e_{m})$ such that 
$e_{1}:=0$, $e_{i}\in[0,2n]$ for $1\le i\le m$ and $e_{i+1}=e_{i}+1$ or $e_{i}+2$. 
The integer $e_{i}$ is associated with the $i$-th boundary edge of $T$.
We say that $\mathbf{e}(n)$ is the label of the boundary edges in $T$, and the $i$-th 
edge has the label $e_{i}$.
The vertex that connects the $i$-th and $i+1$-th edges is 
said to be {\it valid} (resp. {\it invalid}) if $e_{i+1}=e_{i}+2$ 
(resp. $e_{i+1}=e_{i}+1$).
We define the {\it label} of a valid vertex as $v_{i}=e_{i}+1$.
We put a circle $\circ$ on a valid vertex and put a cross $\times$ on an invalid vertex.

We define $\mathbf{e}(n)$ recursively starting from $\mathbf{e}(1)$ by an insertion procedure.
We have a unique tableau when $n=1$, that is a single box with the label $1$.
We define $\mathbf{e}(1):=(0,2)$ and depict it as
\begin{eqnarray}
\label{eqn:TT1}
\tikzpic{-0.5}{[x=0.6cm,y=0.6cm]
\draw(0,0)--(1,0)node[anchor=center,circle,inner sep=1mm,draw]{}--(1,1);
\draw(0,0)--(0,1)--(1,1);
\node at(0.5,0.5){$1$};
}
\end{eqnarray}

Note that once boundary edges and valid vertices are given, the conditions $e_{1}=0$
and $e_{i+1}=e_{i}+1$ or $e_{i}+2$ determine the label of a boundary edge.
Therefore, we do not write a label on a boundary edge explicitly.

Following \cite{ABN11}, we introduce the notion of {\it special point}:
\begin{defn}
Let $T$ be a tableau with dots in $\widetilde{\mathcal{T}}_{n}$.
The {\it special point} of $T$ is the northeast-most dot 
that is placed at the bottom of a column.
\end{defn}

Let $\mathcal{T}^{\times}_{1}$ be the set of the tree-like tableau of size one, 
{\it i.e.,} the set $\mathcal{T}^{\times}_{1}$ consists of the unique tree-like 
tableau depicted in Eqn. (\ref{eqn:TT1}).
To define $\mathcal{T}^{\times}_{n}\subset\widetilde{\mathcal{T}}_{n}$ with $n\ge2$, we introduce the insertion 
procedure as follows.

\begin{defn}[Insertion procedure]
\label{defn:IPforTT}
The insertion procedure $\mathrm{IP}:\widetilde{\mathcal{T}}_{n}\rightarrow\widetilde{\mathcal{T}}_{n+1}$,
$T\mapsto T'$
is defined as an operation of the following two steps:
\begin{enumerate}
\item Take a boundary edge $e$ or a valid vertex $v$. 
\begin{enumerate}
\item If we have $e$ which is horizontal (resp. vertical), we perform a row (resp. column) insertion
at $e$. 
We locally change a label of the boundary edges of $T$ as in Figure \ref{IPbrc}.
\item If we have $v$, we perform a diagonal insertion at $v$. 
We locally change a label of the boundary edges of $T$ as in Figure \ref{IPbd}.
\end{enumerate}
We put a dot at the box $b$ which is added in the insertion process and southeast-most box.
We denote by $T''$ the diagram with circled vertices on the boundary edges obtained by this insertion.
\item If there exists the special point $s$ right to the box $b$, we add a ribbon 
starting from the east edges of $b$ to the south edge of $s$.
The label of boundary edges of $T'$ is the same as $T''$.
\end{enumerate}
When a boundary edge $e$ or a valid vertex $v$ has a label $p$ with $0\le p\le 2n$,
the insertion is said to be the insertion procedure at $p$.
\end{defn}

\begin{figure}[ht]
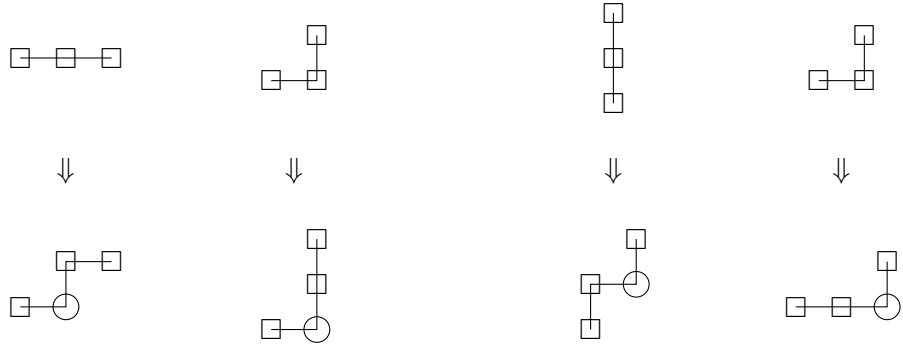

\tikzpic{-0.5}{[x=0.6cm,y=0.6cm]
	\node[anchor=center]at(0,0){
		\tikzpic{-0.5}{
		\draw(0,0)node[anchor=center]{$\square$}
		     --(1,0)node[anchor=center]{$\square$}
		     --(2,0)node[anchor=center]{$\square$};
		}
	};
	\node[anchor=center]at(5,0){
		\tikzpic{-0.5}{
		\draw(0,0)node[anchor=center]{$\square$}
		     --(1,0)node[anchor=center]{$\square$}
		     --(1,1)node[anchor=center]{$\square$};
		}
	};
	\node[anchor=center]at(12,0){
		\tikzpic{-0.5}{
		\draw(0,0)node[anchor=center]{$\square$}
		     --(0,1)node[anchor=center]{$\square$}
		     --(0,2)node[anchor=center]{$\square$};
		}
	};
	\node[anchor=center]at(17,0){
		\tikzpic{-0.5}{
		\draw(0,0)node[anchor=center]{$\square$}
		     --(1,0)node[anchor=center]{$\square$}
		     --(1,1)node[anchor=center]{$\square$};
		}
	};
%%%%%%%%%
	\node[anchor=center]at(0,-5){
		\tikzpic{-0.5}{
		\draw(0,0)node[anchor=center]{$\square$}
		     --(1,0)node[anchor=center,circle,inner sep=1.2mm,draw]{}
		     --(1,1)node[anchor=center]{$\square$}
		     --(2,1)node[anchor=center]{$\square$};
		}
	};
	\node[anchor=center]at(5,-5){
		\tikzpic{-0.5}{
		\draw(0,0)node[anchor=center]{$\square$}
		     --(1,0)node[anchor=center,circle,inner sep=1.2mm,draw]{}
		     --(1,1)node[anchor=center]{$\square$}
		     --(1,2)node[anchor=center]{$\square$};
		}
	};
	\node[anchor=center]at(12,-5){
		\tikzpic{-0.5}{
		\draw(0,0)node[anchor=center]{$\square$}
		     --(0,1)node[anchor=center]{$\square$}
		     --(1,1)node[anchor=center,circle,inner sep=1.2mm,draw]{}
		     --(1,2)node[anchor=center]{$\square$};
		}
	};
	\node[anchor=center]at(17,-5){
		\tikzpic{-0.5}{
		\draw(0,0)node[anchor=center]{$\square$}
		     --(1,0)node[anchor=center]{$\square$}
		     --(2,0)node[anchor=center,circle,inner sep=1.2mm,draw]{}
		     --(2,1)node[anchor=center]{$\square$};
		}
	};
\node[anchor=center]at(0,-2.5){$\Downarrow$};
\node[anchor=center]at(5,-2.5){$\Downarrow$};
\node[anchor=center]at(12,-2.5){$\Downarrow$};
\node[anchor=center]at(17,-2.5){$\Downarrow$};
}
\caption{The change of a label of boundary edges by the row insertion (left pictures) 
and column insertion (right pictures). $\square$ is either $\circ$ or $\times$.}
\label{IPbrc}
\end{figure}

\begin{figure}[ht]
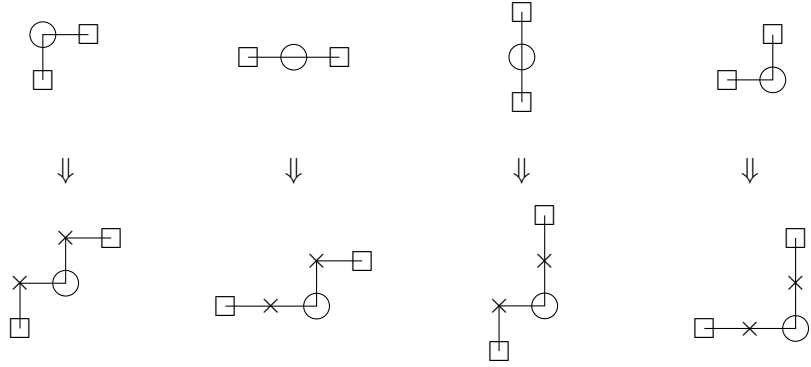

\tikzpic{-0.5}{[x=0.6cm,y=0.6cm]
	\node[anchor=center] at(0,0){
		\tikzpic{-0.5}{
		\draw(0,0)node[anchor=center]{$\square$}
		--(0,1)node[anchor=center,circle,inner sep=1.2mm,draw]{}
		--(1,1)node[anchor=center]{$\square$};
			   }
	};
	\node[anchor=center]at(5,0){
	\tikzpic{-0.5}{
		\draw(0,0)node[anchor=center]{$\square$}
		--(1,0)node[anchor=center,circle,inner sep=1.2mm,draw]{}
		--(2,0)node[anchor=center]{$\square$};
		   }
        };

	\node[anchor=center]at(10,0){
		\tikzpic{-0.5}{
			\draw(0,0)node[anchor=center]{$\square$}
			--(0,1)node[anchor=center,circle,inner sep=1.2mm,draw]{}
			--(0,2)node[anchor=center]{$\square$};
			}
	};
	\node[anchor=center]at(15,0){
		\tikzpic{-0.5}{
			\draw(0,0)node[anchor=center]{$\square$}
			--(1,0)node[anchor=center,circle,inner sep=1.2mm,draw]{}
			--(1,1)node[anchor=center]{$\square$};
			}
	};
	\node[anchor=center] at(0,-5){
		\tikzpic{-0.5}{
		\draw(0,0)node[anchor=center]{$\square$}--(0,1)node[anchor=center]{$\times$}
			--(1,1)node[anchor=center,circle,inner sep=1.2mm,draw]{}
			--(1,2)node[anchor=center]{$\times$}
			--(2,2)node[anchor=center]{$\square$};
			   }
	}; 
	\node[anchor=center]at(5,-5){
		\tikzpic{-0.5}{
			\draw(0,0)node[anchor=center]{$\square$}
			      --(1,0)node[anchor=center]{$\times$}
			      --(2,0)node[anchor=center,circle,inner sep=1.2mm,draw]{}
			      --(2,1)node[anchor=center]{$\times$}
			      --(3,1)node[anchor=center]{$\square$};
		}
	};
	\node[anchor=center]at(10,-5){
		\tikzpic{-0.5}{
			\draw(0,0)node[anchor=center]{$\square$}
			      --(0,1)node[anchor=center]{$\times$}
			      --(1,1)node[anchor=center,circle,inner sep=1.2mm,draw]{}
			      --(1,2)node[anchor=center]{$\times$}
			      --(1,3)node[anchor=center]{$\square$};
		}
	};
	\node[anchor=center]at(15,-5){
		\tikzpic{-0.5}{
			\draw(0,0)node[anchor=center]{$\square$}--(1,0)node[anchor=center]{$\times$}
			--(2,0)node[anchor=center,circle,inner sep=1.2mm,draw]{}
			--(2,1)node[anchor=center]{$\times$}
			--(2,2)node[anchor=center]{$\square$};
		}
	};
\node[anchor=center]at(0,-2.5){$\Downarrow$};
\node[anchor=center]at(5,-2.5){$\Downarrow$};
\node[anchor=center]at(10,-2.5){$\Downarrow$};
\node[anchor=center]at(15,-2.5){$\Downarrow$};
}
\caption{The change of a label of boundary edges by the 
diagonal insertion. $\square$ is either $\circ$ or $\times$.}
\label{IPbd}
\end{figure}

\begin{remark}
We may add a ribbon in the second step of an insertion procedure.
In this case, note that we have the same half-perimeter before 
and after adding the ribbon.
\end{remark}

\begin{defn}
We define $\mathcal{T}^{\times}_{n}\subset\widetilde{\mathcal{T}}_{n}$ with $n\ge2$
as a set of diagrams of size $n$ that is obtained from a single box 
with the label $1$ and the boundary label $\mathbf{e}=(0,2)$ by successive 
insertion procedures defined in Definition \ref{defn:IPforTT}.
\end{defn}

\begin{prop}
\label{propem}
We have $e_{m}=2n$ for the label $\mathbf{e}(n)$ of the boundary edges.
\end{prop}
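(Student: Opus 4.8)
The plan is to argue by induction on the size $n$, using the recursive definition of $\mathbf{e}(n)$ via the insertion procedure. The base case $n=1$ is immediate: from $\mathbf{e}(1)=(0,2)$ in Eqn. (\ref{eqn:TT1}) the half-perimeter is $m=2$ and $e_{m}=e_{2}=2=2\cdot 1$.

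The key observation I would record first is a telescoping identity. Since $e_{1}=0$ and each internal boundary vertex contributes $e_{i+1}-e_{i}=1$ when it is a cross (invalid) and $e_{i+1}-e_{i}=2$ when it is a circle (valid), summing the increments gives
\begin{equation*}
e_{m}=\#\{\text{crosses}\}+2\cdot\#\{\text{circles}\}.
\end{equation*}
Thus $e_{m}$ depends only on the numbers of valid and invalid vertices, not on their positions along the boundary. This position-independence is what makes the induction clean: it reduces the claim to tracking how the counts of circles and crosses change under a single insertion, regardless of where that insertion is performed.

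Next I would reduce the analysis to the first step of the insertion procedure (the geometric row, column, or diagonal insertion) alone. The second step adds a ribbon, but by Definition \ref{defn:IPforTT} the boundary labels of $T'$ coincide with those of the intermediate diagram, so a ribbon leaves $e_{m}$ untouched. It then remains to read off the local changes from Figures \ref{IPbrc} and \ref{IPbd}. A row or column insertion replaces a single internal vertex configuration by one containing that same vertex together with one extra valid vertex, so it increases $\#\{\text{circles}\}$ by one and $\#\{\text{crosses}\}$ by zero. A diagonal insertion, which can only be performed at a valid vertex $v$, replaces $v$ by the local pattern cross--circle--cross, so it leaves $\#\{\text{circles}\}$ unchanged while increasing $\#\{\text{crosses}\}$ by two. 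In every case the telescoping identity yields $e_{m'}=e_{m}+2$ for the new half-perimeter $m'$; since the size grows from $n$ to $n+1$, the inductive hypothesis $e_{m}=2n$ gives $e_{m'}=2n+2=2(n+1)$, completing the step.

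I expect the main obstacle to be purely the bookkeeping: one must confirm that across all four local pictures in each of Figures \ref{IPbrc} and \ref{IPbd} the net change in $(\#\{\text{circles}\},\#\{\text{crosses}\})$ is exactly $(+1,0)$ for row and column insertions and $(0,+2)$ for diagonal insertions, and that an insertion performed in the interior of the boundary — which relabels every later edge — does not corrupt the global count. The latter point is precisely what the telescoping identity handles automatically, so once the four local transitions are verified the argument closes with no further case analysis.
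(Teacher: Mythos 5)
Your proposal is correct and follows essentially the same route as the paper: induction on $n$, with the key observation that each insertion (row, column, or diagonal) increases the terminal label by exactly $2$, read off from Figures \ref{IPbrc} and \ref{IPbd}. Your telescoping identity $e_{m}=\#\{\text{crosses}\}+2\cdot\#\{\text{circles}\}$ is a slightly more explicit bookkeeping device than the paper's one-line statement that ``the number of labels increases by two,'' but the substance of the argument is identical.
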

\begin{proof}
By an insertion procedure, the number of labels increases 
by two from Figure \ref{IPbrc} and Figure \ref{IPbd}.
When $n=1$, we have $\mathbf{e}=(0,2)$, {\it i.e.,} $e_{2}=2$.
In general, we have $e_{m}=2+2(n-1)=2n$.
\end{proof}

\begin{prop}
The number of valid vertices is equal to the number of off-diagonal points.
\end{prop}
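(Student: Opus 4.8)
The natural route is induction on the size $n$ through the insertion procedure of Definition~\ref{defn:IPforTT}: every diagram in $\mathcal{T}^{\times}_{n}$ is obtained from the single box \eqref{eqn:TT1} by a sequence of row, column and diagonal insertions, so it suffices to check that the number of valid vertices and the number of off-diagonal points evolve in the same way under one insertion. First I would record an intrinsic description of the valid-vertex count. Since $e_{1}=0$ and, by Proposition~\ref{propem}, $e_{m}=2n$, and since each of the $m-1$ vertices joining two consecutive boundary edges raises the label by $1$ (invalid) or by $2$ (valid), writing $V$ and $I$ for the numbers of valid and invalid vertices gives $2V+I=2n$ and $V+I=m-1$, hence $V=2n-m+1$. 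Thus a valid vertex is gained precisely when an insertion increases the size by one while increasing the half-perimeter $m$ by only one.

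Next I would run the case analysis on the three insertion types, reading the local changes off Figures~\ref{IPbrc} and \ref{IPbd}. A row (resp. column) insertion appends one row (resp. column), so $m$ increases by one and, by the formula above, $V$ increases by one; the inserted dot is a row (resp. column) dot, i.e. an off-diagonal point, so the off-diagonal count also increases by one. A diagonal insertion appends both a row and a column, so $m$ increases by two and $V$ is unchanged; consistently, Figure~\ref{IPbd} shows that the valid vertex used for the insertion turns into an invalid one while a single new valid vertex is created (flanked by two new invalid vertices), a net change of zero, and the inserted dot is diagonal, so the off-diagonal count is likewise unchanged. Hence each insertion changes the two counts by the same amount. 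I would also note that the ribbon addition in the second step of the procedure leaves the half-perimeter fixed (as observed in the Remark following Definition~\ref{defn:IPforTT}) and reclassifies no dot, so it affects neither count.

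Since the two counts move in lock-step under every insertion, their comparison is invariant along the whole generation tree, and the claim reduces to the base tableau \eqref{eqn:TT1}. The step that needs the most care is exactly this bookkeeping at the diagonal insertion and at the base case: one must verify from Figure~\ref{IPbd} that a diagonal insertion destroys exactly one valid vertex and creates exactly one (not, say, two), and one must account correctly for the root box, which supplies the anchoring valid vertex at the start of the induction. Once the lock-step property $\Delta V=\Delta f$ and the base case are in place, the equality propagates by induction to every generalized tree-like tableau in $\mathcal{T}^{\times}_{n}$.
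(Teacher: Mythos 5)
Your argument is correct and rests on exactly the same two facts as the paper's proof: the identity (number of valid vertices) $=2n+1-(\text{half-perimeter})$, obtained from $e_{1}=0$, $e_{m}=2n$ and the $+1/+2$ label increments, together with the observation that a row/column insertion raises the half-perimeter by one while a diagonal insertion raises it by two. The paper simply collapses your induction into the closed-form count (half-perimeter $=n+m+1$ with $m$ diagonal points, hence $n-m$ valid vertices), so the two proofs differ only in packaging; your remark about the root box supplying the initial valid vertex touches the same implicit convention the paper uses when it identifies $n-m$ with the number of off-diagonal points.
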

\begin{proof}
Let $T$ be a diagram in $\mathcal{T}^{\times}_{n}$ and $m$ be the number 
of diagonal points in $T$.
We increase the half-perimeter by one by a row or column insertion, and 
by two by a diagonal insertion.
Thus, the perimeter is given by $(n-m)+2m+1=n+m+1$, which is the number of 
boundary edges.
The number of labels on boundary edges and valid vertices is $2n+1$.
The number of valid vertices is $2n+1-(n+m+1)=n-m$, which is the number 
of off-diagonal points. 
\end{proof}

Let $\mathbf{h}(n):=(h_{1},\ldots,h_{n})$ and 
$\mathbf{h}(n+1):=(h_{1},\ldots,h_{n+1})$ be insertion histories of length
$n$ and $n+1$.
We have $h_{i}\in[0,2(i-1)]$ for $1\le i\le n+1$.
By definition, we have $\mathbf{h}(1)=(0)$.
We define a tree-like tableau $\mathrm{TTab}(\mathbf{h}(1))$ as 
a single box with the label $1$.
We denote by $\mathrm{TTab}(\mathbf{h}(n))$ a tree-like tableau
of size $n$ associated with $\mathbf{h}(n)$.
We recursively define the tableau $\mathrm{TTab}(\mathbf{h}(n+1))$ 
as the tableau obtained from $\mathrm{TTab}(\mathbf{h}(n))$ by
the insertion procedure at $h_{n+1}$.
See Figure \ref{fig:TTIP} is an example of the insertion procedure 
for an insertion history.
\begin{figure}[ht]
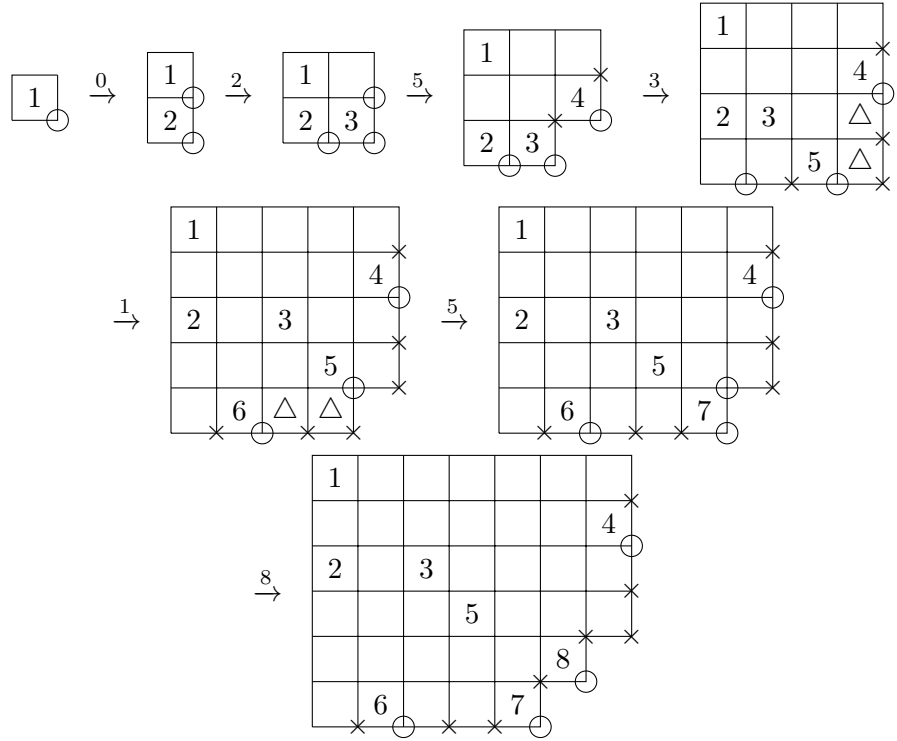

\tikzpic{-0.5}{[x=0.6cm,y=0.6cm]
\draw(0,0)--(1,0)node[anchor=center,circle,inner sep=1mm,draw]{}--(1,1);
\draw(0,0)--(0,1)--(1,1);
\node at(0.5,0.5){$1$};
}
$\xrightarrow{0}$
\tikzpic{-0.5}{[x=0.6cm,y=0.6cm]
\draw(0,0)--(1,0)node[anchor=center,circle,inner sep=1mm,draw]{}
     --(1,1)node[anchor=center,circle,inner sep=1mm,draw]{}--(1,2)--(0,2)--(0,0);
\draw(0,1)--(1,1);
\draw(0.5,0.5)node[anchor=center]{$2$}(0.5,1.5)node[anchor=center]{$1$};
}
$\xrightarrow{2}$
%%%
\tikzpic{-0.5}{[x=0.6cm,y=0.6cm]
\draw(0,0)--(1,0)node[anchor=center,circle,inner sep=1mm,draw]{}
  --(2,0)node[anchor=center,circle,inner sep=1mm,draw]{}
  --(2,1)node[anchor=center,circle,inner sep=1mm,draw]{}--(2,2)--(0,2)--(0,0);
\draw(1,0)--(1,2)(0,1)--(2,1);
\draw(0.5,1.5)node[anchor=center]{$1$}(0.5,0.5)node[anchor=center]{$2$}
    (1.5,0.5)node[anchor=center]{$3$};
}
%%%
$\xrightarrow{5}$
%%%%
\tikzpic{-0.5}{[x=0.6cm,y=0.6cm]
\draw(0,0)--(1,0)node[anchor=center,circle,inner sep=1mm,draw]{}
   --(2,0)node[anchor=center,circle,inner sep=1mm,draw]{}
   --(2,1)node[anchor=center]{$\times$}
   --(3,1)node[anchor=center,circle,inner sep=1mm,draw]{}
   --(3,2)node[anchor=center]{$\times$}--(3,3)--(0,3)--(0,0);
\draw(0,1)--(2,1)(0,2)--(3,2)(1,0)--(1,3)(2,1)--(2,3);
\draw(0.5,2.5)node[anchor=center]{$1$}(0.5,0.5)node[anchor=center]{$2$}
   (1.5,0.5)node[anchor=center]{$3$}(2.5,1.5)node[anchor=center]{$4$};
}
%%%%
$\xrightarrow{3}$
%%%%%
\tikzpic{-0.5}{[x=0.6cm,y=0.6cm]
\draw(0,0)--(1,0)node[anchor=center,circle,inner sep=1mm,draw]{}
    --(2,0)node[anchor=center]{$\times$}--(3,0)node[anchor=center,circle,inner sep=1mm,draw]{}
    --(4,0)node[anchor=center]{$\times$}--(4,1)node[anchor=center]{$\times$}
    --(4,2)node[anchor=center,circle,inner sep=1mm,draw]{}--(4,3)node[anchor=center]{$\times$}
    --(4,4)--(0,4)--(0,0);
\draw(0,1)--(4,1)(0,2)--(4,2)(0,3)--(4,3)(1,0)--(1,4)(2,0)--(2,4)(3,0)--(3,4);
\draw(0.5,3.5)node[anchor=center]{$1$}(0.5,1.5)node[anchor=center]{$2$}
     (1.5,1.5)node[anchor=center]{$3$}(3.5,2.5)node[anchor=center]{$4$}
     (2.5,0.5)node[anchor=center]{$5$};
\draw(3.5,0.5)node[anchor=center]{$\triangle$}(3.5,1.5)node[anchor=center]{$\triangle$};
} \\
%%%%%
$\xrightarrow{1}$
%%%%%%
\tikzpic{-0.5}{[x=0.6cm,y=0.6cm]
\draw(0,0)--(1,0)node[anchor=center]{$\times$}--(2,0)node[anchor=center,circle,inner sep=1mm,draw]{}
   --(3,0)node[anchor=center]{$\times$}--(4,0)node[anchor=center]{$\times$}
   --(4,1)node[anchor=center,circle,inner sep=1mm,draw]{}--(5,1)node[anchor=center]{$\times$}
   --(5,2)node[anchor=center]{$\times$}--(5,3)node[anchor=center,circle,inner sep=1mm,draw]{}
   --(5,4)node[anchor=center]{$\times$}--(5,5)--(0,5)--(0,0);
\draw(0,1)--(4,1)(0,2)--(5,2)(0,3)--(5,3)(0,4)--(5,4)(1,0)--(1,5)(2,0)--(2,5)(3,0)--(3,5)
     (4,1)--(4,5);
\draw(0.5,4.5)node[anchor=center]{$1$}(0.5,2.5)node[anchor=center]{$2$}
     (2.5,2.5)node[anchor=center]{$3$}(4.5,3.5)node[anchor=center]{$4$}
     (3.5,1.5)node[anchor=center]{$5$}(1.5,0.5)node[anchor=center]{$6$};
\draw(2.5,0.5)node[anchor=center]{$\triangle$}(3.5,0.5)node[anchor=center]{$\triangle$};
}
%%%%%%
$\xrightarrow{5}$
%%%%%%%
\tikzpic{-0.5}{[x=0.6cm,y=0.6cm]
\draw(0,0)--(1,0)node[anchor=center]{$\times$}--(2,0)node[anchor=center,circle,inner sep=1mm,draw]{}
   --(3,0)node[anchor=center]{$\times$}--(4,0)node[anchor=center]{$\times$}
   --(5,0)node[anchor=center,circle,inner sep=1mm,draw]{}--(5,1)node[anchor=center,circle,inner sep=1mm,draw]{}
   --(6,1)node[anchor=center]{$\times$}--(6,2)node[anchor=center]{$\times$}
   --(6,3)node[anchor=center,circle,inner sep=1mm,draw]{}--(6,4)node[anchor=center]{$\times$}
   --(6,5)--(0,5)--(0,0);
\draw(0,1)--(5,1)(0,2)--(6,2)(0,3)--(6,3)(0,4)--(6,4)(1,0)--(1,5)(2,0)--(2,5)(3,0)--(3,5)(4,0)--(4,5)
     (5,1)--(5,5);
\draw(0.5,4.5)node[anchor=center]{$1$}(0.5,2.5)node[anchor=center]{$2$}
     (2.5,2.5)node[anchor=center]{$3$}(5.5,3.5)node[anchor=center]{$4$}
     (3.5,1.5)node[anchor=center]{$5$}(1.5,0.5)node[anchor=center]{$6$}
     (4.5,0.5)node[anchor=center]{$7$};
} \\
%%%%%%%
$\xrightarrow{8}$
%%%%%%%%
\tikzpic{-0.5}{[x=0.6cm,y=0.6cm]
\draw(0,0)--(1,0)node[anchor=center]{$\times$}--(2,0)node[anchor=center,circle,inner sep=1mm,draw]{}
     --(3,0)node[anchor=center]{$\times$}--(4,0)node[anchor=center]{$\times$}
     --(5,0)node[anchor=center,circle,inner sep=1mm,draw]{}
     --(5,1)node[anchor=center]{$\times$}--(6,1)node[anchor=center,circle,inner sep=1mm,draw]{}
     --(6,2)node[anchor=center]{$\times$}--(7,2)node[anchor=center]{$\times$}
     --(7,3)node[anchor=center]{$\times$}--(7,4)node[anchor=center,circle,inner sep=1mm,draw]{}
     --(7,5)node[anchor=center]{$\times$}--(7,6)--(0,6)--(0,0);
\draw(1,0)--(1,6)(2,0)--(2,6)(3,0)--(3,6)(4,0)--(4,6)(5,1)--(5,6)(6,2)--(6,6)
     (0,1)--(5,1)(0,2)--(6,2)(0,3)--(7,3)(0,4)--(7,4)(0,5)--(7,5);
\draw(0.5,5.5)node[anchor=center]{$1$}(0.5,3.5)node[anchor=center]{$2$}
     (2.5,3.5)node[anchor=center]{$3$}(6.5,4.5)node[anchor=center]{$4$}
     (3.5,2.5)node[anchor=center]{$5$}(1.5,0.5)node[anchor=center]{$6$}
     (4.5,0.5)node[anchor=center]{$7$}(5.5,1.5)node[anchor=center]{$8$};
}
\caption{Insertion procedure for $\mathbf{h}=(0,0,2,5,3,1,5,8)$. 
We have $\mathbf{e}=(0,1,3,4,5,7,8,10,11,12,13,15,16)$. The boxes 
with $\triangle$ are the ribbon added in the insertion process.}
\label{fig:TTIP}
\end{figure}

We will give an alternative description of the boundary label $\mathbf{e}(n)$.
With $\mathbf{e}(1)=(0,2)$, we associate a unique Dyck path $\mu_{1}$ of length $2$, {\it i.e.,}
$\mu_{1}=UD$, with the insertion history $\mathbf{h}(1)=(0)$.
Given a duple $(\mu_{n},\mathbf{h}(n))$ and $h_{n+1}$, we define a Dyck path $\mu_{n+1}$ 
of length $2(n+1)$ by the following operation.
We insert the Dyck path $UD$ into $\mu_{n}$ at the $h_{n+1}$-th vertices from left, and 
denote the new Dyck path by $\mu_{n+1}$.

Since the leftmost step of a Dyck path is always an up step, we put zero on this step.
Then, given a Dyck path $\mu_{n}$, we put a label on each step according to 
the local configuration depicted in Figure \ref{fig:DPlabel}.
\begin{figure}[ht]
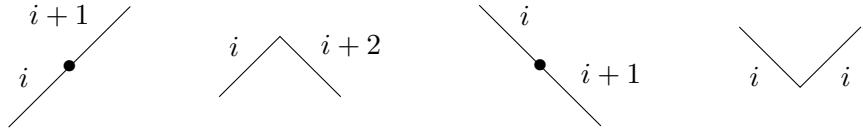

\tikzpic{-0.5}{[x=0.8cm,y=0.8cm]
\draw(0,0)--(2,2);
\node at(1,1){$\bullet$};
\node[anchor=south east] at(0.5,0.5){$i$};
\node[anchor=south east] at(1.5,1.5){$i+1$};
}\qquad
\tikzpic{-0.5}{[x=0.8cm,y=0.8cm]
\draw(0,0)--(1,1)--(2,0);
\node[anchor=south east]at(0.5,0.5){$i$};
\node[anchor=south west]at(1.5,0.5){$i+2$};
}\qquad
\tikzpic{-0.5}{[x=0.8cm,y=0.8cm]
\draw(0,2)--(2,0);
\node at(1,1){$\bullet$};
\node[anchor=south west]at(0.5,1.5){$i$};
\node[anchor=south west]at(1.5,0.5){$i+1$};
}\qquad
\tikzpic{-0.5}{[x=0.8cm,y=0.8cm]
\draw(0,1)--(1,0)--(2,1);
\node[anchor=north east]at(0.5,0.5){$i$};
\node[anchor=north west]at(1.5,0.5){$i$};
}
\caption{Local configurations of labels on a Dyck path.}
\label{fig:DPlabel}
\end{figure}
%%%
Note that we have a unique label for a Dyck path since we have 
zero at the first step.
Let $\tilde{\mathbf{e}}:=(\widetilde{e}_{1},\ldots,\widetilde{e}_{2n})$
be a sequence of labels on steps from left to right in a Dyck path $\mu_{n}$.
We may have duplicated integers in $\tilde{\mathbf{e}}$ if we have 
a $DU$-shape in a Dyck path.
We define $\mathbf{e}':=(e'_{1},\ldots,e'_{m})$ by deleting 
one of duplicated integers from $\tilde{\mathbf{e}}$.

For example, $\mathbf{h}=(0,0,2,5,3,1,5,8)$ implies the following 
Dyck path and its label:
\begin{eqnarray*}
\tikzpic{-0.5}{[x=0.6cm,y=0.6cm]
\draw(0,0)--(1,1)node[anchor=center]{$\bullet$}--(2,2)node[anchor=center]{$\bullet$}
     --(3,1)node[anchor=center]{$\bullet$}--(4,0)node[anchor=center]{$\bullet$}
     --(5,1)node[anchor=center]{$\bullet$}--(6,2)node[anchor=center]{$\bullet$}
     --(7,1)node[anchor=center]{$\bullet$}--(8,2)node[anchor=center]{$\bullet$}
     --(9,3)node[anchor=center]{$\bullet$}--(10,2)node[anchor=center]{$\bullet$}
     --(11,1)node[anchor=center]{$\bullet$}--(12,0)node[anchor=center]{$\bullet$}
     --(13,1)node[anchor=center]{$\bullet$}--(14,2)node[anchor=center]{$\bullet$}
     --(15,1)node[anchor=center]{$\bullet$}--(16,0);
\draw(0.5,0.5)node[anchor=south east]{$0$}(1.5,1.5)node[anchor=south east]{$1$}
(2.5,1.5)node[anchor=south west]{$3$}(3.3,0.7)node[anchor=south west]{$4$}
(4.7,0.7)node[anchor=south east]{$4$}(5.5,1.5)node[anchor=south east]{$5$}
(6.3,1.7)node[anchor=south west]{$7$}(7.7,1.7)node[anchor=south east]{$7$}
(8.5,2.5)node[anchor=south east]{$8$}(9.5,2.5)node[anchor=south west]{$10$}
(10.5,1.5)node[anchor=south west]{$11$}(11.5,0.5)node[anchor=north east]{$12$}
(12.5,0.5)node[anchor=north west]{$12$}(13.5,1.5)node[anchor=south east]{$13$}
(14.5,1.5)node[anchor=south west]{$15$}(15.5,0.5)node[anchor=south west]{$16$};
}
\end{eqnarray*}
We obtain $\mathbf{e}'=(0,1,3,4,5,7,8,10,11,12,13,15,16)$.

\begin{prop}
We have $\mathbf{e}=\mathbf{e}'$.
\end{prop}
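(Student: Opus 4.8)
The plan is to prove $\mathbf{e}=\mathbf{e}'$ by induction on $n$, after recasting both sequences as the same set. First I would note that each of $\mathbf{e}(n)$ and $\mathbf{e}'(n)$ is increasing with first entry $0$, consecutive differences in $\{1,2\}$, and last entry $2n$: for $\mathbf{e}(n)$ this is the content of Proposition~\ref{propem} together with the recursive construction of the boundary label, and for $\mathbf{e}'(n)$ it follows by a direct computation with the labelling rules of Figure~\ref{fig:DPlabel}, where a peak $UD$ forces a jump of $2$, the pairs $UU$ and $DD$ a jump of $1$, and a valley $DU$ produces the single repeated value that is subsequently deleted. A sequence of this type is determined by the set $S\subseteq\{0,1,\dots,2n\}$ of values it attains (a set containing $0$ and $2n$ and omitting no two consecutive integers), the omitted values being the labels of the valid vertices on the tableau side and the values $i+1$ created at the peaks on the Dyck-path side. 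Hence it suffices to prove that the attained sets $S(n)$ and $S'(n)$ coincide.

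The key preliminary step is a small lemma on the Dyck-path labelling: if $UD$ is inserted at the vertex of index $t$, then the inserted up-step receives the label $t$. I would prove this by letting $G(t)$ denote the label the inserted up-step acquires at vertex $t$ and checking from the rules of Figure~\ref{fig:DPlabel} that $G(0)=0$ and $G(t+1)=G(t)+1$, so that $G(t)=t$. This identifies the Dyck-side insertion at vertex $h_{n+1}$ with an insertion at the label $t:=h_{n+1}$, matching the tableau convention in Definition~\ref{defn:IPforTT}. The same computation shows that the vertex $t$ is a peak exactly when $t\notin S'(n)$, and otherwise (ascending, descending, or valley) one has $t\in S'(n)$.

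Next I would establish a single set-update formula valid on both sides. On the Dyck-path side, inserting $UD$ at vertex $t$ leaves every label $\le t$ fixed, assigns the labels $t$ and $t+2$ to the inserted pair, and raises every label $>t$ by $2$, whence
\[
S'(n+1)=\{s\in S'(n):s\le t\}\cup\{t,t+2\}\cup\{s+2:s\in S'(n),\ s>t\}.
\]
I then claim that the tableau insertion of Definition~\ref{defn:IPforTT} obeys the very same formula, to be read off from Figures~\ref{IPbrc} and~\ref{IPbd}: a row or column insertion, which occurs exactly when $t\in S(n)$ (insertion at a boundary edge), contributes the single new value $t+2$, while a diagonal insertion, which occurs when $t\notin S(n)$ (insertion at a valid vertex), contributes both $t$ and $t+2$; in either case the labels weakly to the left are unchanged and those to the right increase by $2$, the shift being forced by Proposition~\ref{propem} since the maximum grows from $2n$ to $2n+2$.

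With these two facts the induction closes at once. The base case $n=1$ gives $\mathbf{e}(1)=\mathbf{e}'(1)=(0,2)$ on both sides. For the inductive step, the hypothesis $S(n)=S'(n)$ makes the dichotomy $t\in S$ agree on the two sides, so both perform the same type of insertion at the same label $t=h_{n+1}$, and the common update formula yields $S(n+1)=S'(n+1)$. The main obstacle I anticipate is the verification in the third paragraph: one must check that the local boundary-label rewrites depicted in Figures~\ref{IPbrc} and~\ref{IPbd}, across all shapes of boundary edge or valid vertex, realise precisely the uniform set-update formula, and in particular that every label to the right of the insertion really is shifted by $2$. Organising this analysis by the present/absent dichotomy for $t$, rather than by the individual geometric pictures, is what makes the many cases collapse to the two displayed ones.
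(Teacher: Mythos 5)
Your proposal is correct and follows essentially the same route as the paper: induction on $n$, matching the tableau insertion at a boundary edge or valid vertex with the insertion of $UD$ into $\mu_{n-1}$ at a non-peak or peak position, and checking that the local label updates agree on both sides. Your only real departure is cosmetic but welcome --- encoding each sequence by its set of attained values and writing one uniform update formula, which collapses the paper's case-by-case inspection of Figures~\ref{IPbrc} and~\ref{IPbd} and makes the closing step (both sequences are determined by the positions of the valid vertices/peaks) more explicit than the paper's final paragraph.
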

\begin{proof}
We prove Proposition by induction with respect to the size $n$ of a Dyck tableau.
When $n=1$, we have $\mathbf{e}=(0,2)$ by definition. In this case, a Dyck path
associated with $\mathbf{h}=(0)$ is a path $UD$ and we have $\mathbf{e}'=(0,2)$.
Thus, we have $\mathbf{e}=\mathbf{e}'$.

We assume that Proposition is true for $n-1$.
This means that we have a valid vertex when we have a $UD$ path in $\mu_{n-1}$, 
and vice versa. 
The action of the diagonal insertion at a valid vertex changes the local 
$\mathbf{e}=(i,i+2,\ldots)$ to $(i,i+1,i+3,i+4,\ldots)$ (see Figure \ref{IPbd}). 
In case of $\mathbf{e}'$, the local change is given by
\begin{eqnarray*}
\tikzpic{-0.5}{[x=0.6cm,y=0.6cm]
\draw(0,0)--(1,1)--(2,0);
\node[anchor=south east]at(0.5,0.5){$i$};
\node[anchor=south west]at(1.5,0.5){$i+2$};
}\quad\longrightarrow\quad
\tikzpic{-0.5}{[x=0.6cm,y=0.6cm]
\draw(0,0)--(1,1)node[anchor=center]{$\bullet$}--(2,2)--(3,1)node[anchor=center]{$\bullet$}--(4,0);
\node[anchor=south east]at(0.5,0.5){$i$};
\node[anchor=south west]at(3.5,0.5){$i+4$};
\node[anchor=south east]at(1.5,1.5){$i+1$};
\node[anchor=south west]at(2.5,1.5){$i+3$};
}.
\end{eqnarray*}
This implies that $\mathbf{e}'$ is locally changed from $(i,i+2,\ldots)$ to $(i,i+1,i+3,i+4,\ldots)$.
Thus we have $\mathbf{e}=\mathbf{e}'$ in case of size $n$.

When we have a row or column insertion, a local $\mathbf{e}$ corresponding to the boundary edge is $(i)$.
From Figure \ref{IPbrc}, the former local $\mathbf{e}$ is changed to $(i,i+2)$ by a row insertion.
On the other hand, in case of $\mathbf{e}'$, we insert a $UD$-path 
in the middle of a local path $UU$, $DU$ or $DD$ of $\mu_{n-1}$.
We have $UUDU$, $DUDU$ and $DUDD$, and the local $\mathbf{e}'$ is changed 
from $(i,i+1)$, $(i)$ and $(i,i+1)$ to $(i,i+1,i+3)$, $(i,i+2)$ and $(i,i+2,i+3)$ 
respectively. 
In all cases, $(j)$ is mapped to $(j,j+2)$ for some $j$.

By a row, column or diagonal insertion, we add one valid vertex on the boundary edges.
The position of the valid vertex is corresponding to the position of $\wedge$-peak 
in $\mu_{n}$. 
Once all the positions of valid vertices are fixed, one can determine $\mathbf{e}$ and $\mathbf{e}'$
uniquely. 
Thus, we have $\mathbf{e}=\mathbf{e}'$.
\end{proof}

\begin{theorem}
\label{thrmtLTab}
We have $\mathcal{T}^{\times}_{n}=\mathcal{T}_{n}$.
Especially, every tree-like tableau can be constructed from a box 
with the label $1$ by the insertion procedure recursively.
\end{theorem}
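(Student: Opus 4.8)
The plan is to prove the two inclusions $\mathcal{T}^{\times}_{n}\subseteq\mathcal{T}_{n}$ and $\mathcal{T}_{n}\subseteq\mathcal{T}^{\times}_{n}$ by induction on the size $n$, the base case $n=1$ being immediate since both sets reduce to the single box labelled $1$ depicted in Eqn.~(\ref{eqn:TT1}). The heart of the argument is to establish that the insertion procedure $\mathrm{IP}$ of Definition~\ref{defn:IPforTT} and the circled reverse insertion $\mathrm{RI}_{\circ}$ are mutually inverse, once the valid vertices produced by $\mathrm{IP}$ are identified with the circled boundary vertices manipulated by $\mathrm{RI}_{\circ}$. Concretely, I would first check that a single insertion at a boundary edge or valid vertex with label $p$, followed by the first step of $\mathrm{RI}_{\circ}$ (removal of the maximal ribbon) and the relevant case among (RI1)--(RI3), returns the original diagram together with its circle pattern; and conversely that the box receiving the label $n$ in $\mathrm{RI}_{\circ}$ is exactly the southeast-most box added by $\mathrm{IP}$. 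The two ribbon steps cancel because, as already noted, a ribbon addition and removal preserve the half-perimeter and act inversely on the boundary-edge labels (Figures~\ref{IPbrc} and \ref{IPbd}).

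For the inclusion $\mathcal{T}^{\times}_{n}\subseteq\mathcal{T}_{n}$, I would take $T\in\mathcal{T}^{\times}_{n}$, written as the image of some $T'\in\mathcal{T}^{\times}_{n-1}$ under $\mathrm{IP}$ at a label $h_{n}\in[0,2(n-1)]$. By the inductive hypothesis $T'\in\mathcal{T}_{n-1}$ is admissible, so it carries a circled diagram $T'^{\circ}\in\mathcal{T}^{\circ}_{n-1}$ realizing its admissibility. I would lift the insertion to the circled setting, placing a circle on the new valid vertex created by the insertion (if any) and on the vertices dictated by Figures~\ref{IPbrc}--\ref{IPbd}; the resulting $T^{\circ}$ then has $2n+1-m$ circles on its $m$-vertex boundary by Proposition~\ref{propem} together with the count of valid vertices, and it satisfies the three defining conditions of $\mathcal{T}^{\circ}_{n}$. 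Crucially, a single diagonal insertion introduces exactly one new circled vertex, so the matching step of $\mathrm{RI}_{\circ}$ adds at most one circle, i.e.\ is admissible; iterating this yields admissibility of $\mathrm{RI}_{\circ}^{p}$ for all $1\le p\le n-1$, whence $T\in\mathcal{T}_{n}$.

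For the reverse inclusion $\mathcal{T}_{n}\subseteq\mathcal{T}^{\times}_{n}$, I would take an admissible $T\in\mathcal{T}_{n}$ with witnessing circled diagram $T^{\circ}\in\mathcal{T}^{\circ}_{n}$ and apply $\mathrm{RI}_{\circ}$ once to obtain $S^{\circ}\in\mathcal{T}^{\circ}_{n-1}$. Since $\mathrm{RI}_{\circ}^{p}(S^{\circ})=\mathrm{RI}_{\circ}^{p+1}(T^{\circ})$, admissibility of all $\mathrm{RI}_{\circ}^{p}$ on $T^{\circ}$ descends to admissibility of all $\mathrm{RI}_{\circ}^{p}$ on $S^{\circ}$, so the underlying diagram $S$ lies in $\mathcal{T}_{n-1}$ and, by induction, in $\mathcal{T}^{\times}_{n-1}$. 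By the mutual-inverse property, the box that $\mathrm{RI}_{\circ}$ labelled $n$ corresponds to a boundary edge or a valid vertex of $S$ carrying a well-defined label $p\in[0,2(n-1)]$, and performing $\mathrm{IP}$ at $p$ on $S$ recovers $T$; hence $T\in\mathcal{T}^{\times}_{n}$, and the two sets coincide. The final assertion of the theorem is then the special case recording that this recursion terminates at the size-one box.

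The main obstacle I anticipate is the careful bookkeeping of circles through the three cases (RI1)--(RI3) and their reconciliation with the three insertion types of Figures~\ref{IPbrc} and \ref{IPbd}, in particular verifying that the circle placed on the former northwest vertex in (RI3) is precisely the circle consumed when a diagonal insertion creates a new valid vertex. A second delicate point is to confirm that the special point governing the ribbon addition in $\mathrm{IP}$ coincides with the boundary dotted box at which the maximal ribbon terminates in $\mathrm{RI}$, so that the two ribbon operations are genuinely inverse. This is where the admissibility hypothesis---adding at most one circle per diagonal step---is used essentially, since it is exactly what rules out the non-admissible configurations such as the right-hand diagram of Figure~\ref{fig:GTT}.
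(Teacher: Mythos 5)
Your proposal follows essentially the same route as the paper: induction on $n$, with both inclusions reduced to the fact that the insertion procedure and the (circled) reverse insertion procedure are mutually inverse and compatible with admissibility. The paper's own proof is considerably terser---it simply asserts that reverse insertion inverts insertion and that insertion preserves membership in $\mathcal{T}_{n+1}$---so your more explicit bookkeeping of circles through (RI1)--(RI3) and of the special-point/maximal-ribbon correspondence supplies exactly the details the paper leaves implicit.
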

\begin{proof}
We prove Theorem by induction on the size $n$.
When $n=1$, we have a unique tree-like tableau, a box with the 
label $1$. Thus, $\mathcal{T}^{\times}_1=\mathcal{T}_{1}$.
We assume that Theorem holds up to $n$.
Let $T\in\mathcal{T}^{\times}_{n+1}$ be a tree-like tableau of size $n+1$. 
By Definition \ref{defn:iipTT}, if we perform the reverse insertion procedure,
we obtain a tree-like tableau $T'$ of size $n$.
By induction assumption, $T'$ can be obtained from the tree-like tableau of size $1$
by using the insertion procedure.
Further, since $\mathcal{T}^{\times}_{n}=\mathcal{T}_{n}$, $T'$ is unique.
Since reverse insertion procedure is the inverse of the insertion procedure,
$T$ can be obtained from $T'$ by the insertion procedure.

When $T'\in\mathcal{T}_{n}$, it is clear that a tree-like tableau constructed 
from $T'$ by the insertion procedure is in $\mathcal{T}_{n+1}$.
Thus, we have $T$ can be constructed by the insertion procedures and 
$\mathcal{T}_{n+1}=\mathcal{T}^{\times}_{n+1}$.
\end{proof}

\begin{cor}
The number of generalized tree-like tableaux in $\mathcal{T}_{n}$ is $(2n-1)!!$.
\end{cor}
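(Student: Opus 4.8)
The plan is to upgrade Theorem \ref{thrmtLTab} into an enumeration by exhibiting a bijection between $\mathcal{T}_{n}$ and the set of insertion histories $\mathbf{h}(n)=(h_{1},\ldots,h_{n})$ with $h_{i}\in[0,2(i-1)]$, and then counting the latter. By Theorem \ref{thrmtLTab} we have $\mathcal{T}_{n}=\mathcal{T}^{\times}_{n}$, so every element of $\mathcal{T}_{n}$ is obtained from the single box of size $1$ by repeated application of the insertion procedure; hence the assignment $\mathbf{h}(n)\mapsto\mathrm{TTab}(\mathbf{h}(n))$ is surjective onto $\mathcal{T}_{n}$. Since the reverse insertion procedure of Definition \ref{defn:iipTT} is the step-by-step inverse of the insertion procedure (this is exactly what is used in the inductive proof of Theorem \ref{thrmtLTab}), the same assignment is also injective, so it is a bijection.

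The one point that must be verified for this to be meaningful is that, at the step passing from size $k-1$ to size $k$, each admissible value $h_{k}\in[0,2(k-1)]$ selects a unique, well-defined insertion site. Here I would argue that on a tableau of size $k-1$ the labels carried by the boundary edges, together with the labels of the valid vertices, exhaust $\{0,1,\ldots,2(k-1)\}$ with each integer occurring exactly once. Indeed, by Proposition \ref{propem} and the recursion defining $\mathbf{e}$, the boundary-edge labels form an increasing sequence $e_{1}=0<\cdots<e_{m}=2(k-1)$ with increments of $1$ or $2$, and by definition a valid vertex with label $e_{i}+1$ sits precisely at each gap where $e_{i+1}=e_{i}+2$. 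Thus every integer of $[0,2(k-1)]$ is the label of exactly one boundary edge or of exactly one valid vertex, and the choice of $h_{k}$ therefore determines a unique edge (giving a row or column insertion) or a unique valid vertex (giving a diagonal insertion), as in Figures \ref{IPbrc} and \ref{IPbd}.

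With the bijection established, the count is immediate. For each $k$ the value $h_{k}$ ranges freely over the $2(k-1)+1=2k-1$ elements of $[0,2(k-1)]$, independently of the earlier choices, so the number of insertion histories of length $n$ is
\[
\prod_{k=1}^{n}\bigl(2(k-1)+1\bigr)=\prod_{k=1}^{n}(2k-1)=1\cdot3\cdot5\cdots(2n-1)=(2n-1)!!,
\]
whence $|\mathcal{T}_{n}|=(2n-1)!!$. The main obstacle is the well-definedness and distinctness claim of the second paragraph, namely that distinct insertion histories never produce the same tableau and that every admissible label genuinely indexes an insertion site; both rest on the invertibility of the insertion procedure already secured in Theorem \ref{thrmtLTab}, so the only remaining content is the bookkeeping that the $2k-1$ insertion sites of a size-$(k-1)$ tableau are in bijection with $[0,2(k-1)]$.
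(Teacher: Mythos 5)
Your proposal is correct and follows essentially the same route as the paper: the paper likewise invokes Proposition \ref{propem} (so that a size-$(k-1)$ tableau admits exactly $2k-1$ insertions, indexed by $[0,2(k-1)]$) together with Theorem \ref{thrmtLTab} to obtain the recursion $|\mathcal{T}_{n}|=(2n-1)|\mathcal{T}_{n-1}|$ and hence $(2n-1)!!$. You merely make explicit the bijection with insertion histories and the injectivity coming from the reverse insertion procedure, which the paper leaves implicit.
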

\begin{proof}
From Proposition \ref{propem}, we have $e_{m}=2n$. This means that we have 
$2n+1$ ways to perform an insertion on $T\in\mathcal{T}^{\times}_{n}$.
Together with Theorem \ref{thrmtLTab}, we have 
$|\mathcal{T}_{n}|=|\mathcal{T}_{n}^{\times}|=(2n-1)|\mathcal{T}_{n-1}^{\times}|=(2n-1)!!$.
\end{proof}

Figure \ref{fig:gtTT} shows the tree-like tableaux in $\mathcal{T}_{n}$ 
at most $n=3$.
\begin{figure}[ht]
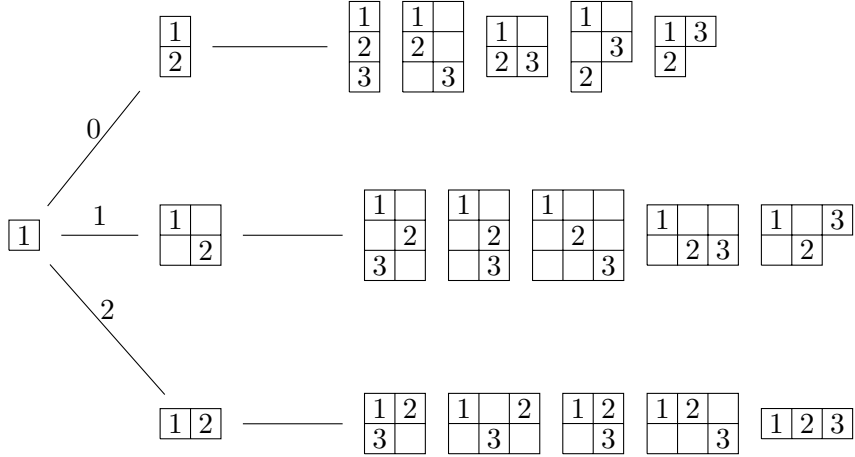

\tikzpic{-0.5}{[x=0.4cm,y=0.4cm,grow=right]
\tikzstyle{level 1}=[level distance=1.5cm, sibling distance=2.5cm]
\tikzstyle{level 2}=[level distance=2cm, sibling distance=1.5cm]
\node[text centered,anchor=east]{
\tikzpic{-0.5}{\draw(0,0)--(1,0)--(1,1)--(0,1)--(0,0);
	       \node[anchor=center] at (0.5,0.5){$1$};
	       }
      }
child {
      node[anchor=west] {
           \tikzpic{-0.5}{\draw(0,0)--(2,0)--(2,1)--(0,1)--(0,0)(1,0)--(1,1);
                          \node[anchor=center] at (0.5,0.5){$1$};\node[anchor=center] at (1.5,0.5){$2$};
                         }
           }
      child {
             node[anchor=west]{$\begin{aligned}
                            \tikzpic{-0.5}{
                                          \draw(0,0)--(2,0)--(2,2)--(0,2)--(0,0)
                                               (0,1)--(2,1)(1,0)--(1,2);
                                          \node[anchor=center] at (0.5,0.5){$3$};
                                          \node[anchor=center] at (0.5,1.5){$1$};
                                          \node[anchor=center] at (1.5,1.5){$2$};
                                          }    
                            \tikzpic{-0.5}{
                                          \draw(0,0)--(3,0)--(3,2)--(0,2)--(0,0)
                                               (0,1)--(3,1)(1,0)--(1,2)(2,0)--(2,2);
                                          \node[anchor=center] at (1.5,0.5){$3$};
                                          \node[anchor=center] at (2.5,1.5){$2$};
                                          \node[anchor=center] at (0.5,1.5){$1$};
                                          }   
                            \tikzpic{-0.5}{
                                          \draw(0,0)--(2,0)--(2,2)--(0,2)--(0,0)
                                               (1,0)--(1,2)(0,1)--(2,1);
                                          \node[anchor=center] at (0.5,1.5){$1$};
                                          \node[anchor=center] at (1.5,1.5){$2$};
                                          \node[anchor=center] at (1.5,0.5){$3$};
                                          }
                             \tikzpic{-0.5}{
                                          \draw(0,0)--(3,0)--(3,2)--(0,2)--(0,0)(0,1)--(3,1)(1,0)--(1,2)
                                               (2,0)--(2,2);
                                          \node[anchor=center] at (0.5,1.5){$1$};
                                          \node[anchor=center] at (1.5,1.5){$2$};
                                          \node[anchor=center] at (2.5,0.5){$3$};
                                          }
                             \tikzpic{-0.5}{
                                          \draw(0,0)--(3,0)--(3,1)--(0,1)--(0,0)
                                               (1,0)--(1,1)(2,0)--(2,1);
                                          \node[anchor=center] at (0.5,0.5){$1$};
                                          \node[anchor=center] at (1.5,0.5){$2$};
                                          \node[anchor=center] at (2.5,0.5){$3$};
                                          }\hspace{1.5cm} \end{aligned}$ 
                            }
            }
      edge from parent node[above]{$2$}
      }
child {
      node[anchor=west] {
           \tikzpic{-0.5}{\draw(0,0)--(2,0)--(2,2)--(0,2)--(0,0)(0,1)--(2,1)(1,0)--(1,2);
                          \node[anchor=center] at (0.5,1.5){$1$};\node[anchor=center] at (1.5,0.5){$2$};
                         }
           }
      child {
             node[anchor=west]{$\begin{aligned}
                            \tikzpic{-0.5}{
                                          \draw(0,0)--(2,0)--(2,3)--(0,3)--(0,0)
                                               (0,1)--(2,1)(0,2)--(2,2)(1,0)--(1,3);
                                          \node[anchor=center] at (0.5,0.5){$3$};
                                          \node[anchor=center] at (1.5,1.5){$2$};
                                          \node[anchor=center] at (0.5,2.5){$1$};
                                          }    
                            \tikzpic{-0.5}{
                                          \draw(0,0)--(2,0)--(2,3)--(0,3)--(0,0)
                                               (0,1)--(2,1)(0,2)--(2,2)(1,0)--(1,3);
                                          \node[anchor=center] at (1.5,0.5){$3$};
                                          \node[anchor=center] at (1.5,1.5){$2$};
                                          \node[anchor=center] at (0.5,2.5){$1$};
                                          }   
                            \tikzpic{-0.5}{
                                          \draw(0,0)--(3,0)--(3,3)--(0,3)--(0,0)
                                               (0,1)--(3,1)(0,2)--(3,2)(1,0)--(1,3)(2,0)--(2,3);
                                          \node[anchor=center] at (2.5,0.5){$3$};
                                          \node[anchor=center] at (1.5,1.5){$2$};
                                          \node[anchor=center] at (0.5,2.5){$1$};
                                          } 
                             \tikzpic{-0.5}{
                                          \draw(0,0)--(3,0)--(3,2)--(0,2)--(0,0)
                                               (0,1)--(3,1)(1,0)--(1,2)(2,0)--(2,2);
                                          \node[anchor=center] at (2.5,0.5){$3$};
                                          \node[anchor=center] at (1.5,0.5){$2$};
                                          \node[anchor=center] at (0.5,1.5){$1$};
                                          }
                             \tikzpic{-0.5}{
                                          \draw(0,0)--(2,0)--(2,1)--(3,1)--(3,2)--(0,2)--(0,0)
                                               (0,1)--(2,1)--(2,2)(1,0)--(1,2);
                                          \node[anchor=center] at (2.5,1.5){$3$};
                                          \node[anchor=center] at (1.5,0.5){$2$};
                                          \node[anchor=center] at (0.5,1.5){$1$};
                                          }\hspace{1.5cm} \end{aligned}$ 
                            }
            }
            edge from parent node[above]{$1$}
      }
child {
      node[anchor=west] {
           \tikzpic{-0.5}{\draw(0,0)--(1,0)--(1,2)--(0,2)--(0,0)(0,1)--(1,1);
                          \node[anchor=center] at (0.5,0.5){$2$};\node[anchor=center] at (0.5,1.5){$1$};
                         }
           }
      child{
           node[anchor=west]{$\begin{aligned}
                            \tikzpic{-0.5}{                                                  
	                                  \draw(0,0)--(1,0)--(1,3)--(0,3)--(0,0)
                                               (0,1)--(1,1)(0,2)--(1,2);
                                          \node[anchor=center] at (0.5,0.5){$3$};
                                          \node[anchor=center] at (0.5,1.5){$2$};
                                          \node[anchor=center] at (0.5,2.5){$1$};
                                          }    
                            \tikzpic{-0.5}{
                                          \draw(0,0)--(2,0)--(2,3)--(0,3)--(0,0)(0,1)--(2,1)
                                               (0,2)--(2,2)(1,0)--(1,3);
                                          \node[anchor=center] at (1.5,0.5){$3$};
                                          \node[anchor=center] at (0.5,1.5){$2$};
                                          \node[anchor=center] at (0.5,2.5){$1$};
                                          }   
                            \tikzpic{-0.5}{
                                          \draw(0,0)--(2,0)--(2,2)--(0,2)--(0,0)
                                               (0,1)--(2,1)(1,0)--(1,2);
                                          \node[anchor=center] at (0.5,0.5){$2$};
                                          \node[anchor=center] at (1.5,0.5){$3$};
                                          \node[anchor=center] at (0.5,1.5){$1$};
                                          } 
                            \tikzpic{-0.5}{
                                          \draw(0,0)--(1,0)--(1,1)--(2,1)--(2,3)--(0,3)--(0,0)
                                               (0,1)--(1,1)(0,2)--(2,2)(1,1)--(1,3);
                                          \node[anchor=center] at (1.5,1.5){$3$};
                                          \node[anchor=center] at (0.5,0.5){$2$};
                                          \node[anchor=center] at (0.5,2.5){$1$};
                                          }
                             \tikzpic{-0.5}{
                                          \draw(0,0)--(1,0)--(1,1)--(2,1)--(2,2)--(0,2)--(0,0)
                                               (0,1)--(1,1)--(1,2);
                                          \node[anchor=center] at (0.5,0.5){$2$};
                                          \node[anchor=center] at (0.5,1.5){$1$};
                                          \node[anchor=center] at (1.5,1.5){$3$};
                                          }\hspace{1.5cm} \end{aligned}$ 
                            }
           }
      edge from parent node[above]{$0$}
      };
}
\caption{Generation tree for tree-like tableaux of size at most $3$}
\label{fig:gtTT}
\end{figure}

We have $\widetilde{\mathcal{T}}_{n}=\mathcal{T}_{n}$ for $n=1$ and $2$.
We have two configurations in $\widetilde{\mathcal{T}}_{3}\setminus\mathcal{T}_{3}$,
which are non-admissible configurations. See Figure \ref{fig:naGTT} for them.
\begin{figure}[ht]
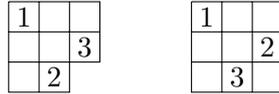

\tikzpic{-0.5}{[x=0.4cm,y=0.4cm]
\draw(0,0)--(3,0)--(3,-2)--(2,-2)--(2,-3)--(0,-3)--(0,0)
     (0,-1)--(3,-1)(0,-2)--(2,-2)(1,0)--(1,-3)(2,0)--(2,-2);
\node at(0.5,-0.5){$1$};\node at(1.5,-2.5){$2$};\node at(2.5,-1.5){$3$};
}\qquad
\tikzpic{-0.5}{[x=0.4cm,y=0.4cm]
\draw(0,0)--(3,0)--(3,-3)--(0,-3)--(0,0)(0,-1)--(3,-1)(0,-2)--(3,-2)(1,0)--(1,-3)(2,0)--(2,-3);
\node at(0.5,-0.5){$1$};\node at(1.5,-2.5){$3$};\node at(2.5,-1.5){$2$};
}
\caption{Non-admissible configurations in $\widetilde{\mathcal{T}}_{n}$.}
\label{fig:naGTT}
\end{figure}

\subsection{\texorpdfstring{Enumerations of diagrams in $\widetilde{\mathcal{T}}_{n}$}{
Enumerations of diagrams in Ttilde}}
Let $C(z):=\sum_{n\ge0}c_{n}z^{n}/n!$ be a formal power series of $z$ 
satisfying the initial condition $c_{0}=1$ and satisfying
\begin{eqnarray}
\label{Intformula}
C(z)=1+C(z)^{2}\int \frac{dz}{C(z)}.
\end{eqnarray}
First few values of $c_{n}$'s are 
\begin{eqnarray*}
\begin{array}{c|ccccccc}
n & 0 & 1 & 2 & 3 & 4 & 5 \\ \hline
c_{n} & 1 & 1 & 3 & 17 & 147 & 1729
\end{array},
\end{eqnarray*}
and corresponds to the sequence A234289 in \cite{Slo}.

\begin{theorem}
The number of diagrams in $\widetilde{\mathcal{T}}_{n}$ is given by $c_{n}$.
\end{theorem}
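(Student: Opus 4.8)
The plan is to pass to the exponential generating function $F(z):=\sum_{n\ge0}|\widetilde{\mathcal{T}}_n|\,z^{n}/n!$ and to prove that $F$ coincides with $C$. Since the integral equation $C=1+C^{2}\int dz/C$ together with $c_{0}=1$ determines the power series $C$ uniquely, and since that equation is equivalent to the first order differential equation
\[
(2-C)C'=C^{2},\qquad C(0)=1,
\]
it suffices to show that $F$ satisfies this differential equation with $F(0)=|\widetilde{\mathcal{T}}_0|=1$. The equivalence of the two forms is a short computation: writing $H=\int dz/C$, so that $H'=1/C$ and $C-1=C^{2}H$, differentiation of $C=1+C^{2}H$ gives $C'=2CC'H+C$; substituting $H=(C-1)/C^{2}$ and simplifying yields $(2-C)C'=C^{2}$, and the reverse implication follows by integrating back and matching constants. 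Thus the entire problem reduces to producing this differential equation for the counting series $F$.

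To obtain the equation, I would set up a recursive decomposition of the diagrams in $\widetilde{\mathcal{T}}_{n+1}$ using the reverse insertion procedure $\mathrm{RI}$ of Definition~\ref{defn:iipTT}, which strips off the dot carrying the largest label together with its row, its column, or both. Letting $F'$ denote the series whose $n$-th coefficient equals $c_{n+1}/n!$ with $c_{n}=|\widetilde{\mathcal{T}}_n|$ (so $F'$ records a diagram of size $n+1$ with its maximal dot distinguished), and splitting according to the type of that dot, one is led to a relation of the shape
\[
2F'=FF'+F^{2},
\]
which is exactly $(2-F)F'=F^{2}$ and expands coefficientwise into
\[
c_{n+1}=\sum_{k=1}^{n}\binom{n}{k}c_{k}\,c_{n+1-k}+\sum_{k=0}^{n}\binom{n}{k}c_{k}\,c_{n-k}.
\]
One checks directly that this reproduces $c_{2}=3$ and $c_{3}=17$. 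In this decomposition the term $F^{2}$ collects the configurations in which deleting the maximal dot disconnects the diagram into two independent sub-diagrams, which is precisely where diagonal dots enter, since a diagonal dot imposes no constraint linking the region above-left to the region below-right; the term $FF'$ records the configurations in which the maximal dot is a row or column dot, so its deletion leaves a single diagram carrying a distinguished dot. The binomial coefficients appear because the canonical labelings induced by iterating $\mathrm{RI}$ on the two resulting pieces can be shuffled in $\binom{n}{k}$ ways to recover the global labeling of the size-$(n+1)$ diagram.

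The main obstacle is the precise bookkeeping in this decomposition. First, one must verify that stripping the maximal dot always yields a diagram (or ordered pair of diagrams) again lying in $\widetilde{\mathcal{T}}$, i.e. that the defining conditions---root in the top-left box, every row and every column occupied, and no dot having both a dot above it and a dot to its left---are preserved; the ``not both'' condition is the delicate one, and it is exactly what forces the clean separation into the two summands rather than a single $F^{2}$ or a single $FF'$ contribution. Second, one must confirm that the construction is reversible, so that the decomposition is a genuine bijection and the counts match term by term; here the analysis of when a ribbon is present (cf.\ the special point and the admissibility discussion preceding Theorem~\ref{thrmtLTab}) governs the ways in which the maximal dot can be reinserted, and it must be shown that allowing all of $\widetilde{\mathcal{T}}_n$ (not merely the admissible subset $\mathcal{T}_n$) is what replaces the simple factor $2n-1$ by the full convolution. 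Granting these structural facts, the relation $2F'=FF'+F^{2}$ holds, and with $F(0)=1$ and the equivalence established in the first paragraph we conclude $F=C$, hence $|\widetilde{\mathcal{T}}_n|=c_{n}$.
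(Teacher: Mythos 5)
Your reduction of the integral equation to the ODE $(2-C)C'=C^{2}$ with $C(0)=1$ is correct, and the coefficient identity $2c_{n+1}=\sum_{k}\binom{n}{k}c_{k}c_{n+1-k}+\sum_{k}\binom{n}{k}c_{k}c_{n-k}$ you aim for is indeed equivalent to what the paper ultimately proves. But the combinatorial decomposition you propose to establish it does not work, and that is where the entire weight of the proof lies. First, the reverse insertion procedure never disconnects a diagram: removing the maximal dot together with its row and/or column (and the possible ribbon) always leaves a single Ferrers diagram in $\widetilde{\mathcal{T}}_{n-1}$, so a diagonal maximal dot does not produce an ordered pair of independent sub-diagrams, and no $F^{2}$ term arises this way. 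Second, the elements of $\widetilde{\mathcal{T}}_{n}$ are unlabelled objects --- the labels $1,\dots,n$ are canonically determined by iterating $\mathrm{RI}$, not free data --- so there is no shuffle of label sets to justify the binomial coefficients. Third, the factor $2$ on the left of $2F'=FF'+F^{2}$ has no interpretation in your scheme, and the counts already fail at $n=1$: there are $2$ size-$2$ diagrams whose maximal dot is a row or column dot and $1$ whose maximal dot is diagonal, while the relevant coefficients of $FF'$ and $F^{2}$ are $4$ and $2$.

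What the insertion structure actually yields is a \emph{linear} recurrence refined by shape: a diagram with $k$ rows and $l$ columns admits exactly $k$ column insertions, $l$ row insertions and $k+l-1$ diagonal insertions, whence $c_{n+1}=\sum_{k,l}\bigl(2(k+l)-1\bigr)c_{n}(k,l)$. The right-hand side involves the total half-perimeter, not $c_{n}$ alone, so the recurrence cannot be closed without tracking that statistic. The paper handles this by introducing the bivariate series $C(z,t,s)$, reducing to one catalytic variable $t$ by the $t\leftrightarrow s$ symmetry, and then eliminating $t$ through the identity $\alpha_{n}=0$ derived from the partial differential equation; the convolution identity you want is the \emph{output} of that computation, not the input. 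To salvage your route you would need either a genuinely new bijective proof of the convolution identity (none is visible from the insertion structure) or some version of the catalytic-variable elimination; as written, the middle of your argument is a restatement of the desired conclusion rather than a proof of it.
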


\begin{proof}
We denote by $c_{n}(k,l)$ be the number of diagrams in $\widetilde{\mathcal{T}}_{n}$
with $k$ rows and $l$ columns.
Let $\Delta_n$ be the set of points defined by
\begin{eqnarray*}
\Delta_{n}:=\{(k,l): 1\le k\le n, 1\le l\le n, n+1\le k+l\le 2n\}.
\end{eqnarray*}
Given $T\in\widetilde{\mathcal{T}}_{n}$ with $k$ rows and $l$ columns,
we have $k$ ways for the column insertion to produce a diagram with $k$ rows and $l+1$ columns, 
$l$ ways for the column insertion to produce a diagram with $k+1$ rows and $l$ columns,
and $k+l-1$ ways for the diagonal insertion to produce a diagram with $k+1$ rows and $l+1$ columns.
The $c_{n}(k,l)$ satisfies the recurrence relation
\begin{eqnarray*}
c_{n+1}(k,l)=k c_{n}(k,l-1)+l c_{n}(k-1,l)+(k+l-3)c_{n}(k-1,l-1),
\end{eqnarray*}
with the initial condition $c_{1}(1,1)=1$.
Note that if we define $c_{0}(0,0):=-1$, 
we have $ts((1+t)\partial_{t}+(1+s)\partial_{s}-1)c_{0}(0,0)=ts$.
Thus, we define the generating function $C(z,t,s)$ by
\begin{eqnarray*}
C(z,t,s):=-1+\sum_{n\ge 1}\sum_{(k,l)\in\Delta_{n}}c_{n}(k,l)\frac{z^{n}}{n!}t^{k}s^{l}.
\end{eqnarray*}

The recurrence relation can be written in terms of a partial differential equation:
\begin{eqnarray*}
\partial_{z}C(z,t,s)=ts((1+t)\partial_{t}+(1+s)\partial_{s}-1)C(z,t,s).
\end{eqnarray*}
We have a symmetry between $t$ and $s$, {\it i.e.,} $C_{n}(z,t,s)=C_{n}(z,s,t)$, 
and $\lim_{s\rightarrow t}(\partial_{t}+\partial_{s})t^{k}s^{l}=(k+l)t^{k+l-1}$ 
is equal to $\partial_{t}t^{k+l}$.
Therefore, by defining $C(z,t):=C(z,t,t)$, $C(z,t)$ satisfies
\begin{eqnarray}
\label{PDEforC}
\partial_{z}C(z,t)=t^{2}((1+t)\partial_{t}-1)C(z,t),
\end{eqnarray}
where $C(z,t)$ has an expansion
\begin{eqnarray*}
C(z,t)=t+\sum_{n\ge1}\sum_{(k,l)\in\Delta_{n}}c_{n}(k,l)\frac{z^{n}}{n!}t^{k+l}.
\end{eqnarray*}

If we expand $C(z,t):=\sum_{n\ge0}c_{n}z^{n}/n!$, Eqn.(\ref{PDEforC}) implies that 
the coefficients $\{c_{n}:n\ge0\}$ satisfy 
\begin{eqnarray}
\label{PDFforc}
\partial_{t}c_{n}=\frac{c_{n+1}+t^{2}c_{n}}{t^{2}(t+1)}.
\end{eqnarray}
If we define 
\begin{eqnarray}
\label{RRforc}
\alpha_{n+1}:=-(1+t)\frac{c_{n+1}}{n!}+\sum_{k=0}^{n}\left(
\frac{c_{k}}{k!}\frac{c_{n-k}}{(n-k)!}+\frac{c_{k+1}}{k!}\frac{c_{n-k}}{(n-k)!}
\right),
\end{eqnarray}
we obtain 
\begin{eqnarray}
\label{PDEforalpha}
\partial_{t}\alpha_{n}=\frac{1}{t^{2}(t+1)}(2\alpha_{n}+n\alpha_{n+1}),
\end{eqnarray}
where we have used Eqn.(\ref{PDFforc}).
Since we have $c_{0}=t$ and $c_{1}=t^{2}$, we have 
$\alpha_{1}=c_0^2-(1+t)c_{1}+c_{1}c_{0}=0$.
From Eqn.(\ref{PDEforalpha}) and $\alpha_1=0$, we have $\alpha_{n}=0$ for $n\ge0$.
The right-hand side of Eqn.(\ref{RRforc}) can be written in terms of a partial differential 
equation:
\begin{eqnarray}
\label{PDEforCzt}
C(z,t)^2-(1+t)\partial_{z}C(z,t)+C(z,t)\partial_{z}C(z,t)=0,
\end{eqnarray}
with the initial condition $C(0,t)=t$.
Note that the number of diagrams in $\widetilde{T}_{n}$ is given by 
$c_{n}|_{t=1}$.
Thus, by putting $t=1$ in Eqn.(\ref{PDEforCzt}), we get 
\begin{eqnarray*}
C(z,1)^{2}=2\frac{dC(z,1)}{dz}-C(z,1)\frac{dC(z,1)}{dz}.
\end{eqnarray*} 
If we integrate this equation, we obtain Eqn.(\ref{Intformula}).
\end{proof}

A few explicit evaluations of $c_{n}(k,l)$ are given by 
\begin{eqnarray*}
c_{n}(n,n)&=&
\begin{cases}
1 & n=1, \\
(2n-3)!! & n\ge2, 
\end{cases}
\\
c_{n}(n,n-1)&=&\frac{2^{-n}(2n-3)!!}{3\cdot(n-2)!}(2^{n}(n-2)\cdot(n-2)!+12), \\
c_{n}(n-1,n-1)&=&\frac{2}{9}(n^{3}-n-6)\cdot(2n-5)!!
\end{eqnarray*}
and 
\begin{eqnarray*}
c_{n}(1,m)&=&\delta_{n,m}, \\
c_{n}(2,m)&=&(2^{m+1}-m-2)\delta_{n-1,m}+(2^{m}-m-1)\delta_{n,m}, 
\end{eqnarray*}
\begin{multline*}
c_{n}(3,m)=\left( 3^{m+2}-\frac{1}{2}(m+3)(2^{m+3}-m-2) \right)\delta_{n-2,m}  \\
+\left( m^{2}-4m(2^{m}-1)+\frac{3}{2}(5\cdot 3^{m}+2^{m+3}+3) \right)\delta_{n-1,m} \\
+\frac{1}{2}\left( m(m-2^{m+1}+3)+3^{m+1}-3\cdot 2^{m+1}+3 \right)\delta_{n,m},
\end{multline*}
where $\delta_{n,m}$ is the Kronecker delta.

\begin{prop}
The average half-perimeter in a tree-like tableau of size $n$ in 
$\widetilde{\mathcal{T}}_{n}$ is 
\begin{eqnarray*}
H_{n}=\frac{1}{2}\left(1+\frac{c_{n+1}}{c_{n}}\right),
\end{eqnarray*}
and equivalently, the average number of diagonal points is
\begin{eqnarray}
\label{eqn:avdp}
D_{n}=\frac{1}{2}\left(\frac{c_{n+1}}{c_{n}}-2n-1\right).
\end{eqnarray}
\end{prop}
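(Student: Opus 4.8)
The plan is to read off the half-perimeter directly from the refined generating function $C(z,t)=C(z,t,t)$ introduced in the previous proof. A diagram in $\widetilde{\mathcal{T}}_{n}$ with $k$ rows and $l$ columns has half-perimeter exactly $k+l$, which is precisely the exponent of $t$ it contributes to the coefficient of $z^{n}/n!$ in $C(z,t)$. Writing $c_{n}(t):=\sum_{(k,l)\in\Delta_{n}}c_{n}(k,l)\,t^{k+l}$ for that coefficient (so that $c_{n}=c_{n}(1)$ is the plain count), I would observe that the total half-perimeter over all size-$n$ diagrams is $\partial_{t}c_{n}(t)\big|_{t=1}$, while the number of diagrams is $c_{n}(1)$. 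Hence the average half-perimeter is the logarithmic-derivative quantity
\[
H_{n}=\frac{\partial_{t}c_{n}(t)\big|_{t=1}}{c_{n}(1)} .
\]
So the whole task reduces to evaluating $\partial_{t}c_{n}(t)$ at $t=1$ in closed form.

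For this I would use the coefficient recurrence already extracted from the PDE. Taking the coefficient of $z^{n}/n!$ in Eqn.\ (\ref{PDEforC}) (equivalently rearranging Eqn.\ (\ref{PDFforc})) gives
\[
c_{n+1}(t)=t^{2}(1+t)\,\partial_{t}c_{n}(t)-t^{2}\,c_{n}(t).
\]
Setting $t=1$ collapses this to $c_{n+1}(1)=2\,\partial_{t}c_{n}(t)\big|_{t=1}-c_{n}(1)$, so that $\partial_{t}c_{n}(t)\big|_{t=1}=\tfrac12\bigl(c_{n}+c_{n+1}\bigr)$. Substituting into the formula for $H_{n}$ yields $H_{n}=\tfrac12\bigl(1+c_{n+1}/c_{n}\bigr)$, which is the first claimed identity. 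This step is where essentially all the content lives, and it is short once the PDE is in hand.

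To pass from half-perimeter to diagonal points I would invoke the linear relation established during the proof that the number of valid vertices equals the number of off-diagonal points: a diagram of size $n$ with $m$ diagonal points has half-perimeter $(n-m)+2m+1=n+m+1$, since the initial box contributes half-perimeter $2$ and each of the remaining $n-1$ insertions raises it by $1$ (row/column) or $2$ (diagonal). Thus $m=(\text{half-perimeter})-(n+1)$, and averaging over $\widetilde{\mathcal{T}}_{n}$ gives $D_{n}=H_{n}-(n+1)$. A one-line simplification then produces Eqn.\ (\ref{eqn:avdp}).

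I do not expect a genuine obstacle here; the argument is essentially a generating-function bookkeeping exercise. The only points requiring care are notational and conceptual rather than technical: one must keep the two uses of $c_{n}$ apart (the $t$-refined polynomial versus its value $c_{n}(1)$), and one must justify cleanly that $\partial_{t}$ at $t=1$ computes the sum of half-perimeters rather than some other statistic. I would also note in passing that the recurrence is valid for all $n\ge 1$ with the convention $c_{0}(t)=t$ used earlier, so no degenerate $n=0$ boundary case interferes with the evaluation.
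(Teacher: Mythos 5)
Your proposal is correct and follows essentially the same route as the paper: both identify the total half-perimeter with $\partial_{t}c_{n}(t)\big|_{t=1}$, evaluate it via Eqn.~(\ref{PDFforc}) (which you rederive from the PDE (\ref{PDEforC})) to get $\partial_{t}c_{n}\big|_{t=1}=\tfrac12(c_{n}+c_{n+1})$, and then use the relation $\text{half-perimeter}=n+m+1$ to deduce $D_{n}=H_{n}-n-1$. No gaps.
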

\begin{proof}
Recall that the left hand side of Eqn. (\ref{PDFforc}) 
is equal to 
\begin{eqnarray*}
\sum_{(k,l)\in\Delta_{n}}(k+l)c_{n}(k,l)t^{k+l-1}.
\end{eqnarray*}
The half-perimeter of a tree-like tableau is $k+l$.
Thus, setting $t=1$ in Eqn. (\ref{PDFforc}) and dividing it by $c_{n}(t=1)$ 
gives the average half-perimeter in $\widetilde{\mathcal{T}}_{n}$, {\it i.e.},
\begin{eqnarray*}
H_{n}&=&\frac{\partial_{t}c_{n}}{c_{n}}\bigg|_{t=1}, \\
&=&\frac{1}{2}\left(1+\frac{c_{n+1}}{c_{n}}\right).
\end{eqnarray*}
The average number of diagonal points $D_{n}$ is related to the 
average half-perimeter $H_{n}$ by 
\begin{eqnarray*}
D_{n}=H_{n}-n-1.
\end{eqnarray*}
Thus, we obtain Eqn. (\ref{eqn:avdp}).
\end{proof}

As in \cite{ABN11}, we define the {\it crossing} boxes as the boxes which form
a ribbon added in the insertion procedure.

Let $\mathrm{Cr}(n,h)$ be the total number of crossing boxes in the set 
of tree-like tableaux of size $n$ and half-perimeter $h$.
We denote by $A_{n}(h)$ the number of tree-like tableaux of size $n$ 
and of half-perimeter $h$.

\begin{prop}
The number $\mathrm{Cr}(n,h)$ satisfies the recurrence relation:
\begin{multline}
\label{eqn:Crrec1}
\mathrm{Cr}(n+1,h)=(h-3)\cdot\mathrm{Cr}(n,h-2)+(h-1)\cdot\mathrm{Cr}(n,h-1) \\
+\frac{1}{6}(h-1)(h-2)(h-3)\cdot A_{n-1}(h-2) \\
+\frac{1}{3}(h-2)(h-3)(h-4)\cdot A_{n-1}(h-3) \\
+\frac{1}{6}(h-3)(h-4)(h-5)\cdot A_{n-1}(h-4).
\end{multline}
\end{prop}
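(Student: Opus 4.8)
The plan is to argue by induction on $n$ through the insertion procedure of Definition~\ref{defn:IPforTT}, which by Theorem~\ref{thrmtLTab} produces every element of $\mathcal{T}_{n+1}$ exactly once from an element of $\mathcal{T}_{n}$. Equivalently, I would organize the sum defining $\mathrm{Cr}(n+1,h)$ according to the reverse insertion procedure (Definition~\ref{defn:iipTT}): every tree-like tableau $T'$ of size $n+1$ and half-perimeter $h$ has a distinguished box labeled $n+1$, and removing it returns a unique parent $T$ of size $n$. Classifying $T'$ by the type of that box splits the count into the row/column case, where $T$ has half-perimeter $h-1$, and the diagonal case, where $T$ has half-perimeter $h-2$, since a diagonal insertion raises the half-perimeter by two and a row or column insertion by one. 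The relevant multiplicities are read off the boundary data: a size-$n$ half-perimeter-$h'$ tableau has $h'$ boundary edges (row/column insertions) and, by Proposition~\ref{propem} and the valid-vertex count, $2n+1-h'$ valid vertices (diagonal insertions).

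The first step is to decompose the crossing statistic of a child $T'$ as the crossing boxes \emph{inherited} from $T$, together with the crossing boxes of any \emph{new} ribbon created in the second step of the insertion. The inherited part is subtle because the spread/column-addition step lengthens by one box every previously added ribbon that straddles the insertion line. Summing the inherited part over all children of half-perimeter $h$ should give exactly $(h-1)\,\mathrm{Cr}(n,h-1)+(h-3)\,\mathrm{Cr}(n,h-2)$: in the row/column case the coefficient is simply the number $h-1$ of boundary edges of the parent, while in the diagonal case the raw valid-vertex multiplicity is $2n+3-h$, and the extension of ribbons crossing the newly inserted row-and-column supplies the missing $2(h-n-3)$, which is twice the number of diagonal points $m=(h-2)-n-1$ of the parent, so that $2n+3-h+2(h-n-3)=h-3$. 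Verifying this extension count is one delicate point.

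The second, and I expect the main, obstacle is the total contribution of the newly created ribbons, which must reproduce the three cubic terms multiplying $A_{n-1}$. A genuine ribbon connects the box labeled $n+1$ to the special point, which is the box labeled $n$; hence a configuration carrying such a ribbon is encoded by the size-$(n-1)$ tableau obtained by reverse-inserting both $n+1$ and $n$, together with positional data recording where these two boxes and the intervening crossing column sit among the boundary edges. The half-perimeter drops by $2$, $3$, or $4$ according to whether the pair $(n,n+1)$ was inserted as two row/column boxes, one of each kind, or two diagonal boxes, which is exactly why $A_{n-1}$ appears with shifts $h-2,\,h-3,\,h-4$, the middle term carrying the extra factor $2$ from the two orderings. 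The three coefficients are the binomial counts
\[
\tfrac16(h-1)(h-2)(h-3)=\binom{h-1}{3},\quad
\tfrac13(h-2)(h-3)(h-4)=2\binom{h-2}{3},\quad
\tfrac16(h-3)(h-4)(h-5)=\binom{h-3}{3},
\]
so the task reduces to showing that the total ribbon length, summed over all size-$(n+1)$ tableaux of half-perimeter $h$, equals the number of ways to choose three boundary positions on the underlying size-$(n-1)$ tableau. Pinning down precisely how a ribbon's length translates into such a choice of three boundary edges, and checking that the three insertion-type combinations for $(n,n+1)$ yield exactly these three binomial weights, is the heart of the argument.

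Finally I would assemble the inherited and new-ribbon contributions and match them term by term with the right-hand side, invoking the inductive definitions of $\mathrm{Cr}$ and $A$ only through this bookkeeping. A convenient end-check is to specialize to the ordinary tree-like tableaux, where $m=0$ forces $h=n+1$, the diagonal terms drop out, and the recurrence must collapse to the classical crossing recursion of \cite{ABN11}; a small direct verification at $n+1=3$ (where $\mathrm{Cr}(2,\cdot)=0$ so only the cubic terms survive and give $\mathrm{Cr}(3,4)=1$, $\mathrm{Cr}(3,5)=2$, $\mathrm{Cr}(3,6)=1$) provides an independent consistency test of the coefficients.
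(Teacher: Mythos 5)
Your overall strategy coincides with the paper's: split $\mathrm{Cr}(n+1,h)$ into crossings inherited from the size-$n$ parent (the two $\mathrm{Cr}(n,\cdot)$ terms) and the boxes of the newly created ribbon, and compute the latter by unfolding two insertion steps back to a size-$(n-1)$ tableau, classifying by the types of the two insertions; the identification of the three cubic coefficients with $\sum_{j}\tfrac12 j(j+1)$ and the factor $2$ from the two mixed orderings is exactly the paper's computation. You do leave that cubic computation as ``the heart of the argument'' rather than carrying it out, but the outline is right.

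There is, however, a concrete error in your treatment of the inherited term. This proposition lives in the subsection on $\widetilde{\mathcal{T}}_{n}$, where a diagonal insertion may be performed at \emph{any} of the $h-3$ boundary vertices of a half-perimeter-$(h-2)$ parent (this is the coefficient $k+l-3$ in the recurrence for $c_{n}(k,l)$, and the paper's proof explicitly uses ``$h-1$ ways to perform a diagonal insertion'' on a half-perimeter-$h$ tableau). The valid-vertex count $2n+1-h'$ that you invoke is the multiplicity for the \emph{admissible} tableaux $\mathcal{T}_{n}$, which is a different recurrence; one can check on $n+1=3$ that the value $\mathrm{Cr}(3,6)=1$ is produced by a diagonal insertion at an \emph{invalid} vertex of the $2\times 2$ tableau, so the stated recurrence really does count $\widetilde{\mathcal{T}}$-insertions. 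Your attempted repair --- that existing ribbons crossing the newly inserted row and column are lengthened, supplying the ``missing'' $2(h-n-3)$ --- is both unnecessary and unsound. A row, column or diagonal insertion into a Ferrers diagram shifts existing boxes but never adds boxes to an existing ribbon; there is no analogue here of the spread operation on Dyck tilings, so the parent's crossing boxes are carried over verbatim and the coefficient is simply the number of insertion sites, namely $h-3$. Moreover, even formally, a correction equal to twice the number of diagonal points of the parent is a constant per tableau, whereas to convert the multiplicity $2n+3-h$ into the coefficient $h-3$ of $\mathrm{Cr}(n,h-2)$ you would need a correction proportional to the number of existing crossing boxes; the identity $2n+3-h+2(h-n-3)=h-3$ matches symbols but not counts. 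Replacing your multiplicity by the boundary-vertex count $h-3$ removes the need for any such correction and restores the argument.
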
 
\begin{proof}
Let $T$ be a tree-like tableau of size $n$ and half-perimeter $h$.
We label its boundary edges $e_{0}(T),\ldots,e_{h-1}(T)$ from the southwest 
to the northeast edge. 
We also label its boundary vertices $v_{0}(T),\ldots,v_{h-2}(T)$ from 
the southwest to the northeast boundary vertices.
We have $h$ ways to perform a row or column insertion, and $h-1$ ways 
to perform a diagonal insertion.

Recall that $A_{n-1}(h)$ is the number of tree-like tableaux $T'$ of size $n-1$
and half-perimeter $h$.
To obtain a tree-like tableau of size $n+1$, we perform two successive a row, column,
or diagonal insertions.
We denote by $T''$ a tree-like tableau of size $n$ after one insertions.
We have three cases for insertions to obtain a tree-like tableau of 
size $n+1$ from $T'$: 
1) two insertions are row or column insertions,
2) one of the two insertions is a row or column insertion and the other is a diagonal insertion,
and 3) both insertions are diagonal ones.

\paragraph{\bf Case 1)}
When we perform two row or column insertions, the half-perimeter of $T'$ is increased 
by two. Further, when the insertion point is $e_{j}(T')$ and $e_{i}(T'')$ with $i<j$,
we add $j-i$ boxes as a ribbon. 
The total number of crossings in the second insertion is given by $1+2+\ldots+j=j(j+1)/2$ 
with $0\le j\le h-1$.

\paragraph{\bf Case 2)}
The half-perimeter of $T'$ is increased by three by the two insertions.
When the first insertion point is $e_{j}(T')$ and the second insertion point is $v_{i}(T'')$ 
with $j>i$, we add $j-i$ boxes as a ribbon.
The total number of crossing boxes added in the second insertion is given by $1+2+\ldots+j=j(j+1)/2$
with $1\le j\le h-1$.
Similarly, if the first insertion point is $v_{j}(T')$ and the second insertion point 
is $e_{i}(T'')$ with $j\ge i$, we add $j-i+1$ boxes as a ribbon.
The total number of crossing boxes added in the second insertion is given by $1+2+\ldots+(j+1)=(j+1)(j+2)/2$
with $0\le j\le h-2$.

\paragraph{\bf Case 3)}
The half-perimeter of $T'$ is increased by four by the two insertions.
When the first insertion point is $v_{j}(T')$ and the second insertion point is $v_{i}(T'')$ with $j\ge i$,
we add $j-i+1$ boxes as a ribbon.
Thus, the total number of crossing boxes added in the second insertion is given 
by $1+2+\ldots+(j+1)=(j+1)(j+2)/2$ with $0\le j\le h-2$.

From these observations, we have 
\begin{multline}
\label{eqn:Crrec2}
\mathrm{Cr}(n+1,h)-(h-3)\mathrm{Cr}(n,h-2)-(h-1)\mathrm{Cr}(n,h-1) \\
=A_{n-1}(h-2)\sum_{j<h-2}\frac{1}{2}j(j+1)
+2 A_{n-1}(h-3)\sum_{j<h-3}\frac{1}{2}j(j+1) \\	
+A_{n-1}(h-4)\sum_{j<h-4}\frac{1}{2}j(j+1).
\end{multline}
Substituting 
$\sum_{j\le s}\frac{1}{2}j(j+1)=\frac{1}{6}s(s+1)(s+2)$ into 
Eqn. (\ref{eqn:Crrec2}), we obtain Eqn. (\ref{eqn:Crrec1}).
\end{proof}

First few values of $\mathrm{Cr}(n,h)$ are 
\begin{eqnarray*}
&&\mathrm{Cr}(1,2)=0, \\
&&\mathrm{Cr}(2,3)=0,\ \mathrm{Cr}(2,4)=0, \\
&&\mathrm{Cr}(3,4)=1,\ \mathrm{Cr}(3,5)=2,\ \mathrm{Cr}(3,6)=1, \\
&&\mathrm{Cr}(4,5)=12,\ \mathrm{Cr}(4,6)=39,\  \mathrm{Cr}(4,7)=42,\  \mathrm{Cr}(4,8)=15. 
\end{eqnarray*}

Let $B_{n}(k,l)$ with $n\ge1$, $1\le k,l\le n$ and $n+1\le k+l\le 2n$
be an integers satisfying 
\begin{eqnarray}
\label{RRBn1}
B_{n}(k,l)&=&B_{n-1}(k-1,l)+k, \\
\label{RRBn2}
B_n(k,l)&=&B_{n-1}(k,l-1)+l,
\end{eqnarray}
with the initial condition $B_{1}(1,1)=1$.
This recurrence relation gives 
\begin{eqnarray*}
B_{n}(k,l)=\frac{1}{2}k(k+1)+\frac{1}{2}l(l+1)-1.
\end{eqnarray*}

Let $T\in\mathcal{T}_{n}$ be a diagram with $k$ rows and $l$ rows.
Recall that we have three types of the insertion procedures.
When we perform a row or column insertion at the $i$-th boundary edge in 
a row or column, we add $i$ boxes to $T$. 
We define the weight of these boxes is one.
On the other hand, we define a weight of boxes associated with 
a diagonal insertion on $T$ as follows.
Let $b$ be the added box with the label $n+1$.
The diagonal insertion means that we have no boxes with a label
above in the same column as $b$ and left to $b$ in the same row.
The weight of the box $b$ with the label $n+1$ is one.
We call the boxes above $b$ in the same column arm boxes and 
the boxes left to $b$ in the same row leg boxes.
We define the weight of arm and leg boxes as follows:
an arm (resp. leg) box has weight one if it is not in the same row or 
column as an arm or leg box associated with another diagonal point
whose label is larger than $n+1$. 
Otherwise, we define the weight of the box is zero.
Let $\mathrm{arm}(b)$ (resp. $\mathrm{leg}(b)$) be the weighted 
sum of the arm (resp. leg) boxes associated with $b$.
Then, the weighted sum of non-crossing boxes associated with 
a diagonal point $b$ is defined as the absolute value 
\begin{eqnarray}
\label{wtdiag}
|\mathrm{arm}(b)-\mathrm{leg}(b)|+1,
\end{eqnarray}
where the plus one comes from the weight of $b$.

Let $\mathrm{NCr}(n,k,l)$ be the total number of non-crossing boxes 
in the set of tree-like tableaux of size $n$ with $k$ rows and $l$ columns.
We denote by $A_{n}(k,l)$ the total number of tree-like tableaux of size 
$n$ with $k$ rows and $l$ columns.

\begin{prop}
The number $\mathrm{NCr}(n,k,l)$ satisfies the recurrence relation:
\begin{multline}
\label{eqn:recNCr}
\mathrm{NCr}(n+1,k,l)=(k+l-3)\cdot\mathrm{NCr}(n,k-1,l-1)+l\cdot\mathrm{NCr}(n,k-1,l)
+k\cdot\mathrm{NCr}(n,k,l-1) \\
+A_{n}(k-1,l-1)\left\{\frac{1}{2}k(k-1)+\frac{1}{2}l(l-1)-1\right\} \\
+\frac{1}{2}l(l+1)\cdot A_{n}(k-1,l)+\frac{1}{2}k(k+1)\cdot A_{n}(k,l-1).
\end{multline}
\end{prop}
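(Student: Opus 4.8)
The plan is to mimic the derivation of the crossing recurrence in Eqn.~(\ref{eqn:Crrec1}), decomposing $\mathrm{NCr}(n+1,k,l)$ according to the last insertion that produces a tableau of size $n+1$ with $k$ rows and $l$ columns. By Definition~\ref{defn:iipTT} the reverse insertion procedure is the inverse of the insertion procedure, so every such tableau arises from a unique source tableau of size $n$ together with a unique insertion; the source has shape $(k-1,l-1)$ for a diagonal insertion, $(k-1,l)$ for a row insertion, and $(k,l-1)$ for a column insertion. On a source of shape $(k,l)$ there are $k$ column, $l$ row, and $k+l-1$ diagonal insertions, so the sources relevant here admit $k$, $l$, and $k+l-3$ insertions respectively. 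First I would write $\mathrm{NCr}(n+1,k,l)$ as a sum over all (source, insertion) pairs of the number of non-crossing boxes, and separate the boxes inherited from the source from the boxes created by the final insertion.

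For the created boxes I would treat the three insertion types separately. A row or column insertion at the $i$-th boundary edge adds $i$ boxes, each of weight one; summing $\sum_{i=1}^{l} i=\tfrac12 l(l+1)$ over row insertions and $\sum_{i=1}^{k} i=\tfrac12 k(k+1)$ over column insertions, and multiplying by the numbers $A_{n}(k-1,l)$ and $A_{n}(k,l-1)$ of sources, produces the last two terms of Eqn.~(\ref{eqn:recNCr}). A diagonal insertion at the internal boundary vertex of coordinate $(p,q)$ creates a diagonal point whose arm and leg consist of $p$ and $q$ boxes, hence contributes the weight $|p-q|+1$ of Eqn.~(\ref{wtdiag}); the created boxes attached to a single diagonal source therefore total $\sum_{v}(|p_{v}-q_{v}|+1)$ over its $k+l-3$ internal vertices.

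The key lemma I would isolate is that this vertex sum depends only on $(k,l)$ and equals $B_{n}(k-1,l-1)=\tfrac12 k(k-1)+\tfrac12 l(l-1)-1$. I would prove path-independence by a corner flip: any two monotone boundary paths with the same endpoints are connected by moves that replace a down--left corner by a left--down corner, which exchanges the single internal vertex $(p+1,q)$ for $(p,q-1)$ while leaving the summand unchanged, since $|p+1-q|=|p-(q-1)|$ and exactly one internal vertex is affected. Evaluating the sum on the rectangular shape, or checking the recurrences~(\ref{RRBn1}) and~(\ref{RRBn2}), identifies it with $B_{n}(k-1,l-1)$; multiplying by $A_{n}(k-1,l-1)$ then yields the diagonal term $A_{n}(k-1,l-1)\{\tfrac12 k(k-1)+\tfrac12 l(l-1)-1\}$.

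For the inherited boxes I would argue that the non-crossing contribution of each diagonal point already present in the source is unchanged when the new, largest-labelled point is inserted; granting this, the inherited total is exactly $(k+l-3)\,\mathrm{NCr}(n,k-1,l-1)+l\cdot\mathrm{NCr}(n,k-1,l)+k\cdot\mathrm{NCr}(n,k,l-1)$, and assembling all pieces gives Eqn.~(\ref{eqn:recNCr}). The hard part will be precisely this invariance: the weight of an arm or leg box is one unless it shares a row or column with an arm or leg box of a diagonal point of larger label, so inserting a point of maximal label could \emph{a priori} demote existing appendage boxes. I expect to prove, by induction on the insertion together with a careful analysis of the row and column shifts performed in the row, column, and diagonal insertions, that the appendage boxes of the newly inserted point never destroy the weight-one status of a previously created appendage box, so that each source contributes its full $\mathrm{NCr}$ value once per insertion. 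This bookkeeping is the analogue of, but more delicate than, the special-point tracking used for Eqn.~(\ref{eqn:Crrec1}), and it is the step I anticipate requiring the most care.
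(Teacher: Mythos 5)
Your proposal follows essentially the same route as the paper's proof: the same decomposition by insertion type into inherited and newly created boxes, the same triangular-number counts $\tfrac12 l(l+1)$ and $\tfrac12 k(k+1)$ for row and column insertions, and the same reduction of the diagonal vertex sum to the rectangular shape via a weight-preserving boundary deformation, identified with $B_{n}$ through the recurrences (\ref{RRBn1}) and (\ref{RRBn2}). The one place you go beyond the paper is in flagging that the inherited contributions $\mathrm{NCr}(n,\cdot,\cdot)$ must be shown to be unaffected by the appendage boxes of the newly inserted maximal-label point; the paper asserts the first three terms without comment, so the extra verification you anticipate is a refinement of, not a departure from, its argument.
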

\begin{proof}
Let $T$ be a tree-like tableau of size $n$, $k$ rows and $l$ columns.
If we perform a diagonal insertion, the numbers of rows and columns are increased 
by one. There are $k+l-1$ ways to perform a diagonal insertion.
When we perform a row or column insertion, the number of rows or columns is 
increased by one respectively. 
There are $l$ or $k$ ways to perform the row or column insertion respectively.
From this observation, we have 
\begin{eqnarray*}
(k+l-3)\cdot\mathrm{NCr}(n,k-1,l-1)+l\cdot\mathrm{NCr}(n,k-1,l)
+k\cdot\mathrm{NCr}(n,k,l-1).
\end{eqnarray*}

Suppose that $T$ has $k$ rows and $l$ columns. 
By a row/column insertion, we add $j$ boxes if the insertion point is 
the $j$-th boundary edge.
Since the weight of these boxes are one, the contribution of the row (resp. column)
insertion to the weighted sum of non-crossing boxes is 
given by $1+2+\cdots+l=l(l+1)/2$ (resp. $k(k+1)/2$).
Thus, we have 
\begin{eqnarray*}
\frac{1}{2}l(l+1)\cdot A_{n}(k-1,l)+\frac{1}{2}k(k+1)\cdot A_{n}(k,l-1)
\end{eqnarray*}

We compute the contribution of arm and leg boxes associated with a diagonal point.
If we remove a diagonal points and their arm and leg boxes, we have a diagram 
$T'$ with $k-1$ rows and $l-1$ columns.
We perform a diagonal insertions of this reduced diagram $T'$.
When we have a local up-right configuration on the boundary edges, {\it i.e.},
successive edges consisting of a vertical edge and a horizontal edge, 
the weighted sum of number of boxes by the diagonal insertion at vertex $(q,p)$, 
which is the vertex between the vertical edge  and the horizontal edge, is given by $|p-q|+1$ 
via Eqn. (\ref{wtdiag}).
We transform the local up-right configuration to a right-up configuration by moving the 
vertex $(q,p)$ to the vertex $(q+1,p+1)$.
The weighted sum of number of boxes by the insertion at vertex $(q+1,p+1)$ is again $|p-q|+1$.
By successive transformations, we arrive at the rectangular shape with $k-1$ rows and $l-1$ columns.
Let $B_{n}(k,l)$ be the contribution of the diagonal insertion to the weighted sum of 
non-crossing boxes.
The weighted sum of number of boxes at the vertex $(k-1,1)$ in $T'$ 
is given by $k-1$ and we have a diagram with $k$ rows and $l$ columns.
Thus, we have the recurrence relation (\ref{RRBn1}).
By a similar argument with respect to a column, we obtain the recurrence relation (\ref{RRBn2}).
Then, we have 
\begin{eqnarray*}
A_{n}(k-1,l-1)B_{n}(k-1,l-1)=A_{n}(k-1,l-1)\left\{\frac{1}{2}k(k-1)+\frac{1}{2}l(l-1)-1\right\}.
\end{eqnarray*}

Summing all over contributions, we obtain Eqn. (\ref{eqn:recNCr}).
\end{proof}

Note that $\mathrm{NCr}(n,k,l)=\mathrm{NCr}(n,l,k)$ from the symmetry of the recurrence relation.
First few values of $\mathrm{NCr}(n,k,l)$ are 
\begin{eqnarray*}
&&\mathrm{NCr}(1,1,1)=1, \\
&&\mathrm{NCr}(2,1,2)=2, \ \mathrm{NCr}(2,2,2)=2, \\
&&\mathrm{NCr}(3,1,3)=3,\ \mathrm{NCr}(3,2,2)=14, \ 
\mathrm{NCr}(3,2,3)=14, \  \mathrm{NCr}(3,3,3)=11, \\
&&\mathrm{NCr}(4,1,4)=4, \  \mathrm{NCr}(4,2,3)=46, \ \mathrm{NCr}(4,2,4)=46, \\
&&\mathrm{NCr}(4,3,3)=194, \ \mathrm{NCr}(4,3,4)=139, \ \mathrm{NCr}(4,4,4)=88.
\end{eqnarray*}

\subsection{\texorpdfstring{Enumerations of diagrams in $\mathcal{T}_{n}$}{Enumerations of 
diagrams in Tn}}
According to \cite{ABN11}, we introduce the polynomial $T_{n}(x,y)$ by
\begin{eqnarray*}
T_n(x,y):=\sum_{T\in\mathcal{T}_n}x^{\mathrm{left}(T)}y^{\mathrm{top}(T)},
\end{eqnarray*}
where $\mathrm{left}(T)$ and $\mathrm{top}(T)$ are the number of {\it left points} 
and {\it top points} in $T$.
Here, the top points (resp. left points) are the non-root points appearing 
in the first row (resp. first column) of its diagram \cite{ABN11}.
When a tableau $T$ is of size $n$, we have $2n+1$ ways to insert a point.
We have a unique way to put a point at the top row or at the left column, and 
$2n-1$ ways to hold two statistics $\mathrm{left}(T)$ and $\mathrm{top}(T)$ 
the same.
Thus, we have the recurrence relation 
\begin{eqnarray*}
T_{n+1}(x,y)=(x+y+2n-1)T_{n}(x,y),
\end{eqnarray*}
with the initial condition $T_{1}=1$.
This gives 
\begin{eqnarray*}
T_{n}(x,y)=(x+y+1)(x+y+3)\cdots (x+y+2n-3).
\end{eqnarray*}

Recall that we have two types of points, off-diagonal points and diagonal points.
An off-diagonal point $p_{0}$ is said to be attached to a diagonal point $p_{1}$ 
if $p_{1}$ is above $p_{0}$ in the same column or left to $p_{0}$ in the same 
row.
Let $X_{n}(h,p)$ be the number of tree-like tableaux such that 
it is size $n$, half-perimeter $h$ and the total number of off-diagonal points 
attached to a diagonal point is $p$.

\begin{prop}
\label{prop:recpatt}
The number $X_{n}(h,p)$ satisfies the following recurrence relation:
\begin{multline}
\label{eqn:Xhprec}
X_{n}(h,p)=(2n+1-h)(X_{n-1}(h-2,p)+X_{n-1}(h-1,p)) \\
+2(h-1-n)X_{n-1}(h-1,p-1).
\end{multline}
\end{prop}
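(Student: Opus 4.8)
The plan is to prove the recurrence by the same recursive device used throughout Section~\ref{sec:TTab}: by Theorem~\ref{thrmtLTab} every tree-like tableau $T\in\mathcal{T}_{n}$ of half-perimeter $h$ is obtained from a \emph{unique} $T'\in\mathcal{T}_{n-1}$ by one insertion procedure, so I would partition $\mathcal{T}_{n}$ according to the type of the last insertion and the half-perimeter of its predecessor $T'$, and then track how the statistic $p$ (the number of off-diagonal points attached to a diagonal point) changes. First I would record the bookkeeping identities that drive the count: for a tableau of size $s$ and half-perimeter $h'$ the number of boundary edges is $h'$, the number of valid vertices (equivalently off-diagonal points) is $2s+1-h'$, and the number of diagonal points is $h'-s-1$. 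Since a row or column insertion raises the half-perimeter by one and a diagonal insertion raises it by two, the predecessor $T'$ has half-perimeter $h-1$ in the row/column case and $h-2$ in the diagonal case.

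Next I would count the insertions of each type. A diagonal insertion is performed at a valid vertex of $T'$, and a $T'$ of half-perimeter $h-2$ has $2(n-1)+1-(h-2)=2n+1-h$ valid vertices; this produces the coefficient $2n+1-h$ of $X_{n-1}(h-2,p)$. A row or column insertion is performed at a boundary edge of $T'$, and a $T'$ of half-perimeter $h-1$ has exactly $h-1$ boundary edges. The crux is then to split these $h-1$ insertions according to whether they change $p$. I would interpret attachment via the parent point and show that a row/column insertion raises $p$ by exactly one precisely when the newly added off-diagonal point has a diagonal parent, and that this occurs for exactly two of the boundary edges associated with each diagonal point of $T'$ (one making the new point sit directly below a diagonal point in its column, one making it sit directly to the right in its row). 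Since a $T'$ of half-perimeter $h-1$ has $h-1-n$ diagonal points, this gives $2(h-1-n)$ insertions raising $p$ by one and $(h-1)-2(h-1-n)=2n+1-h$ insertions preserving $p$, supplying the two remaining terms; note $(2n+1-h)+2(h-1-n)=h-1$ as required, and both coefficients are nonnegative since $n+1\le h\le 2n+1$.

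It remains to justify the two invariance claims, and this is where the main work lies. For a diagonal insertion the new point is itself diagonal and is placed at the bottom of its new column and at the right end of its new row, so no pre-existing off-diagonal point can acquire it as the point directly above in a column or directly to the left in a row; hence $p$ is unchanged, \emph{provided} one checks that the insertion together with the accompanying ribbon addition preserves the attachment status of every earlier point. For a row/column insertion I would argue symmetrically that the only point whose ``attached to a diagonal point'' status can change is the new point, and then determine when its parent is diagonal. The hardest part will be this last combinatorial analysis: I must show that the ribbon added during insertion never alters the attachment of an old point and never changes the type of the parent of the new point, and that the boundary edges making the new point attached to a diagonal point are in bijection with (diagonal point, direction) pairs, exactly two per diagonal point. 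I expect this to follow from a local analysis of the boundary labels $\mathbf{e}$ at each valid/invalid vertex (Figures~\ref{IPbrc} and~\ref{IPbd}), combined with the characterization of diagonal points as the topmost point of their column and the leftmost point of their row; but verifying that the ribbon never interferes with these attachments is the delicate step on which the whole count rests.
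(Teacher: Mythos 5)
Your proposal is correct and follows essentially the same route as the paper's own proof: partition $\mathcal{T}_n$ by the type of the final insertion, count the $2n+1-h$ valid vertices of a predecessor of half-perimeter $h-2$ for the diagonal term, and split the $h-1$ boundary edges of a predecessor of half-perimeter $h-1$ into $2(h-1-n)$ insertions that attach the new off-diagonal point to one of the $h-1-n$ diagonal points (two per diagonal point) and $2n+1-h$ that do not. In fact the paper's proof simply asserts these counts, whereas you explicitly flag the verification that ribbon additions and diagonal insertions leave the attachment status of pre-existing points unchanged — a point the published argument leaves implicit.
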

\begin{proof}
We need a diagonal insertion to obtain a tree-like tableau $T$ of size $n$, half-perimeter $h$ and 
$p$ attached off-diagonal points from a tree-like tableau $T'$ of size $n-1$, half-perimeter $h-2$ and 
$p$ attached off-diagonal points.
There are $2n+1-h$ valid vertices in $T'$, which gives a contribution $(2n+1-n)X_{n-1}(h-2,p)$.

There are two cases to obtain $T$ by a row or column insertion: the first case is the one where 
the added off-diagonal point is not attached and the second case is the one where 
the added off-diagonal point is attached to a diagonal point.
In a tree-like tableau of size $n-1$ and half-perimeter $h-1$, we have $h-1-n$ diagonal points.
In the first case, we have $h-1-2(h-1-n)=2n+1-h$ ways to perform a row or column insertion since 
the added off-diagonal point is not attached to a diagonal point.
In the second case, we have $2(h-1-n)$ ways to perform a row or column insertion.

From these observations, we obtain Eqn. (\ref{eqn:Xhprec}).
\end{proof}

We change the variable from $h$ to $h'$ by $h=n+1+h'$.
We define $\widetilde{X}_{n}(h'):=X_{n}(h,0)$.
Then, from Proposition \ref{prop:recpatt}, $\widetilde{X}_{n}(h')$ satisfies the following recurrence relation:
\begin{eqnarray}
\label{eqn:recXtilde}
\widetilde{X}_{n}(h')=(n-h')\{\widetilde{X}_{n-1}(h'-1)+\widetilde{X}_{n-1}(h')\}
\end{eqnarray}
with $0\le h'\le n-1$ and the initial condition $\widetilde{X}_{1}(0)=1$.

\begin{prop}
The number $\widetilde{X}_{n}(h')$ is expressed as 
\begin{eqnarray}
\label{eqn:Xtildefac}
\widetilde{X}_{n}(h')=
\begin{cases}
n! & h'=0, \\
(n-h')\cdot n!\cdot f_{h'}(n), & h'\ge1,
\end{cases}
\end{eqnarray}
where $f_{h'}(n)$ is a polynomial of $n$ in degree $h'-1$ and its expansion is 
\begin{eqnarray}
\label{eqn:ltfh}
f_{h'}(n)=\frac{1}{(2h')!!}n^{h'-1}+\cdots.
\end{eqnarray}
\end{prop}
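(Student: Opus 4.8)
The plan is to argue by induction on $h'$, after first pinning down the $h'=0$ row. For $h'=0$ the recurrence (\ref{eqn:recXtilde}) reads $\widetilde{X}_{n}(0)=n\,\widetilde{X}_{n-1}(0)$ (the term $\widetilde{X}_{n-1}(-1)$ being $0$, since $h'$ ranges over non-negative integers), so $\widetilde{X}_{n}(0)=n!$ from $\widetilde{X}_{1}(0)=1$. This is the asserted value and the base of the outer induction.

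For the inductive step I would substitute the ansatz $\widetilde{X}_{n}(h')=(n-h')\,n!\,f_{h'}(n)$ into (\ref{eqn:recXtilde}), using the induction hypothesis for the $(h'-1)$-row, namely $\widetilde{X}_{n-1}(h'-1)=(n-h')(n-1)!\,f_{h'-1}(n-1)$ for $h'\ge 2$ and $\widetilde{X}_{n-1}(0)=(n-1)!$ for $h'=1$. Clearing the common factor $(n-h')(n-1)!$ reduces the problem to the first-order recurrence in $n$
\[
L_{h'}[f_{h'}](n):=n f_{h'}(n)-(n-1-h')f_{h'}(n-1)=R_{h'}(n),
\]
where $R_{h'}(n)=(n-h')f_{h'-1}(n-1)$ for $h'\ge 2$ and $R_{1}(n)=1$; in either case $R_{h'}$ is a polynomial of degree $h'-1$. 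The key observation is that $L_{h'}$ preserves degree: a direct expansion gives $L_{h'}[n^{d}]=(d+1+h')\,n^{d}+(\text{lower order})$, so in the monomial basis $L_{h'}$ is triangular with non-zero diagonal entries $d+1+h'>0$ and hence invertible on the polynomials of degree at most $m$ for every $m$. Therefore there is a unique polynomial $f_{h'}$ of degree exactly $h'-1$ with $L_{h'}[f_{h'}]=R_{h'}$.

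To conclude (\ref{eqn:Xtildefac}) I then check that this polynomial reproduces the sequence: the candidate $\widehat{X}_{n}(h'):=(n-h')\,n!\,f_{h'}(n)$ satisfies the same recurrence (\ref{eqn:recXtilde}) by construction and the same boundary value $\widehat{X}_{h'}(h')=0=\widetilde{X}_{h'}(h')$, so $\widehat{X}=\widetilde{X}$ by induction on $n$. For the leading coefficient (\ref{eqn:ltfh}), the diagonal entry of $L_{h'}$ at top degree $d=h'-1$ is $2h'$, so $2h'\cdot(\text{lead } f_{h'})=\text{lead } R_{h'}$; with $\text{lead } R_{1}=1$ and $\text{lead } R_{h'}=\text{lead } f_{h'-1}=1/(2h'-2)!!$ for $h'\ge 2$ (the inner inductive hypothesis), this gives $\text{lead } f_{h'}=1/(2h')!!$ in both cases.

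The main obstacle is the degree-preservation of $L_{h'}$ together with the bookkeeping that identifies the unique polynomial solution with the combinatorial sequence: one must verify carefully that the top-degree terms of $n f_{h'}(n)$ and of $(n-1-h')f_{h'}(n-1)$ cancel, so that $f_{h'}$ does not acquire spurious degree $h'$, and that the single boundary value at $n=h'$ suffices precisely because (\ref{eqn:recXtilde}) is first order in $n$. The base case $h'=1$, where the $(h'-1)$-row is not yet in the factored form $(n-h')n!f_{\bullet}(n)$, has to be treated separately, which is why $R_{1}\equiv 1$ appears as an exceptional right-hand side.
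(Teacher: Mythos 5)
Your proof is correct and follows essentially the same route as the paper: induction on $h'$, reduction to the first-order recurrence $nf_{h'}(n)=(n-h')f_{h'-1}(n-1)+(n-1-h')f_{h'}(n-1)$, and the leading-coefficient relation $2h'\cdot a_{h'}=a_{h'-1}$. Your triangularity argument for $L_{h'}$ and the explicit identification of the polynomial solution with the sequence via the boundary value at $n=h'$ supply precisely the details the paper passes over with ``it is obvious,'' and your uniform treatment of $h'=1$ via $R_{1}\equiv 1$ replaces the paper's direct unrolling of $\widetilde{X}_{n}(1)=\tfrac{1}{2}(n-1)\cdot n!$.
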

\begin{proof}
When $h'=0$, we have $\widetilde{X}_{n}(0)=n\cdot\widetilde{X}_{n-1}(0)=n!$
from the recurrence relation (\ref{eqn:recXtilde}).

For $h'\ge1$, we prove Proposition by induction on $h'$.
For $h'=1$, we have 
\begin{eqnarray*} 
\widetilde{X}_{n}(1)&=&(n-1)\{\widetilde{X}_{n-1}(0)+\widetilde{X}_{n-1}(1)\}, \\
&=&(n-1)\cdot(n-1)!+(n-1)\widetilde{X}_{n-1}(1), \\
&=&\sum_{s=1}^{n-1}(n-1)\cdots(n-s)\cdot(n-s)!, \\
&=&\sum_{s=1}^{n-1}(n-s)\cdot(n-1)! \\
&=&\frac{1}{2}(n-1)\cdot n!.
\end{eqnarray*}
For $h'\ge2$, We assume that $\widetilde{X}_{n}(h')$ can be factorized as 
Eqn. (\ref{eqn:Xtildefac}).
Then, $f_{h'}(n)$ satisfies the recurrence relation
\begin{eqnarray*}
f_{h'}(n)=\frac{n-h'}{n}f_{h'-1}(n-1)+\frac{n-1-h'}{n}f_{h'}(n-1).
\end{eqnarray*}
We multiply the both sides by $n$. Then, it is obvious that when $f_{h'-1}(n)$ 
is a polynomial of $n$ of degree $h'-2$, we have a unique polynomial $f_{h'}(n)$ 
of degree $h'-1$.
We denote by $a_{h'}$ the leading coefficient of $f_{h'}(n)$.
Then, we have $2h'\cdot a_{h'}=a_{h'-1}$ from the above recurrence relation, 
which implies Eqn. (\ref{eqn:ltfh}).
\end{proof}

The first few polynomials $f_{h'}(n)$ are 
\begin{eqnarray*}
f_{1}(n)&=&\frac{1}{2}, \\
f_{2}(n)&=&\frac{1}{24}(3n-5), \\
f_{3}(n)&=&\frac{1}{48}(n-2)(n-3), \\
f_{4}(n)&=&\frac{1}{5760}(15n^3-150n^2+485n-502), \\
f_{5}(n)&=&\frac{1}{11520}(n-4)(n-5)(3n^2-23n+38).
\end{eqnarray*}

Let $\widetilde{X}_{n}(h',p):=X_{n}(h,p)$ with $h=n+1+h'$.
The first few expressions with $p\ge1$ are 
\begin{eqnarray*}
\widetilde{X}_{n}(1,1)&=&\frac{1}{2}(n-1)(n-2)\cdot(n-1)!, \\
\widetilde{X}_{n}(2,1)&=&\frac{1}{36}(n-2)(n-3)(9n^2-19n+4)\cdot(n-2)!, \\
\widetilde{X}_{n}(3,1)&=&\frac{1}{144}(n-2)(n-3)(n-4)(9n^3-44n^2+49n-6)\cdot(n-3)!, \\
\widetilde{X}_{n}(1,2)&=&\frac{1}{2}\left((n-1)(n-6)+4\sum_{k=1}^{n-1}\frac{1}{k}\right)\cdot(n-1)!, \\
\widetilde{X}_{n}(2,2)&=&\frac{1}{216}\left\{
(n-2)(81n^3-728n^2+1487n-240)+72(3n^2-9n+2)\sum_{k=1}^{n-2}\frac{1}{k}
\right\}\cdot(n-2)!
\end{eqnarray*}

Let $A_n(k,l)$ with $1\le k,l\le n$ and $n+1\le k+l\le 2n$ 
be the Eulerian numbers of the second order.
Namely, $A_{n}(k,l)$ satisfies
\begin{eqnarray}
\label{RREuler}
A_{n+1}(k,l)=kA_{n}(k,l-1)+lA_{n}(k-1,l)+(2n+3-k-l)A_{n}(k-1,l-1),
\end{eqnarray}
with the initial condition $A_{1}(1,1)=1$.
These numbers correspond to the integer sequence A321591 in \cite{Slo}.

\begin{prop}
The number of tree-like tableaux of size $n$ with $k$ rows and 
$l$ columns is given by $A_{n}(k,l)$.
\end{prop}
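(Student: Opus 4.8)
The plan is to prove this by induction on $n$, showing that the number of tree-like tableaux in $\mathcal{T}_{n}$ with $k$ rows and $l$ columns, call it $N_{n}(k,l)$, satisfies both the initial condition $A_{1}(1,1)=1$ and the recurrence (\ref{RREuler}) that defines the second-order Eulerian numbers $A_{n}(k,l)$. The base case is immediate, since the only tableau in $\mathcal{T}_{1}$ is a single box, so $N_{1}(1,1)=1=A_{1}(1,1)$.

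For the inductive step the central tool is Theorem \ref{thrmtLTab}: every tableau in $\mathcal{T}_{n+1}$ arises from a \emph{unique} tableau in $\mathcal{T}_{n}$ by a single insertion, because the reverse insertion procedure of Definition \ref{defn:iipTT} inverts the insertion procedure of Definition \ref{defn:IPforTT}. Consequently I would compute $N_{n+1}(k,l)$ by summing, over all possible predecessors in $\mathcal{T}_{n}$, the number of insertions producing a diagram with $k$ rows and $l$ columns. The next step is to classify the three insertion types by their effect on the pair $(k,l)$: a column insertion fixes the number of rows and raises the number of columns by one, a row insertion does the opposite, and a diagonal insertion raises both by one. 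For the site counts I would use that a Ferrers diagram with $k$ rows and $l$ columns has exactly $k$ vertical and $l$ horizontal boundary edges, so there are $k$ column insertions and $l$ row insertions. Collecting the contributions from the predecessors $(k,l-1)$, $(k-1,l)$, and $(k-1,l-1)$ of size $n$ then yields
\begin{equation*}
N_{n+1}(k,l)=kN_{n}(k,l-1)+lN_{n}(k-1,l)+(2n+3-k-l)N_{n}(k-1,l-1),
\end{equation*}
which is precisely (\ref{RREuler}), so the induction closes.

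The hard part will be the diagonal term, namely showing that the number of diagonal insertion sites (valid vertices) of a size-$n$ tableau with $k$ rows and $l$ columns depends only on $(n,k,l)$ and equals $2n+1-k-l$; this is what forces the coefficient $(2n+1-(k-1)-(l-1))=2n+3-k-l$ attached to the predecessor $(k-1,l-1)$. Here I would invoke the earlier proposition identifying valid vertices with off-diagonal points, together with the count established in its proof: the half-perimeter $k+l$ equals $n+m+1$, where $m$ is the number of diagonal points, so $m=k+l-n-1$ and the number of off-diagonal (hence valid) points is $n-m=2n+1-k-l$. The delicate issue is precisely that this quantity is a function of $(k,l)$ alone and not of the individual tableau, which is exactly what makes the recurrence well-defined; once that uniformity is in hand, the three contributions assemble into (\ref{RREuler}) and the proof is complete.
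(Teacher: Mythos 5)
Your proposal is correct and follows essentially the same route as the paper: classify the three insertion types by their effect on $(k,l)$, count the available sites ($k$ column, $l$ row, and $2n+1-k-l$ valid vertices for diagonal insertions), and read off the recurrence (\ref{RREuler}) with the initial condition $A_1(1,1)=1$. The only difference is that you make explicit the justification of the diagonal-site count via the identity (half-perimeter) $=n+m+1$ and the equality of valid vertices with off-diagonal points, which the paper leaves implicit.
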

\begin{proof}
Suppose $T\in\mathcal{T}_{n}$ has $k$ rows and $l$ columns.
Then, by the insertion procedure, we have $k$ tableaux 
with $k$ rows and $l+1$ columns, $l$ tableaux 
with $k+1$ rows and $l$ columns, and 
$2n+1-k-l$ tableaux with $k+1$ rows and $l+1$ columns 
in $\mathcal{T}_{n+1}$.
From this, we obtain the recurrence relation (\ref{RREuler})
\end{proof}

We introduce the Eulerian polynomial
$A_n(t,s):=\sum_{1\le k,l\le n}A_n(k,l)t^{k}s^{l}$. 
Then, we have the recurrence relation for $A_n(t,s)$:
\begin{eqnarray*}
A_{n+1}(t,s)=(2n+1)tsA_{n}(t,s)+t(1-t)s\cdot\partial_{t}A_{n}(t,s)
+ts(1-s)\cdot\partial_{s}A_{n}(t,s),
\end{eqnarray*}
with the initial condition $A_{1}(t,s)=ts$, where 
we denote a partial derivative by $\partial_{x}:=\partial/\partial x$.
If we differentiate the recurrence relation %%twice, 
once, and 
substitute $t=s=1$ in this case, we obtain equations:
\begin{eqnarray*}
\partial_{x}A_{n+1}(1,1)&=&(2n+1)A_{n}(1,1)+2n\cdot \partial_{x}A_{n}(1,1), %%\\
%%\partial_{x}^{2}A_{n+1}(1,1)&=&4n\cdot \partial_{x}A_{n}(1,1)+(2n-1)\partial_{x}^{2}A_{n}(1,1),
\end{eqnarray*}
where $x=t$ or $s$. 
Since we have $A_{n}(1,1)=(2n-1)!!$, we get 
\begin{eqnarray*}
\partial_{t}A_{n}(1,1)=\partial_{s}A_{n}(1,1)
&=&\frac{1}{3}(2n+1)!!, %%\\
%\partial_{t}^{2}A_{n}(1,1)=\partial_{s}^{2}A_{n}(1,1)
%&=&\frac{2}{9}(n-1)n(4n+1)\cdot (2n-3)!!.
\end{eqnarray*}

\begin{prop}
The average half-perimeter of a tree-like tableau of size $n$ in $\mathcal{T}_{n}$
is given by 
\begin{eqnarray}
\label{eqn:avtt1}
\frac{2}{3}(2n+1),
\end{eqnarray}
or equivalently the average number of diagonal dots is 
\begin{eqnarray}
\label{eqn:avttdd}
\frac{1}{3}(n-1).
\end{eqnarray}
\end{prop}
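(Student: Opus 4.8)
The plan is to obtain the average half-perimeter directly from the bivariate generating polynomial $A_n(t,s)$ by a logarithmic-derivative argument, reusing the two evaluations of $\partial_t A_n$ and $\partial_s A_n$ at $t=s=1$ that were established immediately before this statement.

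First I would record the combinatorial meaning of the derivatives. Since $A_n(t,s)=\sum_{1\le k,l\le n}A_n(k,l)\,t^{k}s^{l}$ enumerates the tableaux of $\mathcal{T}_n$ according to their number $k$ of rows and number $l$ of columns, and since a tableau with $k$ rows and $l$ columns has half-perimeter $k+l$, the aggregate of all half-perimeters over $\mathcal{T}_n$ is
\[
\sum_{1\le k,l\le n}(k+l)\,A_n(k,l)=\bigl(\partial_t A_n\bigr)(1,1)+\bigl(\partial_s A_n\bigr)(1,1).
\]
Dividing by the total number of tableaux $|\mathcal{T}_n|=A_n(1,1)=(2n-1)!!$ then yields the average half-perimeter.

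Next I would substitute the values $\partial_t A_n(1,1)=\partial_s A_n(1,1)=\tfrac{1}{3}(2n+1)!!$, computed just above from the differentiated recurrence for $A_n(t,s)$, obtaining
\[
\frac{2\cdot\tfrac{1}{3}(2n+1)!!}{(2n-1)!!}=\frac{2}{3}\,\frac{(2n+1)!!}{(2n-1)!!}=\frac{2}{3}(2n+1),
\]
where the last step uses $(2n+1)!!=(2n+1)(2n-1)!!$. This is exactly (\ref{eqn:avtt1}).

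Finally, for the equivalent count of diagonal dots, I would invoke the fact (used earlier when counting boundary edges) that a tableau of size $n$ with $m$ diagonal points has half-perimeter exactly $n+m+1$. Thus $m=h-n-1$ is a deterministic affine function of the half-perimeter $h$, averaging is linear, and the mean number of diagonal dots is $\tfrac{2}{3}(2n+1)-(n+1)=\tfrac{1}{3}(n-1)$, which is (\ref{eqn:avttdd}). There is no genuine obstacle remaining: the real work was the derivative evaluation $\partial_x A_n(1,1)=\tfrac13(2n+1)!!$ carried out in the preceding proposition via the recurrence for $A_n(t,s)$, and the present statement is its immediate corollary, so the only care needed is the double-factorial simplification and the bookkeeping relation between half-perimeter and the number of diagonal dots.
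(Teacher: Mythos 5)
Your proposal is correct and follows essentially the same route as the paper: the total half-perimeter is $(\partial_t+\partial_s)A_n(t,s)\big|_{(1,1)}=\tfrac{2}{3}(2n+1)!!$, dividing by $A_n(1,1)=(2n-1)!!$ gives $\tfrac{2}{3}(2n+1)$, and the diagonal-dot count follows from the affine relation $m=h-(n+1)$. No discrepancies to note.
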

\begin{proof}
The total number of half-perimeters of a tree-like tableau of size $n$ is given 
by  
\begin{eqnarray*}
\sum_{(k,l)\in\Delta_{n}}(k+l)A_{n}(k,l),
\end{eqnarray*}
which is equal to 
\begin{eqnarray}
\label{eqn:avtt2}
(\partial_{t}+\partial_{s})A_{n}(t,s)\big|_{(t,s)=(1,1)}=\frac{2}{3}(2n+1)!!.
\end{eqnarray}
Dividing Eqn. (\ref{eqn:avtt2}) by $A_{n}(1,1)=(2n-1)!!$, we obtain Eqn. (\ref{eqn:avtt1}).

The number of diagonal dots is the half-perimeter minus $n+1$, which gives Eqn. (\ref{eqn:avttdd})
\end{proof}

\subsection{Crossings in a tree-like tableau}
\label{sec:crossttab}
Let $T$ be a natural label of the tree $\mathrm{Tree}(\lambda)$, 
$B$ be the tree-like tableau for $T$ and $\mathbf{h}:=(h_1,\ldots,h_n)$ be its insertion history.
From the definition of the insertion procedure of tree-like tableaux, 
we add a ribbon to a tree-like tableau when $h_{n-1}>h_{n}$.
Let $e_i$, $1\le i\le n$, be the edge with label $i$ in $T$.

\begin{prop}
Suppose $h_{n}>h_{n+1}$. Then, we have
\begin{eqnarray}
\label{eqn:hh}
\begin{aligned}
h_{n}-h_{n+1}&=\#\{k| e_{n}\uparrow e_{k}\}+\#\{k| e_{n+1}\uparrow e_{k}\} \\
&+2\cdot\#\{k|e_{n+1}\rightarrow e_{k}\rightarrow e_{n}\text{ and } k<n\}
-2\cdot\#\{k|e_{n}\uparrow e_{k} \text{ and }e_{n+1}\uparrow e_{k}\}.
\end{aligned}
\end{eqnarray}
\end{prop}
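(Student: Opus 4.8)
The plan is to compute $h_n$ and $h_{n+1}$ directly from their definition in Section~\ref{sec:DTSDTR} and to reduce the asserted identity to a single combinatorial statement about the planar positions of the edges $e_n$ and $e_{n+1}$. Writing $L_i:=\#\{e'\mid e'\leftarrow e_i\}$ for the number of edges strictly left of $e_i$ and $\mathrm{anc}(e_i):=\#\{e'\mid e_i\uparrow e'\}$ for the number of ancestors of $e_i$, I would first observe that the only two edges of label $\ge n$ are $e_n$ and $e_{n+1}$. Hence the label restriction $n(e')<i$ in the definition of $h_i$ removes only $e_{n+1}$ from the left-count of $e_n$ and removes nothing from that of $e_{n+1}$, giving $h_n=2\bigl(L_n-[e_{n+1}\leftarrow e_n]\bigr)+\mathrm{anc}(e_n)$ and $h_{n+1}=2L_{n+1}+\mathrm{anc}(e_{n+1})$, where $[\,\cdot\,]$ is an indicator. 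Subtracting and comparing with the claimed right-hand side, the whole proposition becomes equivalent to the single identity
\[
L_n-L_{n+1}-[e_{n+1}\leftarrow e_n]=\#\{k\mid e_{n+1}\rightarrow e_k\rightarrow e_n\}+\mathrm{anc}(e_{n+1})-\#\{k\mid e_n\uparrow e_k,\ e_{n+1}\uparrow e_k\}.
\]

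Next I would use the hypothesis $h_n>h_{n+1}$ to pin down the relative position of $e_n$ and $e_{n+1}$. Since an edge corresponds to a matched $U\cdots D$ pair, any two edges are either nested or disjoint, and because $e_{n+1}$ carries the maximal label $n+1$ it is a leaf edge (it has no descendants) and cannot be an ancestor of $e_n$. This leaves exactly three configurations: $e_{n+1}$ disjoint and left of $e_n$, $e_{n+1}$ disjoint and right of $e_n$, or $e_{n+1}$ nested inside $e_n$. A short monotonicity estimate on the two expressions for $h_n,h_{n+1}$ rules out the last two: in the nested case $e_{n+1}\subset e_n$ forces $\mathrm{anc}(e_{n+1})\ge\mathrm{anc}(e_n)+1$ and $L_{n+1}\ge L_n$, whence $h_{n+1}>h_n$; in the disjoint-right case $e_n$ together with each of its ancestors that does not already contain $e_{n+1}$ is strictly left of $e_{n+1}$, which forces $L_{n+1}\ge L_n+1$ plus enough extra left-edges to give $h_{n+1}\ge h_n+2$. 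Thus $h_n>h_{n+1}$ forces $e_{n+1}\leftarrow e_n$, so $[e_{n+1}\leftarrow e_n]=1$ and $[e_n\leftarrow e_{n+1}]=0$.

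In this surviving configuration I would prove the displayed identity by a telescoping count on right endpoints of the matched pairs. Writing $e_{n+1}=(i',j')$ and $e_n=(i,j)$ with $i'<j'<i<j$, one checks from the laminar structure that, for any edge $e\neq e_n,e_{n+1}$ with right endpoint $b$, one has $[e\leftarrow e_n]=[b<i]$ and $[e\leftarrow e_{n+1}]=[b<i']$; summing the difference over all such $e$ (and adding the boundary term coming from $e_{n+1}$ itself) collapses $L_n-L_{n+1}-1$ to the number of edges whose right endpoint lies in $[i',i)$. Classifying those edges by laminarity yields exactly three families: descendants of $e_{n+1}$, edges lying strictly between $e_{n+1}$ and $e_n$, and ancestors of $e_{n+1}$ that do not contain $e_n$. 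The first family is empty because $e_{n+1}$ is a leaf; the second is counted by $\#\{k\mid e_{n+1}\rightarrow e_k\rightarrow e_n\}$ (the constraint $k<n$ being automatic, as any such $e_k$ has label below $n$); and the third is precisely $\mathrm{anc}(e_{n+1})-\#\{k\mid e_n\uparrow e_k,\ e_{n+1}\uparrow e_k\}$, since the common ancestors are exactly those ancestors of $e_{n+1}$ that also contain $e_n$. This establishes the reduced identity and hence Eqn.~(\ref{eqn:hh}).

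I expect the main obstacle to be the laminar bookkeeping in the last paragraph: one must distinguish \emph{strictly left} from \emph{ancestor} and from \emph{descendant} with care (in particular separating edges nested inside $e_{n+1}$, edges sitting between $e_{n+1}$ and $e_n$, and ancestors of $e_{n+1}$ that stop short of $e_n$), and the crucial point that makes the potential extra descendant term vanish is precisely that $e_{n+1}$ has the maximal label and is therefore a leaf. The monotonicity argument excluding the other two configurations is routine but needs the same care about which counts the label restriction in $h_i$ actually affects, especially the refined common-ancestor estimate in the disjoint-right case.
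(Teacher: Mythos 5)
Your proof is correct, and it takes a genuinely different route from the paper's. The paper argues through the boundary-label sequence $\mathbf{e}$ (equivalently, the labels placed on the Dyck path $\lambda$): it observes that $h_{n}-h_{n+1}$ equals the number of steps plus valid vertices minus local $DU$-factors between the two insertion positions, cancels the last two quantities, and then simply asserts that the right-hand side of Eqn.~(\ref{eqn:hh}) counts the tree edges lying between those positions. You instead work directly from the definition of the insertion history in Section~\ref{sec:DTSDTR}, expanding $h_{n}$ and $h_{n+1}$ as tree statistics and reducing the claim to the laminar identity $L_{n}-L_{n+1}-1=\#\{k\mid e_{n+1}\rightarrow e_{k}\rightarrow e_{n}\}+\mathrm{anc}(e_{n+1})-\#\{\text{common ancestors}\}$. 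What your approach buys is twofold: it makes explicit where the hypothesis $h_{n}>h_{n+1}$ enters (it forces $e_{n+1}\leftarrow e_{n}$, ruling out the nested and disjoint-right configurations via the monotonicity estimates, which the paper never addresses), and it supplies the case analysis -- descendants of $e_{n+1}$ vanish because $e_{n+1}$ carries the maximal label and is a leaf edge, edges strictly between give the factor-$2$ term, and ancestors of $e_{n+1}$ split according to whether they also contain $e_{n}$ -- that the paper compresses into a single sentence. I verified the key steps: the subtraction of the indicator $[e_{n+1}\leftarrow e_{n}]$ is the only effect of the label restriction $n(e')<i$, the inclusion $\{e'\mid e'\leftarrow e_{n+1}\}\subseteq\{e'\mid e'\leftarrow e_{n}\}$ holds in the surviving configuration, and the refined common-ancestor bound you flag for the disjoint-right case does give $h_{n+1}\ge h_{n}+2$ there. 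The two arguments reach the same count; yours is the more self-contained and checkable of the two.
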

\begin{proof}
Let $\lambda$ be a Dyck path corresponding to $\mathbf{h}$.
It is clear from the definition of $\mathbf{e}$ that 
$h_{n}-h_{n+1}$ is equal to the number of edges, plus the number of valid vertices, minus the 
number of a local path $DU$ between the $h_{n+1}$-th and $h_{n}$-th position in $\lambda$.
Note that the number of valid vertices is equal to the number of a local path $DU$.
So, to show Proposition, it is enough to count the number of edges between $h_{n+1}$-th and $h_{n}$-th position.
Actually, the right hand side of Eqn. (\ref{eqn:hh}) counts the number of such edges.
\end{proof}

Let $R_{n}$ be the number of boxes in a ribbon added in a tree-like tableau by the $n$-th step 
of the insertion procedure. 
\begin{prop}
\label{prop:TTR}
Suppose $h_{n}>h_{n+1}$. 
Then, we have 
\begin{eqnarray*}
R_{n+1}=h_{n}-h_{n+1}-\#\{k<n | e_{n+1}\rightarrow e_{k}\rightarrow e_{n} 
\text{ and $e_{k}$ is connected to a leaf}\}.
\end{eqnarray*}
\end{prop}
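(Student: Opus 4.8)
The plan is to reduce the statement to the proposition proved immediately above it, which already identifies $h_{n}-h_{n+1}$ with a count of edges of $\mathrm{Tree}(\lambda)$ lying in the strip between the two insertion points, and then to show that the ribbon lays down exactly one box for each such edge apart from a controlled family of exceptions. First I would recall from Definition \ref{defn:IPforTT} that a ribbon is added at the $(n+1)$-th step precisely when the special point, which is the box labelled $n$, lies to the right of the newly inserted box $b$ labelled $n+1$; by Proposition \ref{prop:2+2} this is exactly the configuration $e_{n+1}\rightarrow e_{n}$, equivalently $h_{n}>h_{n+1}$. The ribbon then runs from the east edge of $b$ to the south edge of the box labelled $n$, and $R_{n+1}$ is the number of its boxes. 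So the target is to compare this box count against the edge count that the preceding proposition equates with $h_{n}-h_{n+1}$.

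The core of the argument is a local reading of the ribbon along the top path $\mu_{n}$. Using the identity $\mathbf{e}=\mathbf{e}'$, the insertion points $h_{n+1}$ and $h_{n}$ are determined by vertices of $\mu_{n}$, and the ribbon occupies the strip of $\mu_{n}$ between them; the proof of the preceding proposition tells us that $h_{n}-h_{n+1}$ counts precisely the edges of $\mathrm{Tree}(\lambda)$ whose anchor lies in this strip. I would then match the ribbon boxes to these edges one at a time, walking the rim hook from $b$ to the box labelled $n$: a generic edge in the strip forces the ribbon to advance by one box, so in the absence of any leaf edge one has $R_{n+1}=h_{n}-h_{n+1}$, as is borne out by the two ribbons of the example in Figure \ref{fig:TTIP}. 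The single exception occurs when an edge $e_{k}$ with $e_{n+1}\rightarrow e_{k}\rightarrow e_{n}$ is connected to a leaf: such an edge corresponds to an adjacent $UD$ pair, i.e.\ a $\wedge$-peak, over which the rim hook turns without depositing a box of its own, by the no-$2\times2$ condition defining a ribbon. Summing the per-edge contributions then yields $R_{n+1}=(h_{n}-h_{n+1})-\#\{k<n \mid e_{n+1}\rightarrow e_{k}\rightarrow e_{n},\ e_{k}\text{ connected to a leaf}\}$.

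I expect the main obstacle to be justifying this one-box-per-edge bookkeeping rigorously, and in particular confirming that the skipped peaks are \emph{exactly} the leaf edges counted horizontally between $e_{n+1}$ and $e_{n}$, while the edges recorded by the $\uparrow$-terms of Eqn.\ (\ref{eqn:hh}) and the non-leaf horizontal edges each genuinely force a box. The delicate point is that Eqn.\ (\ref{eqn:hh}) assembles the edge count with a factor of two on the horizontal family together with an overlap correction, whereas the ribbon is a thin rim hook advancing by a single box per local move, so I must verify that the weighted column/row bookkeeping of the preceding proposition collapses cleanly onto the staircase geometry of the ribbon. I would control this either by a direct case analysis of the local Dyck-path configurations ($UU$, $DU$, $DD$, and the $UD$ peak) encountered as the ribbon crosses the strip, in the spirit of the proof of $\mathbf{e}=\mathbf{e}'$, or, should a purely local argument prove awkward, by induction on $n$ using the reverse insertion procedure of Definition \ref{defn:iipTT}: removing the box labelled $n+1$ together with its attached ribbon alters both sides of the claimed identity by the same amount, reducing the statement to the size-$n$ case.
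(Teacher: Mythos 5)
Your proposal takes essentially the same route as the paper's proof: both identify $R_{n+1}$ as $h_{n}-h_{n+1}$ minus the number of valid vertices (equivalently $UD$-peaks of $\lambda$, equivalently leaf edges $e_{k}$ with $e_{n+1}\rightarrow e_{k}\rightarrow e_{n}$) lying in the strip between the two insertion positions. The paper's own argument is in fact terser than your sketch---it simply asserts that the ribbon has $h_{n}-h_{n+1}-m$ boxes with $m$ the number of valid vertices in the strip, and then identifies valid vertices with $UD$-peaks and hence with leaf edges---so the local bookkeeping you flag as the delicate point is left implicit there as well.
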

\begin{proof}
If we add a ribbon in a Dyck tableau by the insertion procedure, the number of added box 
is equal to $h_{n}-h_{n+1}-m$ where $m$ is the number of valid vertices between the 
$h_{n+1}$-th and $h_{n}$-th position in $\lambda$.
From the definition of $\mathbf{e}$, we have a valid vertex if we have a $UD$ path 
in $\lambda$.
In the language of a tree, a local path $UD$ corresponds to an edge connected to 
a leaf.
Thus, $m$ is equal to the valid vertices and we have 
\begin{eqnarray*}
m=\#\{k<n | e_{n+1}\rightarrow e_{k}\rightarrow e_{n} 
\text{ and $e_{k}$ is connected to a leaf}\}.
\end{eqnarray*}
\end{proof}

\subsection{Dyck tableaux and tree-like tableaux}
In this section, we consider two combinatorial  objects: 
Dyck tableaux (see Section \ref{sec:DTab}) and tree-like 
tableaux (see Section \ref{sec:TTab}).
As pointed in \cite{ABDH11,ABN11}, Dyck tableaux and tree-like tableaux 
have similar recursive structure based on their insertion 
procedures.

Let $R_{n}$ be the number defined in Section \ref{sec:crossttab}.
\begin{theorem}
We have a bijection between Dyck tableaux $D$ and tree-like 
tableaux $T$ satisfying the following properties.
\begin{enumerate}
\item The number of labeled box in $T$ is the number of labeled box in $D$.
\item There is a ribbon between the labeled box $n$ and the labeled box $n+1$
in $D$ if and only if there is a ribbon between the labeled box $n$ and the labeled 
box $n+1$ in $D$ or if the labeled box $n+1$ is below the labeled box $n$ in $T$.
\item When $m_{i}:=h_{i-1}-h_{i}-R_{i}>0$ for $1\le i\le n-1$ in $T$, 
we have $M:=\sum_{i}m_{i}$ proper shadow boxes of $D$.
\end{enumerate}
\end{theorem}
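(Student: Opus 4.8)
The plan is to realize the bijection through the common parametrization of both families of tableaux by insertion histories. By the recursive construction of $\mathrm{DTab}$ in Section~\ref{sec:DTab}, the assignment $T_1\mapsto\mathrm{DTab}(T_1)$ is a bijection from natural labels of $\mathrm{Tree}(\lambda)$ onto Dyck tableaux, while by Theorem~\ref{thrmtLTab}, together with the bijection $\mathbf{h}\leftrightarrow T_1$ between insertion histories and natural labels from Section~\ref{sec:DTSDTR}, the assignment $\mathbf{h}\mapsto\mathrm{TTab}(\mathbf{h})$ is a bijection onto tree-like tableaux. The desired bijection is the composite $\mathrm{DTab}(T_1)\mapsto\mathrm{TTab}(\mathbf{h})$, where $\mathbf{h}$ is the insertion history of $T_1$. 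Property (1) is then immediate: each step of either insertion procedure introduces exactly one labeled box, so two corresponding tableaux of size $n$ both carry $n$ labeled boxes.

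For property (2), I would compare the ribbon-addition rules step by step. On the Dyck tableau side, Proposition~\ref{prop:2+2} identifies an added ribbon joining boxes $n$ and $n+1$ precisely with the pattern $2^{+}2$, that is, with $e(n+1)\rightarrow e(n)$. On the tree-like side, the insertion at step $n+1$ adds a ribbon exactly when $h_n>h_{n+1}$ and the special point lies to the right of the inserted box. The task is to show that the event $e(n+1)\rightarrow e(n)$ coincides with the disjunction ``a ribbon joins $n$ and $n+1$ in $T$'' or ``box $n+1$ sits below box $n$ in $T$.'' I would establish this by a case analysis on the type (row, column, or diagonal) of the insertion producing box $n+1$ in $T$, using Figures~\ref{IPbrc} and~\ref{IPbd} to track how the relative horizontal/vertical placement of boxes $n$ and $n+1$ records the tree relation between $e(n)$ and $e(n+1)$. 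The vertical-adjacency (``below'') case should account exactly for those $2^{+}2$ patterns that the tree-like procedure realizes as a column relation rather than as an explicit ribbon.

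For property (3), the starting point is Proposition~\ref{prop:TTR}: when $h_{i-1}>h_i$ it gives $R_i=h_{i-1}-h_i-\#\{k<i-1\mid e(i)\rightarrow e(k)\rightarrow e(i-1)\text{ and }e(k)\text{ connected to a leaf}\}$, so that for the positive terms
\[
m_i=h_{i-1}-h_i-R_i=\#\{k<i-1\mid e(k)\text{ a leaf edge and }e(i)\rightarrow e(k)\rightarrow e(i-1)\}.
\]
Summing the nonnegative $m_i$, the total $M=\sum_i m_i$ counts the triples $(i-1,k,i)$ with $e(i)\rightarrow e(k)\rightarrow e(i-1)$, $k<i-1$, and $e(k)$ connected to a leaf. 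Setting $a=i$, $b=k$, $c=i-1$, these are exactly the patterns $2^{+}12$ of $T_1$ whose middle edge $e(b)$ is connected to a leaf. Invoking the generalization of Proposition~9 of \cite{ABDH11} (shadow boxes $\leftrightarrow$ patterns $2^{+}12$), I would then argue that the leaf condition on $e(b)$ is precisely the condition that the associated shadow box be \emph{proper}: if $e(b)$ is connected to a leaf, its anchor box sits at the base of a frozen column with no empty box beneath the label $b$, so after the local moves defining $D'$ the shadow path descends to lie immediately above $b$ with nothing in between; conversely a non-leaf $e(b)$ leaves frozen or empty boxes below $b$ that obstruct properness. Matching the two descriptions yields $M=\#\{\text{proper shadow boxes of }D\}$.

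The hard part will be property (3), and within it the precise equivalence between ``$e(b)$ connected to a leaf'' and ``the shadow box is proper.'' This demands a careful treatment of the frozen region and of the reduction $D\to D'$ by local moves: one must verify that the obstruction to lowering a shadow path onto its labeled box $b$ is controlled exactly by whether the anchor box of $e(b)$ lies at the bottom of its frozen column, and that the $\mathrm{Pos}$-ordering built into the $2^{+}12$ pattern agrees with the horizontal $\rightarrow$-ordering of $e(i),e(k),e(i-1)$. By contrast, property (2) is comparatively routine once the insertion rules are aligned, and property (1) is immediate.
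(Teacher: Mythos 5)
Your proposal follows the paper's proof essentially verbatim: the bijection is the composite through natural labels/insertion histories, property (1) is immediate, and property (3) rests on Proposition~\ref{prop:TTR} exactly as in the paper, which asserts the identification of the leaf-edge count $m_i$ with the proper shadow boxes added at step $i$ even more tersely than you do. The only cosmetic difference is in property (2), where the paper compares the two ribbon-addition criteria directly via the trichotomy $h_{n+1}<h_{n}$, $h_{n+1}=h_{n}$, $h_{n+1}>h_{n}$ rather than routing through the $2^{+}2$ pattern and a case analysis on insertion types, but this amounts to the same computation.
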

\begin{proof}

Both Dyck tableaux and tree-like tableaux are characterized by a natural label of 
size $n$ and bijective to the natural label.
Thus, we have a natural bijection between Dyck tableaux and tree-like tableaux 
through natural labels.
We show that the bijection satisfies the three properties.
\begin{enumerate}
\item Obvious from the insertion procedures of Dyck tableaux and tree-like tableaux.
\item Let $\mathbf{h}$ be an insertion history.
We add a ribbon in the insertion procedure in $D$ if and only if $h_{n+1}\le h_{n}$.
Similarly, we add a ribbon in $T$ if and only if $h_{n+1}<h_{n}$.
The box with the label $n+1$ is below $n$ in $T$ if and only if $h_{n+1}=h_{n}$.
\item From Proposition \ref{prop:TTR}, $m_{i}$ is equal to the number of edges $e_{k}$
such that $e_{k}$ is right to $e_{i}$ and left to $e_{i-1}$ with $k<i-1$, and 
$e_{k}$ is connected to a leaf.
When we insert $i$ in $D$, $m_{i}$ is precisely the number of proper shadow boxes
added for the insertion of $i$.
Thus, we have $M:=\sum_{i}m_{i}$ proper shadow boxes in $D$.
\end{enumerate}
\end{proof}

\section{Relations among bijections on Dyck tilings}

When $\lambda$ is a Dyck path, we denote by 
$T(\lambda)$ a natural label of the tree $\mathrm{Tree}(\lambda)$.
Recall that the operations $\mathrm{ref}$ and $\alpha$ are involutions 
introduced in Section \ref{sec:alpha}.

Let $\lambda$ be a general Dyck path (not necessarily a zigzag path) and $U$ be a 
decreasing label of the tree 
$\mathrm{Tree}(\lambda)$.
We define a label $U^{\vee}$ from $U$ by 
\begin{eqnarray}
\label{eqn:decvee}
\mathrm{label}(e):=\#\{e'| e\rightarrow e', U(e)>U(e')\}
\end{eqnarray}
where $e$ is an edge in $U^{\vee}$.

\subsection{The DTS bijection}
Let $\lambda$ be a Dyck path, $T$ be a natural label of the tree $\mathrm{Tree}(\lambda)$, 
$S:=\alpha(T)$.  
We construct a decreasing tree $U$ from $T$ by the following operation.
Let $e_{i}$ for $i\in[1,n]$ be the edges of $T$ with the label $i$.
Take an edge $e_{n_0}$ in $T$. Suppose that $e_{n_1}$ is a child of $e_{n_0}$
and $n_{1}$ is the minimum satisfying $n_{0}<n_{1}$.
We denote this relation by $e_{n_{0}}\nearrow e_{n_1}$. 
Then, we have a unique chain of edges starting from $e_{n_0}$:
\begin{eqnarray}
\label{eqn:relne}
e_{n_0}\nearrow e_{n_1}\nearrow \cdots \nearrow e_{n_p}.
\end{eqnarray}
We first choose $n_{0}:=1$ and $n_{0}<n_1<\ldots<n_{p}$ with maximal $p$.
We change the label $n_{i}$ of the edge $e_{n_i}$ to $n_{i+1}$ 
for $i\in[0,p-1]$, and $n_{p}$ to $n_{0}$. 
The integer $1$ is on the edge connected to a leaf.
We choose $n_{0}=2$ and have the maximal chain (\ref{eqn:relne}).
Then, we change the labels as above.
The integer $2$ is on the edge connected to a leaf, or on the parent 
edge of the edge with the label $1$.
We continue this procedure until we obtain a decreasing tree.
We call this procedure {\it inverse cyclic operation} and 
denote by $U$ the new decreasing tree obtained from $T$.

The cyclic operation and inverse cyclic operation satisfy the following 
property.
Given an edge $e\in T$, we denote by $T(e)$ the label of the edge $e$ in $T$. 
%%%%%%%%
\begin{prop}
\label{prop:US}
Let $T$ be a natural label, $S:=\alpha(T)$, and $U$ a decreasing tree 
obtained from $T$ by the inverse cyclic operation.
We have
\begin{eqnarray*}
U(e)+S(e)=n+1,
\end{eqnarray*}
where $e\in\mathrm{Tree}(\lambda)$.
\end{prop}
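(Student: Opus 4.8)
The plan is to prove the sharper statement that $U=\overline{S}$, where $\overline{\,\cdot\,}$ denotes the bar operation $\overline{X}(e)=n+1-X(e)$; this immediately gives $U(e)+S(e)=(n+1-S(e))+S(e)=n+1$, which is the proposition. Recall from the construction of $\alpha$ that $S=\alpha(T)$ is obtained by first applying the bar operation to $T$ and then the cyclic operation to $\overline{T}$. Writing $\mathrm{cyc}$ and $\mathrm{invcyc}$ for the cyclic and inverse cyclic operations, the identity to establish therefore reads
\[
\mathrm{invcyc}(T)=\overline{\mathrm{cyc}(\overline{T})},
\]
and since both sides are labelings of $\mathrm{Tree}(\lambda)$ it suffices to match them edge by edge.

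The key observation is that the bar operation interchanges the two relations $\searrow$ and $\nearrow$ that generate the chains. For any labeling $X$ of $\mathrm{Tree}(\lambda)$ and edges $a,b$, the relation $a\searrow b$ (that is, $b$ is the child of $a$ whose label is maximal among the children with label smaller than $X(a)$) holds in $X$ if and only if $a\nearrow b$ (that is, $b$ is the child of $a$ whose label is minimal among the children with label larger than $\overline{X}(a)$) holds in $\overline{X}$. This is immediate because $\ell\mapsto n+1-\ell$ is an order-reversing bijection on labels, so ``maximal among those smaller than $X(a)$'' turns into ``minimal among those larger than $\overline{X}(a)$''. Consequently the maximal $\searrow$-chain issuing from the edge carrying label $m$ in $X$ coincides, edge for edge, with the maximal $\nearrow$-chain issuing from the edge carrying label $n+1-m$ in $\overline{X}$.

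With this in hand I would run the two procedures in parallel and prove, by induction on the number of processed labels, that the intermediate labeling produced by $\mathrm{invcyc}$ on $T$ is always the bar image of the intermediate labeling produced by $\mathrm{cyc}$ on $\overline{T}$. The synchronization is the natural one: $\mathrm{cyc}$ processes $m=n,n-1,\dots,1$ while $\mathrm{invcyc}$ processes $m'=1,2,\dots,n$, and I pair the step for $m$ with the step for $m'=n+1-m$. Granting the invariant before a paired step, the edge currently carrying label $m$ on the cyclic side is exactly the edge currently carrying label $n+1-m$ on the inverse side, and by the previous paragraph the two chains agree as edge sets. It then remains to check that one cyclic shift is the bar image of one inverse-cyclic shift: if the chain has current labels $n_0>n_1>\cdots>n_p$ on the cyclic side, the shift replaces the label $n_i$ by $n_{i+1}$ for $0\le i\le p-1$ and replaces $n_p$ by $n_0$; under the bar map these become the labels $n+1-n_0<\cdots<n+1-n_p$, and the induced replacement is precisely the one $\mathrm{invcyc}$ prescribes for the $\nearrow$-chain carrying those labels. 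This restores the invariant, and passing to the final labelings yields $U=\overline{S}$.

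The main obstacle I anticipate is bookkeeping the dynamic nature of the construction: both the chain itself and the choice of its starting edge depend on the current, already partly shifted labels rather than on the original ones, so the chain decomposition is not fixed in advance. The care needed is to verify that the relation-interchange of the second paragraph holds for \emph{arbitrary} intermediate labelings (which are in general neither increasing nor decreasing), and that the two processing orders $m=n,\dots,1$ and $m'=1,\dots,n$ remain exactly in step so that corresponding edges are identified at every stage. Once the invariant is phrased at the level of a single paired step, each verification reduces to a short order-reversal argument, and the global identity follows by induction.
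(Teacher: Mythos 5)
Your proposal is correct and follows essentially the same route as the paper's own proof: both compare the cyclic operation on $\overline{T}$ with the inverse cyclic operation on $T$ step by step, using the fact that the order-reversing bar map turns the $\nearrow$-chains into $\searrow$-chains, so that the labels $l$ and $\overline{l}=n+1-l$ remain on the same edge throughout and hence $U=\overline{S}$. Your version merely makes the synchronization invariant (pairing step $m$ with step $n+1-m$) and the concern about intermediate labelings more explicit than the paper does.
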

%%%%%%%%
\begin{proof}
We compare the action of the cyclic operation on $\overline{T}$ with 
the one of the inverse cyclic operation on $T$.
Let $e$ be an edge of $\mathrm{Tree}(\lambda)$.
Suppose that $n_{0}<n_{1}<\cdots<n_{p}$ satisfies 
$e_{n_{0}}\nearrow e_{n_{1}}\nearrow \cdots \nearrow e_{n_{p}}$ 
in $T$ and $p$ is maximal.
By taking the bar involution on $T$, we have 
$e_{\overline{n_{0}}}\searrow e_{\overline{n_{1}}}\searrow \cdots \searrow e_{\overline{n_{p}}}$ 
in $\overline{T}$ and $p$ is maximal.
Recall that when $e_{n_{i+1}}$ is a child of $e_{n_{i}}$ in $T$, we take 
the minimum $n_{i+1}$ satisfying $n_{i}<n_{i+1}$.
This condition can be rephrased in $\overline{T}$ as taking the maximum $\overline{n_{i+1}}$
satisfying $\overline{n_{i}}>\overline{n_{i+1}}$.
We move all the labels except the top-most label upward by one edge in $T$ and $\overline{T}$, 
and we replace the label $n_{p}$ and $\overline{n_p}$ by $n_{0}$ and $\overline{n_{0}}$.
Note that the labels $n_{0}$ and $\overline{n_{0}}$ are on the same edge 
on $\mathrm{Tree}(\lambda)$.
The labels $\overline{n_{0}}$ and $n_{0}$ of this edge are not changed by successive cyclic 
and inverse cyclic operations respectively.
Further, we have $n_{0}+\overline{n_{0}}=n+1$.
By applying the cyclic and inverse cyclic operations on $\overline{T}$ and $T$
respectively, we obtain $S$ and $U$.
The above argument implies that the label $l$ and $\overline{l}$ are always 
on the same edge of $\mathrm{Tree}(\lambda)$ in $T$ and $\overline{T}$.
This means that $U(e)+S(e)=n+1$.
\end{proof}

\begin{prop}
\label{prop:ico}
Let $T, \overline{T}, U$ and $S$ be a label defined as above. 
We have 
\begin{eqnarray}
\label{eqn:TUS}
\begin{CD}
T                  @>\mathrm{bar}>>  \overline{T}  \\
@V\mathrm{ico}VV                      @VV\mathrm{co}V \\
U                  @>\mathrm{bar}>>   S  
\end{CD}
\end{eqnarray}
where $\mathrm{co}$ (resp. $\mathrm{ico}$) stands for 
the cyclic operation (resp. inverse cyclic operation).
\end{prop}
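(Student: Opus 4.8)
The plan is to read the commutativity of the square directly off Proposition~\ref{prop:US}, since the genuine combinatorial content has already been isolated there. First I would observe that the top and right arrows are essentially tautological: by definition $\overline{T}=\mathrm{bar}(T)$, and the cyclic operation $\mathrm{co}$ applied to $\overline{T}$ is exactly how $S:=\alpha(T)$ was defined, so the composite $\mathrm{co}\circ\mathrm{bar}$ carries $T$ to $S$ by construction. Consequently the only assertion that requires verification is that the left-then-bottom route lands on the same label, i.e. that $\mathrm{bar}(U)=S$, where $U=\mathrm{ico}(T)$ is the decreasing tree produced by the inverse cyclic operation.

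Next I would unwind the bar operation on $U$ edge by edge. For every edge $e\in\mathrm{Tree}(\lambda)$ the definition of the bar operation gives $\mathrm{bar}(U)(e)=n+1-U(e)$. By Proposition~\ref{prop:US} we have $U(e)+S(e)=n+1$, so that $\mathrm{bar}(U)(e)=n+1-U(e)=S(e)$. Since this identity holds on every edge of $\mathrm{Tree}(\lambda)$, we get $\mathrm{bar}(U)=S$ as labeled trees, which is precisely the commutativity of the diagram in Eqn.~(\ref{eqn:TUS}).

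The step I expect to carry the weight is not in this argument at all but already inside Proposition~\ref{prop:US}, whose proof matches the maximal chains $e_{n_0}\nearrow\cdots\nearrow e_{n_p}$ of $T$ with the maximal chains $e_{\overline{n_0}}\searrow\cdots\searrow e_{\overline{n_p}}$ of $\overline{T}$, and checks that a label $\ell$ and its bar $\overline{\ell}$ remain attached to the same edge of $\mathrm{Tree}(\lambda)$ under the simultaneous application of $\mathrm{ico}$ and $\mathrm{co}$. Once that bookkeeping is granted, the present proposition is a purely formal consequence of the additive relation $U(e)+S(e)=n+1$ together with the definition $\mathrm{bar}(\,\cdot\,)=n+1-(\,\cdot\,)$, and there is no further obstacle to overcome.
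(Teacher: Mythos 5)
Your proposal is correct and follows essentially the same route as the paper: the top and left arrows are definitional, and the commutativity reduces to $\overline{U}=S$, which is exactly the edgewise consequence $S(e)=n+1-U(e)$ of Proposition~\ref{prop:US}. You also correctly locate the real combinatorial work in the chain-matching argument of Proposition~\ref{prop:US} rather than in this formal deduction.
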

\begin{proof}
From Proposition \ref{prop:US}, we have $U(e)+S(e)=n+1$.
This implies that $S(e)=\overline{U(e)}$, that is, 
$\overline{U}=S$.
Then, we obtain the diagram (\ref{eqn:TUS}).
\end{proof}

\begin{cor}
We have 
\begin{eqnarray*}
T=S \Leftrightarrow \overline{T}=U.
\end{eqnarray*}
\end{cor}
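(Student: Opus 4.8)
The plan is to derive the corollary as an immediate formal consequence of Proposition~\ref{prop:US} (equivalently Proposition~\ref{prop:ico}), so almost no new work should be required. The key identity to invoke is $U(e)+S(e)=n+1$ for every edge $e\in\mathrm{Tree}(\lambda)$, which says precisely that $S=\overline{U}$, i.e.\ $S$ is the bar-image of $U$. I would state this at the outset as the single fact carrying the whole argument.

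Next I would record the only structural property of the bar operation that is needed: it is an involution, so $\overline{\overline{L}}=L$ for any label $L$, and in particular it is injective. From $S=\overline{U}$ and involutivity we immediately get $U=\overline{S}$. At this point the equivalence is a one-line rewriting: $\overline{T}=U$ holds iff $\overline{T}=\overline{S}$ (substituting $U=\overline{S}$), and since bar is injective this is equivalent to $T=S$. Running the chain of biconditionals in the forward order $\overline{T}=U \Leftrightarrow \overline{T}=\overline{S}\Leftrightarrow T=S$ gives both directions of the claimed equivalence simultaneously, so there is no need to argue $\Rightarrow$ and $\Leftarrow$ separately.

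Concretely I would write: ``By Proposition~\ref{prop:US} we have $S(e)=n+1-U(e)$ for all $e$, hence $S=\overline{U}$; since the bar operation is an involution this is the same as $U=\overline{S}$. Applying the (injective) bar operation, $T=S$ if and only if $\overline{T}=\overline{S}=U$, which is the desired equivalence.'' I do not anticipate a genuine obstacle here; the only thing to be careful about is not to conflate the two maps feeding into $S$—namely $S=\alpha(T)$ versus $S=\overline{U}$—and to make explicit that the commuting square~(\ref{eqn:TUS}) of Proposition~\ref{prop:ico} is exactly what licenses the substitution $S=\overline{U}$. Everything else is a mechanical use of $\overline{\overline{L}}=L$.
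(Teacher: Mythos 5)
Your proposal is correct and follows essentially the same route as the paper: both rest on the identity $S=\overline{U}$ from Proposition~\ref{prop:US}/\ref{prop:ico} together with the involutivity of the bar operation. Your write-up is simply a more explicit spelling-out of the paper's two-line argument.
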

\begin{proof}
From Proposition \ref{prop:ico}, if $T=S$, we have $\overline{T}=U$ 
since $\overline{\overline{T}}=T$.
Reversely, when $\overline{T}=U$, we have $T=S$.
\end{proof}

Recall that we have a description of DTS and DTR bijections in terms 
of insertion algorithm introduced in Section \ref{sec:DTSDTR}.
Similarly, the construction of a cover-inclusive Dyck tiling via
an Hermite history can be realized by an insertion algorithm.

Recall that an edge in $U^{\vee}$ corresponds to a pair of an up step
and a down step in $\mathrm{Tree}(\lambda)$.
To obtain a Dyck tiling over $\lambda$, we put Dyck tiles above $\lambda$
such that the statistics $\mathrm{art}$ for the trajectory of Dyck tiles 
starting from a down step is $\mathrm{label}(e)$ with $e\in U^{\vee}$. 
This defines an Hermite history and denote by $\mathrm{Hh}_{2}(U^{\vee})$
the Dyck tiling obtained by this Hermite history.
%%%
\begin{theorem}
\label{thrm:HhrlDTS}
Give a Dyck path $\lambda$, we have
\begin{eqnarray*}
\mathrm{Hh}_{2}(U^{\vee})=\mathrm{rlDTS}(U).
\end{eqnarray*}
\end{theorem}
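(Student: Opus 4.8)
The plan is to reduce the statement to Theorem~\ref{thrm:HhDTS} by conjugating with the reflection involution $\mathrm{ref}$ and the bar operation, so that the ``reverse order, left'' features of $\mathrm{rlDTS}$ and the ``down step'' features of $\mathrm{Hh}_{2}$ are all transported to the ``forward, right, up step'' setting in which Theorem~\ref{thrm:HhDTS} is phrased. First I would record two structural identities for the DTS variants. Since reflecting the Cartesian picture exchanges the two directions of strip growth while preserving which edge is deleted at each recursive step, one has $\mathrm{rlDTS}(U)=\mathrm{ref}\bigl(\mathrm{rDTS}(\mathrm{ref}(U))\bigr)$, where $\mathrm{ref}(U)$ is again a decreasing label (the root stays the root and leaves stay leaves). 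Next, comparing the recursive definitions of $\mathrm{rDTS}$ and $\mathrm{DTS}$ edge by edge, I would show $\mathrm{rDTS}(V)=\mathrm{DTS}(\overline{V})$: the bar operation turns the decreasing label $V$ into an increasing one, it sends the minimal (leaf) label of $V$ to the maximal label of $\overline{V}$ so that the same edge is removed at each step, and it reverses every comparison $V(e')>V(e)\Leftrightarrow\overline{V}(e')<\overline{V}(e)$ so that the insertion positions $h_{i}$ coincide. Combining the two gives $\mathrm{rlDTS}(U)=\mathrm{ref}\bigl(\mathrm{DTS}(\overline{\mathrm{ref}(U)})\bigr)$.

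Second, on the Hermite-history side I would observe that $\mathrm{Hh}_{2}$ records the $\mathrm{art}$ weights of trajectories emanating from the \emph{down} steps of $\lambda$, whereas the Hermite history underlying Theorem~\ref{thrm:HhDTS} reads trajectories from the \emph{up} steps; since $\mathrm{ref}$ exchanges up and down steps of $\lambda$ with those of $\overline{\lambda}$, the tiling $\mathrm{Hh}_{2}(U^{\vee})$ is the reflection of an up-step Hermite-history tiling over $\overline{\lambda}$. The remaining bookkeeping is the identification of the weighted tree feeding that up-step history. Here the key observation is that the labelling $U^{\vee}$ defined in Eqn.~(\ref{eqn:decvee}) is precisely the weakly decreasing tree $T_{3}$ built from $U=\overline{T_{1}}$ in the proof of the $N_{\max}$ proposition (the formula there is literally Eqn.~(\ref{eqn:decvee})), so that $U^{\vee}(e)+T_{2}(e)=\mathrm{cap}(e)$ for the increasing label $T_{1}:=\overline{U}$. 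Transporting this identity through $\mathrm{ref}$ should turn it into the statement that the reflected $U^{\vee}$ governs the number of non-trivial Dyck tiles over each $U$--$D$ pair in the region associated with the increasing label $\overline{\mathrm{ref}(U)}$.

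With these two reductions in hand, applying Theorem~\ref{thrm:HhDTS} to the increasing label $T':=\overline{\mathrm{ref}(U)}$ on $\mathrm{Tree}(\overline{\lambda})$ yields that the up-step Hermite-history tiling built from $T'$ equals $\mathrm{DTS}(T')$; reflecting both sides and using the two identities of the previous paragraphs then gives $\mathrm{Hh}_{2}(U^{\vee})=\mathrm{rlDTS}(U)$. Throughout I would lean on Proposition~\ref{prop:US} and Proposition~\ref{prop:ico}, which already package the interaction of the bar, cyclic, and inverse cyclic operations and guarantee that $\overline{U}$, $S=\alpha(T')$, and the various labellings sit on the same edges of the tree.

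The main obstacle, and the step I would spend the most care on, is the convention matching in the middle paragraph: reflecting $U^{\vee}$ turns its defining count of edges \emph{to the right with smaller $U$-value} into a count of edges \emph{to the left with larger label} relative to $T'$, which is neither the $T_{2}$ nor the $T_{3}$ pattern of Theorem~\ref{thrm:HhDTS} verbatim. I therefore expect that the clean reduction actually requires the \emph{left} (region $R'$, $\alpha$) incarnation of the up-step Hermite history rather than the region $R$ one, and that reconciling these --- equivalently, checking that the reflected trajectories still connect the correct up steps to the correct anchor boxes so that the derived sequence $\mathbf{b}$ matches $U^{\vee}$ --- is where the real work lies. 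If this bookkeeping proves unwieldy, the fallback is a direct induction on the number of edges mirroring the proof of Theorem~\ref{thrm:HhDTS}: define a truncation of the region tiling that inverts one step of the \emph{left} strip growth of $\mathrm{rlDTS}$, and verify that deleting the edge of smallest label in the decreasing tree $U$ corresponds to this truncation, exactly as the deletion of the largest label did for $\mathrm{DTS}$.
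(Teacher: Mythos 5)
Your overall strategy --- conjugating by $\mathrm{ref}$ and the bar operation so as to replace $\mathrm{rlDTS}$ by $\mathrm{DTS}$ and the down-step Hermite history by the up-step one, and then invoking Theorem~\ref{thrm:HhDTS} --- is genuinely different from the paper's proof. The paper argues directly on $\mathrm{rlDTS}$: since insertion is performed in decreasing order of the $U$-labels and the strip is grown to the \emph{left} of the insertion point, each edge $e'$ with $e\rightarrow e'$ and $U(e')<U(e)$ is inserted after $e$ at a position to its right, and its insertion increases the $\mathrm{art}$ of the trajectory attached to the down step of $e$ by exactly one (either a new single box from the strip growth or a unit elongation of a tile under the spread); hence that trajectory's $\mathrm{art}$ weight is exactly $U^{\vee}(e)$, which is the defining prescription of $\mathrm{Hh}_{2}(U^{\vee})$. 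Your framework identities $\mathrm{rlDTS}=\mathrm{ref}\circ\mathrm{rDTS}\circ\mathrm{ref}$ and $\mathrm{rDTS}=\mathrm{DTS}\circ\mathrm{bar}$ are both correct (they appear in the paper immediately after this theorem), as is your observation that $\mathrm{ref}$ exchanges the two Hermite-history conventions and that $U^{\vee}$ coincides with the label $T_{3}$ of the $N_{\max}$ proposition.

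The gap is exactly where you suspect it, and it is not closed by Theorem~\ref{thrm:HhDTS}. After conjugation, what you need is that the $\mathrm{art}$ weight of the up-step trajectory at an edge $e$ of $\mathrm{DTS}(T')$ equals $\#\{e'\mid e'\rightarrow e,\ T'(e')>T'(e)\}$ (edges strictly to the left of $e$ carrying a larger label), since that is what $U^{\vee}$ becomes under $\mathrm{ref}$ and $\mathrm{bar}$. But Theorem~\ref{thrm:HhDTS} identifies $\mathrm{DTS}(T')$ with the Hermite history of the sequence $\mathbf{b}$, which is defined through the region-$R$ tiling and the anchor-box labels $a_{i}$, not through any tree-level count; neither $T_{2}$ nor $T_{3}$ records the individual trajectory weights (only their \emph{sums} are tied to $\mathrm{art}$ in the $N_{\max}$ proposition), and the paper nowhere proves $b_{i}=\#\{e'\mid e'\rightarrow e_{i},\ T'(e')>T'(e_{i})\}$. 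Establishing that identity requires tracking, insertion by insertion, how each spread and strip growth increments each trajectory's $\mathrm{art}$ --- which is precisely the direct argument the paper gives for $\mathrm{rlDTS}$ itself. So the detour through $\mathrm{ref}$ and $\mathrm{bar}$ does not in fact reduce the problem to a proved statement; your fallback (a direct induction with a left-handed truncation) is the viable route, but at that point it is a mirrored reproof rather than a reduction.
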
	
\begin{proof}
The label $\mathrm{label}(e)$ with $e\in U^{\vee}$ counts the number of 
edges which are strictly right to $e$ and whose labels are smaller than $\mathrm{label}(e)$.
Since $U$ is a decreasing label and we perform the reversed-order left 
DTS bijection on $U$, $\mathrm{label}(e)$ is counts the number of added 
boxes in the addition process of the DTS. 
By an insertion process of DTS, we may enlarge the size of a Dyck tile 
by one. 
It is obvious that the statistics $\mathrm{art}$ is increased by one.
Thus, the number $\mathrm{label}(e)$ is equal to the statistics $\mathrm{art}$
on the trajectory of the Hermite history in $\mathrm{Hh}_{2}(U^{\vee})$.
This implies $\mathrm{Hh}_{2}(U^{\vee})=\mathrm{rlDTS}(U)$.
\end{proof}

\begin{cor}
Let $U$ be a decreasing tree. Then, we have 
\begin{eqnarray*}
\mathrm{rDTS}(U)=\mathrm{ref}\circ\mathrm{Hh}_{2}\circ(\mathrm{ref}(U))^{\vee}.
\end{eqnarray*}
\end{cor}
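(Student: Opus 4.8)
The plan is to deduce the corollary from Theorem~\ref{thrm:HhrlDTS} by exhibiting $\mathrm{rDTS}$ as the reflection-conjugate of $\mathrm{rlDTS}$. Concretely, I would first establish the intertwining relation
\[
\mathrm{ref}\circ\mathrm{rDTS}=\mathrm{rlDTS}\circ\mathrm{ref}
\]
on decreasing labels of $\mathrm{Tree}(\lambda)$, where $\mathrm{ref}(U)$ denotes the mirror image of $U$, a decreasing label of $\mathrm{Tree}(\overline{\lambda})$. Here I use that reflection preserves each label and only mirrors its position, so $\mathrm{ref}(U)$ is again decreasing from root to leaves and the decreasing insertion order is unchanged. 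Granting the intertwining, I apply Theorem~\ref{thrm:HhrlDTS} to the decreasing tree $\mathrm{ref}(U)$, obtaining $\mathrm{rlDTS}(\mathrm{ref}(U))=\mathrm{Hh}_{2}((\mathrm{ref}(U))^{\vee})$, and then reflect: since $\mathrm{ref}$ is an involution,
\[
\mathrm{rDTS}(U)=\mathrm{ref}\bigl(\mathrm{rlDTS}(\mathrm{ref}(U))\bigr)=\mathrm{ref}\bigl(\mathrm{Hh}_{2}((\mathrm{ref}(U))^{\vee})\bigr),
\]
which is exactly $\mathrm{ref}\circ\mathrm{Hh}_{2}\circ(\mathrm{ref}(U))^{\vee}$. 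The domains match: $(\mathrm{ref}(U))^{\vee}$ and $\mathrm{Hh}_{2}((\mathrm{ref}(U))^{\vee})$ live over $\overline{\lambda}$, and reflecting back yields a Dyck tiling over $\lambda$ of the same shape as $\mathrm{rDTS}(U)$.

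To prove the intertwining I would induct on the number $n$ of edges, using the recursive definition of the DTS variants via spread followed by strip-growth. The base case $n=1$ is immediate. The inductive step rests on two geometric facts about $\mathrm{ref}$: first, reflection commutes with the spread operation up to mirroring the cut position, i.e. $\mathrm{ref}(\mathrm{sp}_{m}(D))=\mathrm{sp}_{m'}(\mathrm{ref}(D))$ where $m'$ is the reflected abscissa of the cut; second, reflecting a vertical strip added to the \emph{right} of the insertion line produces a vertical strip added to the \emph{left} of the reflected insertion line, so reflection interchanges the strip-growth of $\mathrm{rDTS}$ with that of $\mathrm{rlDTS}$. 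Combined with the fact that deleting the appropriate edge (in the common decreasing insertion order) commutes with $\mathrm{ref}$ and that the insertion history of $\mathrm{ref}(U)$ is the positionally reflected history of $U$, reflecting the recursion for $\mathrm{rDTS}(U)$ term by term turns it into the recursion for $\mathrm{rlDTS}(\mathrm{ref}(U))$, closing the induction.

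The main obstacle is the precise bookkeeping of the insertion positions under reflection: one must check that the cut abscissa used by $\mathrm{rDTS}$ on $U$ maps, under the position-mirroring induced by $\mathrm{ref}$, to the cut abscissa used by $\mathrm{rlDTS}$ on $\mathrm{ref}(U)$ at the corresponding recursion step, and that this matching is compatible with the special-column/strip conventions of the two variants. Once this position-matching lemma is verified together with the two reflection identities above, the two recursions become mirror images of each other and the intertwining follows; the remainder of the argument is the purely formal composition displayed in the first paragraph.
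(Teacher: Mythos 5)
Your proposal is correct and follows essentially the same route as the paper: both arguments reduce the corollary to the conjugation identity $\mathrm{rDTS}=\mathrm{ref}\circ\mathrm{rlDTS}\circ\mathrm{ref}$ together with Theorem~\ref{thrm:HhrlDTS} applied to $\mathrm{ref}(U)$. The only difference is that the paper simply asserts the conjugation identity as following from the definitions, whereas you sketch an inductive verification of it; that extra detail is harmless and consistent with the paper's intent.
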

\begin{proof}
Since $\mathrm{rDTS}$ is written as $\mathrm{ref}\circ\mathrm{rlDTS}\circ\mathrm{ref}$,
we have 
\begin{eqnarray*}
\mathrm{rDTS}(U)&=&\mathrm{ref}\circ\mathrm{rlDTS}\circ\mathrm{ref}(U), \\
&=&\mathrm{ref}\circ\mathrm{Hh_{2}}\circ(\mathrm{ref}(U))^{\vee},
\end{eqnarray*}
where we have used Theorem \ref{thrm:HhrlDTS} in the second equality.
\end{proof}

It is obvious that the bijections $\mathrm{rDTS}$ and $\mathrm{lDTS}$ are written by 
the DTS bijection, $\mathrm{bar}$ and $\mathrm{ref}$ as follows.
\begin{prop}
Let $T$ and $U$ be increasing and decreasing labels respectively.
We have 
\begin{enumerate}
\item $\mathrm{rDTS}(U)=\mathrm{DTS}\circ\mathrm{bar}(U)$, 
\item $\mathrm{lDTS}(T)=\mathrm{ref}\circ\mathrm{DTS}\circ\mathrm{ref}(T)$.
\end{enumerate}
\end{prop}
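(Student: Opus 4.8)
The plan is to prove both identities by induction on the size $n$, using the recursive description of the DTS bijection via insertion histories from Section \ref{sec:DTSDTR}. Recall that the tiling produced by (any variant of) the DTS bijection has the form $\mathrm{SG}(\mathrm{sp}_{h}(\mathrm{DTS}(T')))$, where $h$ is the insertion position attached to the last-inserted edge; consequently such a tiling is completely determined by the sequence of insertion positions together with the side (right or left) on which each vertical strip is grown. Thus in each case it suffices to exhibit a single sequence of (position, growth-side) operations that is generated by both procedures, and the statement then follows formally from the recursion.

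For (1), first observe that the bar operation $U\mapsto\mathrm{bar}(U)$, $e\mapsto n+1-U(e)$, carries a decreasing label to an increasing one and reverses the total order of the labels: the edge carrying the largest $U$-label is exactly the edge carrying the smallest $\mathrm{bar}(U)$-label. Since $\mathrm{rDTS}$ inserts the edges of $\mathrm{Tree}(\lambda)$ in decreasing order of their $U$-labels while $\mathrm{DTS}$ inserts them in increasing order of their $\mathrm{bar}(U)$-labels, the two procedures insert the edges in the \emph{same} order. It remains to match the insertion positions. When the edge $e$ with $U(e)=j$ is inserted, the edges already present are precisely those $e'$ with $U(e')>j$; applying the bar operation, $U(e')>j\Leftrightarrow\mathrm{bar}(U)(e')<n+1-j$, so these are exactly the edges already present in the $\mathrm{DTS}$ run at the corresponding step. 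As the insertion position is $2\cdot\#\{e'\mid e'\leftarrow e,\ e'\text{ already inserted}\}+\#\{e'\mid e\uparrow e'\}$, where the $\uparrow$-term lists edges lying between $e$ and the root (hence always already inserted, whether labels increase or decrease toward the root), the position depends only on the already-inserted set and on the tree. The two insertion histories therefore coincide, and since both procedures grow their strips to the right of the insertion point, the induction gives $\mathrm{rDTS}(U)=\mathrm{DTS}\circ\mathrm{bar}(U)$.

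For (2), I would exploit the reflection symmetry. The map $\mathrm{ref}$ sends $\mathrm{Tree}(\lambda)$ to $\mathrm{Tree}(\overline{\lambda})$ by mirroring, preserving labels and the ancestor relation while interchanging the relations $\leftarrow$ and $\rightarrow$; on Dyck tilings it is the reflection along a vertical line. Running $\mathrm{DTS}$ on $\mathrm{ref}(T)$ inserts the edges in the same increasing label order, but each insertion position is now computed from the edges lying to the \emph{right} of $e(i)$ in $T$ (because left and right have been swapped), and the strip is grown to the right in the mirrored picture; after applying $\mathrm{ref}$ a second time this becomes a strip grown to the \emph{left} of the corresponding insertion point in the original picture. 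This is precisely the rule defining $\mathrm{lDTS}$. Verifying that $\mathrm{ref}$ intertwines $\mathrm{sp}_{m}$ with $\mathrm{sp}_{m'}$ for the mirrored position $m'$ and conjugates right-strip-growth into left-strip-growth, an induction on $n$ yields $\mathrm{lDTS}(T)=\mathrm{ref}\circ\mathrm{DTS}\circ\mathrm{ref}(T)$.

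I expect the main obstacle to be the bookkeeping at the level of insertion positions rather than any conceptual difficulty: in (1) one must confirm that $U(e')>j$ matches $\mathrm{bar}(U)(e')<n+1-j$ and that the $\uparrow$-term is insensitive to the direction in which labels grow, and in (2) one must track how the reflected position $m'$ relates to $m$ and check that the side of strip-growth is genuinely swapped under conjugation by $\mathrm{ref}$. Once these compatibilities are in place, both identities are immediate consequences of the recursion—which is exactly why they may be regarded as straightforward.
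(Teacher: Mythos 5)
The paper states this proposition with no proof at all (it is prefaced by ``It is obvious that\dots''), so there is nothing to compare against except the intended unwinding of the definitions, and your argument is a correct and complete version of exactly that: (1) the bar operation reverses the label order, so $\mathrm{rDTS}$ on $U$ and $\mathrm{DTS}$ on $\mathrm{bar}(U)$ insert the edges in the same order with the same already-inserted sets, hence the same insertion history and the same right-grown strips; (2) conjugation by $\mathrm{ref}$ swaps the roles of left and right both in the position count and in the strip growth. The one piece of bookkeeping you defer in (2) does check out: since every already-inserted edge is either strictly left of, strictly right of, or an ancestor of $e(i)$, the left-counted position $h_i$ and the right-counted position $h'_i$ of the reflected label satisfy $h_i+h'_i=2(i-1)$, so reflecting the spread point at $x=h'_i$ in a path of length $2(i-1)$ lands exactly at $x=h_i$, and the right-grown strip becomes a left-grown strip at the correct place.
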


Theorem \ref{thrm:HhrlDTS} can be written in terms of the standard DTS bijection
and the inivolution $\alpha$ as follows.
\begin{prop}
Let $T$ be a natural label, $U$ be a decreasing tree obtained from $T$ as above, 
and $S=\alpha(T)$.
We define Dyck tilings $D_{1}$ and $D_{2}$ by
\begin{gather}
T\xrightarrow{\alpha}\xrightarrow{\mathrm{ref}}\xrightarrow{\mathrm{DTS}}D_{1}, \\
U^{\vee}\xrightarrow{\mathrm{Hh}_{2}}\xrightarrow{\mathrm{ref}}D_{2}.
\end{gather}
Then, we have $D_{1}=D_{2}$.
\end{prop}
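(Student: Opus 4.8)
The plan is to rewrite both $D_1$ and $D_2$ as the single expression $\mathrm{DTS}(\mathrm{ref}(S))$ and then read off the equality. Unfolding the arrow notation, the definitions give $D_1 = \mathrm{DTS}(\mathrm{ref}(\alpha(T)))$ and $D_2 = \mathrm{ref}(\mathrm{Hh}_2(U^{\vee}))$. Since $S = \alpha(T)$ by hypothesis, the left-hand object is immediately $D_1 = \mathrm{DTS}(\mathrm{ref}(S))$, so the entire content of the proof is to massage $D_2$ into the same form using the already-established relations among the four variants of the DTS bijection, the Hermite-history construction $\mathrm{Hh}_2$, and the operations $\mathrm{bar}$, $\mathrm{ref}$ and the (inverse) cyclic operation.

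First I would invoke Theorem \ref{thrm:HhrlDTS}, which gives $\mathrm{Hh}_2(U^{\vee}) = \mathrm{rlDTS}(U)$, so that $D_2 = \mathrm{ref}(\mathrm{rlDTS}(U))$. Next, using the identity $\mathrm{rDTS} = \mathrm{ref}\circ\mathrm{rlDTS}\circ\mathrm{ref}$ recorded in the paragraph preceding Theorem \ref{thrm:HhrlDTS} (equivalently $\mathrm{rlDTS} = \mathrm{ref}\circ\mathrm{rDTS}\circ\mathrm{ref}$) together with the fact that $\mathrm{ref}$ is an involution, the outer reflection cancels and I obtain $D_2 = \mathrm{rDTS}(\mathrm{ref}(U))$. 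Applying the first part of the Proposition on $\mathrm{rDTS}$ and $\mathrm{lDTS}$, namely $\mathrm{rDTS}(V) = \mathrm{DTS}(\mathrm{bar}(V))$, then yields $D_2 = \mathrm{DTS}(\mathrm{bar}(\mathrm{ref}(U)))$.

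It remains to simplify $\mathrm{bar}(\mathrm{ref}(U))$. Because $\mathrm{bar}$ acts only on the values of the labels while $\mathrm{ref}$ only permutes the edges, these two operations commute, so $\mathrm{bar}(\mathrm{ref}(U)) = \mathrm{ref}(\mathrm{bar}(U))$. Finally, the commutative diagram (\ref{eqn:TUS}) of Proposition \ref{prop:ico} states precisely that $\mathrm{bar}(U) = \overline{U} = S$, whence $D_2 = \mathrm{DTS}(\mathrm{ref}(S)) = D_1$, as desired.

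Since the argument is essentially a bookkeeping chain built from earlier results, there is no deep obstacle; the only point requiring care is keeping track of which reflections cancel and checking that every intermediate object lives over the correct boundary path, so that the compositions are legitimate. Concretely, $U$ and $S$ sit on $\mathrm{Tree}(\lambda)$, while $\mathrm{ref}(U)$, $\mathrm{ref}(S)$ and both tilings $D_1, D_2$ sit over $\overline{\lambda}$, and $\mathrm{bar}(\mathrm{ref}(U))$ is the increasing label on $\mathrm{Tree}(\overline{\lambda})$ to which $\mathrm{DTS}$ legitimately applies. I would therefore double-check the commutativity of $\mathrm{bar}$ and $\mathrm{ref}$ and the exact orientation convention in Theorem \ref{thrm:HhrlDTS}, since a single misplaced reflection would break the cancellation.
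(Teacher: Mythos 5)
Your proposal is correct and follows essentially the same route as the paper's proof: both reduce $D_{2}$ to $\mathrm{DTS}\circ\mathrm{ref}(\overline{U})$ via Theorem \ref{thrm:HhrlDTS} and then identify $\overline{U}=S=\alpha(T)$ using Proposition \ref{prop:ico}. The only difference is that the paper asserts the identity $\mathrm{rlDTS}(U)=\mathrm{ref}\circ\mathrm{DTS}\circ\mathrm{ref}(\overline{U})$ in one step, whereas you derive it from $\mathrm{rlDTS}=\mathrm{ref}\circ\mathrm{rDTS}\circ\mathrm{ref}$, $\mathrm{rDTS}=\mathrm{DTS}\circ\mathrm{bar}$, and the commutativity of $\mathrm{bar}$ with $\mathrm{ref}$, which is a slightly more explicit justification of the same step.
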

\begin{proof}
Given a decreasing label $U$, the action of reverse-order left DTS bijection 
is equal to the actions of the bar operation, the reflection, the DTS bijection and 
the reflection:
\begin{eqnarray*}
\mathrm{rlDTS}(U)=\mathrm{ref}\circ\mathrm{DTS}\circ\mathrm{ref}(\overline{U}).
\end{eqnarray*}
Note that, in the right hand side of the above equation, the firs reflection acts 
on the label and the third reflection acts as the reflection of a Dyck tiling.
From Proposition \ref{prop:ico}, we have $\overline{U}=S=\alpha(T)$.
From Theorem \ref{thrm:HhrlDTS}, 
We have 
\begin{eqnarray*}
\mathrm{ref}\circ\mathrm{Hh}_{2}(U^{\vee})&=&\mathrm{ref}\circ\mathrm{rlDTS}(U), \\
&=&\mathrm{DTS}\circ\mathrm{ref}(\overline{U}), \\
&=&\mathrm{DTS}\circ\mathrm{ref}\circ\alpha(T),
\end{eqnarray*}
which implies $D_{1}=D_{2}$.
\end{proof}

%%%%%%%%%%

The above theorems propositions are summarized as the following diagrams.
%%%%%
\begin{center}
\begin{tikzcd}[row sep=huge,column sep=huge]
T \arrow[r,"\mathrm{bar}"] \arrow[d,"\mathrm{ico}"] & \overline{T} \arrow[d,"\mathrm{co}"] & \\
U \arrow[r,"\mathrm{bar}"] \arrow[d,"\mathrm{ref}"] \arrow[rr,bend right,"\mathrm{rDTS}"] 
& S \arrow[r,"\mathrm{DTS}"] & D_{1} \\
U' \arrow[r,"\vee"] & {U^{'}}^{\vee} \arrow[r,"\mathrm{Hh}_{2}"] & D_{2} \arrow[u,"\mathrm{ref}"]
\end{tikzcd}
\qquad
\begin{tikzcd}
T \arrow[r,"\alpha"]\arrow[d,"\mathrm{ico}"] & S \arrow[r,"\mathrm{ref}"] 
& \arrow[r,"\mathrm{DTS}"] & D_{1} \\
U \arrow[r,"\vee"] \arrow[rr,"\mathrm{rlDTS}",bend right=50] & U^{\vee} \arrow[r,"\mathrm{Hh}_{2}"] 
& D_{2} \arrow[ru,bend right,"\mathrm{ref}"] 
\end{tikzcd}
\end{center}

\subsection{Involutions on Dyck tableaux and the DTR bijection}
In this subsection, we introduce two operations $\ast$-operation
and $\times$-operation on a natural label as follows.
The $\ast$-operation maps a natural label to a decreasing tree.
On the other hand, $\times$-operation maps a natural label 
to another natural label.
These two operations are dual to each other with respect to the bar operation.
In other words, we have $\times=\mathrm{bar}\circ\ast$ and $\ast=\mathrm{bar}\circ\times$: 
\begin{center}
\begin{tikzcd} 
T \arrow[rr,"\ast"] \arrow[rd,"\times"] & & T^{\ast} \arrow[ld,"\mathrm{bar}"] \\
& T^{\times}\arrow[ru] &  
\end{tikzcd} 
\end{center}

Let $T$ be a natural label and $f_{i}$ be the edge with the label $i$ in $T$.
A {\it decreasing sequence} in $T$ is a set of labels $\mathrm{Dec}(i,j):=[i,j]$ satisfying 
\begin{eqnarray*}
f_{j}\leftarrow f_{j-1}\leftarrow \cdots \leftarrow f_{i+1}\leftarrow f_{i},
\end{eqnarray*}
where $i\le j$ and $j-i$ is maximal.
A decreasing sequence is maximal if 
$f_{j+1}\uparrow f_{j}$ or $f_{j}\rightarrow f_{j+1}$, and 
$f_{i}\uparrow f_{i-1}$ or $f_{i-1}\rightarrow f_{i}$.
A given maximal decreasing sequence $\mathrm{Dec}(i,j)$, 
we define a decreasing sequence $\mathrm{Dec}(p,r)$ which is right to the maximal 
decreasing sequence $\mathrm{Dec}(i,j)$ as follows.
Suppose $q\in\mathrm{Dec}(p,r)$.
We say that $\mathrm{Dec}(p,r)$ is right to $\mathrm{Dec}(i,j)$
if and only if $f_{i}\rightarrow f_{q}$ for all $q\in[p,r]$.
We denote $\mathrm{Dec}(i,j)\rightarrow \mathrm{Dec}(p,r)$ 
when $\mathrm{Dec}(p,r)$ is right to $\mathrm{Dec}(i,j)$. 

We consider a chain of decreasing sequences
\begin{eqnarray}
\label{eqn:chain}
\mathrm{Dec}(i_1,j_1)\rightarrow \mathrm{Dec}(i_2,j_2)\rightarrow
\cdots\rightarrow\mathrm{Dec}(i_{s-1},j_{s-1})\rightarrow\mathrm{Dec}(i_s,j_s)
\end{eqnarray}
with $i_{d+1}=j_{d}+1$ for all $d\in[1,s-1]$.
The chain is maximal if $\mathrm{Dec}(i_{0},j_{0})\not\rightarrow\mathrm{Dec}(i_1,j_1)$ 
and $\mathrm{Dec}(i_{s},j_{s})\not\rightarrow\mathrm{Dec}(i_{s+1},j_{s+1})$
with $j_{0}=i_{1}-1$ and $i_{s+1}=j_{s}+1$.
Given the maximal chain with $i_1=1$, we call $\mathrm{Dec}(i_s,j_s)$ the 
{\it special decreasing sequence}.
By definition, we have a unique special decreasing sequence for a natural 
label.

The set $\mathrm{RDec}(i,j)$ is defined as the union of maximal 
decreasing sequences $\mathrm{Dec}(p,q)$ such that 
$\mathrm{Dec}(i,j)\rightarrow \mathrm{Dec}(p,q)$.
Similarly, we define the set $\mathrm{LDec}(i,j)$ as the union of
maximal decreasing sequences $\mathrm{Dec}(p,q)$ such that 
$\mathrm{Dec}(p,q)\rightarrow\mathrm{Dec}(i,j)$.

Let $\mathrm{Dec}(i_s,j_s)$ be the special decreasing sequence.
Then, $\mathrm{Dec}(i_s,j_s)$ is the right-most sequence of the chain
starting from $\mathrm{Dec}(1,j_1)$, but note that $\mathrm{RDec}(i_s,j_s)$ 
may not be empty.

We define the set $\mathrm{ChildDec}(i,j)$ as follows.
Let $k\in[i,j]$ and $q\in[p,r]$.
The set $\mathrm{Dec}(p,r)\in\mathrm{ChildDec}(i,j)$ 
if $e_{q}\uparrow e_{k}$ for some $k$ and $q$.
Note that we have 
$\mathrm{RDec}(i,j)\cap\mathrm{ChildDec}(i,j)=\mathrm{LDec}(i,j)\cap\mathrm{ChildDec}(i,j)=\emptyset$.

We define the standardization of partial tree in $T$. 
Take a set of labeled edges of $T$ and let $n$ be the number of edges.
By standardization, we replace the labels by $1,2,\ldots, n$ according 
to their order.
By de-standardization with respect to the set $S$ with $|S|=n$, 
we replace the labels of a natural label by the elements of $S$ 
according to their order.

Let $\lambda$ be a Dyck path and $T$ be a natural label of the tree $\mathrm{Tree}(\lambda)$.
The tree $\mathrm{Tree}(\lambda)$ is decomposed into a concatenation of trees at their roots.
We write $\mathrm{Tree}(\lambda):=S_{1}\circ\cdots\circ S_{r}$ where a tree $S_{i}$ for 
$1\le i\le r$ is a tree. Here, a tree $S_{i}$ can not be decomposed into a concatenation 
of trees of smaller size.
This means that a tree $S_{i}$ has exactly one edge connected to its root.

Since the natural label $T$ has also a tree structure, we will decompose $T$ into 
a concatenation of natural labels $T_{i}$ for $1\le i\le p$ with the following condition.
Let $|T_{i}|$ be the number of edges in $T_{i}$ and $\max(T_{i})$ (resp. $\min(T_{i})$)
be the largest (resp. smallest) label in $T_{i}$.
As a tree structure, a tree $T_{i}$ is given by a concatenation of trees 
$S_{a}\circ S_{a+1}\circ\cdots\circ S_{b}$ for some $a$ and $b$.
We consider the following conditions:
\begin{eqnarray*}
\min(T_{i})&=&\sum_{k=1}^{i-1}|T_{k}|+1, \\
\max(T_{i})&=&\sum_{k=1}^{i}|T_{k}|.
\end{eqnarray*}
We say that $T_{i}$ is a {\it minimal natural label} if $T_{i}$ satisfies the 
above conditions and $b-a$ is minimal.
When all $T_{i}$'s are minimal natural labels, 
we denote  
\begin{eqnarray*}
T:=T_{1}\circ T_{2}\circ\cdots\circ T_{p}. 
\end{eqnarray*}

We define the action of $\ast$-operation on a natural label $T$ by
\begin{eqnarray*}
T^{\ast}:=T_{1}^{\ast}\circ T_{2}^{\ast}\circ\cdots\circ T_{p}^{\ast}.
\end{eqnarray*}
Here, the action of $\ast$-operation on $T_{i}^{\ast}$ is given by 
the following two steps.
\begin{enumerate}
\item Standardize the $T_{i}$ and act the $\ast$-operation on the 
standardized natural label. Let $T_{i}'$ be the new label after the 
action of the $\ast$-operation.
\item De-standardize $T_{i}'$ with respect to $[\min(T_{i}),\max(T_{i})]$.
\end{enumerate}
%Here, standardization (resp. de-standardization) of a minimal natural label $T$ is 
%to change the labels of $T$ from $[\min(T),\max(T)]$ (resp. $[1,n]$) to 
%$[1,n]$ (resp. $[\min(T),\max(T)]$) by keeping their order. 
Note that standaredization and de-standardization of $T$ are well-defined since 
the number of edges of $T$ is equal to $\max(T)-\min(T)+1$.

We define the action of $\times$-operation on $T$ in the similar way:
\begin{eqnarray*}
T^{\times}&:=&T_{1}^{\times}\circ T_{2}^{\times}\circ\cdots\circ T_{p}^{\times}, \\
&=&\overline{T_{1}^{\ast}}\circ\overline{T_{2}^{\ast}}\circ\cdots\circ \overline{T_{p}^{\ast}}.
\end{eqnarray*}

Below, we introduce the actions of the $\ast$-operation and the $\times$-operation
on a standardized minimal natural label.

\paragraph{\bf Algorithm for the $\ast$-operation on a standardized minimal natural label}
We recursively define the algorithm for the $\ast$-operation on a minimal natural label.
Let $n$ be the number of edges of the minimal natural label.
\begin{enumerate}
\item Find the special decreasing sequence $\mathrm{Dec}(p,q)$ 
and the set $\mathrm{RDec}(p,q)$. 
We define 
$M=|\mathrm{RDec}(p,q)|$.

\item We replace the label of $f_{r}$ by $n-M-q+r$ for all $r\in[p,q]$.

\item Let $T_{r}$ be the partial tree consisting of edges in $\mathrm{RDec}(p,q)$, 
and $T_{l}$ be the partial tree consisting of edges which are neither in 
$\mathrm{Dec}(p,q)$ nor $\mathrm{RDec}(p,q)$.
\begin{enumerate}
\item We standardize the partial tree $T_{l}$.
We apply the $\ast$-operation to this standard partial tree $T_{l}$.
We de-standardize $T_{l}$ with respect to $[1,n-M-q+p-1]$.

\item We standardize the partial tree $T_{r}$.
we apply the $\ast$-operation to this partial tree.
Then, we de-standardize $T_{r}$ with respect to $[n-M+1,n]$. 
\end{enumerate}
\end{enumerate}

\begin{remark}
In (3) of the algorithm for the $\ast$-operation, we consider the set which are neither 
$\mathrm{Dec}(p,q)$ nor $\mathrm{RDec}(p,q)$.
Note that this set is not $\mathrm{LDec}(p,q)\cup\mathrm{ChildDec}(p,q)$ in general.
\end{remark}

\paragraph{\bf Algorithm for the $\times$-operation on a standardized minimal natural label}
By definition, $\times$-operation is written as a composition of 
the $\ast$-operation and the bar operation. 
Thus, the algorithm for the $\times$-operation is mostly similar 
to the one for the $\ast$-operation.
The first step is the same as the $\ast$-operation.
We replace the second and third steps in the $\ast$-operation by the following:
\begin{enumerate}
\item[(2')] replace $n-M-q+r$ by $M+q-r+1$,
\item[(3a')] replace $[1,n-M-q+p-1]$ by $[M+q-p+2,n]$,
\item[(3b')] replace $[n-M+1,n]$ by $[1,M]$.
\end{enumerate}

An example of the actions of these operations is shown in Figure \ref{fig:asttimes}.

\begin{figure}[ht]
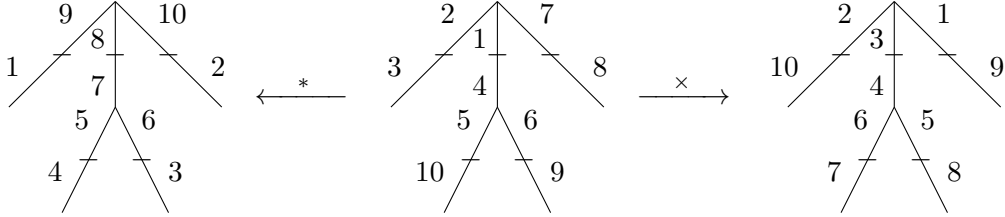

\tikzpic{-0.5}{[level distance=0.7cm, sibling distance=0.7cm]
\coordinate
	child{coordinate (c2)
        child{coordinate(c3)}
	child[missing]
	child[missing]
        }
	child{coordinate(c1)
	child{coordinate(c4)
	        child{coordinate(c5)
		child{coordinate(c10)}
		child[missing]
		}
		child{coordinate(c6)
		child[missing]
		child{coordinate(c9)}
		}
	}
	}
	child{coordinate(c7)
	child[missing]
	child[missing]
	child{coordinate(c8)}
	};
\draw(c2)node{$-$}(c1)node{$-$}(c5)node{$-$}(c6)node{$-$}(c7)node{$-$};
\node[anchor=south east] at ($(0,0)!.6!(c2)$){9};
\node[anchor=south east] at ($(c2)!.6!(c3)$){1};
\node[anchor=east] at ($(0,0)!.7!(c1)$){8};
\node[anchor=east] at ($(c1)!.6!(c4)$){7};
\node[anchor=south east] at ($(c4)!.6!(c5)$){5};
\node[anchor=south east] at ($(c5)!.6!(c10)$){4};
\node[anchor=south west] at ($(c4)!.6!(c6)$){6};
\node[anchor=south west] at ($(c6)!.6!(c9)$){3};
\node[anchor=south west] at ($(0,0)!.6!(c7)$){10};
\node[anchor=south west] at ($(c7)!.6!(c8)$){2};
}
$\xleftarrow[\hspace{1cm}]{\ast}$
\tikzpic{-0.5}{[level distance=0.7cm, sibling distance=0.7cm]
\coordinate
	child{coordinate (c2)
        child{coordinate(c3)}
	child[missing]
	child[missing]
        }
	child{coordinate(c1)
	child{coordinate(c4)
	        child{coordinate(c5)
		child{coordinate(c10)}
		child[missing]
		}
		child{coordinate(c6)
		child[missing]
		child{coordinate(c9)}
		}
	}
	}
	child{coordinate(c7)
	child[missing]
	child[missing]
	child{coordinate(c8)}
	};
\draw(c2)node{$-$}(c1)node{$-$}(c5)node{$-$}(c6)node{$-$}(c7)node{$-$};
\node[anchor=south east] at ($(0,0)!.6!(c2)$){2};
\node[anchor=south east] at ($(c2)!.6!(c3)$){3};
\node[anchor=east] at ($(0,0)!.7!(c1)$){1};
\node[anchor=east] at ($(c1)!.6!(c4)$){4};
\node[anchor=south east] at ($(c4)!.6!(c5)$){5};
\node[anchor=south east] at ($(c5)!.6!(c10)$){10};
\node[anchor=south west] at ($(c4)!.6!(c6)$){6};
\node[anchor=south west] at ($(c6)!.6!(c9)$){9};
\node[anchor=south west] at ($(0,0)!.6!(c7)$){7};
\node[anchor=south west] at ($(c7)!.6!(c8)$){8};
}
$\xrightarrow[\hspace{1cm}]{\times}$
\tikzpic{-0.5}{[level distance=0.7cm, sibling distance=0.7cm]
\coordinate
	child{coordinate (c2)
        child{coordinate(c3)}
	child[missing]
	child[missing]
        }
	child{coordinate(c1)
	child{coordinate(c4)
	        child{coordinate(c5)
		child{coordinate(c10)}
		child[missing]
		}
		child{coordinate(c6)
		child[missing]
		child{coordinate(c9)}
		}
	}
	}
	child{coordinate(c7)
	child[missing]
	child[missing]
	child{coordinate(c8)}
	};
\draw(c2)node{$-$}(c1)node{$-$}(c5)node{$-$}(c6)node{$-$}(c7)node{$-$};
\node[anchor=south east] at ($(0,0)!.6!(c2)$){2};
\node[anchor=south east] at ($(c2)!.6!(c3)$){10};
\node[anchor=east] at ($(0,0)!.7!(c1)$){3};
\node[anchor=east] at ($(c1)!.6!(c4)$){4};
\node[anchor=south east] at ($(c4)!.6!(c5)$){6};
\node[anchor=south east] at ($(c5)!.6!(c10)$){7};
\node[anchor=south west] at ($(c4)!.6!(c6)$){5};
\node[anchor=south west] at ($(c6)!.6!(c9)$){8};
\node[anchor=south west] at ($(0,0)!.6!(c7)$){1};
\node[anchor=south west] at ($(c7)!.6!(c8)$){9};
}
\caption{The actions of the $\ast$-operation and $\times$-operation on a natural 
label.}
\label{fig:asttimes}
\end{figure}

\begin{theorem}
\label{thrm:timesDTR}
We have the following diagram:
\begin{eqnarray}
\begin{tikzcd}[row sep=small]
T \arrow[r,"\mathrm{DTR}"] \arrow[dd,"\times"]  & D_{1} \arrow[rd,"\mathrm{ref}"] & \\
& & D_{2} \\
T^{\times} \arrow[r,"\mathrm{ref}"] & ^{\times}T \arrow[ru,"\mathrm{DTR}"] & 
\end{tikzcd}
\label{cd:timesref}
\end{eqnarray}
\label{thrm:ref}
\end{theorem}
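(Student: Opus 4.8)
The plan is to recast the diagram as a single intertwining identity on natural labels and then prove it by induction. Reading off $D_2=\mathrm{ref}(\mathrm{DTR}(T))$ along the top route and $D_2=\mathrm{DTR}(\mathrm{ref}(T^{\times}))$ along the bottom route, the assertion is exactly $\mathrm{ref}\circ\mathrm{DTR}(T)=\mathrm{DTR}\circ\mathrm{ref}(T^{\times})$. First I would record the reflection principle for ribbon growth: reflecting a Dyck tiling along a vertical line turns the rightward ribbon growth of the DTR bijection into leftward ribbon growth, so the left DTR bijection $\mathrm{lDTR}$ (defined exactly as $\mathrm{lDTS}$ but with ribbon growth in place of strip growth) satisfies $\mathrm{lDTR}=\mathrm{ref}\circ\mathrm{DTR}\circ\mathrm{ref}$, by the same argument already used for $\mathrm{lDTS}(T)=\mathrm{ref}\circ\mathrm{DTS}\circ\mathrm{ref}(T)$. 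Substituting this into the target identity and cancelling one reflection reduces the theorem to the clean statement $\mathrm{DTR}(T)=\mathrm{lDTR}(T^{\times})$, i.e. the $\times$-operation is precisely the relabeling intertwining the right-growth and left-growth DTR bijections. Since $\times$ preserves the shape $\mathrm{Tree}(\lambda)$, both sides are cover-inclusive Dyck tilings over the same lower path $\lambda$, so it suffices to match their non-trivial tiles.

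Next I would reduce to a single minimal natural label. Using the decomposition $T=T_1\circ\cdots\circ T_p$ into minimal natural labels, I would check that each of $\mathrm{DTR}$, $\mathrm{lDTR}$, $\mathrm{ref}$ and $\times$ is compatible with root-concatenation: $\times$ acts blockwise by definition, $\mathrm{ref}$ reverses the order of the factors and reflects each, and (since the blocks carry consecutive increasing label ranges) $\mathrm{DTR}$ and $\mathrm{lDTR}$ of a concatenation are the horizontal juxtapositions $\mathrm{DTR}(T_1)\circ\cdots\circ\mathrm{DTR}(T_p)$. The last point follows from the insertion history: the first edge of block $T_i$ is a root edge lying to the right of all earlier blocks, so its spread occurs exactly at the right boundary $x=2(|T_1|+\cdots+|T_{i-1}|)$ and the block is built in its own region without disturbing the earlier tiling. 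These compatibilities collapse the per-block identities into the single-block case, so from now on I may assume $T$ is a standardized minimal natural label.

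For the single-block case I would induct on the size $n$, peeling off the special decreasing sequence used to define the $\ast$- (hence $\times$-) operation. The essential dictionary is that a maximal decreasing sequence $f_j\leftarrow f_{j-1}\leftarrow\cdots\leftarrow f_i$ of $T$ is exactly a connected chain of ribbons in $\mathrm{DTR}(T)$: each consecutive pair is a $2^{+}2$ pattern, so by Proposition \ref{prop:2+2} it contributes one ribbon, and these concatenate into a single ribbon chain. Under reflection such a chain, whose labels decrease from left to right, becomes a chain whose labels increase from left to right, precisely a ribbon produced by the left-growth bijection $\mathrm{lDTR}$. I would then show that the special decreasing sequence $\mathrm{Dec}(p,q)$ together with $\mathrm{RDec}(p,q)$ locates the rightmost eligible (special) column at the last step of the DTR construction, and that the $\times$-relabeling --- sending $\mathrm{RDec}$ to the bottom range $[1,M]$, the reversed special sequence to the middle, and the remaining edges $T_l$ to the top range --- is exactly the permutation turning DTR's rightmost ribbon growth into $\mathrm{lDTR}$'s leftmost ribbon growth. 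Removing these tiles reduces $T$ to the sub-labels $T_l$ and $T_r$, whose $\ast$-images are computed recursively by the very same algorithm, so the inductive hypothesis applies to each and reassembles $\mathrm{DTR}(T)=\mathrm{lDTR}(T^{\times})$.

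The main obstacle is this last geometric-combinatorial matching: proving that the special decreasing sequence of $T$ (a purely label-theoretic object) coincides with the special column and the final ribbon addition of the DTR bijection (a geometric object), and that this identification is stable under the recursion on $T_l$ and $T_r$. The delicate points are that $\mathrm{RDec}(p,q)$ need not be empty and that the complement of $\mathrm{Dec}\cup\mathrm{RDec}$ is not simply $\mathrm{LDec}\cup\mathrm{ChildDec}$, so the relabeling ranges must be tracked carefully to ensure that the tiles of $T_l$ and of $T_r$ occupy, after reflection, exactly the mirror positions predicted by $\mathrm{lDTR}$ on $T^{\times}$; verifying this bookkeeping --- rather than any single hard estimate --- is where the real work lies.
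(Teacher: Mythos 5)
Your initial reductions are sound: reading the diagram as $\mathrm{ref}\circ\mathrm{DTR}(T)=\mathrm{DTR}\circ\mathrm{ref}(T^{\times})$, introducing $\mathrm{lDTR}=\mathrm{ref}\circ\mathrm{DTR}\circ\mathrm{ref}$ to rewrite this as $\mathrm{DTR}(T)=\mathrm{lDTR}(T^{\times})$, and splitting along the concatenation $T=T_{1}\circ\cdots\circ T_{p}$ are all legitimate, and you have also isolated the correct key lemma, namely the dictionary of Proposition \ref{prop:2+2} between DTR ribbons and consecutive pairs inside maximal decreasing sequences. The gap is in the inductive engine you propose to run afterwards. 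You want to peel off the special decreasing sequence $\mathrm{Dec}(p,q)$ and claim that it ``locates the rightmost eligible (special) column at the last step of the DTR construction.'' This identification is false in general: the special decreasing sequence is by definition the terminal element of the chain that starts at the sequence containing the label $1$, whereas the final ribbon addition of the DTR bijection is governed by the labels $n$ and $n-1$. For instance, for the zigzag label $\sigma=312$ the maximal decreasing sequences are $\mathrm{Dec}(1,1)$ and $\mathrm{Dec}(2,3)$; the chain from $\mathrm{Dec}(1,1)$ does not extend (since $f_{3}$ lies to the left of $f_{1}$), so the special decreasing sequence is $\mathrm{Dec}(1,1)$, while the last DTR step adds the ribbon joining the boxes labeled $3$ and $2$, i.e.\ it belongs to $\mathrm{Dec}(2,3)$. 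More structurally, the recursion of the $\times$-operation removes a block of labels sitting in the middle of $[1,n]$ (the special sequence together with $\mathrm{RDec}(p,q)$), while the DTR recursion removes the single largest label; these two recursions do not align, so the inductive hypothesis cannot be invoked on $T_{l}$ and $T_{r}$ without first proving a nontrivial locality statement for DTR under deletion of such a block --- which is exactly the step you defer as ``bookkeeping'' and which, as stated, would fail.

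The paper closes the argument without any induction, by a determination argument that you could substitute for your inductive step. One first observes that a natural label is uniquely recovered from the list of its maximal decreasing sequences together with the edges each occupies, and that by Proposition \ref{prop:2+2} the ribbons of $\mathrm{DTR}(T)$ are exactly the links inside these sequences; hence the Dyck tiling $\mathrm{DTR}(T)$ is determined by the positions of the maximal decreasing sequences. One then checks directly from the definition of the $\times$-operation that each $\mathrm{Dec}(i,j)$ of $T$ becomes a maximal increasing sequence of $T^{\times}$ on the same set of edges (with the adjacency $k=j+1$ of consecutive sequences in a chain turning into $i'=l'+1$), and that $\mathrm{ref}$ turns these back into maximal decreasing sequences occupying the mirror-image edges. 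Since both sides are therefore built from the same ribbon data in mirrored positions, the two tilings are reflections of each other. I would recommend replacing your special-column induction with this argument; the pieces you have already assembled (the $\mathrm{lDTR}$ reformulation and the ribbon dictionary) then suffice.
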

\begin{proof}
We denote by $f^{\times}_{p}$ the edge labeled by $p$ in $T^{\times}$.
Then, a maximal increasing sequence is a set of labels $\mathrm{Inc}(i',j'):=[i',j']$ satisfying 
\begin{eqnarray*}
f^{\times}_{i'}\rightarrow f^{\times}_{i'+1}\rightarrow\cdots\rightarrow
f^{\times}_{j'}
\end{eqnarray*}
with $j'-i'$ is maximal.
By the definition of the $\times$-operation, 
a maximal decreasing sequence $\mathrm{Dec}(i,j)$ in $T$ 
is changed to a maximal increasing sequence $\mathrm{Inc}(i',j')$ in $T^{\times}$
with some $i'$ and $j'$ satisfying $j'-i'=j-i$.

When $\mathrm{Dec}(i,j)\rightarrow\mathrm{Dec}(k,l)$ in $T$ with $k=j+1$,   
we have $\mathrm{Inc}(i',j')\rightarrow\mathrm{Inc}(k',l')$ in $T^{\times}$.
Since we change the labels in $\mathrm{Dec}(k,l)$ before $\mathrm{Dec}(i,j)$,
we have $l'<i'$. Furthermore, the condition $k=j+1$ is translated to 
the condition $i'=l'+1$.

By the third step for the $\times$-operation, it is obvious that 
the new labels in $T^{\times}$ are increasing from the root to leaves.

The reflection of a natural tree reverses the order of labels
in $\mathrm{Inc}(i',j')$,  
that is, $\mathrm{Inc}(i',j')$ becomes a maximal decreasing 
sequence $\mathrm{Dec}(i',j')$ in $\mathrm{ref}(T^{\times})$.
Once given positions of maximal increasing sequence, we have 
a unique natural label $T^{\times}$.

Suppose we have $\mathrm{Dec}(i,j)$ in $T$. In the DTR bijection,
we have a ribbon between the box labeled by $k$ and the box labeled 
by $k+1$ if $k\in[i,j-1]$.
Once we fix the positions of maximal decreasing sequences and 
ribbons in the DTR bijection, we have a unique Dyck tiling via 
the DTR bijection.
This argument can also be applied to $\mathrm{ref}(T^{\times})$.
Therefore, we obtain the diagram (\ref{cd:timesref}).
\end{proof}

\begin{cor}
The composition of the $\times$-operation and the reflection on a natural label 
is an involution. 
We have $(\mathrm{ref}\circ\times)^{2}=\mathrm{id}$ where $\mathrm{id}$ is the 
identity.
\end{cor}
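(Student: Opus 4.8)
The plan is to derive the corollary purely formally from Theorem~\ref{thrm:timesDTR}, using only that $\mathrm{ref}$ is an involution on Dyck tilings and that $\mathrm{DTR}$ is a bijection. First I would restate the content of the commutative diagram~(\ref{cd:timesref}) as a single identity of maps. Reading the diagram, for a natural label $T$ of $\mathrm{Tree}(\lambda)$ it asserts
\[
\mathrm{ref}\bigl(\mathrm{DTR}(T)\bigr)=\mathrm{DTR}\bigl(\mathrm{ref}(T^{\times})\bigr)
=\mathrm{DTR}\bigl((\mathrm{ref}\circ\times)(T)\bigr),
\]
so, writing $\Phi:=\mathrm{ref}\circ\times$, we obtain the relation $\mathrm{ref}\circ\mathrm{DTR}=\mathrm{DTR}\circ\Phi$.

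The bookkeeping I would be most careful about is the underlying shapes. The $\times$-operation sends a natural label of $\mathrm{Tree}(\lambda)$ to another natural label of the same tree $\mathrm{Tree}(\lambda)$, whereas $\mathrm{ref}$ sends a label of $\mathrm{Tree}(\lambda)$ to a label of $\mathrm{Tree}(\overline{\lambda})$. Hence $\Phi$ maps labels of $\mathrm{Tree}(\lambda)$ to labels of $\mathrm{Tree}(\overline{\lambda})$, and since $\overline{\overline{\lambda}}=\lambda$ the composition $\Phi^{2}$ is a genuine self-map of the set of natural labels of $\mathrm{Tree}(\lambda)$. Correspondingly, the two occurrences of $\mathrm{DTR}$ in the relation act on labels of different shapes (on $\overline{\lambda}$ on the left, on $\lambda$ on the right), and the two $\mathrm{ref}$'s act one on labels and one on tilings; making these identifications explicit is the only delicate point.

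With the relation in hand I would iterate it. Applying $\mathrm{ref}\circ\mathrm{DTR}=\mathrm{DTR}\circ\Phi$ to the label $\Phi(T)$ of $\mathrm{Tree}(\overline{\lambda})$ gives
\[
\mathrm{ref}\bigl(\mathrm{DTR}(\Phi(T))\bigr)=\mathrm{DTR}\bigl(\Phi^{2}(T)\bigr).
\]
Substituting $\mathrm{DTR}(\Phi(T))=\mathrm{ref}(\mathrm{DTR}(T))$ from the relation into the left-hand side and using that $\mathrm{ref}$ is an involution on Dyck tilings, the left-hand side collapses to $\mathrm{DTR}(T)$, so $\mathrm{DTR}(\Phi^{2}(T))=\mathrm{DTR}(T)$.

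Finally, since $\mathrm{DTR}$ is a bijection, and in particular injective, on natural labels of $\mathrm{Tree}(\lambda)$, I can cancel it to conclude $\Phi^{2}(T)=T$ for every $T$, that is $(\mathrm{ref}\circ\times)^{2}=\mathrm{id}$. I expect no genuine obstacle beyond the shape bookkeeping described above: the argument is a two-step diagram chase once the maps and their domains are pinned down.
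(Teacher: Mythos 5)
Your proof is correct and is essentially the same argument as the paper's: the paper rewrites the diagram of Theorem~\ref{thrm:timesDTR} as the conjugation identity $\mathrm{ref}=\mathrm{DTR}\circ\mathrm{ref}\circ\times\circ\mathrm{DTR}^{-1}$ and squares it, using that reflection on Dyck tilings is an involution, which is exactly your iterate-and-cancel diagram chase in slightly different notation. Your extra remarks on the shape bookkeeping ($\lambda$ versus $\overline{\lambda}$) are a welcome clarification but do not change the substance.
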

\begin{proof}
From Theorem \ref{thrm:ref}, we have 
\begin{eqnarray}
\label{eqn:refDTR}
\mathrm{ref}=\mathrm{DTR}\circ\mathrm{ref}\circ\times\circ\mathrm{DTR}^{-1},
\end{eqnarray}
where the $\mathrm{ref}$ in the left hand side acts on a Dyck tiling, and 
the $\mathrm{ref}$ in the right hand side acts on a natural label.
Since the refection on a Dyck tiling is an involution, 
we have $\mathrm{ref}^{2}=\mathrm{id}$.
From Eqn. (\ref{eqn:refDTR}), we have $(\mathrm{ref}\circ\times)^{2}=\mathrm{id}$.
\end{proof}

Let $\mathbf{m}=(m_{1},\ldots,m_{r})$ and $\lambda:=\wedge_{\mathbf{m}}$.
We define an involution $\heartsuit$ on Dyck tilings over $\lambda$.
Since $\lambda=\wedge_{\mathbf{m}}$, maximal decreasing sequences 
$\mathrm{Dec}(i_1,j_1),\ldots,\mathrm{Dec}(i_r,j_r)$ have 
a natural poset structure with respect to the order of the $(i_p,j_p)$'s with 
$1\le p\le r$.
More precisely, we write $\mathrm{Dec}(i,j)\prec\mathrm{Dec}(k,l)$ if a box 
in a Dyck tableau forming $\mathrm{Dec}(k,l)$ is above a box forming 
$\mathrm{Dec}(i,j)$.
Otherwise, two decreasing sequences are not comparable. 
Let $c(i_p)$ and $c(j_p)$ be the columns where the boxes labeled with $i_p$ 
and $j_p$ are placed. 
Let $\heartsuit$-operation be the operation such that 
it reverses the partial order of maximal decreasing sequences.
Further, if $\mathrm{Dec}(i_p,j_p)$ is mapped to $\mathrm{Dec}(i'_p,j'_p)$ by 
$\heartsuit$-operation, we have $c(i_p)=c(i'_{p})$ and $c(j_p)=c(j'_{p})$.
Figure \ref{fig:heart} is an example of the action of the $\heartsuit$-operation
on a Dyck tableau.
%%%%%%%%%%%
\begin{figure}[ht]
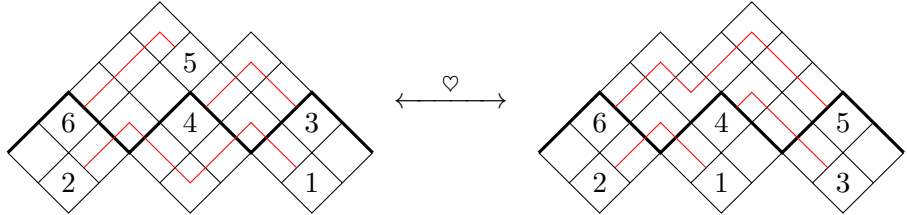

\tikzpic{-0.5}{[x=0.4cm,y=0.4cm]
\draw[very thick](0,0)--(2,2)--(4,0)--(6,2)--(8,0)--(10,2)--(12,0);
\draw(2,2)--(5,5)--(11,-1)(1,-1)--(6,4)(0,0)--(2,-2)--(4,0)
     (4,0)--(6,-2)--(8,0)--(10,-2)--(12,0)(1,1)--(3,-1)
     (3,3)--(7,-1)(4,4)--(6,2)(5,-1)--(9,3)(9,-1)--(11,1)
     (6,2)--(8,4)--(10,2);
\draw(2,-1)node{$2$}(2,1)node{$6$}(6,1)node{$4$}(6,3)node{$5$}(10,-1)node{$1$}(10,1)node{$3$};
\draw[red](2.5,1.5)--(5,4)--(5.5,3.5)(2.5,-0.5)--(4,1)--(6,-1)--(8,1)--(9.5,-0.5)
          (6.5,1.5)--(8,3)--(9.5,1.5);
}
$\overset{\heartsuit}{\overleftrightarrow{\hspace{1.5cm}}}$
\tikzpic{-0.5}{[x=0.4cm,y=0.4cm]
\draw[very thick](0,0)--(2,2)--(4,0)--(6,2)--(8,0)--(10,2)--(12,0);
\draw(2,2)--(4,4)--(6,2)--(8,4)--(10,2)(0,0)--(2,-2)--(4,0)--(6,-2)--(8,0)--(10,-2)--(12,0)
     (1,1)--(3,-1)(5,1)--(7,-1)(9,1)--(11,-1)
     (1,-1)--(7,5)--(8,4)(3,3)--(5,1)(6,4)--(9,1)(5,-1)--(9,3)(9,-1)--(11,1);
\draw(2,-1)node{$2$}(2,1)node{$6$}(6,-1)node{$1$}(6,1)node{$4$}(10,-1)node{$3$}(10,1)node{$5$};
\draw[red](2.5,1.5)--(4,3)--(5,2)--(7,4)--(9.5,1.5)(2.5,-0.5)--(4,1)--(5.5,-0.5)
          (6.5,1.5)--(7,2)--(9.5,-0.5);
}
\caption{An action of $\heartsuit$-operation on a Dyck tableau. In the left picture,
we have the poset $\mathrm{Dec}(1,2)\prec\mathrm{Dec}(3,4)\prec\mathrm{Dec}(5,6)$.}
\label{fig:heart}
\end{figure}

\begin{remark}
The involution $\heartsuit$ does not preserve the shape of a Dyck tiling in general.
\end{remark}

\begin{prop}
\label{prop:wedgeheart}
Let $\lambda:=\wedge_{\mathbf{m}}$ and $T$ be a natural label of $\mathrm{Tree}(\lambda)$. 
Then, the actions of the $\ast$-operation and the cyclic operation
give 
\begin{eqnarray}
\label{eqn:DTabheart}
\mathrm{DTab}(T)^{\heartsuit}=\mathrm{DTab}(\mathrm{co}(T^{\ast})).
\end{eqnarray} 
\end{prop}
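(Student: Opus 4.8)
The plan is to prove the identity by setting up a dictionary between the combinatorics of maximal decreasing sequences in the natural label $T$ and the geometry of $\mathrm{DTab}(T)$, and then showing that the composite $\mathrm{co}\circ\ast$ is the label-level incarnation of the tableau involution $\heartsuit$. The first ingredient is the observation, already implicit in Proposition \ref{prop:2+2}, that a maximal decreasing sequence $\mathrm{Dec}(i,j)$ of $T$ is recorded in $\mathrm{DTab}(T)$ as a maximal chain of ribbons joining the labeled boxes $i,i+1,\dots,j$: each consecutive pair $k,k+1$ with $e(k+1)\rightarrow e(k)$ is a $2^{+}2$ pattern, hence a ribbon. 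Thus the maximal decreasing sequences of $T$ are exactly the maximal ribbon-chains of the tableau, and the poset $\prec$ used to define $\heartsuit$ is precisely the vertical stacking order of these chains. Because $\lambda=\wedge_{\mathbf{m}}$, each chain threads the anchor boxes of a single $\wedge$-block from floor to floor, so $\prec$ is a genuine partial order and $\heartsuit$ is a well-defined vertical flip that fixes the two endpoint columns $c(i_p),c(j_p)$ of every chain.

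I would carry out the proof by induction on the number $n$ of edges, matching the recursive definition of the $\ast$-operation step by step with a peeling of $\heartsuit$. Step~2 of the $\ast$-algorithm relabels the special decreasing sequence $\mathrm{Dec}(p,q)$ to the block ending at $n-M$ and, via Step~3, sends the right-family $\mathrm{RDec}(p,q)$ to the top block $[n-M+1,n]$ and the remaining edges to the bottom block $[1,n-M-q+p-1]$. The key claim to establish is that, under the dictionary above, this reallocation of label-blocks is exactly what reversing $\prec$ does: I would show that the special chain occupies an extremal position in the stacking order and is carried to the opposite extreme, while the recursive calls of $\ast$ on the two sub-forests reverse the induced sub-posets. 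The cyclic operation $\mathrm{co}$ then turns the decreasing label $T^{\ast}$ back into a genuine natural label; here I would invoke Proposition \ref{prop:ico} together with $\ast=\mathrm{bar}\circ\times$ to rewrite $\mathrm{co}(T^{\ast})=\alpha(T^{\times})$, so that the order-reversing behaviour of $\mathrm{co}$ recorded in Proposition \ref{prop:US} can be reused rather than re-derived.

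Having matched the label data, the remaining task is to check that the two tableaux have the same shape and the same non-trivial tiles. For the shape I would apply Proposition \ref{prop:lbrb} to $\mathrm{co}(T^{\ast})$ and compare the resulting left and right borders with those obtained by flipping the ribbon-chains of $\mathrm{DTab}(T)$ while holding the endpoint columns fixed. Recalling from the proof of Proposition \ref{prop:lbrb} that a ribbon joining consecutive labels $k$ and $k+1$ toggles exactly $\mathrm{lb}$ of the box $k$ and $\mathrm{rb}$ of the box $k+1$, the invariance of the endpoint columns $c(i_p),c(j_p)$ under $\heartsuit$ should translate directly into the border formulas for $\mathrm{co}(T^{\ast})$. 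The floors of the anchor boxes are then forced by the reversed stacking order, so the non-trivial tiles coincide as well.

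The main obstacle I anticipate is the floor-bookkeeping in the last step: I must prove that reversing $\prec$ sends a chain sitting on floors $0,\dots,t$ of a $\wedge$-block to the complementary floors of the flipped block \emph{without} moving its two endpoint columns, and that the $\ast$-relabeling realizes precisely this complementation rather than merely reversing the linear order of the chains. Getting the off-by-one constants right --- the exact value of $M=|\mathrm{RDec}(p,q)|$ and the boundary indices $n-M-q+p-1$ and $n-M+1$ in Step~3 --- is what makes the induction close, and verifying that these constants encode the correct floor shift for a generalized (rather than ordinary) zigzag path is the delicate point. The remark that $\heartsuit$ need not preserve the overall shape is exactly the symptom that this floor complementation, and not a naive reflection, is the operation being realized.
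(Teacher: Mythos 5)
Your overall strategy is the same as the paper's: reduce the identity to the statement that $\mathrm{co}\circ\ast$ reverses the stacking poset $\prec$ of maximal decreasing sequences while preserving their lengths and endpoint columns, the endpoint columns being what pins down the tableau. The paper does this directly and briefly (the $\ast$-operation preserves the order of decreasing sequences inside a chain and reverses the order of pairs not in a chain; the cyclic operation reverses the label order inside each $\wedge_{m_i}$ and hence the order inside chains and nothing else), whereas you propose a heavier induction that tracks the recursive structure of the $\ast$-algorithm. That route could in principle work, but as written it rests on a false geometric picture.

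The concrete error is the claim that ``each chain threads the anchor boxes of a single $\wedge$-block from floor to floor,'' together with the closing goal of showing that $\heartsuit$ ``sends a chain sitting on floors $0,\dots,t$ of a $\wedge$-block to the complementary floors.'' A maximal decreasing sequence $\mathrm{Dec}(i,j)$ is a chain $f_j\leftarrow f_{j-1}\leftarrow\cdots\leftarrow f_i$ of edges that are pairwise \emph{strictly} left of one another, while all edges inside a single block $\wedge_{m_i}$ lie on one root-to-leaf path and are related by $\uparrow$, never by $\leftarrow$. Hence a maximal decreasing sequence meets each $\wedge$-block in at most one edge, and its ribbon chain runs \emph{horizontally} across blocks, occupying one anchor column per block. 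What $\heartsuit$ reverses is therefore not the floors visited by one sequence inside one block, but the vertical stacking of \emph{distinct} decreasing sequences that happen to share a block; the ``floor complementation'' you single out as the delicate point is the wrong target, and the induction you describe (special chain at an extreme of $\prec$, recursive calls on $T_r$ and $T_l$ reversing induced sub-posets) would not close without first correcting this. Note also that $T_l$ in the $\ast$-algorithm is not a clean sub-forest: it contains edges left of, above, and below the special sequence, so the claim that the recursive call ``reverses the induced sub-poset'' needs the chain-versus-non-chain case split that the paper makes explicit, namely that $\ast$ alone preserves the order along a chain and it is only the subsequent cyclic operation that reverses it there.
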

\begin{proof}
By definition of the $\ast$-operation, a maximal decreasing sequence $\mathrm{Dec}(i,j)$ is 
mapped to a maximal decreasing sequence $\mathrm{Dec}(i',j')$ with $j-i=j'-i'$.
Further, we have $c(i)=c(i')$ and $c(j)=c(j')$.

When a chain Eqn. (\ref{eqn:chain}) exists, the order of maximal decreasing sequences  
in the chain is preserved.
Otherwise, we reverse the order of two maximal decreasing sequences.
The $\ast$-operation produces a decreasing tree from $T$.
By the cyclic operation, we reverse the order of labels in $\wedge_{m_i}$, $1\le i\le r$, 
in the decreasing tree.
This reverses the order of maximal decreasing sequences in the chain.
The cyclic operation does not reverse the order of two maximal decreasing sequences if 
they do not belong to the same chain.
From these observations, we have Eqn. (\ref{eqn:DTabheart}).
\end{proof}

We have a bijection between a Dyck tableau and a DTR bijection
of $T(\lambda)$.
We denote by $\mathrm{DTab}(T(\lambda))$ a Dyck tableau associated 
with a label $T(\lambda)$.
Below, we will define an involution on a Dyck tableau for a zigzag path $\lambda$:
\begin{eqnarray*}
\clubsuit: \mathrm{DTab}(T(\lambda))\rightarrow \mathrm{DTab}(T(\lambda)).
\end{eqnarray*}
Let $\mu$ be the top path of $\mathrm{DTab}(T(\lambda))$.
Suppose a dot $d$ in $\mathrm{DTab}(T(\lambda))$ is in the $i$-th floor, 
and the $p$-th floor is the highest floor below the path $\mu$.
Then, an involution $\clubsuit$-operation preserves its shape, {\it i.e.}, 
the top path $\mu$ is not changed by the action of the $\clubsuit$-operation, 
and the dot $d$ is moved from the $i$-th floor to the $(p-i)$-th floor.
Figure \ref{fig:club} is an example of the action of $\clubsuit$-operation.
\begin{figure}[ht]
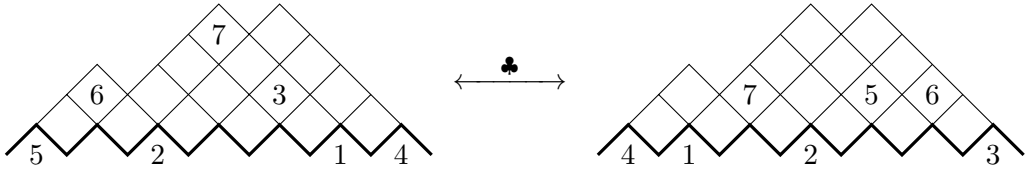

\tikzpic{-0.5}{[x=0.4cm,y=0.4cm]
\draw[very thick](0,0)--(1,1)--(2,0)--(3,1)--(4,0)--(5,1)--(6,0)--(7,1)
                  --(8,0)--(9,1)--(10,0)--(11,1)--(12,0)--(13,1)--(14,0);  
\draw(1,1)--(3,3)--(5,1)--(9,5)--(13,1)(2,2)--(3,1)--(7,5)--(11,1)--(12,2);
\draw(5,3)--(7,1)--(10,4)(6,4)--(9,1)--(11,3);
\draw(1,0)node{5}(3,2)node{6}(5,0)node{2}(7,4)node{7}(9,2)node{3}(11,0)node{1}(13,0)node{4};
}
$\overset{\clubsuit}{\overleftrightarrow{\hspace{1.5cm}}}$
\tikzpic{-0.5}{[x=0.4cm,y=0.4cm]
\draw[very thick](0,0)--(1,1)--(2,0)--(3,1)--(4,0)--(5,1)--(6,0)--(7,1)
                  --(8,0)--(9,1)--(10,0)--(11,1)--(12,0)--(13,1)--(14,0);  
\draw(1,1)--(3,3)--(5,1)--(9,5)--(13,1)(2,2)--(3,1)--(7,5)--(11,1)--(12,2);
\draw(5,3)--(7,1)--(10,4)(6,4)--(9,1)--(11,3);
\draw(1,0)node{4}(3,0)node{1}(5,2)node{7}(7,0)node{2}(9,2)node{5}(11,2)node{6}(13,0)node{3};
}
\caption{An action of the $\clubsuit$-operation on a Dyck tableau.}
\label{fig:club}
\end{figure}

Given a Dyck tableau $D$, we denote by $\mathrm{shadow}(D)$ the number of shadow 
boxes in $D$ and by $\mathrm{clear}(D)$ the number of clear boxes.
\begin{prop}
Let $D$ be a Dyck tableau for a permutation $\sigma$.
Then, we have 
\begin{eqnarray*}
\mathrm{shadow}(D^{\clubsuit})&=&\mathrm{clear}(D), \\
\mathrm{clear}(D^{\clubsuit})&=&\mathrm{shadow}(D).
\end{eqnarray*}
\end{prop}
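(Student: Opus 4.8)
The plan is to prove both identities simultaneously by a columnwise analysis, using that the $\clubsuit$-operation is a vertical reflection of the dots that fixes the top path $\mu$. First I would recall the structural features of the zigzag case recorded in the Remark following the construction of $\mathrm{DTab}$: the region $R_0$ contains no empty boxes, and every box lying below a labeled box is a $\vee$-turn box. Because each edge of $\mathrm{Tree}(\lambda)$ receives a single label, each peak column of the Dyck tableau (the column above a fixed $\wedge$-corner of the zigzag $\lambda$) contains exactly one dot, and all of its remaining boxes are parallel or turn boxes.

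Next I would localize the counting of shadow and clear boxes to peak columns. By Definition \ref{def:shadowclear}, and as in the proof of the generalization of Proposition~9 of \cite{ABDH11} (where a shadow box is realized as the intersection of the column of a labeled box $b$ with a ribbon), a shadow or clear box lies in the same column as the labeled box to which it refers. Since dots occur only in peak columns, no box of a valley column is a shadow or clear box, and the two statistics decompose as sums of columnwise contributions.

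The key step is then the per-column computation together with the action of $\clubsuit$. Fix a peak column, write $i$ for the floor of its unique dot and $p$ for the highest floor below $\mu$ in that column. As this column occupies the contiguous range of floors $0,1,\dots,p$ and has no empty boxes, every box at a floor in $\{0,\dots,i-1\}$ is a clear box and every box at a floor in $\{i+1,\dots,p\}$ is a shadow box, the column's unique dot serving as the reference labeled box in each case; thus the column contributes $i$ clear boxes and $p-i$ shadow boxes. Now $\clubsuit$ fixes $\mu$, hence fixes $p$ in each column, and moves the dot from floor $i$ to floor $p-i$; since $D^{\clubsuit}$ is again a zigzag Dyck tableau the same description applies to it, so in this column it contributes $p-i$ clear boxes and $i$ shadow boxes. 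Summing over all peak columns gives
\[
\mathrm{shadow}(D^{\clubsuit})=\sum_{\text{columns}} i=\mathrm{clear}(D),
\qquad
\mathrm{clear}(D^{\clubsuit})=\sum_{\text{columns}}(p-i)=\mathrm{shadow}(D).
\]

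The main obstacle is the per-column identification in the third step: one must verify carefully that every box strictly above the dot and below $\mu$ is genuinely a shadow box, with its nearest labeled box below being the column's unique dot, and symmetrically that every box strictly below the dot is a clear box, so that the column's floors split cleanly into $i$ clear and $p-i$ shadow boxes with no gaps. This is precisely where the zigzag-specific facts — absence of empty boxes in $R_0$ and exactly one dot per peak column — are essential, and one should also confirm that $\clubsuit$, asserted only to preserve $\mu$ and reflect the floor of each dot, leaves the ambient shape (and hence each value $p$) unchanged.
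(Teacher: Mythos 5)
Your proposal is correct and follows essentially the same route as the paper's (much terser) proof: the paper simply observes that, since $\clubsuit$ fixes the top path and reflects each dot's floor, the number of boxes above a dotted box in $D$ equals the number below it in $D^{\clubsuit}$, which is exactly your per-column count of $i$ clear versus $p-i$ shadow boxes being swapped. Your added care about localizing shadow/clear boxes to the column of their reference dot and invoking the absence of empty boxes in the zigzag case is a legitimate filling-in of details the paper leaves implicit, not a different argument.
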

\begin{proof}
By definition of the $\clubsuit$-operation, the number of boxes above a dotted 
box in $D$ is equal to the number of boxes below the dotted box in $D^{\clubsuit}$.
This implies $\mathrm{clear}(D^{\clubsuit})=\mathrm{shadow}(D)$.

By a similar argument, we have $\mathrm{shadow}(D^{\clubsuit})=\mathrm{clear}(D)$.
\end{proof}

\begin{theorem}
\label{thrm:astclub}
The action of $\ast$-operation on a zigzag path is equivalent 
to the action of $\clubsuit$. 
We have 
\begin{eqnarray}
\label{eqn:DTRclub}
\mathrm{DTab}(T)^{\clubsuit}=\mathrm{DTab}(T^{\ast}).
\end{eqnarray}
\end{theorem}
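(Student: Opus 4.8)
The plan is to deduce the theorem from the already-proved Proposition \ref{prop:wedgeheart} by observing that a zigzag path is a degenerate instance of a generalized zigzag path. Indeed $\mathrm{zigzag}_{n}=\wedge_{\mathbf{m}}$ with $\mathbf{m}=(1,\ldots,1)$, so $\mathrm{Tree}(\lambda)$ is the star in which every edge is simultaneously attached to the root and to a leaf. First I would check that the cyclic operation acts trivially on such a tree: since no edge has a child, every $\searrow$-chain has length one and the relabeling step of $\mathrm{co}$ leaves each label fixed, whence $\mathrm{co}(T^{\ast})=T^{\ast}$. (Note also that for the star the increasing/decreasing conditions on labels are vacuous, so $T^{\ast}$ is a legitimate input for $\mathrm{DTab}$.) Substituting this into Proposition \ref{prop:wedgeheart} gives $\mathrm{DTab}(T)^{\heartsuit}=\mathrm{DTab}(T^{\ast})$ for every natural label $T$ of a zigzag path.

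In view of this identity, proving Eqn.~(\ref{eqn:DTRclub}) is equivalent to showing that the two involutions $\heartsuit$ and $\clubsuit$ coincide on Dyck tableaux whose lower path is a zigzag, i.e. $\mathrm{DTab}(T)^{\heartsuit}=\mathrm{DTab}(T)^{\clubsuit}$. I would prove this by comparing the two operations column by column. For a zigzag path each anchor column is a single vertical stack of floors $0,1,\ldots,p$ capped by the top path $\mu$, and the maximal decreasing sequences are exactly the ribbon-connected staircases produced by the insertion procedure (Proposition \ref{prop:2+2}). The defining data of $\clubsuit$ are the floor of each dot together with the maximal floor $p$ of its column below $\mu$, the latter being read off from the borders of $\mu$ via Proposition \ref{prop:lbrb}.

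The core of the argument is then a dictionary between the poset of maximal decreasing sequences and the floors. I would show that, within a fixed anchor column, the boxes belonging to the various maximal decreasing sequences are stacked in the order given by the poset $\prec$, so that the floor of a box equals its rank among the sequences meeting that column. Reversing $\prec$ while keeping the endpoint columns $c(i_{p})$, $c(j_{p})$ fixed---which is precisely $\heartsuit$---therefore replaces each floor by its complement $p-(\cdot)$ in its column and leaves $\mu$ unchanged, which is precisely $\clubsuit$. In particular this also verifies that $\heartsuit$ preserves the shape in the zigzag case, although it need not do so in general.

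The main obstacle I anticipate is exactly this last dictionary: one must argue carefully that for a zigzag the vertical order of boxes in a common column is faithfully recorded by $\prec$, and that the endpoint-column constraint of $\heartsuit$ is equivalent to the shape-preservation built into $\clubsuit$. The delicate point is that several distinct maximal decreasing sequences may pass through one column at different floors, so the rank-to-floor correspondence, and its compatibility with the fixed borders of $\mu$, has to be established uniformly rather than sequence by sequence. Once this is in place, the equality $\heartsuit=\clubsuit$, and hence the theorem, follows.
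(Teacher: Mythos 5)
Your proposal is correct and follows essentially the same route as the paper: reduce to Proposition \ref{prop:wedgeheart} via $\lambda=\wedge_{(1,\ldots,1)}$ with $\mathrm{co}(T^{\ast})=T^{\ast}$, then identify $\heartsuit$ with $\clubsuit$ on zigzag tableaux. Your column-rank dictionary is the paper's argument in different words --- the paper phrases it as the swap of shadow and clear boxes under reversal of the order of maximal decreasing sequences, with the column height (shadow $+$ clear $+1$) invariant, which is exactly your ``floor equals rank, total preserved'' observation.
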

\begin{proof}
A zigzag path is a path $\wedge_{\mathbf{m}}$ with $\mathbf{m}=(1,\ldots,1)$.
One can apply Proposition \ref{prop:wedgeheart} to a natural label $T$.
Note that we have $\mathrm{co}(T^{\ast})=T^{\ast}$ in case of $\wedge_{\mathbf{m}}$
with $\mathbf{m}=(1,\ldots,1)$.

To show Eqn. (\ref{eqn:DTRclub}), it is enough to show that the shapes of Dyck tableaux
in Eqn. (\ref{eqn:DTRclub}) are the same.
When $\lambda$ is a zigzag path, we have no empty boxes in a Dyck tableau.
Since we reverse the order of maximal decreasing sequences, the shadow (resp. clear) boxes 
of a labeled box in $\mathrm{DTab}(T)$ are bijective to the clear (shadow) boxes of the 
labeled box in $\mathrm{DTab}(T^{\ast})$.
The number of boxes in a column where a labeled box is placed is the sum of 
the numbers of shadow and clear boxes plus one.
This is invariant under the action of $\ast$-operation. This implies that 
the shape of $\mathrm{DTab}(T)^{\clubsuit}$ is the same as that of $\mathrm{DTab}(T^{\ast})$.
\end{proof}

\begin{prop}
The $\clubsuit$-operation on a zigzag path is equivalent to the reverse order left DTR bijection.
When $T$ is a natural label for a zigzag path, we have
\begin{eqnarray}
\label{rel:DtabrlDTR}
\mathrm{DTab}(T)^{\clubsuit}=\mathrm{rlDTR}(T).
\end{eqnarray}
\end{prop}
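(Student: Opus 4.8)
The plan is to reduce the statement to Theorem~\ref{thrm:astclub} and then transport the resulting identity of Dyck tableaux to one of Dyck tilings by means of the map $\phi_{0}$. Theorem~\ref{thrm:astclub} gives $\mathrm{DTab}(T)^{\clubsuit}=\mathrm{DTab}(T^{\ast})$, so it suffices to prove
\[
\mathrm{DTab}(T^{\ast})=\mathrm{rlDTR}(T),
\]
where both sides are read as Dyck tilings over the zigzag path $\lambda$. By Eqn.~\eqref{eqn:DTRDTab} the Dyck tiling $\phi_{0}(\mathrm{DTab}(S))$ equals $\mathrm{DTR}(S)$ for a natural label $S$; the inductive proof of that theorem uses only the common recursive structure of the two insertion procedures (column addition followed by ribbon addition), so I would first note it applies verbatim to the decreasing tree $T^{\ast}$ read in increasing order of its labels. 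The task thus becomes the equality of Dyck tilings $\mathrm{DTR}(T^{\ast})=\mathrm{rlDTR}(T)$.

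First I would record the conjugation rules for the variant DTR bijections, obtained exactly as for the DTS bijections in Section~\ref{sec:DTSDTR}: reversing the insertion order is precomposition with the bar operation, $\mathrm{rDTR}(L)=\mathrm{DTR}(\overline{L})$, while growing ribbons to the left is conjugation by the reflection, $\mathrm{lDTR}(L)=\mathrm{ref}\circ\mathrm{DTR}\circ\mathrm{ref}(L)$. Combining these as in the derivation of $\mathrm{rlDTS}$ yields
\[
\mathrm{rlDTR}(T)=\mathrm{ref}\circ\mathrm{DTR}\circ\mathrm{ref}(\overline{T}).
\]
On the other hand $\ast=\mathrm{bar}\circ\times$ gives $T^{\ast}=\overline{T^{\times}}$, so the left-hand tiling is $\mathrm{DTR}(\overline{T^{\times}})$, and the desired identity reads $\mathrm{DTR}(\overline{T^{\times}})=\mathrm{ref}\circ\mathrm{DTR}\circ\mathrm{ref}(\overline{T})$.

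The heart of the argument is to close this identity using the characterisation of the $\times$-operation by the reflection of the DTR tiling, Theorem~\ref{thrm:timesDTR}, in the operator form $\mathrm{ref}\circ\mathrm{DTR}=\mathrm{DTR}\circ\mathrm{ref}\circ\times$ on natural labels, together with the commutation $\mathrm{bar}\circ\mathrm{ref}=\mathrm{ref}\circ\mathrm{bar}$ and the fact that $\mathrm{ref}$ preserves naturality of a label. The bookkeeping that makes everything close up is the zigzag hypothesis: since $\lambda$ is a zigzag path, $\mathrm{co}(T^{\ast})=T^{\ast}$ (already used in Theorem~\ref{thrm:astclub}), so the maximal-decreasing-sequence data of $T^{\ast}$ coincides column by column with the data produced by reversing the maximal decreasing sequences of $T$, which is precisely the reverse-order, left ribbon placement of $\mathrm{rlDTR}(T)$. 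Matching the ribbon positions edge by edge, as in Proposition~\ref{prop:2+2}, then forces the two tilings to share the same top path, and since a cover-inclusive Dyck tiling over $\lambda$ is determined by its top path together with its nontrivial tiles, the two tilings agree.

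The main obstacle is exactly this orientation bookkeeping: the two sides mix an increasing-versus-decreasing insertion order with a left-versus-right ribbon growth, and the operator formulas above require applying $\mathrm{DTR}$ to non-natural (decreasing) labels, where one must justify that the recursive insertion still obeys the conjugation identities. If the operator calculus proves delicate, the safest route is a parallel induction on the size $n$ using the column-plus-ribbon insertion procedures of both constructions: at each step the added column sits at the same abscissa, the reversal effected by $\ast$ turns the right-growing ribbon attached to the pair $(n,n-1)$ in $\mathrm{DTab}$ into the left-growing ribbon attached to the reversed pair in $\mathrm{rlDTR}$, and one checks that the top paths after the two ribbon additions coincide, with Proposition~\ref{prop:TTR} and the description of ribbons via maximal decreasing sequences supplying the needed count. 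Either way, once the top paths and the nontrivial tiles are matched the equality of tilings, hence Eqn.~\eqref{rel:DtabrlDTR}, follows.
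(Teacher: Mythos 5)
Your reduction of Eqn.~(\ref{rel:DtabrlDTR}) to the identity $\mathrm{DTR}(T^{\ast})=\mathrm{rlDTR}(T)$ via Theorem \ref{thrm:astclub} is legitimate, and the conjugation formula $\mathrm{rlDTR}(T)=\mathrm{ref}\circ\mathrm{DTR}\circ\mathrm{ref}(\overline{T})$ is the correct analogue of the one the paper uses for $\mathrm{rlDTS}$. The gap is in the step where you claim the operator identity "closes up." Writing $T^{\ast}=\overline{T^{\times}}$ and applying Theorem \ref{thrm:timesDTR} to the label $\mathrm{ref}(\overline{T})$ turns your right-hand side into $\mathrm{DTR}\bigl(\mathrm{ref}((\mathrm{ref}(\overline{T}))^{\times})\bigr)$, so since $\mathrm{DTR}$ is injective what you actually need is
\begin{eqnarray*}
\overline{T^{\times}}=\mathrm{ref}\bigl((\mathrm{ref}\circ\mathrm{bar}(T))^{\times}\bigr),
\end{eqnarray*}
that is, that the $\times$-operation commutes with conjugation by $\mathrm{ref}\circ\mathrm{bar}$. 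This is established nowhere; it does not follow from $\mathrm{bar}\circ\mathrm{ref}=\mathrm{ref}\circ\mathrm{bar}$, nor from $(\mathrm{ref}\circ\times)^{2}=\mathrm{id}$ (which only gives $\mathrm{ref}\circ\times\circ\mathrm{ref}=\times^{-1}$, and $\times$ is not an involution in general); and it is essentially of the same difficulty as the statement you are trying to prove. Invoking $\mathrm{co}(T^{\ast})=T^{\ast}$ on a zigzag path does not substitute for a proof of this commutation.

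The paper avoids the operator calculus entirely and argues locally: for each maximal decreasing sequence $\mathrm{Dec}(i,j)$, the standard DTR lays the connecting ribbons from left to right while $\mathrm{rlDTR}$ lays them from right to left, so a label $k$ whose column lies between those of $i$ and $j$ sits above the ribbons in one picture exactly when it sits below them in the other; hence shadow and clear boxes are interchanged while the total number of boxes in each column is preserved. Because a zigzag Dyck tableau has no empty boxes, this forces the top path to be unchanged and sends a dot on the $i$-th floor to the $(p-i)$-th floor, which is precisely the definition of the $\clubsuit$-operation. This local shadow/clear count is the ingredient missing from your argument; your fallback "parallel induction" does not supply it either, because $\mathrm{DTR}$ applied to $T^{\ast}$ and $\mathrm{rlDTR}$ applied to $T$ insert their columns in different orders, so the intermediate tilings need not match step by step.
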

\begin{proof}
Let $\mathrm{Dec}(i,j)$ be a maximal decreasing sequence.
By the standard DTR bijection, we connect the boxes with 
labels in $[i,j]$ by ribbons from left to right.
On the other hand, we connect the boxes with labels $[i,j]$ 
by ribbons from right to left by the reverse order left DTR bijection.
Suppose $k$ is in-between $i$ and $j$ in $T$.
Suppose $k>j$. 
Then, $k$ is above (resp. below) the ribbons associated with $\mathrm{Dec}(i,j)$ 
in case of the standard (resp. reverse order left) DTR bijection.
In case of $k<j$, we have a similar statement.
From these observations, we have 
\begin{eqnarray*}
\mathrm{shadow}(\mathrm{rlDTR}(T))&=&\mathrm{clear}(D), \\
\mathrm{clear}(\mathrm{rlDTR}(T))&=&\mathrm{shadow}(D).
\end{eqnarray*}
To show Eqn. (\ref{rel:DtabrlDTR}), it is enough to show that 
the top path of $\mathrm{DTab}(T)$ is the same as the top path 
of $\mathrm{rlDTR}(T)$.
Since we consider a zigzag path, we have no empty boxes 
in the Dyck tableaux.
The sum of the numbers of shadow and clear boxes in a column is preserved 
by the $\mathrm{rlDTR}$ bijection.
This implies that the top path is invariant under the action of 
the $\mathrm{rlDTR}$ bijection.
\end{proof}

\begin{theorem}
\label{thrm:alphaDTab}
We have the following diagram for a permutation $\sigma$:
\begin{center}
\begin{tikzcd}[row sep=normal,column sep=normal]
\sigma \arrow[r,"\mathrm{bar}"] \arrow[d,"\mathrm{DTab}"] 
& S \arrow[r,"\mathrm{ref}"] 
& S'\arrow[d,"\mathrm{DTab}"] 
\\
D_{1} \arrow[r,"\clubsuit"] 
& D_{1}' \arrow[r,"\mathrm{ref}"] 
& D_{2}
\end{tikzcd}
\end{center}
\end{theorem}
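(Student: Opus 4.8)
The plan is to show that the outer rectangle of the diagram commutes, which amounts to the single identity
\[
\mathrm{DTab}(S') = \mathrm{ref}(\clubsuit(\mathrm{DTab}(\sigma))),
\]
where $S' = \mathrm{ref}(\mathrm{bar}(\sigma))$ is the top-right label and both sides equal the bottom-right tableau $D_{2}$. Since every arrow is $\mathrm{DTab}$, $\clubsuit$, $\mathrm{ref}$ or $\mathrm{bar}$, and $\lambda$ is a zigzag path so that $\mathrm{Tree}(\lambda)$ is a star and every labeling of it is simultaneously a natural label and a decreasing tree, I can carry the whole argument through the operations $\ast$ and $\times$ on permutations, invoking the identification $\mathrm{DTR} = \phi_{0}\circ\mathrm{DTab}$ whenever a tiling-level statement is needed.

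First I would rewrite the right-hand side. By Theorem \ref{thrm:astclub} the action of $\clubsuit$ on a Dyck tableau of a zigzag path is the action of $\ast$ on its label, so $\clubsuit(\mathrm{DTab}(\sigma)) = \mathrm{DTab}(\sigma^{\ast})$ and hence the right-hand side equals $\mathrm{ref}(\mathrm{DTab}(\sigma^{\ast}))$. Next I would transport the commutation relation of Theorem \ref{thrm:timesDTR} from the DTR/tiling level to the DTab/tableau level: reading off its diagram gives $\mathrm{ref}(\mathrm{DTR}(T)) = \mathrm{DTR}(\mathrm{ref}(T^{\times}))$ for a natural label $T$, and since $\phi_{0}$ is a bijection that intertwines reflection of tableaux with reflection of tilings, the same identity holds verbatim with $\mathrm{DTR}$ replaced by $\mathrm{DTab}$. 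Applying it with $T = \sigma^{\ast}$, which is legitimate because on the star tree $\sigma^{\ast}$ is again a natural label, yields $\mathrm{ref}(\mathrm{DTab}(\sigma^{\ast})) = \mathrm{DTab}(\mathrm{ref}((\sigma^{\ast})^{\times}))$.

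It then remains to identify $(\sigma^{\ast})^{\times}$ with $\mathrm{bar}(\sigma)$. Using $\times = \mathrm{bar}\circ\ast$ I have $(\sigma^{\ast})^{\times} = \overline{(\sigma^{\ast})^{\ast}}$, so the point is that $\ast$ is an involution on permutations. I would deduce this from the fact that $\clubsuit$ is an involution: it sends a dot in floor $i$ to floor $p-i$ while preserving the top path, hence the highest floor $p$, so $\clubsuit^{2} = \mathrm{id}$; combining this with Theorem \ref{thrm:astclub} and the injectivity of $\mathrm{DTab}$ gives
\[
\mathrm{DTab}((\sigma^{\ast})^{\ast}) = \mathrm{DTab}(\sigma^{\ast})^{\clubsuit} = (\mathrm{DTab}(\sigma)^{\clubsuit})^{\clubsuit} = \mathrm{DTab}(\sigma),
\]
whence $(\sigma^{\ast})^{\ast} = \sigma$. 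Therefore $(\sigma^{\ast})^{\times} = \overline{\sigma} = \mathrm{bar}(\sigma)$ and $\mathrm{ref}((\sigma^{\ast})^{\times}) = \mathrm{ref}(\mathrm{bar}(\sigma)) = S'$, so $\mathrm{ref}(\mathrm{DTab}(\sigma^{\ast})) = \mathrm{DTab}(S')$, which is exactly the required identity $\mathrm{ref}(D_{1}') = D_{2} = \mathrm{DTab}(S')$.

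The step I expect to be the main obstacle is the careful interpretation of the operations across the two models: one must check that $\sigma^{\ast}$, produced by $\ast$ as a decreasing tree, may indeed be fed into the $\times$-operation and into Theorem \ref{thrm:timesDTR} as a natural label, which is valid precisely because $\lambda$ is a zigzag path, and that the reflection appearing in Theorem \ref{thrm:timesDTR}, defined on Dyck tilings, matches the reflection applied to Dyck tableaux in the present diagram under $\phi_{0}$. Establishing the involutivity of $\ast$ rigorously and confirming that $\phi_{0}$ commutes with reflection are the two technical checks on which the whole chain of equalities rests; everything else is formal substitution.
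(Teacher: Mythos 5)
Your proof follows essentially the same route as the paper's: both reduce the diagram to the identity $\mathrm{DTab}^{-1}\circ\mathrm{ref}\circ\clubsuit\circ\mathrm{DTab}=\mathrm{ref}\circ\times\circ\ast=\mathrm{ref}\circ\mathrm{bar}$ using Theorems \ref{thrm:astclub} and \ref{thrm:timesDTR} together with $\times=\mathrm{bar}\circ\ast$ and $\ast^{2}=\mathrm{id}$ on permutations. The only differences are that you derive $\ast^{2}=\mathrm{id}$ from the involutivity of $\clubsuit$ and the injectivity of $\mathrm{DTab}$, and that you explicitly flag the transfer of Theorem \ref{thrm:timesDTR} from the DTR/tiling level to the DTab level via $\phi_{0}$ --- both points the paper leaves implicit.
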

\begin{proof}
Since $\sigma$ is a permutation, the action of $\ast$-operation is equivalent to 
the $\clubsuit$-operation from Theorem \ref{thrm:astclub}.
The reflection of a Dyck tiling is equivalent to a composition $\mathrm{ref}\circ\times$
on $\sigma$ from Theorem \ref{thrm:timesDTR}.
Thus, the composition of $\mathrm{ref}\circ\clubsuit$ is written as 
\begin{eqnarray*}
\mathrm{DTab}^{-1}\circ\mathrm{ref}\circ\clubsuit\circ\mathrm{DTab}
&=&\mathrm{ref}\circ\times\circ\ast, \\
&=&\mathrm{ref}\circ\mathrm{bar}\circ\ast\circ\ast, \\
&=&\mathrm{ref}\circ\mathrm{bar},
\end{eqnarray*}
where we have used $\ast^{2}=\mathrm{id}$ on a permutation.
\end{proof}

\begin{cor}
Let $T$ be a natural label associated with a zigzag path.
We have the following diagram:
\begin{center}
\begin{tikzcd}
T \arrow[r,"\mathrm{bar}"]\arrow[d,"\mathrm{ref}"] & S\arrow[r,"\mathrm{DTab}"] & D_{1} \\
T' \arrow[r,"\mathrm{DTab}"] & D_{2}\arrow[r,"\mathrm{ref}"] & D_{2}' \arrow[u,"\clubsuit"]
\end{tikzcd}
\end{center}
\end{cor}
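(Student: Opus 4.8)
The plan is to deduce this commuting square from Theorem \ref{thrm:alphaDTab} together with the fact that the column-wise reflection $\clubsuit$ and the left–right reflection $\mathrm{ref}$ commute. First I would note that a natural label $T$ on a zigzag path is exactly a permutation, so Theorem \ref{thrm:alphaDTab} applies. Reading the commuting square of that theorem as an operator identity on permutations (top row versus the left–down–right path) gives
\begin{eqnarray*}
\mathrm{DTab}\circ\mathrm{ref}\circ\mathrm{bar}=\mathrm{ref}\circ\clubsuit\circ\mathrm{DTab}.
\end{eqnarray*}

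Next I would apply this identity to the permutation $\mathrm{ref}(T)$. Since the bar operation and the reflection act on disjoint data — the bar operation rescales the label value by $\ell\mapsto n+1-\ell$ while $\mathrm{ref}$ only permutes the edge positions — they commute, so $\overline{\mathrm{ref}(T)}=\mathrm{ref}(\overline{T})$; combining this with $\mathrm{ref}^{2}=\mathrm{id}$ the left-hand side collapses and one obtains
\begin{eqnarray*}
\mathrm{DTab}(\overline{T})=\mathrm{ref}\bigl(\clubsuit(\mathrm{DTab}(\mathrm{ref}(T)))\bigr).
\end{eqnarray*}
This is the asserted equality $\mathrm{DTab}(\mathrm{bar}(T))=\clubsuit(\mathrm{ref}(\mathrm{DTab}(\mathrm{ref}(T))))$ except that $\clubsuit$ and $\mathrm{ref}$ appear in the opposite order.

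The main step — and the only one requiring genuine work — is therefore the claim that $\clubsuit\circ\mathrm{ref}=\mathrm{ref}\circ\clubsuit$ on Dyck tableaux for a zigzag path, which I would argue geometrically. For a zigzag path the lower path $(UD)^{n}$ is mirror-symmetric, so $\mathrm{ref}$ sends zigzag Dyck tableaux to zigzag Dyck tableaux and sends the top path $\mu$ to $\overline{\mu}$; being a rigid left–right flip it preserves every height, hence preserves the floor index of each dot and sends the dot in column $c$ to its mirror column $c'$. In particular the highest floor $p_{c}$ lying below $\mu$ in column $c$ equals the highest floor lying below $\overline{\mu}$ in the mirror column $c'$. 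Since $\clubsuit$ fixes the top path and acts independently in each column by the involution $i\mapsto p_{c}-i$ on floor indices, evaluating $\mathrm{ref}\circ\clubsuit$ and $\clubsuit\circ\mathrm{ref}$ on a dot at floor $i$ in column $c$ both produce a dot at floor $p_{c}-i$ in the mirror column $c'$, with final top path $\overline{\mu}$ in either order. Hence the two operations commute, and substituting $\mathrm{ref}\circ\clubsuit=\clubsuit\circ\mathrm{ref}$ into the displayed equality yields exactly the commutativity of the diagram. The delicate point to check carefully is that $\mathrm{ref}$ really preserves the floor stratification (floor $i$ corresponds to height $2i$, so heights are fixed) and that it transports $p_{c}$ to the corresponding quantity in the mirror column; once this bookkeeping is in place the argument is routine.
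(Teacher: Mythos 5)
Your proposal is correct and follows essentially the same route as the paper: apply Theorem \ref{thrm:alphaDTab} to $\mathrm{ref}(T)$, cancel using $\mathrm{ref}\circ\mathrm{bar}\circ\mathrm{ref}=\mathrm{bar}$, and then commute $\clubsuit$ past $\mathrm{ref}$. The only difference is cosmetic: the paper justifies that last step by the commutativity of $\mathrm{ref}$ and $\ast$ on natural labels (transported to tableaux via Theorem \ref{thrm:astclub}), whereas you verify $\clubsuit\circ\mathrm{ref}=\mathrm{ref}\circ\clubsuit$ directly and geometrically on the tableaux; both are valid.
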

\begin{proof}
The involutions $\mathrm{bar}$ and $\mathrm{ref}$ commute with each other, and the involutions
$\mathrm{ref}$ and $\ast$ commute with each other.
From Theorem \ref{thrm:alphaDTab}, we obtain the diagram.
\end{proof}

\subsection{The maps from a Dyck tableau to the extreme Dyck tableaux for a zigzag path}
Given a Dyck tableau $D$ for a zigzag path, we have two extreme Dyck tableaux of the same 
shape as $D$.
The first one is a unique Dyck tableau with all dots in the highest floors, and the second one 
is with all dots in the lowest floors. 
We denote by $D_{\vee}$ (resp. $D_{\wedge}$) the first (resp. second) extreme Dyck tableau. 
Figure \ref{fig:exDT} is examples of extreme Dyck tableaux.
\begin{figure}[ht]
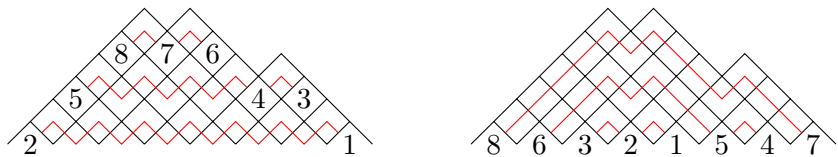

\tikzpic{-0.5}{[x=0.3cm,y=0.3cm]
\draw(0,0)--(6,6)--(12,0)--(14,2)--(16,0)(2,0)--(8,6)--(14,0)--(15,1)
     (4,0)--(9,5)(6,0)--(10,4)(8,0)--(12,4)--(14,2)(10,0)--(13,3)
     (1,1)--(2,0)(2,2)--(4,0)(3,3)--(6,0)(4,4)--(8,0)(5,5)--(10,0);
\draw(1,0)node{2}(3,2)node{5}(5,4)node{8}(7,4)node{7}(9,4)node{6}(11,2)node{4}
     (13,2)node{3}(15,0)node{1};
\draw[red](1.5,0.5)--(2,1)--(3,0)--(4,1)--(5,0)--(6,1)--(7,0)--(8,1)--
          (9,0)--(10,1)--(11,0)--(12,1)--(13,0)--(14,1)--(14.5,0.5)
          (3.5,2.5)--(4,3)--(5,2)--(6,3)--(7,2)--(8,3)--(9,2)--(10,3)--(10.5,2.5)
          (11.5,2.5)--(12,3)--(12.5,2.5)
          (5.5,4.5)--(6,5)--(6.5,4.5)(7.5,4.5)--(8,5)--(8.5,4.5);
}
\qquad
\tikzpic{-0.5}{[x=0.3cm,y=0.3cm]
\draw(0,0)--(6,6)--(12,0)--(14,2)--(16,0)(2,0)--(8,6)--(14,0)--(15,1)
     (4,0)--(9,5)(6,0)--(10,4)(8,0)--(12,4)--(14,2)(10,0)--(13,3)
     (1,1)--(2,0)(2,2)--(4,0)(3,3)--(6,0)(4,4)--(8,0)(5,5)--(10,0);
\draw(1,0)node{8}(3,0)node{6}(5,0)node{3}(7,0)node{2}(9,0)node{1}
     (11,0)node{5}(13,0)node{4}(15,0)node{7};
\draw[red](1.5,0.5)--(6,5)--(7,4)--(8,5)--(11,2)--(12,3)--(14.5,0.5)
          (3.5,0.5)--(6,3)--(7,2)--(8,3)--(10.5,0.5)(11.5,0.5)--(12,1)--(12.5,0.5)
          (5.5,0.5)--(6,1)--(6.5,0.5)(7.5,0.5)--(8,1)--(8.5,0.5);
}
\caption{Examples of extreme Dyck tableaux: $D_{\vee}$ (left picture) and 
        $D_{\wedge}$ (right picture). }
\label{fig:exDT}
\end{figure}

Let $\sigma$ be a permutation of length $n$.
When we have a maximal decreasing sequences $\mathrm{Dec}(i,j)$ in $\sigma$, 
we connect the positions $\sigma^{-1}(j), \sigma^{-1}(j-1),\ldots,\sigma^{-1}(i)$ 
by an arch.
We define the size of an arch corresponding to $\mathrm{Dec}(i,j)$ as $j-i+1$.
When $j=i$, we have an arch of size one. 
We do not depict anything for an arch of size one. 
The {\it skeleton} of $\sigma$ is a collection of arches associated with maximal 
decreasing sequences.
For example, when $\sigma=64835271$, the maximal decreasing sequences
are $\mathrm{Dec}(1,4)$, $\mathrm{Dec}(5,6)$ and $\mathrm{Dec}(7,8)$.
The skeleton of $\sigma$ is depicted as 
\begin{eqnarray*}
\tikzpic{-0.5}{[x=0.4cm,y=0.4cm]
\draw(3,0)--(3,1)--(7,1)--(7,0)
     (2,0)--(2,2)--(8,2)--(8,0)(4,0)--(4,2)(6,0)--(6,2)
     (1,0)--(1,3)--(5,3)--(5,0);
}
\end{eqnarray*}

Since a skeleton consists of arches and arches may cross, 
a skeleton has trivalent vertices and tetravalent vertices.
A tetravalent vertex corresponds to an intersection of two arches.
Given two maximal decreasing sequences $\mathrm{Dec}(i_1,j_1)$ and $\mathrm{Dec}(i_2,j_2)$,
we have three types of configurations as follows.
\begin{enumerate}
\item $\mathrm{Dec}(i_1,j_1)$ is strictly right to $\mathrm{Dec}(i_2,j_2)$, which means 
$\mathrm{Dec}(i_2,j_2)\rightarrow\mathrm{Dec}(i_1,j_1)$, or equivalently 
$\sigma^{-1}(j_2)<\sigma^{-1}(i_2)<\sigma^{-1}(j_1)<\sigma^{-1}(i_1)$. 
\item $\mathrm{Dec}(i_1,j_1)$ is weakly right to $\mathrm{Dec}(i_2,j_2)$, which means
$\sigma^{-1}(j_2)<\sigma^{-1}(j_1)<\sigma^{-1}(i_2)<\sigma^{-1}(i_1)$.
\item $\mathrm{Dec}(i_1,j_1)$ is inclusive to $\mathrm{Dec}(i_2,j_2)$, which means 
$\sigma^{-1}(j_2)<\sigma^{-1}(j_1)<\sigma^{-1}(i_1)<\sigma^{-1}(i_2)$.	
\end{enumerate}
When $\mathrm{Dec}(i_1,j_1)$ is strictly right to $\mathrm{Dec}(i_2,j_2)$, there is no
intersection between these two arches.
When $\mathrm{Dec}(i_1,j_1)$ is weakly right to $\mathrm{Dec}(i_2,j_2)$,
there is at least one intersection between these two arches.

In the picture of a skeleton, we depict an arch associated with $\mathrm{Dec}(i_1,j_1)$
inside of an arch associated with $\mathrm{Dec}(i_2,j_2)$ if $\mathrm{Dec}(i_1,j_1)$ 
is inclusive to $\mathrm{Dec}(i_2,j_2)$.
Similarly, if $\mathrm{Dec}(i_1,j_1)$ is weakly right to $\mathrm{Dec}(i_2.j_2)$, 
we depict the arch associated to $\mathrm{Dec}(i_2,j_2)$ is above the arch associated 
to $\mathrm{Dec}(i_1,j_1)$. 
Here, an arch $a$ is above an arch $b$ means that all the vertices of $a$ are above 
all the vertices of $b$. 

We transform a tetravalent vertex into two lines or a trivalent vertex by reconnecting 
the two arches.
Given two arches with tetravalent vertices, the right-most tetravalent vertex is said 
to be {\it special}.
When one of two arches is weakly right to another arch, 
we reconnect the two arches by changing the special tetravalent vertex into 
two lines. Pictorially, we have  
\begin{eqnarray*}
\tikzpic{-0.5}{
\draw[thick](0,0)--(2,0)(1,1)--(1,-1);
%\draw(1,0)node{$\circ$};
}\xrightarrow{\hspace{1cm}}
\tikzpic{-0.5}{
\draw[thick](0,0)..controls(0.5,0)and(1,-0.5)..(1,-1);
\draw[thick](1,1)..controls(1,0.5)and(1.5,0)..(2,0);
}
\end{eqnarray*}
When an arch is inclusive to another arch, we transform 
the special tetravalent vertex (if it exits) into a trivalent 
vertex. Pictorially, we have 
\begin{eqnarray*}
\tikzpic{-0.5}{
\draw[thick](0,0)--(2,0)(1,1)--(1,-1)(0,1)--(2,1);
}\xrightarrow{\hspace{1cm}}
\tikzpic{-0.5}{
\draw[thick](0,1)--(2,1);
\draw[thick](0,0)--(2,0)(1,0)--(1,-1);
}
\end{eqnarray*}
Note that we have a trivalent vertex above the special tetravalent vertex
since an arch below the special vertex is inclusive to an arch above the 
special vertex.
We call this operation on a tetravalent vertex a {\it resolution of a tetravalent vertex}.

We may have several special tetravalent vetices in a skeleton.
We perform the resolution on tetravalent vertices, and arrive at a skeleton 
without tetravalent vertices.
We call a skeleton without tetravalent vertices a {\it fundamental skeleton}.
The following proposition insures the existence of a unique fundamental skeleton
obtained from a skeleton by resolutions.	 
\begin{prop}
Suppose that a skeleton has more than one tetravalent vertices.
If we perform resolutions on the skeleton in any order of choices of special vertices, 
we arrive at the same fundamental skeleton.
\end{prop}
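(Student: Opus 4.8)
The plan is to regard the resolution procedure as an abstract rewriting system on the set of skeletons, whose rewrite steps are the resolutions of special tetravalent vertices, and to prove that this system is both terminating and locally confluent. Newman's lemma then promotes local confluence to global confluence, yielding a unique normal form; since the normal forms are precisely the skeletons with no tetravalent vertex, i.e. the fundamental skeletons, this gives exactly the claimed independence of the order in which special vertices are chosen.

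First I would establish termination. To this end I define a complexity measure $c(K)$ of a skeleton $K$, namely the total number of tetravalent vertices, equivalently the number of pairwise intersections of arches counted with multiplicity. I claim that a single resolution strictly decreases $c(K)$: resolving a special vertex replaces one tetravalent vertex either by two disjoint line segments or by a trivalent vertex, so it removes that crossing, and --- this is the point that must be verified --- it creates no new crossing, because the two local reconnections displayed before the statement keep each reconnected strand on the same side of every other arch. Hence $c(K)$ drops by exactly one at each step, the procedure halts after finitely many resolutions, and a skeleton is a normal form exactly when $c(K)=0$, that is, when it is a fundamental skeleton.

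The heart of the argument is local confluence: if from a skeleton $K$ two different special vertices $v_1$ and $v_2$ admit resolutions producing $K_1$ and $K_2$, then $K_1$ and $K_2$ can be reduced to a common skeleton $K_3$. I would organize the verification by the combinatorial relation between the two arch pairs carrying $v_1$ and $v_2$. When the four arches involved are pairwise distinct and the two special vertices lie in independent local patches, the two resolutions commute on the nose, so $K_1$ and $K_2$ both reduce in one step to the same $K_3$. The delicate cases are those in which the two resolutions share an arch, or in which resolving $v_1$ changes which intersection of a pair is right-most and hence alters the special vertex that $v_2$ would have been. Here I would invoke the explicit trichotomy recorded before the statement --- $\mathrm{Dec}(i_1,j_1)$ strictly right to, weakly right to, or inclusive to $\mathrm{Dec}(i_2,j_2)$ --- together with the fact that a resolution reconnects strands only at the right-most crossing, to show that after resolving $v_1$ the induced special vertex near $v_2$ is resolved in a compatible way, and symmetrically. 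Checking that every such local divergence closes up, and in particular that no configuration of three mutually intersecting arches produces an irreducible discrepancy, is the main obstacle I anticipate.

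Finally, with termination and local confluence in hand, Newman's lemma (the diamond lemma for terminating relations) gives global confluence, so any two maximal sequences of resolutions starting from the same skeleton terminate at the same normal form. As these normal forms are exactly the fundamental skeletons, the fundamental skeleton obtained does not depend on the order in which special tetravalent vertices are resolved, which is the assertion of the proposition.
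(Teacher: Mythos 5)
Your overall architecture --- view resolution as a rewriting system, show termination via the count of tetravalent vertices, show local confluence, and invoke Newman's lemma --- is essentially the paper's argument made explicit: the paper likewise reduces the proposition to the claim that two successive resolutions commute (the diamond property) and relies implicitly on the fact that each resolution removes one tetravalent vertex. The termination half of your plan is sound, since a resolution modifies the diagram only in a small neighbourhood of the resolved vertex, so every other crossing persists and no new one is created.

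The problem is that the half of the argument where all the content lives is not actually carried out. You write that checking the overlapping cases --- two special vertices on a common arch, or a resolution at $v_1$ altering which intersection of another arch pair is right-most and hence which vertex is special --- ``is the main obstacle I anticipate,'' and then you stop. As it stands the proposal reduces the proposition to a case analysis that is named but never performed, so it does not yet constitute a proof. For comparison, the paper closes this case by a locality argument: when the two special vertices $s_1$ and $s_2$ lie on a common arch (the only nontrivial overlap, up to symmetry), the two reconnections act on disjoint local patches of the diagram, so performing them in either order yields the same skeleton. To complete your write-up along these lines you would need to verify two things that you currently only gesture at: first, that resolving $s_1$ does not disturb the status of $s_2$ as the special (right-most) vertex of its arch pair, nor change whether that pair is in the ``weakly right'' or ``inclusive'' relation (which determines which of the two local moves is applied at $s_2$); and second, that the two local replacements commute on the shared arch. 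Until that verification is supplied, the proof has a genuine gap at exactly the step the proposition is about.
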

\begin{proof}
To show Proposition, it is enough to show that successive application of resolutions 
of two special vertices does not depend on the order of the choice of these two special vertices.
Let $s_{1}$ and $s_{2}$ be special vertices.
Suppose that $s_{1}$ (resp. $s_{2}$) is the intersection of two arched associated 
with $\mathrm{Dec}(i_1,j_1)$ and $\mathrm{Dec}(i_2,j_2)$ 
(resp. $\mathrm{Dec}(i_3,j_3)$ and $\mathrm{Dec}(i_4,j_4)$).
When all $i_1, i_2, i_3$ and $i_4$ are distinct, it is obvious that the resolutions 
does not depend on the order of choices of special vertices.
Below, we assume that $i_1=i_4$ and $j_1=j_4$ without loss of generality.
We have two special vertices on an arch associated with $\mathrm{Dec}(i_1,j_1)$.
The resolutions at $s_1$ and $s_2$ change a skeleton into another skeleton locally,
which means that the order of choices of special vertices $s_1$ and $s_2$ 
does not effect the new skeleton.
\end{proof}

Figure \ref{fig:res} is an example of resolutions on the skeleton for 
$\sigma=64837251$.
\begin{figure}[ht]
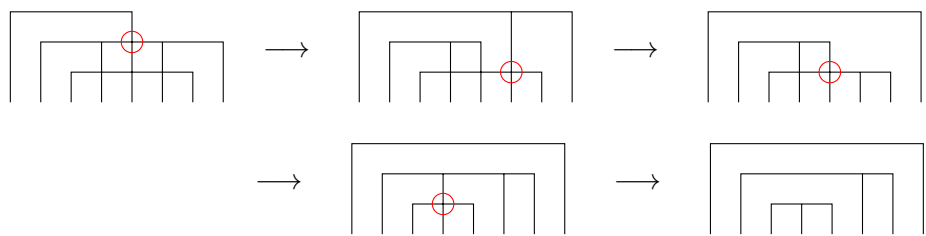

\tikzpic{-0.5}{[x=0.4cm,y=0.4cm]
\draw(0,0)--(0,3)--(4,3)--(4,0)(1,0)--(1,2)--(7,2)--(7,0)(3,2)--(3,0)(5,2)--(5,0)
     (2,0)--(2,1)--(6,1)--(6,0);
\draw(4,2)node[red,anchor=center,circle,inner sep=1mm,draw]{};
}\quad$\longrightarrow$\quad 
\tikzpic{-0.5}{[x=0.4cm,y=0.4cm]
\draw(0,0)--(0,3)--(7,3)--(7,0)(5,3)--(5,0)(1,0)--(1,2)--(4,2)--(4,0)(3,2)--(3,0)
     (2,0)--(2,1)--(6,1)--(6,0);
\draw(5,1)node[red,anchor=center,circle,inner sep=1mm,draw]{};
}\quad$\longrightarrow$\quad 
\tikzpic{-0.5}{[x=0.4cm,y=0.4cm]
\draw(0,0)--(0,3)--(7,3)--(7,0)(1,0)--(1,2)--(4,2)--(4,0)(3,2)--(3,0)
     (2,0)--(2,1)--(6,1)--(6,0)(5,1)--(5,0);
\draw(4,1)node[red,anchor=center,circle,inner sep=1mm,draw]{};
} \\[5mm]
\hspace*{3cm}\quad$\longrightarrow$\quad 
\tikzpic{-0.5}{[x=0.4cm,y=0.4cm]
\draw(0,0)--(0,3)--(7,3)--(7,0)(1,0)--(1,2)--(6,2)--(6,0)(3,2)--(3,0)(5,2)--(5,0)
     (2,0)--(2,1)--(4,1)--(4,0);
\draw(3,1)node[red,anchor=center,circle,inner sep=1mm,draw]{};
}
\quad$\longrightarrow$\quad 
\tikzpic{-0.5}{[x=0.4cm,y=0.4cm]
\draw(0,0)--(0,3)--(7,3)--(7,0)(1,0)--(1,2)--(6,2)--(6,0)(5,2)--(5,0)
     (2,0)--(2,1)--(4,1)--(4,0)(3,1)--(3,0);
}
\caption{An example of resolutions of the skeleton for $\sigma=64837251$.
A tetravalent vertex with a red circle is a special vertex.}
\label{fig:res}
\end{figure}

Once a skeleton $S$ given, one has a permutation $\sigma_{S}$ whose skeleton is $S$.
Note that we may have several such permutations, but at least one permutation for $S$.
\begin{prop}
\label{prop:sketop}
Let $S$ be a skeleton and $\sigma_{S}$ be a corresponding permutation.
We denote by $S'$ a skeleton obtained from $S$ by a resolution.
Then, the top path of $\mathrm{DTR}(\sigma_{S})$ is the same 
as the one of $\mathrm{DTR}(\sigma_{S'})$.
\end{prop}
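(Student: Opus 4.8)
The plan is to reduce the claim about the top path to a statement purely about the maximal decreasing sequences of $\sigma$, and then to check that a single resolution leaves that combinatorial data invariant column by column.

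First I would recall that for a zigzag path the DTR bijection and the Dyck tableau carry the same top path: by the theorem identifying $\mathrm{DTR}(T)$ with $\phi_{0}(\mathrm{DTab}(T))$, the top path $\mu$ of $\mathrm{DTR}(\sigma)$ is exactly the top path of $\mathrm{DTab}(\sigma)$. Since $\lambda$ is a zigzag path, the tree $\mathrm{Tree}(\lambda)$ is a root with $n$ leaf-edges, so the relations $e\uparrow e'$ are vacuous and Proposition \ref{prop:lbrb} simplifies: writing $\ell=\sigma(p)$ for the label in column $p$, the two steps of $\mu$ in column $p$ are $\mathrm{lb}(e(\ell))\,\mathrm{rb}(e(\ell))$, and these equal $U$ (resp. $D$) precisely according to whether $\ell$ is, or is not, the maximal (resp. minimal) element of the maximal decreasing sequence of $\sigma$ containing it. Consequently each column carries one of four profiles — $UD$ if $\ell$ is a singleton sequence, $UU$ if $\ell$ is the top of a longer sequence, $DD$ if it is the bottom, and $DU$ if it is internal — and $\mu$ is simply the left-to-right concatenation of these per-column profiles.

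Thus the top path is completely determined by the function assigning to each column its role (top / bottom / internal / singleton) among the maximal decreasing sequences, that is, by the skeleton $S$ together with the columns of its arch-vertices; in particular it does not depend on the choice of $\sigma_{S}$. The remaining and main step is to establish the key statement: a resolution of a special tetravalent vertex preserves, column by column, this role function. To prove it I would treat the two kinds of resolution separately, using the trichotomy (strictly right / weakly right / inclusive) set up before the statement. In the weakly-right case the special (rightmost) crossing of the two arches $\mathrm{Dec}(i_{1},j_{1})$ and $\mathrm{Dec}(i_{2},j_{2})$ is replaced by two non-crossing strands; I would check that this reconnection keeps the two leftmost vertices (columns $\sigma^{-1}(j_{1}),\sigma^{-1}(j_{2})$) as the tops of the two new arches and the two rightmost vertices (columns $\sigma^{-1}(i_{1}),\sigma^{-1}(i_{2})$) as their bottoms, while every other vertex stays internal, so no column changes role. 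In the inclusive case the crossing is turned into a trivalent vertex, and I would verify likewise that the sets of top-columns, bottom-columns and internal-columns are unchanged, the nesting forcing the reconnected strands to retain the same extremal vertices. Singleton sequences carry no crossings and are untouched. Combining this with the first paragraph, the per-column profiles of $\mathrm{DTR}(\sigma_{S})$ and $\mathrm{DTR}(\sigma_{S'})$ agree, hence their top paths coincide.

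I expect the main obstacle to be the bookkeeping in the key statement: making the reconnection at the special vertex precise enough to see that the extremal (leftmost/rightmost) vertices of arches — which govern the $U/D$ choices through Proposition \ref{prop:lbrb} — are genuinely preserved, especially in the inclusive/trivalent case, where the combinatorial meaning of the new vertex as a merge of two decreasing runs must be matched against the column-role description. Everything else follows directly from Proposition \ref{prop:lbrb} and the earlier identification of $\mathrm{DTR}$ with $\phi_{0}\circ\mathrm{DTab}$.
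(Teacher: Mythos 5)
Your proposal follows essentially the same route as the paper's proof: both reduce the top path, via Proposition \ref{prop:lbrb}, to the per-column data of whether each label is the maximum, minimum, or an internal element of its maximal decreasing sequence, and then observe that a resolution only reconnects arches at the special vertex while preserving exactly this column-by-column role information. Your explicit listing of the four column profiles ($UU$, $DD$, $DU$, $UD$) is a slightly more detailed packaging of the same argument, and the remaining bookkeeping you flag as the main obstacle is treated at a comparable level of detail in the paper itself.
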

\begin{proof}
A resolution of a special point means that we locally change the connectivity 
of arches at the special point.
Suppose that $\sigma_{S}(i)=j$.
From Proposition \ref{prop:lbrb}, the top path of $S$ and $S'$ depends only on
the relative positions of $j-1$, $j$ and $j+1$.
The new connectivity of arches after the resolution may change $\sigma_{S}(i)=j$
to $\sigma_{S'}(i)=j'$.
When we have an arch contains $j$, then $j+1$ is to the left of $j$, or $j-1$ is to 
the right of $j$.
Similarly, when an arch contains $j'$, then $j'+1$ is to the left of $j'$, or 
$j'-1$ is to the right of $j'$.
The resolution preserves the relative positions of $j$ and $j+1$, thus those of $j'$ and $j'+1$,
or those of $j$ and $j-1$, thus those of $j'$ and $j'-1$.
Therefore, the top paths of $\mathrm{DTR}(\sigma_{S})$ and $\mathrm{DTR}(\sigma_{S'})$ 
are the same. 
\end{proof}

We construct two permutations for the extreme Dyck tableaux from 
a fundamental skeleton as follows.
We denote by $\sigma_{\vee}$ (resp. $\sigma_{\wedge}$) a permutation 
corresponding to the extreme Dyck tableau $D_{\vee}$ (resp. $D_{\wedge}$).
By definition of fundamental skeletons, an arch is inclusive to 
or strictly right to another arch since we have no tetravalent 
vertices.
Suppose an arch $y$ is right to or inclusive to another arch $x$.
We denote by $x\twoheadrightarrow y$ this relation. 
Then, if a fundamental skeleton consists of arches $\{x_1,\ldots,x_{r}\}$, 
we have a unique chain 
\begin{eqnarray}
\label{eqn:chainskeleton}
x_r\twoheadrightarrow x_{r-1}\twoheadrightarrow \cdots\twoheadrightarrow x_{1}.
\end{eqnarray}
We denote by $|x_{i}|$ the size of the arch $x_{i}$.

\paragraph{\bf Construction of $\sigma_{\vee}$}
Given $x_{i}$ for $1\le i\le r$, we define 
\begin{eqnarray*}
p&:=&\sum_{k=1}^{i-1}|x_{k}|+1, \\
q&:=&\sum_{k=1}^{i}|x_{k}|.
\end{eqnarray*}
The arch $x_{i}$ corresponds to the maximal decreasing sequence 
$\mathrm{Dec}(p,q)$.
Once we have a correspondence between an arch and a maximal decreasing
sequence, we get a permutation $\sigma_{\vee}$ in this way.

\paragraph{\bf Construction of $\sigma_{\wedge}$}
Given $x_{i}$ for $1\le i\le r$, we define 
\begin{eqnarray*}
p'&:=&n+1-\sum_{k=1}^{i}|x_{k}|, \\
q'&:=&n-\sum_{k=1}^{i-1}|x_{k}|.
\end{eqnarray*}
The arch $x_{i}$ corresponds to the maximal decreasing sequence 
$\mathrm{Dec}(p',q')$.
We get a permutation $\sigma_{\wedge}$ in a similar way to $\sigma_{\vee}$.

For example, let $\sigma=64837251$. 
Then, We have $\sigma_{\vee}=25876431$ and $\sigma_{\wedge}=86321547$.

\begin{theorem}
Let $\sigma_{\vee}$ and $\sigma_{\wedge}$ be permutations constructed as above.
Then, we have 
\begin{eqnarray*}
D_{\vee}=\mathrm{DTR}(\sigma_{\vee}), \\
D_{\wedge}=\mathrm{DTR}(\sigma_{\wedge}).
\end{eqnarray*}
\end{theorem}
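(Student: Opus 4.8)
The plan is to prove the two identities together by establishing that the permutations $\sigma_\vee$ and $\sigma_\wedge$ constructed from the fundamental skeleton are exactly the permutations whose Dyck tableaux have all dots in the highest (respectively lowest) floors, while sharing the same top path (shape) as $\mathrm{DTab}(\sigma)$. The key conceptual point, already prepared by Proposition \ref{prop:sketop}, is that passing from the skeleton $S$ of $\sigma$ to its fundamental skeleton via resolutions does not change the top path of the associated DTR-image. So first I would record that $\mathrm{DTR}(\sigma)$, $\mathrm{DTR}(\sigma_\vee)$, and $\mathrm{DTR}(\sigma_\wedge)$ all have the same top path: $\sigma$ and $\sigma_\vee$ (resp.\ $\sigma_\wedge$) have skeletons that resolve to the \emph{same} fundamental skeleton, hence by Proposition \ref{prop:sketop} applied repeatedly along any resolution sequence, their DTR top paths coincide.

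Next I would verify that $\mathrm{DTR}(\sigma_\vee)$ realizes the extreme tableau $D_\vee$, i.e.\ that every dot sits in the highest possible floor compatible with the fixed top path. The construction of $\sigma_\vee$ assigns to the $i$-th arch $x_i$ in the chain \eqref{eqn:chainskeleton} the maximal decreasing sequence $\mathrm{Dec}(p,q)$ with $p=\sum_{k<i}|x_k|+1$ and $q=\sum_{k\le i}|x_k|$; this choice forces the labels so that along the chain $x_r\twoheadrightarrow\cdots\twoheadrightarrow x_1$ each outer/right arch carries strictly larger labels. Using the DTR insertion and the left/right border rules of Proposition \ref{prop:lbrb}, I would argue that this labelling creates the maximal possible stacking: whenever an arch is inclusive to or right of another, the larger-labelled decreasing sequence is inserted later and its ribbon is pushed to the topmost floor, so no dot can be raised further without violating the fixed shape. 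Dually, the construction of $\sigma_\wedge$ reverses the label ranges ($p'=n+1-\sum_{k\le i}|x_k|$, $q'=n-\sum_{k<i}|x_k|$), which makes each decreasing sequence insert as early as possible, depositing every dot in the lowest floor.

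A clean way to organize the "highest/lowest floor" verification is through the involution $\clubsuit$ and the $\ast$-operation. Since these statements concern a zigzag path (a permutation $\sigma$), Theorem \ref{thrm:astclub} gives $\mathrm{DTab}(T)^{\clubsuit}=\mathrm{DTab}(T^{\ast})$, where $\clubsuit$ sends a dot in the $i$-th floor of a column of height $p$ to the $(p-i)$-th floor, interchanging highest and lowest stackings while preserving the shape. Thus it suffices to prove one of the two identities, say $D_\vee=\mathrm{DTR}(\sigma_\vee)$, and then obtain $D_\wedge=\mathrm{DTR}(\sigma_\wedge)$ by applying $\clubsuit$, provided I check that $\sigma_\wedge$ is obtained from $\sigma_\vee$ by the relabelling induced by $\ast$ (equivalently by the $\mathrm{bar}$/reversal built into the $p'$,$q'$ formulas). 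I would make this correspondence explicit by comparing the two arch-to-sequence assignments: the map $\mathrm{Dec}(p,q)\mapsto\mathrm{Dec}(n+1-q,n+1-p)$ is precisely the bar operation on label ranges, so $\sigma_\wedge$ is the $\ast$-image of $\sigma_\vee$ up to reflection of decreasing sequences.

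The main obstacle I anticipate is the verification that $\sigma_\vee$ genuinely puts \emph{every} dot at its maximal floor, not merely that it shares the top path. The top-path invariance from Proposition \ref{prop:sketop} only pins down the shape; I must separately show that the chain ordering \eqref{eqn:chainskeleton} forces the strongest possible inclusion pattern among ribbons in the DTR bijection. This requires translating "arch $x_i$ inclusive to $x_j$" into "decreasing sequence $\mathrm{Dec}(p,q)$ stacked above $\mathrm{Dec}(p',q')$" and checking, via the recursive DTR/spread insertion of Section \ref{sec:DTSDTR}, that the contiguous label blocks assigned by the $p,q$ formulas make later-inserted sequences climb over all earlier ones. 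Once this maximality is established for $\sigma_\vee$, the uniqueness of the extreme tableau of a given shape closes the argument, and the $\wedge$-case follows by the $\clubsuit$/$\ast$ duality.
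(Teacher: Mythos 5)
Your proposal is correct and, for the heart of the argument, coincides with the paper's proof: both rely on Proposition \ref{prop:sketop} to conclude that $\sigma$, $\sigma_{\vee}$ and $\sigma_{\wedge}$ share the same DTR top path, and then on the unique chain (\ref{eqn:chainskeleton}) of the fundamental skeleton together with the contiguous label blocks $[p,q]$ to force every dot to the extreme floor; the paper states this second step just as briefly as you do, so the ``main obstacle'' you flag is not something the paper resolves in any more detail. The only genuine divergence is the $\wedge$-case: the paper simply repeats the symmetric argument, while you deduce $D_{\wedge}=\mathrm{DTR}(\sigma_{\wedge})$ from the $\vee$-case via the involution $\clubsuit$ and Theorem \ref{thrm:astclub}. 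This route is sound, since $D_{\vee}^{\clubsuit}$ has the same shape as $D_{\vee}$ with every dot sent to the lowest floor and is therefore the unique tableau $D_{\wedge}$, but it costs you the extra lemma $\sigma_{\vee}^{\ast}=\sigma_{\wedge}$ (i.e.\ that $\ast$ fixes the arches and replaces each label block $[p,q]$ by $[n+1-q,n+1-p]$); this is true and can be checked on the example $\sigma=64837251$, yet it is an additional verification that the paper's symmetric repetition avoids, so be sure to include it if you keep the $\clubsuit$ route.
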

\begin{proof}
We first show that $D_{\vee}=\mathrm{DTR}(\sigma_{\vee})$.	
From Proposition \ref{prop:sketop}, permutations whose skeleton is a 
fundamental one have the same top path.
Since a fundamental skeleton has a unique chain of arches (see Eqn. (\ref{eqn:chainskeleton})), 
the construction of $\sigma_{\vee}$ ensures that dotted boxes are at 
the maximal floor under the top path.

The same argument for $D_{\wedge}=\mathrm{DTR}(\sigma_{\wedge})$.
\end{proof}

\subsection{The top path of an Hermite history}
Given a decreasing label $L_{\mathrm{dec}}$ of the tree $\mathrm{Tree}(\lambda)$,
one can obtain a Dyck tiling over $\lambda$ through its Hermite 
history.
More precisely, we have a collection of non-negative integers $\mathbf{H}$ (see Section \ref{sec:DTHh}) 
from $L_{\mathrm{dec}}^{\vee}$ (defined by Eqn. (\ref{eqn:decvee})) by reading the labels of $L_{\mathrm{dec}}^{\vee}$ using 
the post-order.
Here, the post-order means that we visit a node after both its left and right subtrees.
Let $\mathbf{h}:=(h_1,h_2,\ldots,h_n)$ be an insertion history for the decreasing 
label $L_{\mathrm{dec}}$, that is, $h_i\in[0,2(i-1)]$ and $h_{i}$ indicates 
the insertion point of the label $n+1-i$ in $L_{\mathrm{dec}}$.
We always have $h_{1}=0$ by its definition.

Let $(k_1,k_2,\ldots,k_{2n})$ be a bi-word consisting 
of $U$ and $D$ of length $2n$ and of an integer sequence in $[1,n]$, that is,
$k_{i}=\begin{bmatrix}X \\ p\end{bmatrix}$ with $X=U$ or $D$ and $p\in[1,n]$.
Then, a Dyck bi-word $\mathbf{K}_{n}:=(k_1,k_2,\ldots,k_{2n})$ is defined as follows.
The first word in the first row of $\mathbf{K}_{n}$ is a Dyck word of $U$'s and $D$'s. 
The second word in the second row of $\mathbf{K}_{n}$ is a parenthesis presentation of a Dyck word, {\it i.e.},
if we replace the first $i$ by $U$ and the second $i$ by $D$ for $i\in[1,n]$, we obtain 
a Dyck word, and a partial word between these $U$ and $D$ is again a Dyck word.

We define 
\begin{eqnarray*}
\mathbf{K}_{1}:=
\begin{bmatrix}
U & D \\
1 & 1
\end{bmatrix}.
\end{eqnarray*}
We recursively construct $\mathbf{K}_{n}$ from $\mathbf{K}_{n-1}$ by using 
the insertion history $\mathbf{h}$.

\begin{enumerate}
\item Increase all integers in the second word of $\mathbf{K}_{n-1}$ by $1$ 
and denote it by $\mathbf{K}^{'}_{n-1}$
\item Find the $h_{n}$-th position in $\mathbf{K}^{'}_{n-1}$ and insert $\mathbf{K}_{1}$ there.
\item If $\begin{matrix}D \\ 2\end{matrix}$ is left to $\begin{matrix}U \\ 1\end{matrix}$,
we move $\begin{matrix}U \\ 1\end{matrix}$ left to $\begin{matrix}D \\ 2\end{matrix}$.
As a sequence, we have 
\begin{eqnarray*}
\cdots
\begin{matrix}
U & D \\
2 & 2
\end{matrix}
\cdots
\begin{matrix}
U & D \\
1 & 1
\end{matrix}\cdots
\longrightarrow
\cdots
\begin{matrix}
U & U & D\\
2 & 1 & 2
\end{matrix}
\cdots
\begin{matrix}
 D \\
 1
\end{matrix}\cdots
\end{eqnarray*}
where the dotted parts are unchanged.
\item We hold the integer label for $U$, and change the integer labels for $D$ such
that a new integer sequence is a parenthesis presentation of a Dyck word.
\end{enumerate}

\begin{example}
We consider $\mathbf{h}=(0,2,3,1,5,2)$. Then, we have 
\begin{eqnarray*}
&&\mathbf{K}_{1}=
\begin{bmatrix}
U & D \\
1 & 1
\end{bmatrix},
\quad
\mathbf{K}_{2}=
\begin{bmatrix}
U & U & D & D \\
2 & 1 & 1 & 2
\end{bmatrix},
\quad
\mathbf{K}_{3}=
\begin{bmatrix}
U & U & U & D & D & D \\
3 & 2 & 1 & 1 & 2 & 3
\end{bmatrix},\\
&&\mathbf{K}_{4}=
\begin{bmatrix}
U & U & D & U & U & D & D & D\\
4 & 1 & 1 & 3 & 2 & 2 & 3 & 4
\end{bmatrix},
\quad
\mathbf{K}_{5}=
\begin{bmatrix}
U & U & U & D & U & U & D & D & D & D \\
5 & 2 & 1 & 1 & 4 & 3 & 3 & 4 & 2 & 5
\end{bmatrix},\\ 
\quad
&&\mathbf{K}_{6}=
\begin{bmatrix}
U & U & U & D & U & D & U & U & D & D & D & D\\
6 & 3 & 1 & 1 & 2 & 2 & 5 & 4 & 4 & 5 & 3 & 6
\end{bmatrix},
\end{eqnarray*}
The first word $\mathbf{K}_{6}$ is the top path of a Dyck 
tiling obtained from an Hermite history (see Figure \ref{fig:Hhtop}).
\begin{figure}[ht]
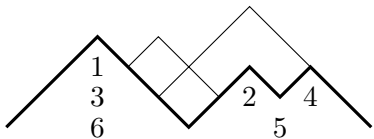

\tikzpic{-0.5}{[x=0.4cm,y=0.4cm]
\draw[very thick](0,0)--(3,3)--(6,0)--(8,2)--(9,1)--(10,2)--(12,0);
\draw(4,2)--(5,3)--(7,1)(5,1)--(8,4)--(10,2);
\draw(3,2)node{$1$}(3,1)node{$3$}(3,0)node{$6$};
\draw(8,1)node{$2$}(10,1)node{$4$}(9,0)node{$5$};
}
\caption{A Dyck tiling associated with an insertion history $\mathbf{h}=(0,2,3,1,5,2)$
for a decreasing label $L_{\mathrm{dec}}$ where the post-order word of $L_{\mathrm{dec}}$ 
is $136245$.}
\label{fig:Hhtop}
\end{figure}
\end{example}

\begin{prop}
Let $L_{\mathrm{dec}}$ be a decreasing label, $\mathbf{h}$ be its insertion history, and 
$\mathbf{K}_{n}$ be a Dyck bi-word associated with $\mathbf{h}$.
Then, the top path of the Hermite history of $L_{\mathrm{dec}}$ is the first Dyck word 
of $\mathbf{K}_{n}$.
\end{prop}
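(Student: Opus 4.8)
The plan is to argue by induction on $n$, matching the recursive construction of the Dyck bi-word $\mathbf{K}_{n}$ step by step against the recursive (spread plus growth) construction of the Dyck tiling produced by the Hermite history. First I would record what the intermediate objects mean. Let $L_{\mathrm{dec}}'$ be the decreasing label obtained from $L_{\mathrm{dec}}$ by deleting the edge carrying the smallest label $1$ (which, in a decreasing tree, is necessarily attached to a leaf) and standardizing. Since $h_{i}$ is by definition the insertion point of the label $n+1-i$, the truncated sequence $(h_{1},\ldots,h_{n-1})$ is exactly the insertion history of $L_{\mathrm{dec}}'$, while label $1$ is inserted last, at $x=h_{n}$. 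By Theorem~\ref{thrm:HhrlDTS} the tiling built from the Hermite history of a decreasing label coincides with its $\mathrm{rlDTS}$ image, so it admits the recursive insertion description of Section~\ref{sec:DTSDTR}: the tiling of $L_{\mathrm{dec}}$ is obtained from that of $L_{\mathrm{dec}}'$ by a single spread at $x=h_{n}$ followed by one (leftward) growth step. The induction hypothesis is that the first word of $\mathbf{K}_{n-1}$ is the top path of the Hermite-history tiling of $L_{\mathrm{dec}}'$; the base case $n=1$ is immediate, since $\mathbf{K}_{1}$ has first word $UD$, which is the top path of the unique one-box tiling.

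For the inductive step I would track the top path through the two geometric operations and compare with the four steps defining $\mathbf{K}_{n}$. The spread at $x=h_{n}$ inserts an adjacent pair $UD$ into the top path at the $h_{n}$-th vertex, which is precisely the insertion of $\mathbf{K}_{1}$ at position $h_{n}$ (step~2); the uniform increment of the second word (step~1) is pure bookkeeping that keeps the second word a valid parenthesization and shifts the label of the pair inserted at the previous step from $1$ to $2$. The growth step then raises the top path to the left of the insertion point, and I claim this is reproduced verbatim by step~3: moving the newly inserted up step to the left of the down step labeled $2$ lifts the portion of the top path lying between them, which is exactly the change effected by the added strip/ribbon. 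My worked example with $\mathbf{h}=(0,2,3,1,5,2)$, building $\mathbf{K}_{5}$ from $\mathbf{K}_{4}$, confirms that this single leftward move converts $UUDUUUDDDD$ into $UUUDUUDDDD$, matching the top path of the target tiling; step~4 then merely re-labels the down steps so that the second word again encodes the up--down matching of the new top path, carrying no geometric content.

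The conceptual heart of the argument, and the step I expect to be the main obstacle, is proving that the second word correctly drives step~3. Concretely I must show that at each stage the down step labeled $2$ (after the increment) is precisely the step of the current top path over which the growth terminates, i.e.\ the right boundary at which the added strip/ribbon stops. Geometrically this label-$2$ pair is the peak inserted at the previous step, and the assertion is that the growth of the new insertion grows exactly up to this previously inserted structure; this is the analogue of the special-column (strip-termination) condition of Section~\ref{sec:DTSDTR}. I would establish it by maintaining, as part of the inductive invariant, that the parenthesization given by the second word faithfully encodes the nesting of the Hermite-history trajectories, equivalently the planted-plane-tree structure, and by then comparing the termination condition of the growth with the matching relation recorded by the second word, using the local determination of the top path by its left and right borders as in Proposition~\ref{prop:lbrb}. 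Once this correspondence is verified, the equality of the first word of $\mathbf{K}_{n}$ with the top path of the Hermite-history tiling follows immediately, closing the induction.
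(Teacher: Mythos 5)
Your inductive skeleton matches the paper's: both arguments induct on $n$, read the insertion of $\mathbf{K}_{1}$ at position $h_{n}$ as the insertion of the edge labelled $1$ (the last edge processed), and put all the weight on understanding step (3). Your reduction via Theorem \ref{thrm:HhrlDTS} to the spread-plus-growth recursion of $\mathrm{rlDTS}$ is legitimate (the paper instead works directly with the $\mathrm{art}$ weights of the trajectories), and your bookkeeping for steps (1) and (2) and for the truncated insertion history is fine. The gap is in what you call the conceptual heart. You assert that the leftward growth \emph{terminates} at the down step labelled $2$, and you propose to prove this as an analogue of the special-column condition. But termination at a special column is a feature of the \emph{ribbon} growth of the DTR bijection, not of the \emph{strip} growth of the DTS-type bijections: the strip growth attaches a box to every eligible step on the relevant side of the insertion line, all the way to the end of the path, so its net effect on the top path is $\mu_{L}\mu_{R}\mapsto U\,\mu_{L}\,D\,\mu_{R}$ --- the new up step migrates to the very front, not merely to the position of the down step labelled $2$. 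The statement you plan to prove is therefore not the right one, and the tools you propose for it (Proposition \ref{prop:lbrb}, which governs DTR/Dyck-tableau borders, and a ``nesting of trajectories'' invariant) do not bear on it.

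What actually makes step (3) correct is a positional lemma that your proposal never states: in $\mathbf{K}_{n-1}$ the pair carrying the label $1$ always sits at the \emph{first peak} of the first word, i.e.\ at positions $p$ and $p+1$ where the first word begins $U^{p}D$. (This is the first half of the paper's proof, established by a short induction splitting on whether $h_{n}\le p$.) Granting it, every letter to the left of the down step labelled $2$ is an up step, so moving the new $U$ only as far as that down step produces the same Dyck word as moving it all the way to the front; hence step (3) reproduces $U\,\mu_{L}\,D\,\mu_{R}$, and step (4) is indeed pure relabelling. With this lemma inserted, and the ``termination'' claim replaced by the equality of the two leftward moves at the level of Dyck words, your induction closes; without it the crucial step is unsupported.
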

\begin{proof}
Let $p\in[1,n]$ be an integer such that the first words of $k_{1},\ldots,k_{p}$ are 
$U$ and the first word of $k_{p+1}$ is $D$ in $\mathbf{K}_{n-1}$. 
We first show that the second words of $k_{p}$ and $k_{p+1}$ are $1$ by induction.
It is obvious when $n=1$, and we assume that the statement holds up to $n-1$.
We construct $\mathbf{K}_{n}$ from $\mathbf{K}_{n-1}$ by the insertion history $h_{n}$. 
If $h_{n}\le p$, we insert a word $UD$ into the $p$-th position in $\mathbf{K}_{n-1}$, 
and statement is true.
If $h_{n}>p$, the step (3) ensures that the first words of $k_{1},\ldots,k_{p+1}$ are 
$U$, the first word of $k_{p+2}$ is $D$ and the second words of $k_{p+1}$ and $k_{p+2}$ 
are $1$ in $\mathbf{K}_{n}$.
Thus, the statement holds for $n$.

When $k=1$, the first word of $\mathbf{K}_{1}$ is the top path of the Hermite history 
$\mathbf{H}=(0)$ with the insertion history $\mathbf{h}=(0)$.
We prove Theorem by induction on $n$ and assume that Theorem holds up to $n-1$.
Let $\lambda$ be the Dyck path associated with the insertion history $\mathbf{h}$.
We insert the bi-word $\mathbf{K}_{1}$ at the $h_{n}$-th position in $\mathbf{K}_{n-1}$,
which means that we insert a $UD$ path in $\lambda$ and 
it corresponds to the edge labeled by $1$ in $L_{\mathrm{dec}}$.
Since we consider an Hermite history, the top path right to the $h_{n}+2$-th position 
in the Dyck tiling of size $n$ is the same as the top path right to the $h_{n}$-th 
position in the Dyck tiling of size $n-1$. 
In $L_{\mathrm{dec}}$, the label $1$ is on an edge connected to a leaf and the smallest integer.
We have to increase by one the statistics $\mathrm{art}$ for a trajectory starting 
from a $D$ step left to the $h_{n}$-th position.
This is realized by the step (3).
Thus, the top path of the Hermite history associated with $\mathbf{h}$ is 
the first word of $\mathbf{K}_{n}$.
\end{proof}

\bibliographystyle{amsplainhyper} 
\newcommand{\doi}[1]{https://doi.org/#1}
\bibliography{biblio}

\end{document}